\tikzstyle{block} = [rectangle, draw, 
\newcommand{\Dim}{\ensuremath{\mathrm{Dim}_{\mathds{H}}}}
\newcommand\tree{\begin{tikzpicture}[level distance=0.8mm]
\tikzstyle{every node}=[fill,circle,inner sep=0.5pt]
\tikzstyle{level 1}=[sibling distance=1.3mm]
  \node{}[grow'=up]
    child {[fill] circle (0.5pt)}
    child {[fill] circle (0.5pt)};
\end{tikzpicture}}
\newcommand{\ttree}{\begin{tikzpicture}[level distance=0.8mm]
\tikzstyle{every node}=[fill,circle,inner sep=0.5pt]
\tikzstyle{level 1}=[sibling distance=1.3mm]
\tikzstyle{level 2}=[sibling distance=1.3mm]
\tikzstyle{level 3}=[sibling distance=1mm]
  \node{}[grow'=up]
    child {[fill] circle (0.5pt)}
     child {node{} child {[fill] circle (0.5pt)} child {[fill] circle (0.5pt)}};
\end{tikzpicture}}
\newcommand{\tttree}{\begin{tikzpicture}[level distance=0.8mm]
\tikzstyle{every node}=[fill,circle,inner sep=0.5pt]
\tikzstyle{level 1}=[sibling distance=1.3mm]
\tikzstyle{level 2}=[sibling distance=1.3mm]
\tikzstyle{level 3}=[sibling distance=1.3mm]
  \node{}[grow'=up]
    child {[fill] circle (0.5pt)}
     child {node{} child {[fill] circle (0.5pt)} child {node{} child {[fill] circle (0.5pt)} child {[fill] circle (0.5pt)} }};
\end{tikzpicture}}
\newcommand{\Tree}{\begin{tikzpicture}[level distance=0.8mm]
\tikzstyle{every node}=[fill,circle,inner sep=0.5pt]
\tikzstyle{level 1}=[sibling distance=1.3mm]
\tikzstyle{level 2}=[sibling distance=0.75mm]
\tikzstyle{level 3}=[sibling distance=1mm]
  \node{}[grow'=up]
     child {node{} child {[fill] circle (0.5pt)} child {[fill] circle (0.5pt)}}
     child {node{} child {[fill] circle (0.5pt)} child {[fill] circle (0.5pt)}};
\end{tikzpicture}}
\newcommand{\vvert}[1]{{\left\vert\kern-0.25ex\left\vert\kern-0.25ex\left\vert #1 
    \right\vert\kern-0.25ex\right\vert\kern-0.25ex\right\vert}}
\newcommand{\1}{\mathds{1}}
\renewcommand{\P}{\mathds{P}}
\newcommand{\E}{\mathds{E}}
\newcommand{\R}{\mathds{R}}
\newcommand{\Z}{\mathds{Z}}
\newcommand{\B}{\mathcal{B}}
\newcommand{\e}{\mathrm{e}}
\newcommand\dP{\mathds{P}}
\newcommand\dR{\mathds{R}}
\newcommand\dE{\mathds{E}}
\newcommand\dZ{\mathds{Z}}
\newcommand\dN{\mathds{N}}
\newcommand\dS{\mathds{S}}
\newcommand\dQ{\mathds{Q}}
\newcommand\dV{\mathds{V}}
\newcommand\dT{\mathds{T}}
\newcommand\bR{}
\newcommand\sC{\mathscr{C}}
\newcommand\sD{\mathscr{D}}
\newcommand\sH{\mathscr{H}}
\newcommand\sS{\mathscr{S}}
\newcommand\sX{\mathscr{X}}
\newcommand\cB{\mathcal{B}}
\newcommand\cD{\mathcal{D}}
\newcommand\cF{\mathcal{F}}
\newcommand\cG{\mathcal{G}}
\newcommand\cH{\mathcal{H}}
\newcommand\cI{\mathcal{I}}
\newcommand\cJ{\mathcal{J}}
\newcommand\cK{\mathcal{K}}
\newcommand\cL{\mathcal{L}}
\newcommand\cP{\mathcal{P}}
\newcommand\cM{\mathcal{M}}
\newcommand\cT{\mathcal{T}}
\newcommand\cO{\mathcal{O}}
\newcommand\cZ{\mathcal{Z}}
\newcommand\cQ{\mathcal{Q}}
\newcommand\cU{\mathcal{U}}
\newcommand\blambda{\boldsymbol{\lambda} }
\newcommand\bmu{\boldsymbol{\mu}}
\newcommand\fa{\mathfrak{a}}
\newcommand\fc{\mathfrak{c}}
\newcommand\fn{\mathfrak{n}}
\newcommand\fX{\mathfrak{X}}
\newcommand\fZ{\mathfrak{Z}}
\newtheorem{stat}{Statement}[section]
\newtheorem{proposition}[stat]{Proposition}
\newtheorem{corollary}[stat]{Corollary}
\newtheorem{theorem}[stat]{Theorem}
\newtheorem{lemma}[stat]{Lemma}
\theoremstyle{definition}
\newtheorem{definition}[stat]{Definition}
\newtheorem{remark}[stat]{Remark}
\numberwithin{equation}{section}
\renewcommand{\tocsection}[3]{%
  \indentlabel{\@ifnotempty{#2}{\bfseries\ignorespaces#1 #2\quad}}\bfseries#3}
\renewcommand{\tocsubsection}[3]{%
  \indentlabel{\@ifnotempty{#2}{\ignorespaces#1 #2\quad}}#3}
\newcommand\@dotsep{4.5}
\def\@tocline#1#2#3#4#5#6#7{\relax
  \ifnum #1>\c@tocdepth 
  \else
    \par \addpenalty\@secpenalty\addvspace{#2}%
    \begingroup \hyphenpenalty\@M
    \@ifempty{#4}{%
      \@tempdima\csname r@tocindent\number#1\endcsname\relax
    }{%
      \@tempdima#4\relax
    }%
    \parindent\z@ \leftskip#3\relax \advance\leftskip\@tempdima\relax
    \rightskip\@pnumwidth plus1em \parfillskip-\@pnumwidth
    #5\leavevmode\hskip-\@tempdima{#6}\nobreak
    \leaders\hbox{$\m@th\mkern \@dotsep mu\hbox{.}\mkern \@dotsep mu$}\hfill
    \nobreak
    \hbox to\@pnumwidth{\@tocpagenum{\ifnum#1=1\bfseries\fi#7}}\par
    \nobreak
    \endgroup
  \fi}
\renewcommand\csname r@tocindent0\endcsname{0pt}
\def\l@subsection{\@tocline{2}{0pt}{2.5pc}{5pc}{}}
\begin{document}

\title[Fractal geometry of the PAM in 2D and 3D with white noise potential]{Fractal geometry of the parabolic Anderson model in 2D and 3D with white noise potential
}
%
\author[P.\ Ghosal]{Promit Ghosal}
\address{P.\ Ghosal,
	Department of Mathematics, Massachusetts Institute of Technology (MIT),
	\newline\hphantom{\quad \ \ P. Ghosal}
	77 Massachusetts Avenue, Cambridge, MA 02139, USA
}
\email{promit@mit.edu}

\author[J.\ Yi]{Jaeyun Yi }
\address{J.\ Yi,
	School of Mathematics, Korea Institute for Advanced Study (KIAS),
	\newline\hphantom{\quad \ \ J. Yi}
	85 Hoegiro Dongdaemun-gu, Seoul 02455, Republic of Korea
	}
\email{jaeyun@kias.re.kr}

\date{\today}

\begin{abstract} We study the parabolic Anderson model (PAM)
\begin{equation*}
     \begin{cases} {\partial \over \partial t}u(t,x) =\frac{1}{2}\Delta u(t,x) + u(t,x)\xi(x), \quad t>0, x\in \dR^d,\\
   u(0,x) \equiv 1, \quad x\in \dR^d,
   \end{cases}
   \end{equation*} where $\xi$ is spatial white noise on $\dR^d$ with $d \in\{2,3\}$. We show that the peaks of the PAM are macroscopically multifractal. More precisely, we prove that the spatial peaks of the PAM have infinitely many distinct values and we compute the macroscopic Hausdorff dimension (introduced by Barlow and Taylor \cite{BT89,BT92}) of those peaks. As a byproduct, we obtain the exact spatial asymptotics of the solution of the PAM. We also study the spatio-temporal peaks of the PAM and show their macroscopic multifractality. %
   Some of the major tools used in our proof techniques include paracontrolled calculus and tail probabilities of the largest point in the spectrum of the \emph{Anderson Hamiltonian}. 


\vspace{1cm} 
 
\noindent{\it Keywords:} Parabolic Anderson model, Anderson Hamiltonian, macroscopic Hausdorff dimension, Paracontrolled Calculus. \\
	
	\noindent{\it \noindent AMS 2020 subject classification:}
	Primary. 60H15; Secondary. 35R60, 60K37.
\end{abstract}

\maketitle

\tableofcontents

\section{Introduction}\label{sec_intro} 
We consider the parabolic Anderson model on $\R^d$ with $d\in \{2,3\}$
\begin{equation}\label{eq:PAM}
     \begin{cases} {\partial \over \partial t}u(t,x) =\frac{1}{2}\Delta u(t,x) + u(t,x)\xi(x), \quad t>0, x\in \R^d,\\
   u(0,x) \equiv 1, \quad x\in \R^d.
   \end{cases}
   \end{equation} where the random potetial $\xi$ is the spatial white noise on $\dR^d$ which is a mean zero Gaussian field with delta correlation between any two spatial points. PAM is one of the prototypical framework for modelling conduction of electron in crystals filled with defects. There is a competition between the two terms appearing in the operator: While the eigenfunctions of
the Laplacian which depicts the behavior of the electron waves being spread out over the whole space, the multiplication-by-$\xi$ operator which models the random defects, tends to concentrate
the mass of the eigenfunctions in very small regions. A discrete version of the above Hamiltonian was introduced in a seminal paper of Anderson \cite{And58} where he showed that the bottom part of the spectrum consists of localized eigenfunctions. This phenomenon is often termed as the \emph{Anderson localization} which triggered an enormous amount of research activities in last several decades (see \cite{DL2020} for detailed references).    

The solution theory of \eqref{eq:PAM} is obtained by using a  mollified version of noise $\xi_\epsilon$ minus a correction $c_{\epsilon} = \frac{1}{2\pi} \log \epsilon$ and it is proved that the solution $u_{\epsilon}$ of the PAM with potential $\xi_{\epsilon} - c_{\epsilon}$ has a limit as $\epsilon \to 0$. It was first constructed on torus $\dT^2$ by Hairer \cite{Hai14} using the regularity structure and by Gubinelli, Imkeller and Perkowski \cite{GIP2015} by using the framework of para-controlled calculus. Later Hairer and Labb\'e extended the solution theory for the whole $\dR^2$ in \cite{HL2015} and  furthermore for whole $\dR^3$ in \cite{HL2018}.

With some particular choices of random potential, PAM admits an intriguing concentration property for its tall peaks
on large space-time scales which is often referred as \emph{intermittency}. A vast amount of previous works
of the PAM on $\dZ^d$ with i.i.d. potential, and on $\dR^d$ with regular potential, have revealed that the solution of PAM is highly concentrated on few small islands that are far from each other and carry
most of the total mass of the solution. This phenomenon can be attributed to the following spectral representation in terms of the eigenvalues $\lambda_1\geq \lambda_2\geq \cdots$ and corresponding
$L_2$-orthonormal basis of eigenfunctions $e_1, e_2, e_3, .....$ of $\frac{1}{2}\Delta + \xi$,
\begin{align}\label{eq:SpectralRep}
u(t,x) = \sum_{n} e^{tn} e_n(x) e_n(0).
\end{align}   
From this representation, the intermittency of the system comes as consequence of Anderson localization which dictates  the leading eigenfunctions $e_1, e_2,\cdots$
 to be concentrated in small islands. This phenomenon has been proved inside large centered boxes for few instances including the case where $\xi$ an i.i.d. potential on $\dZ^d$ with double exponential tails \cite{BKS18}. See \cite{Konig16} for the past developments on PAM. 

 In the case of white noise potential, the phenomenon of intermittency or Anderson localization makes sense for dimensions $d=1,2,3$. The one dimensional case is well understood due to three beautiful works by Laure Duma\'z and Cyril Labb\'e \cite{DL2020,DL21a,DL21b}. There is no known solution theory for $d\geq 4$ since the PAM with white noise potential is scaling-critical/supercritical in those cases. As we have mentioned earlier, the cases $d = 2, 3$ are dealt with regularity structure as developed by Hairer or paracontrolled calculus by Gubinelli and Perkowski.

Intermittency is intimately tied with macroscopic fractality which was studied in 
\cite{KKX17, KKX18} for a large collection of parabolic stochastic PDEs including the $(1+1)$-d stochastic heat equation with multiplicative space-time white noise. 
They had shown that when the intermittency holds, the peaks of those stochastic PDEs form complex macroscopic multifractal structures. More precisely, their results show that the macroscopic Hausdorff dimension (introduced by Barlow and Taylor \cite{BT89,BT92}, see Definition~\ref{def:definition of dimension}) of the tall peaks take distinct and nontrivial values as the level of the peaks vary, a property which symbolizes the
multifractality. The same phenomenon does not hold in the case of Brownian motion where the tall peaks
demonstrate a constant Hausdorff dimension (see \cite[Theorem 1.4]{KKX17}) along a different length scale.

In a recent work, \cite{KPZ20} showed that the sizes of the tall peaks in boxes of width $t^{\alpha}$ for $\alpha\in (0,1)$ and deep valleys in parabolic Anderson model in 2 dimension are asymptotically same. They have also commented that similar result is expected for PAM in 3 dimension. This property is in apparent contradiction with the intermittency property for the PAM. In this paper, we seek to study the fractality of the PAM. Our main theorems which are stated below shows that the spatial (Theorem~\ref{thm:spatial multifractality}) and spatio-temporal peaks of the PAM (Theorem~\ref{thm:spatio-temporal multifractality}) are macroscopically multifractal (see Section~\ref{sec2} for definition) for $d=2,3$. 

  Before proceeding to the main statement of those results, we introduce few notations. For $\alpha, \beta, v,t > 0$, define the set of peaks 
\begin{equation*}
  \cP^d_t(\alpha): = \left\{ x\in \R^d \, : \, u(t,x) \geq e^{\alpha t(\log|x|)^{\frac{2}{4-d}}} \right\},
\end{equation*}
and 
\begin{equation*}
  \cP^d(\beta, v):= \left\{ (e^{t/v}, x) \in (e,\infty) \times \R^d \, :\, u(t,x) \geq e^{\beta t^{ \frac{6-d}{4-d}} }\right\}. 
\end{equation*}
We also introduce 
\begin{equation}\label{eq:definition of c}
  \fc_d : = \frac{8}{d^{\frac{d}{2}}(4-d)^{2-\frac{d}{2}}\kappa^4_d }, 
\end{equation} where
\begin{equation*}
  \kappa_d: = \sup_{f \in H^1(\dR^d)} \frac{\| f\|_{L^4(\dR^d)}}{\|\nabla f \|^{d/4}_{L^2(\dR^d)} \| f\|^{1-d/4}_{L^2(\dR^d)}}.
\end{equation*}

Our first result which is stated below finds the macroscopic Hausdorff dimension (denoted as $\Dim[\cdot]$) of the peaks of PAM for $d=2,3$ in the spatial direction for all large time $t$. Furthermore, it also finds the asymptotic shape of the peaks in the spatial direction for any fixed large $t$.

\begin{theorem}[\bf Spatial Multifractality and Asymptotics of the PAM]\label{thm:spatial multifractality}
  For $\alpha>0$, there exists $t_0= t_0(\alpha,d)>0$ such that for all $t\geq t_0$, we have 
  \begin{equation}\label{eq:dimension of spatial tall peaks}
    \Dim[\cP^d_t(\alpha) ] =(d-\alpha^{\frac{4-d}{2}} \fc_d) \vee 0, \quad \text{a.s.}
  \end{equation} In addition, there exists $t_1= t_1(d)>0$ such that for all $t>t_1$,
\begin{equation}\label{eq:limsup of spatial tall peaks}
  \limsup_{|x|\rightarrow \infty} \frac{\log_+u(t,x)}{(\log|x|)^{\frac{2}{4-d}}} \stackrel{a.s.}{=}  \left(\frac{d}{\fc_d}\right)^{\frac{1}{2-d/2}}t.
\end{equation}
\end{theorem}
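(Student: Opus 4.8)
\emph{Reduction to the top of the Anderson spectrum.}
The guiding principle is the Feynman--Kac / spectral picture: for large $|x|$ one expects $\log u(t,x)=t\,\lambda(x)\,(1+o(1))$, where $\lambda(x)$ is the principal (Dirichlet) eigenvalue of the Anderson Hamiltonian $\mathcal{H}=\tfrac12\Delta+\xi$ on a ball $B(x,R_{|x|})$ whose radius grows only polylogarithmically in $|x|$; equivalently, up to $o(\log|x|)$ errors in the exponent, $\{x\in\cP^d_t(\alpha)\}$ is the event that $\mathcal{H}$ has an eigenfunction localised near $x$ with eigenvalue at least $\alpha(\log|x|)^{\frac{2}{4-d}}$. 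Feeding in the sharp eigenvalue tail — namely $\P\big(\lambda_1(Q_L)>\mu\big)=\exp\!\big(-(\fc_d+o(1))\,\mu^{\frac{4-d}{2}}\big)$ as $\mu\to\infty$, uniformly for $L$ in the relevant range, where $\fc_d$ of \eqref{eq:definition of c} is the value of the variational problem whose extremiser is (a rescaling of) the Gagliardo--Nirenberg maximiser defining $\kappa_d$ — and using $\mu=\alpha(\log|x|)^{\frac{2}{4-d}}$ so that $\mu^{\frac{4-d}{2}}=\alpha^{\frac{4-d}{2}}\log|x|$, the basic estimate becomes
\[
\P\big(x\in\cP^d_t(\alpha)\big)=|x|^{-\theta+o(1)},\qquad |x|\to\infty,\qquad\text{where}\quad \theta:=\fc_d\,\alpha^{\frac{4-d}{2}}.
\]
The claimed dimension is then $(d-\theta)\vee 0$, and the constant in \eqref{eq:limsup of spatial tall peaks} is the unique $\alpha_c=(d/\fc_d)^{1/(2-d/2)}$ solving $\theta(\alpha_c)=d$. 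Throughout, far-apart regions carry essentially independent values of $u$ (by independence of $\xi$ on disjoint sets combined with the localisation estimates), a decoupling used repeatedly below.

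\emph{The limsup \eqref{eq:limsup of spatial tall peaks}.}
Work along dyadic annuli $\mathcal{A}_n=\{x:2^n\le|x|<2^{n+1}\}$. For the upper bound, fix $\alpha>\alpha_c$, so $\theta>d$; covering $\mathcal{A}_n$ by $\asymp 2^{nd+o(n)}$ cells of the localisation scale and using the estimate above, $\P(\cP^d_t(\alpha)\cap\mathcal{A}_n\neq\emptyset)\lesssim 2^{n(d-\theta)+o(n)}$, which is summable, so Borel--Cantelli makes $\cP^d_t(\alpha)$ a.s.\ bounded; letting $\alpha\downarrow\alpha_c$ gives $\limsup_{|x|\to\infty}\log_+ u(t,x)/(\log|x|)^{\frac{2}{4-d}}\le\alpha_c\,t$ a.s. For the lower bound, fix $\alpha<\alpha_c$, so $\theta<d$; within $\mathcal{A}_n$ the $\asymp 2^{nd+o(n)}$ well-separated localisation cells are nearly independent, each landing in $\cP^d_t(\alpha)$ with probability $2^{-n\theta+o(n)}$, so a second moment / Paley--Zygmund estimate gives $\P(\cP^d_t(\alpha)\cap\mathcal{A}_n\neq\emptyset)\ge c>0$ uniformly in $n$; since these events along a sparse subsequence of $n$ are asymptotically independent, the second Borel--Cantelli lemma yields infinitely many such $n$ a.s., hence $\limsup\ge\alpha\,t$ for every $\alpha<\alpha_c$. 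That the time threshold here is independent of $\alpha$ (the statement's $t_1=t_1(d)$) is because only a fixed neighbourhood of the level $\alpha_c=\alpha_c(d)$ is relevant.

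\emph{The dimension \eqref{eq:dimension of spatial tall peaks}.}
If $\theta\ge d$ then $\cP^d_t(\alpha)$ is a.s.\ bounded by the above, so $\Dim[\cP^d_t(\alpha)]=0=(d-\theta)\vee 0$. Assume $\theta<d$, and write $\nu^n_\gamma(\cdot)$ for the Barlow--Taylor gauge from Definition~\ref{def:definition of dimension}. For the upper bound $\Dim[\cP^d_t(\alpha)]\le d-\theta$: given $\gamma>d-\theta$, cover $\cP^d_t(\alpha)\cap\mathcal{A}_n$ by the localisation cells it meets; then $\E[\nu^n_\gamma(\cP^d_t(\alpha))]\le 2^{n(d-\theta)+o(n)}\,2^{-n\gamma}\to0$, so $\liminf_n\nu^n_\gamma(\cP^d_t(\alpha))=0$ a.s.\ and $\Dim[\cP^d_t(\alpha)]\le\gamma$; let $\gamma\downarrow d-\theta$. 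For the lower bound $\Dim[\cP^d_t(\alpha)]\ge d-\theta$, I would invoke the mass-distribution principle of Barlow--Taylor \cite{BT89,BT92}, following the scheme of \cite{KKX17,KKX18}. Fix $\gamma<d-\theta$ and an intermediate scale $R_n$ with $R_n^d\,2^{-n\theta}\asymp 1$ (so $R_n\asymp 2^{n\theta/d}$). Tile $\mathcal{A}_n$ by boxes of side $R_n$; using the near-independence across (say, every other) such box together with a Chernoff/second moment estimate and Borel--Cantelli, show that a.s., for all large $n$, at least $\asymp(2^n/R_n)^d\asymp 2^{n(d-\theta)}$ of these boxes meet $\cP^d_t(\alpha)$. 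Picking one point of $\cP^d_t(\alpha)$ from each such box yields $M_n\asymp 2^{n(d-\theta)}$ points that are $R_n$-separated; let $\mu_n$ be the uniform probability measure on them. For any box $Q\subset\mathcal{A}_n$ of side $s\in[1,2^n]$, $Q$ meets $\le(2s/R_n)^d$ of the $R_n$-tiles when $s\ge R_n$ and $\le 2^d$ of them when $s<R_n$, so $\mu_n(Q)\le C\max\{1,(s/R_n)^d\}/M_n$; since $M_n R_n^d\asymp 2^{nd}$ and $\gamma<d$, this is $\le C'(s/2^n)^d\le C'(s/2^n)^\gamma$ for $s\ge R_n$, while for $s<R_n$ it is $\le C/M_n\asymp 2^{-n(d-\theta)}\le C'(s/2^n)^\gamma$ because $\gamma<d-\theta$. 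Hence $\mu_n(Q)\le C'(s/2^n)^\gamma$ for every admissible box, so $\nu^n_\gamma(\cP^d_t(\alpha))\ge 1/C'>0$ for all large $n$, giving $\Dim[\cP^d_t(\alpha)]\ge\gamma$; let $\gamma\uparrow d-\theta$.

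\emph{Main difficulty.}
Granting the sharp eigenvalue tail, the crux is the two-sided correspondence $\log u(t,x)=t\lambda(x)(1+o(1))$ with an error that is genuinely $o(1)$ on the scale $(\log|x|)^{\frac{2}{4-d}}$, \emph{uniformly} over the annulus $\mathcal{A}_n$. The lower bound on $u(t,x)$ (steer the Feynman--Kac path onto a favourable eigenfunction bump and keep it there) is where the hypothesis $t\ge t_0(\alpha,d)$ is forced, so that the gain $e^{t\lambda}$ dominates the $O(1)$ entrance cost and the subleading spectral terms; the upper bound on $u(t,x)$ requires a priori estimates ruling out largeness for any reason other than a nearby large eigenvalue (controlling the contributions of bumps at all distances), which is precisely where the paracontrolled solution theory and the resolvent/eigenfunction decay bounds for $\mathcal{H}$ are used. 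The second delicate point is making the decoupling quantitative enough that the union bound over all $O(2^{nd})$ sub-boxes in the dimension lower bound survives: one must bound the correlation of the local eigenvalue events on regions separated only by the polylogarithmic localisation scale. This, rather than the counting, is the most technical part of the argument.
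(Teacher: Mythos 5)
Your overall strategy -- reduce $\log u(t,x)$ to the principal Anderson eigenvalue on a polylog-sized box, feed in the eigenvalue tail to get a per-cell probability $\asymp|x|^{-\theta+o(1)}$ with $\theta=\fc_d\alpha^{(4-d)/2}$, and then run a covering argument for the upper bound and a hitting argument for the lower bound -- is the same as the paper's, and the crux you name at the end (the two-sided correspondence $\log u\approx t\lambda$ uniformly over an annulus, plus quantitative decoupling) is exactly what occupies the paper's Sections~3--5. The upper bound on $\Dim[\cP^d_t(\alpha)]$ via $\E\sum_n\nu^n_\rho$ is the paper's Section~6.2 (Proposition~\ref{prop:right tail probability of spatial maximum}) almost verbatim.

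Where you genuinely diverge is in the two lower bounds. For $\Dim[\cP^d_t(\alpha)]\ge d-\theta$ you invoke the mass-distribution principle: pick an intermediate scale $R_n$, show a positive fraction of $R_n$-cells in $\dS_n$ meet the set via second moment plus Chernoff, put uniform measure on one hit per cell, and bound $\mu_n(Q)\lesssim(s/e^n)^\gamma$. The paper instead uses the Barlow--Taylor thick-set criterion (Proposition~\ref{prop:thick set}): it shows that for every box of side $e^{n\gamma}$ in a $\gamma$-skeleton of $\dS_n$, the maximum of $u$ over $m(n)\asymp e^{dn(\gamma-\theta')}$ well-separated subpoints exceeds the threshold, using the product form of the left-tail bound $\prod_j\P(\blambda_1\le s)$ (Proposition~\ref{prop:left tail probability of spatial maximum}) and a union-plus-Borel--Cantelli over boxes. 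The thick-set route avoids tuning your intermediate scale $R_n$ -- note that $R_n^d 2^{-n\theta}\asymp1$ does not in fact make the expected hits per $R_n$-box of order one, because each hit costs $2^{-n\theta+o(n)}$ with a sub-exponential but possibly shrinking $o(n)$ correction (there are $\asymp(R_n/\mathrm{polylog})^d$ cells, so the expectation carries an extra $1/\mathrm{polylog}^d$), so you would need $R_n=2^{n(\theta/d+\varepsilon)}$ and then send $\varepsilon\downarrow0$; the thick-set approach absorbs this slack into the choice of $\gamma$ automatically. Similarly, for the lower half of \eqref{eq:limsup of spatial tall peaks} you run Paley--Zygmund and the second Borel--Cantelli lemma along sparse annuli; the paper gets this for free from \eqref{eq:dimension of spatial tall peaks}, since $\Dim>0$ already forces the level set to be unbounded. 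Both of your alternatives are valid, and the second-moment route makes the near-independence hypothesis more explicit, but the paper's combination of a left-tail maximum estimate with the thick-set criterion is shorter and does not require a lower bound on the one-point probability $\P(\blambda_1(Q_L)\ge s)$, only the two upper-tail bounds of Proposition~\ref{lemma:tail probability of eigenvalue}. Nothing in your outline is wrong, but if you flesh it out you should (i) make the one-point lower bound $\P(u(t,x)\ge e^{\alpha t(\log|x|)^{2/(4-d)}})\ge|x|^{-\theta(1+o(1))}$ explicit -- it follows from $\P(\blambda_1\le s)\le\exp(-cA)$ with $A\to0$, giving $\P(\blambda_1>s)\ge cA/2$ -- and (ii) carry the $o(n)$ slack through your choice of $R_n$.
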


For $d=2$, the long time asymptotics of the solution of PAM had been found in \cite{KPZ20}. They have show that $\sup_{x\in }\log u(t,x)$ is approximately equal to $\frac{2t}{\fc_2}$ as $t$ gets larger when the initial data is Dirac delta. Our result shows that the tall peaks of $\log u(t,x)$ in the spatial direction  take the shape of $\frac{2t}{\fc_2}$ even for finite value of $t$. For one dimensional PAM, similar results were proven by Xia Chen \cite{Chen15} using the moment asymptotics. However, the proof techniques for $d=1$ breaks down in the case of $d=2,3$ since the moments of PAM in dimension larger than $1$ blow up in finite time. This poses a serious technical difficulty which we are able to circumvent in this paper by introducing new techniques.  
 \
Our next result shows the macroscopic Hausdorff dimension of the spatio-temporal peaks of PAM for $d=2,3$. 


\begin{theorem}[\bf Spatio-Temporal Multifractality of the PAM]\label{thm:spatio-temporal multifractality}
  For every $\beta>0$ and $v>0$, we have
\begin{equation}\label{eq:dimension of spatio-temporal tall peaks}
    \Dim[\cP^d(\beta, v)] = (d+1- \beta^{\frac{4-d}{2}} v \fc_d) \vee d, \quad \text{a.s.} 
  \end{equation}
\end{theorem}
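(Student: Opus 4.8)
The plan is to leverage Theorem~\ref{thm:spatial multifractality} and, more importantly, the machinery behind it (tail bounds for $\log u(t,x)$ coming from the top of the spectrum of the Anderson Hamiltonian), and transfer the one-dimensional-in-space Barlow–Taylor computation to the $(d+1)$-dimensional space-time setting. First I would unwind the definition: since the temporal coordinate is parametrized by $s = e^{t/v}$, a point $(s,x) = (e^{t/v},x)$ lies in $\cP^d(\beta,v)$ exactly when $u(t,x) \geq e^{\beta t^{(6-d)/(4-d)}}$ with $t = v\log s$. Writing $n \approx \log s \approx \log|x|$ on the relevant dyadic scales (the space-time box of scale $n$ has its temporal side comparable to $e^{n}$ and its spatial side comparable to $e^{n}$, so $\log s$ and $\log|x|$ are both of order $n$), the threshold becomes $\log_+ u(t,x) \gtrsim \beta (v n)^{(6-d)/(4-d)}$ while the available "budget" of peak size at spatial scale $e^n$ and time $t=vn$ is, by the spatial asymptotics \eqref{eq:limsup of spatial tall peaks} and its quantitative refinement, of order $(d/\fc_d)^{1/(2-d/2)}\, t \, n^{2/(4-d)} = (d/\fc_d)^{1/(2-d/2)} v n^{1+2/(4-d)} = (d/\fc_d)^{1/(2-d/2)} v n^{(6-d)/(4-d)}$, which matches the threshold's $n$-power. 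This matching of exponents is exactly what makes the problem nontrivial and produces a genuine multifractal spectrum in the temporal scaling variable $v$.

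The upper bound $\Dim[\cP^d(\beta,v)] \le (d+1-\beta^{(4-d)/2} v \fc_d)\vee d$ I would obtain by a first-moment / covering argument: fix a dyadic space-time scale $n$ and cover the box $\{|x|\asymp e^n\} \times \{|s|\asymp e^n\}$ by cubes of unit side; estimate, using the upper tail bound for $\log u(t,x)$ (ultimately $\P(\log u(t,x) \ge \theta t n^{2/(4-d)}) \approx \exp(-c_d \theta^{(4-d)/2} n)$ for $t = vn$, which is the spatial-scale content of the earlier proof via the Anderson Hamiltonian eigenvalue tails), the expected number of unit cubes meeting $\cP^d(\beta,v)$ at scale $n$; a union bound over $n$ and Borel–Cantelli then shows any $\rho$-dimensional Barlow–Taylor content with $\rho > d+1 - \beta^{(4-d)/2} v\fc_d$ is finite a.s. The extra $+1$ relative to the spatial statement comes from the temporal axis contributing a full unit of dimension (each value of $s$ in the dyadic annulus is "free"), and the truncation at $d$ rather than $0$ reflects the fact that even taking $u(t,x)$ merely bounded below by a constant the set $\{(e^{t/v},x)\}$ sweeps out something of dimension $d$ (or one may argue the lower bound directly gives $\ge d$). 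For the lower bound $\Dim[\cP^d(\beta,v)] \ge d+1-\beta^{(4-d)/2} v\fc_d$ I would run a second-moment / energy method: on each dyadic space-time shell, select a well-separated collection of spatial centers, use the lower tail bound $\P(\log u(t,x) \ge \theta t n^{2/(4-d)}) \ge \exp(-C_d \theta^{(4-d)/2} n)$ together with a near-independence estimate for $u$ at spatially distant points (finite speed of propagation / localization of the Anderson Hamiltonian eigenfunctions, as already used in the proof of Theorem~\ref{thm:spatial multifractality}) to show that with positive probability many such centers, across a positive density of temporal slices, belong to $\cP^d(\beta,v)$; a Frostman-type / Barlow–Taylor energy bound then yields the claimed lower bound on the dimension, and a $0$–$1$ law upgrades "positive probability" to "almost surely."

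The main obstacle, as in the spatial theorem, is controlling the \emph{joint} behavior of $u(t,x)$ at many space-time points simultaneously — both the quasi-independence needed for the lower (second-moment) bound and ensuring the union bound in the upper bound is not lossy. Here there is the additional subtlety that the temporal variable is logarithmically compressed ($s = e^{t/v}$), so two nearby space-time dyadic boxes may correspond to time horizons $t$ and $t'$ that differ by an amount growing with $n$; one must check that the spatial peak-size asymptotics are \emph{uniform} over a suitable range of $t$ (which is why Theorem~\ref{thm:spatial multifractality} is stated for all $t \ge t_0$ with explicit dependence) and that the decorrelation estimates degrade gracefully in this range. I expect the bulk of the work to be bookkeeping: converting the tail and decorrelation estimates already established for fixed large $t$ into estimates uniform over $t \in [\delta n, \delta^{-1} n]$, and then feeding these into the standard Barlow–Taylor covering and energy lemmas; the conceptual input — the eigenvalue tail $\P(\lambda_1 \ge \mu) \approx \exp(-c_d \mu^{(4-d)/2})$ for the Anderson Hamiltonian on a box — is the same one powering the spatial result.
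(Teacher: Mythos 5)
You have the right conceptual engine: match $\log_+ u(t,x)\asymp t\,(\log\|x\|)^{2/(4-d)}$ against the threshold $\beta t^{(6-d)/(4-d)}$ at $t=v\log s$, use the eigenvalue tails of the Anderson Hamiltonian on boxes, and exploit spatial independence of well-separated boxes. That is indeed what powers the paper's proof.

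The genuine gap is in your upper bound, precisely at the $\vee\,d$ truncation. You propose to cover only the corner $\{|x|\asymp e^n\}\times\{|s|\asymp e^n\}$ of the space-time shell $\dS_n\subset\dR^{d+1}$ and to use, for each unit cube, the probability estimate $\approx\exp(-\fc_d\beta^{(4-d)/2}vn)$. But a point $(s,x)\in\dS_n$ need only have \emph{one} of its $d+1$ coordinates of order $e^n$. In the region $\{|s|\le e^{n/q}\}$ with $\|x\|_\infty\asymp e^n$ the relevant time $t=v\log s$ is $O(n/q)$ or even $O(1)$, so the threshold $e^{\beta t^{(6-d)/(4-d)}}$ collapses and the per-cube probability of lying in $\cP^d(\beta,v)$ is $\Theta(1)$, not $\exp(-\fc_d\beta^{(4-d)/2}vn)$. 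Carrying this region correctly, the expected unit-cube count acquires an extra $e^{dn}$ contribution, which forces $\rho>d$ in the summability criterion; your claim that the $\rho$-content is finite for all $\rho>d+1-\fc_d\beta^{(4-d)/2}v$ is simply false once $\fc_d\beta^{(4-d)/2}v>1$, since the actual dimension is then $d$. Your parenthetical "one may argue the lower bound directly gives $\ge d$" explains why the answer should not fall below $d$, but it does not repair an upper-bound estimate that asserts otherwise. The paper resolves this by first splitting off the low-coordinate slabs via a block lemma (Lemma~\ref{lemma:block lemma}), which shows deterministically that $\bigcup_n(0,e^{n/q}]\times(e^{n/q},e^{n+1}]^{d}$ has macroscopic dimension $\le d$, and only then running the first-moment covering on the complement $\cL_n=(e^{n/q},e^{n+1}]^{d+1}$ where your probability estimate is valid (Proposition~\ref{prop:right tail probability of spatio-temporal maximum}, which moreover is stated uniformly over a time window $t\in(a,a+l]$ rather than at a single $t$ — the uniformity point you correctly flag as necessary).

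For the lower bound, your plan differs from the paper's. The paper does not use a second-moment/energy argument or a $0$--$1$ law: it shows directly, by Borel--Cantelli applied to the left-tail bound for well-separated centers (Proposition~\ref{prop:left tail probability of spatio-temporal maximum}), that $\widetilde{\cP}^d(\beta,v)$ is almost surely dense at every dyadic scale, and then applies the Barlow--Taylor density theorem (Proposition~\ref{prop:DensityTheorem}) to turn density into a dimension lower bound; the $\ge d$ floor is supplied by a separate reduction to a fixed time-slice and the spatial Theorem~\ref{thm:lower bound of dimension for spatial peaks}. Your proposed $0$--$1$ law upgrade would require checking that $\{\Dim(\cP^d(\beta,v))\ge\rho\}$ is a tail event for the white noise, which is not obvious and unnecessary here: the left-tail probabilities decay super-exponentially in $n$, so Borel--Cantelli already gives the almost-sure statement without any positive-probability intermediary.
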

Macroscopic fractal dimension of the spatio-temporal peaks of the parabolic stochastic PDEs with multiplicative white noise had been investigated by Khoshnevisan, Kim and Xiao \cite{KKX18}. This class of stochastic PDEs contains the $(1+1)$-dimensional stochastic heat equation with multiplicative spatio-temporal white noise. Recently macroscopic fractal dimension of the the peaks and valleys of $(1+1)$-d Kardar-Parisi-Zhang (KPZ) equation has been found in \cite{DG21,GY21}.  
The case of $(2+1)-$ dimensional stochastic heat equation with spatio-temporal white noise remained completely unclear since the solution theory was only known for the sub-critical regime so far \cite{CD2020,CSZ2020}. Although the solution theory of parabolic Anderson model in $(2+1)$-d and $(3+1)$-d are well studied by now, the depiction of the macroscopic fractal structures in those cases were missing. 
Theorem~\ref{thm:spatio-temporal multifractality} filled this gap by showing the multifractality of spatio-temporal peaks for higher dimensional PAM.

Multifractality of the peaks of intermittent systems were discussed in many occasions in the previous literature including \cite{GD05} in the context of turbulence and \cite{Z.et.Al00} for stochastic Allen-Cahn equation with multiplicative forcing. \cite[Theorem~1.1]{KKX18} showed that the spatio-temporal peaks of the $(1+1)-$dimensional stochastic heat equation (SHE) with space-time white noise form multifractals with peaks of height $e^{\beta t}$ for every $\beta>0$. This result leverages on the (moment) intermittency of the $(1+1)$-d, which means the exponential moment $\dE[u(t,x)^p]$ of the solution behaves as $ \exp ( \gamma(p) t ) $ where $p\mapsto \gamma(p)$ is a strictly convex function (see \cite{CM94}). Indeed, the proof of \cite[Theorem~1.1]{KKX18} utilized this exponential moment to obtain the tail estimates of the solution. However, the moments of parabolic Anderson model in $(2+1)$-d or $(3+1)$-d cases blow up. As a result, the previous approach based on moment intermittency breaks down in those two cases. We rather use the asymptotics of the spectrum of the {\it Anderson Hamiltonian} and the Feynman-Kac representation of the PAM built using the theory of para-controlled distributions \cite{GIP15}. Theorem~\ref{thm:spatio-temporal multifractality} exposes that even though there is no moment intermittency, the spatio-temporal tall peaks of the solution to $(2+1)$-d (resp. $(3+1)$-d) PAM exhibit multifractality of order $e^{\beta t^2} $ (resp. $e^{\beta t^3}$), which displays a chaotic nature of the high dimensional multiplicative noise. In a recent work \cite{GGL22}, the first author of this paper and his collaborators have introduced the idea of \emph{finite time intermittency} for the PAM in higher dimension with asymptotically singular noise. We believe that many of our proof techniques can be extended to study the macroscopic fractality of the peaks in those settings.

\subsection{Proof Ideas}
In this section, we discuss the proof ideas behind Theorem~\ref{thm:spatial multifractality} and Theorem~\ref{thm:spatio-temporal multifractality}. 
Proving fractal dimension of any given set can be done in two steps: first showing a lower bound to the fractal dimension and finally, showing the appropriate upper bound. While showing an appropriate upper bound can pose serious challenges, proving a lower bound to the fractal dimension very often requires more precise insights about the geometry of the associated models. The case of PAM in higher dimension is not exception to this folklore. One of the major challenges in showing both upper and lower bounds to the fractal dimension is to control the tail probabilities of the maximum of PAM solution in compact sets. Since the moments of $(d+1)$-dimensional PAM blows up in finite time when $d=2,3$, previous approaches based on moment asymptotics \cite{KKX18,DG21} fail to work in this situation. To get around this difficulty, we seek to use the connection between the solution of PAM and the spectrum of \emph{Anderson Hamiltonian} as formally stated in \eqref{eq:SpectralRep}. 

Showing a lower bound to the fractal dimension of a set $E\subset \R^d$ requires to show that the associated set is '\emph{sufficiently thick}'. See the illustration in Figure~\ref{fig:Fig2} for the definition of thickness of a set $E$. In order to show enough thickness of $E$, we first embed $E$ into a large $d$-dimensional box and then divide the large box into smaller boxes. It is then sufficient to show there are enough of such small boxes which carries the points from $E$ or rather the probability of the set of points of $E$ escaping most of those small boxes is close to $0$. Controlling this probability will require two important ingredients which were lacking before: $(a)$ near independence between the solutions of PAM restricted on any two such smaller boxes and $(b)$ upper bound on the probability of the PAM to be bounded above by a large value.

\begin{figure}
\hspace{2cm}\includegraphics[height=7cm, width = 10cm]{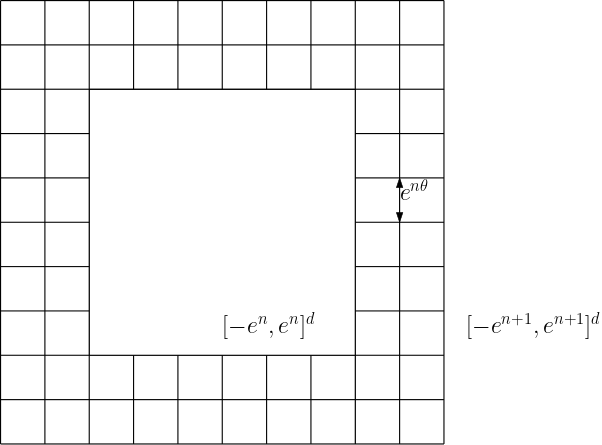}
\caption{A set $E$ is called $\theta$-thick for some $\theta\in (0,1)$ if $E$ contains points each cell of side-length $e^{n\theta}$ in the outer shell of $[-e^{n},e^{n}]^d$ for all large $n$.}
\label{fig:Fig2}
\end{figure}

On the other hand, following the definition of macroscopic Hausdorff dimension from \ref{def:definition of dimension}, $\Dim(E)$ is upper bounded by $\rho$ if $\rho$-dimensional Hausdorff content of $E$ often computed as $\dE[\sum_{n=1}^{\infty} \nu^{n}_{\rho}(E)]$ is finite. See the paragraph before Definition~\ref{def:definition of dimension} for $\nu^{n}_{\rho}(E)$. Bounding the above expected value requires one more important ingredient which is $(c)$ to bound  the tail probability of the supremum of the solution of PAM in a small ball.

We obtain those ingredients via combinations of different tools that we develop throughout the paper. Inception of these tools and carrying out the rest of proof of our main results can be broadly divided into three steps: the first step is to show appropriate bounds on the solution of PAM in terms of spectrum of Anderson Hamiltonian, the second step is to derive some tail probability on the solution and the third is to integrate the first two steps with a series expansion (coming from Feynman-Kac representation of the PAM in higher dimension) of the solution to complete the proof. Below we discuss each steps in more details. Figure~\ref{fig:Fig1} describes schematic representation of where the different tools are introduced and how they are combined to prove Theorem~\ref{thm:spatial multifractality} and~\ref{thm:spatio-temporal multifractality}.

\textbf{Step 1.} We derived appropriate bound on the solution  of PAM in Section~\ref{subsec:bound on the enhanced noise}. We mainly use three tools to show such bounds on the solution. These three tools are respectively, the Feynman-Kac representation of the solution of PAM, transition kernel estimates and appropriate bounds on the noise. The Feynman-Kac representation of the solution is derived in Theorem~\ref{thm:Feynman-Kac representation} which shows that the solution of \eqref{eq:PAM} $u^{\phi}_{L,y}$ started from the initial data $\phi$ and restricted on $y+[-\frac{L}{2}, \frac{L}{2}]^d$ with Dirichlet boundary condition can be written as 
\begin{align}\label{eq:modified Feynman-Kac in intro}
    u^{\phi}_{L,y}(t,x) = \mathbb{E}\Big[ \exp\left( \int_r^t (Z^y_L+\eta Y^y_L)(X_s)ds + (Z^y_L+Y^y_L)(X_r)-(Z^y_L+Y^y_L)(X_t) \right) \phi(X_t)\1^X\Big]
\end{align}
where $\eta>0$ is a small number,  $\1^X:= \1^X_{X_{[0,t]} \subset y+[-\frac{L}{2}, \frac{L}{2}]^d}$ and $X_t$ is a diffusion defined by $$X_t = x+ \int^{t}_0\nabla(Z^y_L+Y^y_L)(X_s)ds + B_t$$ such that $B_t$ is Brownian motion independent of $Z^y_L$ and $Y^y_L$ where $Z:= (1-\frac{1}{2}\Delta)^{-1}\xi\in \sC^{\frac{1}{2}-}$ and $Y$ solves $$(\eta -\frac{1}{2})Y^y_L=\frac{1}{2}|\nabla Z^y_L|^2+ \nabla Y^y_L\cdot \nabla Z^y_L + \frac{1}{2} |\nabla Y^y_L|^2.$$ These two random processes are introduced in Proposition~\ref{prop:resolvent eqaution} of Section~\ref{sec:feynman-kac representation}. The reason that the expression \eqref{eq:modified Feynman-Kac in intro} differs from the classical Feynman-Kac representation is the roughness of the noise $\xi$ (see \eqref{eq:classical Feynman-Kac} and the following discussion). We showed the equivalence between the classical form of Feynman-Kac and the modified form by using the Girsanov's theorem along the similar line as in \cite{GP17}. Similar results have been shown for $d=2$ by \cite{KPZ20}. However the $d=3$ case requires handling of sufficient technical difficulties which has been overcome in the present paper using similar tools as in \cite{CC18} based on para-controlled distributions. See Remark~\ref{rem:3D_Feynman_Kac} for more details. 

The next main tool is the bound on the transition density of the diffusion $X_t$. To this end, the transition density of $X_t$ is a solution of Cauchy problem as shown in \eqref{eq:cauchy problem} (see Theorem~\ref{thm:well-posedenss of martingale problem}). Since $\nabla (Z^y_L+Y^y_L)$ is distribution valued, it requires non-trivial fixed point argument to show that the transition density kernel exists. We first lift $\nabla (Z^y_L+Y^y_L)$ in the space of rough distributions and then employ tools from para-controlled calculus to achieve this in Section~\ref{sec:feynman-kac representation}. Similar problem had been considered before in \cite{CC18}. On the way of proving our result, we have extended their result (especially \cite[Theorem~3.10]{CC18}) to cover the singular initial data case.

Once the existence is shown, the upper and lower bound on the transition kernel is derived using the ideas of \cite{Str08}. The next main tool is to bound the mollified noise $\xi_{\epsilon}$, or more precisely $(1-\frac{1}{2}\Delta)^{-1}\xi_{\epsilon}$ uniformly in $\epsilon$ using hyper-contractivity of Gaussian noise (see Proposition~\ref{prop:bounds on the enhance noise} and~\ref{prop:bounds on Z,Y,eta}).

\textbf{Step 2.} The second step is to derive the tail probabilities of the solution of PAM in $(d+1)$ case where $d=2,3$. This is shown in Proposition~\ref{prop:left tail probability of spatial maximum} \& \ref{prop:right tail probability of spatial maximum}. More precisely, we find the tail probability of the supremum value of PAM where the supremum is taken over a finite set of points. The main idea behind its proof lies in using a local 'representations' of the solution of PAM. Construction of such proxy is done via the Feynman-Kac representation obtained in Theorem~\ref{thm:Feynman-Kac representation}. In more concrete words, the diffusion $X_t$ in the Feynman-Kac formula could be restricted into a set of disjoint but space-filling boxes to write $u(t,x) $ as a sum of local representations like $u^{\phi}_{L,y} $ of \eqref{eq:modified Feynman-Kac in intro}. 
This is termed as series expansion of the solution of PAM on $\dR_{\geq 0}\times \dR^d$. See Lemma~\ref{lemma:convergence of series to the solution} for more details. By construction,  those local representations of $u(t,x)$ are independent when they are taken from two separate far away boxes. Furthermore, those could be bounded from above and below by some functional of the largest point in the spectrum of Anderson Hamiltonian as shown in Proposition~\ref{prop:upper and lower bound of the localized solution}.   
Finally the tail probabilities of the solution of PAM restricted in finite boxes are found in terms of the tail probabilities of the largest point in the spectrum of the Anderson Hamiltonian and tail probabilities of $(1-\frac{1}{2}\Delta)^{-1}\xi_{\epsilon}$ obtained from Lemma~\ref{prop:bounds on the enhance noise}. Tail probabilities of the largest point in the spectrum were investigated before in many occasions in the past (see Proposition~\ref{lemma:tail probability of eigenvalue}). 

\textbf{Step 3.} The third step is to complete the proof of Theorem~\ref{thm:spatial multifractality} and~\ref{thm:spatio-temporal multifractality} using the tail probabilities and series expansion of $u(t,x)$ of Lemma~\ref{lemma:convergence of series to the solution}. This is mainly done in Section~\ref{sec:proof of spatial multifractality in 2d} and~\ref{sec:proof of spatio-temporal multifractality}. As we have indicated earlier, the proof of the lower bound in fractal dimension goes by showing that the probability of the maximum of $u(t,x)$ over a finite set of points (satisfying the conditions in Proposition~\ref{prop:left tail probability of spatial maximum} or~\ref{prop:left tail probability of spatio-temporal maximum}) being less than a certain value decays fast to $0$. Since the \emph{local representations} of $u(t,x)$ around those points (as described in \textbf{Step 2}) can be made independent and the tail probabilities of the local representations are determined through Proposition~\ref{prop:upper and lower bound of the localized solution} and~\ref{lemma:tail probability of eigenvalue}, these two tools are combined to bound the lower tail probability of the maximum of $u(t,x)$ which finally leads to the lower bound in Hausdorff dimensions in Theorem~\ref{thm:spatial multifractality} and~\ref{thm:spatio-temporal multifractality}. The upper bound part of Theorem~\ref{thm:spatial multifractality} and~\ref{thm:spatio-temporal multifractality} is proved using the set of tools from \textbf{Step 1} and \textbf{Step 2}. These tools provide the upper tail probability of the maximum of $u(t,x)$ in compact sets which is used to control the expected value of Hausdorff contents of the given level sets.


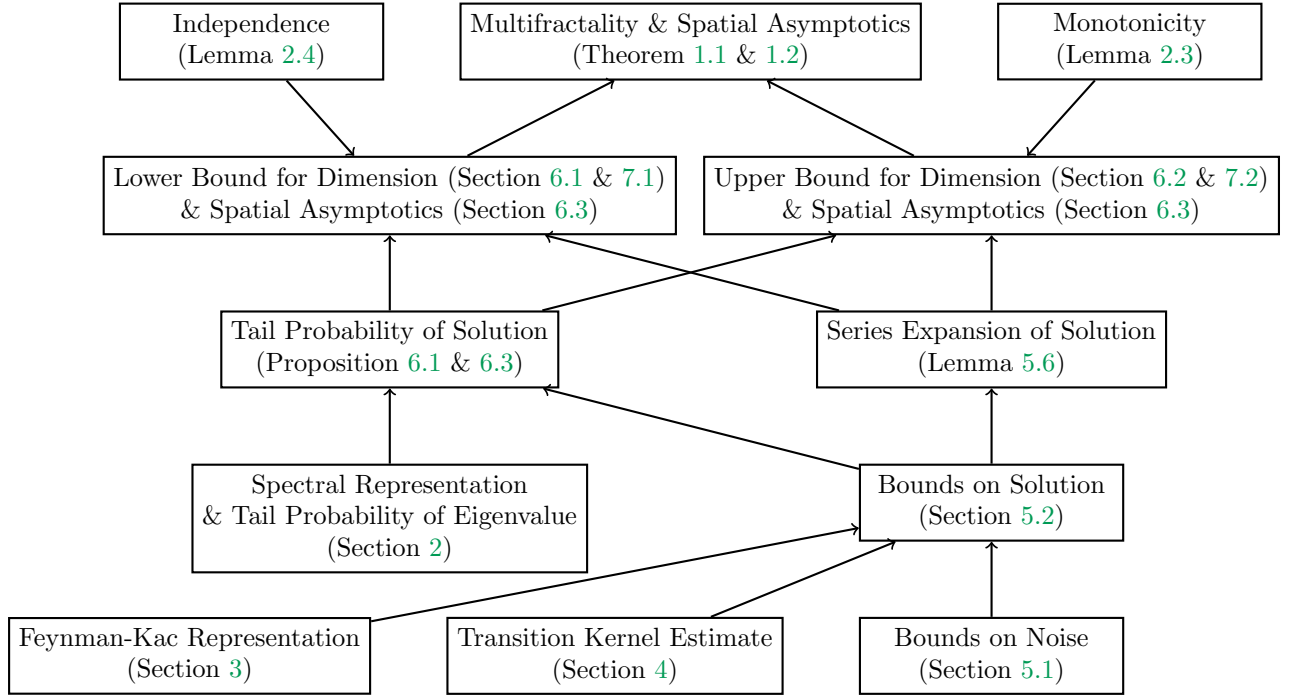
\begin{figure}
\begin{center}
\begin{tikzpicture}[font=\small,thick]

    \node[ draw,
    align=center,
    minimum width=3.5cm,
    minimum height=1cm
] (block1) { Multifractality \& Spatial Asymptotics  \\ (Theorem~\ref{thm:spatial multifractality} \& \ref{thm:spatio-temporal multifractality})};

\node[draw,
align=center,
below =of block1,
xshift=-4cm,
 minimum width=3.5cm,
 minimum height=1cm
] (block2) { Lower Bound for Dimension (Section~\ref{subsec:lower bound in thm1} \& \ref{subsec:lower bound in thm2}) \\ \& Spatial Asymptotics (Section~\ref{subsec:spatial asymptotics})};

\node[draw,
    align=center,
   below=of block1,
   xshift=4cm,
    minimum width=3.5cm,
    minimum height=1cm
] (block3) { Upper Bound for Dimension (Section~\ref{subsec:upper bound in thm1} \& \ref{subsec:upper bound in thm2}) \\ \& Spatial Asymptotics (Section~\ref{subsec:spatial asymptotics})};

\node[draw, align=center, left= of block1, minimum width=3.5cm,
    minimum height=1cm ] (independence) { Independence \\ (Lemma~\ref{lem:independence of eigenvalues})};

\node[draw, align=center, right= of block1, minimum width=3.5cm,
    minimum height=1cm ] (monotonicity) { Monotonicity \\ (Lemma~\ref{lemma:monotonicity of eigenvalue}) };

\node[draw, align=center, below= of block2, minimum width=3.5cm,
    minimum height=1cm ] (tail_sol) { Tail Probability of Solution \\ (Proposition~\ref{prop:left tail probability of spatial maximum} \& \ref{prop:right tail probability of spatial maximum})};
        
\node[draw, align=center, below= of block3, minimum width=3.5cm,
    minimum height=1cm ] (series) { Series Expansion of Solution \\ (Lemma~\ref{lemma:convergence of series to the solution})};

\node[draw, align=center, below= of tail_sol, minimum width=3.5cm,
    minimum height=1cm ] (tail_eigen) {Spectral Representation \\
    \& Tail Probability of Eigenvalue \\ (Section~\ref{sec:AndersonHamiltonian})};

\node[draw, align=center, below= of series, minimum width=3.5cm,
    minimum height=1cm ] (bound_sol) {Bounds on Solution \\ (Section~\ref{subsec:reduction to a box and asymtotic bounds for the solution})};

\node[draw, align=center, below= of bound_sol, minimum width=3.5cm,
    minimum height=1cm ] (bound_noise) {Bounds on Noise \\ (Section~\ref{subsec:bound on the enhanced noise})};

\node[draw, align=center, left= of bound_noise, minimum width=3.5cm,
    minimum height=1cm ] (transition) {Transition Kernel Estimate \\ (Section~\ref{sec:transition density estimate})};
    
\node[draw, align=center, left= of transition, minimum width=3.5cm,
    minimum height=1cm ] (feynman-kac) {Feynman-Kac Representation \\ (Section~\ref{sec:feynman-kac representation}) };

\draw[->] (block2) -- (block1);
\draw[->] (block3) -- (block1);
\draw[->] (independence) -- (block2);
\draw[->] (monotonicity) -- (block3);
\draw[->] (tail_sol) -- (block2);
\draw[->] (tail_sol) -- (block3);
\draw[->] (series) -- (block2);
\draw[->] (series) -- (block3);
\draw[->] (tail_eigen) -- (tail_sol);
\draw[->] (bound_sol) -- (tail_sol);
\draw[->] (bound_sol) -- (series);
\draw[->] (bound_noise) -- (bound_sol);
\draw[->] (feynman-kac) -- (bound_sol);
\draw[->] (transition) -- (bound_sol);

\end{tikzpicture}
\end{center}
\caption{Flowchart of the proof of Theorem~\ref{thm:spatial multifractality} and \ref{thm:spatio-temporal multifractality}}
\label{fig:Fig1}
\end{figure}

\subsection*{Acknowledgement}
 PG was supported by NSF grant DMS-2153661 and JY was supported by Samsung Science and Technology Foundation under Project Number SSTF-BA1401-51. Part of the research of this paper was done when JY was visiting the department of Mathematics of MIT during the summer of 2022.

\section{Spectrum of the Anderson Hamiltonian}\label{sec:AndersonHamiltonian}
In this section, we discuss some preliminary facts about Anderson Hamiltonian and parabolic Anderson model which will be used throughout the rest of the paper. On our way, we introduce many notations, provide the context of their use in later sections and explain their roles in proving the main results of this paper.   

We define $Q_L(d) := [-\frac{L}{2}, \frac{L}{2}]^d \subset \dR^d$. We often use the symbol $Q_L$ in place of $Q_L(d)$ and the value $d$ will be clear from the context. For any $y\in \dR^d$, we set $Q^y_L(d): = y+ Q_L(d)$.
We consider the PAM with the Dirichlet boundary condition on $Q^y_L(d)$ started from initial data $\phi$ and with \emph{enhanced noise} $\xi_L$ as constructed in \cite[Section~6]{CZ21}
\begin{equation}\label{eq:localized PAM}
     \begin{cases} {\partial \over \partial t}u^\phi_{L,y}(t,x) =\frac{1}{2}\Delta u^\phi_{L,y} (t,x) + u^\phi_{L,y}(t,x)\xi_{L,y}(x), \quad t>0, x\in Q^y_L,\\
   u(0,x) = \phi, \quad x\in Q^y_L, \quad \text{and} \quad u^\phi_{L,y}\mid_{\partial Q^y_L} =0,
   \end{cases}
   \end{equation}
Construction of $\xi_L$ goes by first projecting the white noise on the Neumann space of the box and then take
the regularisation corresponding to a Fourier multiplier. Fix any even function $\tau \in C^{\infty}_c(^d, [0,1])$ and define
$$\xi^{y}_{L,\varepsilon} = \sum_{k\in \mathbb{N}^d} \tau\big(\frac{\varepsilon}{L}k\big) \langle \xi, \mathfrak{n}_{k,L}\rangle \mathfrak{n}_{k,L}, \quad \mathfrak{n}_{k,L}(x) := 2^{-\frac{1}{2}\{i:k_i=0\}} \mathbbm{1}(x\in Q^y_L(d))\big(\frac{2}{L}\big)^d \prod_{i=1}^d\cos\big(\frac{\pi}{L}k_ix_i\big).$$
Theorem 6.7 of \cite{CZ21} shows that $\xi^{y}_{L,\varepsilon}$ converges almost surely to the white noise $\xi^y_{L}\in \sC^{\alpha}$ as $\varepsilon$ goes to $0$ for $\alpha<-\frac{d}{2}$ and the limit does not depend on the choice of $\tau$. 
  The solution of \eqref{eq:localized PAM} could be realized as the weak limit of the following system where we replace $\xi^y_{L}$ by $\xi^{y}_{L,\varepsilon}$, i.e., 
  \begin{equation}\label{eq:localized PAM_renorm}
     \begin{cases} {\partial \over \partial t}u^{\phi,y}_{L,\varepsilon}(t,x) =\frac{1}{2}\Delta u^{\phi,y}_{L,\varepsilon} (t,x) + u^{\phi,y}_{L,\varepsilon}(t,x)(\xi^{y}_{L,\varepsilon}(x)-c_{\varepsilon}), \quad t>0, x\in Q^y_L,\\
   u(0,x) = \phi, \quad x\in Q^y_L(d), \quad \text{and} \quad u^\phi_{L,y}\mid_{\partial Q^y_L(d)} =0,
   \end{cases}
   \end{equation}
where $c_{\varepsilon}$ denotes the renormalization constant which we set as $c_{\varepsilon}= \frac{1}{2\pi}\log \frac{1}{\varepsilon}$. It has been shown in Section~2 of \cite{KPZ20} for all $T>0$,  $u^{\phi,y}_{L,\varepsilon}(t,x)$ converges in $C([0,T], B^{\varrho, \beta}_{\infty,\infty}(Q^y_L(d)))$ uniformly on $[0,T]\times Q^y_L(d)$ in probability to $u^{\phi}_{L,y}$. Here $B^{\varrho, \beta}_{\infty,\infty}(Q^y_L(d))$ is the Dirichlet Besov space defined in \cite[Section~4]{CZ21}. 
   
Our next goal is to introduce the spectral representation of $u^{\phi}_{L,y}$ in $B^{\varrho, \beta}_{\infty,\infty}(Q^y_L(d))$ in terms of the spectrum of Anderson Hamiltonian. To this end, we recall definition of the Anderson Hamiltonian operator from \cite{CZ21} and \cite{Lab19}. Theorem~5.4 of \cite{CZ21} characterized the spectrum of Anderson Hamiltonian for $d=2$ case wheres \cite[Theorem~1]{Lab19} did the same for $d=3$. We summarize below their results. Denote the enhancement of $B^{\varrho, \beta}_{\alpha,\infty}(Q^y_L(d))$ by $\fX^\alpha(Q^y_L(d))$ and their respective Neumann extensions as  $B^{\varrho, \beta}_{\alpha,\infty,\mathfrak{n}}(Q^y_L(d))$ and $\fX^\alpha_\fn(Q^y_L(d))$.

 Let $L>0$, $y\in \dR^d$ and $\xi$ be a $d-$dimensional spatial white noise. In dimension $d\in\{2,3\}$, there exists $\sH_{\xi}$ that is densely defined on $L^2(Q^y_L(d))$, a closed and self-adjoint operator given by 
  \begin{equation*}
     \sH_{\xi} = \Delta u + \xi u,
   \end{equation*} with values in $L^2(Q^y_L(d))$. $\sH_{\xi}$ has a pure spectrum consisting of eigenvalues $\blambda_1(Q^y_L(d))>\blambda_2(Q^y_L(d))\geq\blambda_3(Q^y_L(d))\geq \cdots$. We let $v^y_{n,L}$ be an eigenvector with eigenvalue $\blambda_n(Q^y_L(d))$ such that $\{v^y_{n,L}\}_{n\in\dN}$ is an orthonormal basis of $L^2(Q^y_L(d)).$  Due to the lack of regularity of $\xi$, the product $\xi u$ is not well-defined in a classical sense. For the rigorous definition of the product, we refer the readers to \cite[Theorem~5.4]{CZ21} and \cite[Theorem~1]{Lab19}.

\begin{lemma}[{\bf Spectral representation}, Lemma~2.11 and Theorem~2.12 of \cite{KPZ20}]\label{lemma:spectral representation} For $L,t>0, y\in \R^d$ and $\phi \in L^2(Q^y_L(d))$, we have 
\begin{equation}\label{eq:spectral representation}
  u^\phi_{L,y}(t,\cdot) = \sum_{n \in \dN} e^{t \blambda_{n} (Q^y_L(d))} \langle v^y_{n,L},\phi \rangle v^y_{n,L}.
\end{equation} Moreover, this representation holds for $\phi= \delta_z$. In other words,
\begin{equation}\label{eq:spectral representation with delta initial data}
   u^{\delta_z}_{L,y}(t,x) = \sum_{n \in \dN} e^{t \blambda_{n} (Q^y_L(d))}  v^y_{n,L}(z) v^y_{n,L}(x), \quad \text{for }x,z\in Q^y_L(d).
\end{equation}
  
\end{lemma}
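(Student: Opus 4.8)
The plan is to derive the spectral expansion from the functional-calculus description of the semigroup generated by the Anderson Hamiltonian $\sH_\xi$ on $L^2(Q^y_L(d))$. Since $\sH_\xi$ is self-adjoint with pure point spectrum and orthonormal eigenbasis $\{v^y_{n,L}\}_{n\in\dN}$ with eigenvalues $\blambda_n(Q^y_L(d))\to-\infty$, the spectral theorem gives, for each $t>0$, a bounded operator $e^{t\sH_\xi}$ acting as $e^{t\sH_\xi}\phi=\sum_n e^{t\blambda_n}\langle v^y_{n,L},\phi\rangle v^y_{n,L}$ for $\phi\in L^2(Q^y_L(d))$, with the series converging in $L^2$. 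First I would record that $u\mapsto e^{t\sH_\xi}\phi$ is the unique solution (in the relevant Dirichlet Besov class $C([0,T],B^{\varrho,\beta}_{\infty,\infty}(Q^y_L(d)))$) of the localized PAM \eqref{eq:localized PAM} with initial datum $\phi$: this follows because both $u^\phi_{L,y}$ (constructed as the renormalized limit of \eqref{eq:localized PAM_renorm}) and the semigroup solve the same well-posed linear equation, and well-posedness of the Anderson equation on the box was established in \cite{CZ21,Lab19} together with the matching of the two constructions in \cite{KPZ20,CZ21}. Hence \eqref{eq:spectral representation} holds as an $L^2$-identity for every $\phi\in L^2(Q^y_L(d))$.

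Next I would upgrade the mode of convergence so that the identity makes sense pointwise and in the Besov class, and in particular extends to the singular initial datum $\phi=\delta_z$. The key input is a smoothing estimate for the eigenfunctions: each $v^y_{n,L}$ lies in the domain of $\sH_\xi$, hence in $B^{\varrho,\beta}_{\infty,\infty}(Q^y_L(d))$, and one has a quantitative bound of the form $\|v^y_{n,L}\|_{B^{\varrho,\beta}_{\infty,\infty}}\lesssim (1+|\blambda_n|)^{\kappa}$ for some $\kappa>0$ (obtained by rewriting $(\,\cdot\,-\blambda_n)v^y_{n,L}=$ lower-order terms in the paracontrolled formulation and using the Schauder-type estimates there, or by invoking the domain characterization in \cite{CZ21,Lab19}). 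Combined with a polynomial lower bound on the growth of $|\blambda_n|$ in $n$ (Weyl-type asymptotics for $\sH_\xi$, again available from \cite{CZ21,Lab19}), the exponential factors $e^{t\blambda_n}$ beat all polynomial losses, so the series $\sum_n e^{t\blambda_n}\langle v^y_{n,L},\phi\rangle v^y_{n,L}$ converges absolutely in $B^{\varrho,\beta}_{\infty,\infty}(Q^y_L(d))$ — in particular uniformly on $Q^y_L(d)$ — for any fixed $t>0$. This simultaneously identifies the $L^2$-limit with a genuine element of the Besov space and shows \eqref{eq:spectral representation} holds in that stronger sense.

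For the delta initial datum I would argue by approximation: take $\phi_\varepsilon\to\delta_z$ (e.g. normalized bump functions), so $\langle v^y_{n,L},\phi_\varepsilon\rangle\to v^y_{n,L}(z)$ by continuity of the eigenfunctions, and pass to the limit inside the series using the uniform-in-$\varepsilon$ summability furnished by the same eigenfunction bounds (now using $\|v^y_{n,L}\|_{L^\infty}\lesssim(1+|\blambda_n|)^\kappa$ to control $|\langle v^y_{n,L},\phi_\varepsilon\rangle|$). On the left-hand side, $u^{\phi_\varepsilon}_{L,y}(t,\cdot)\to u^{\delta_z}_{L,y}(t,\cdot)$ because the semigroup $e^{t\sH_\xi}$ is smoothing and maps the negative Besov space containing $\delta_z$ into $B^{\varrho,\beta}_{\infty,\infty}(Q^y_L(d))$ continuously for $t>0$; this is exactly the content needed to make sense of $u^{\delta_z}_{L,y}$ in the first place, as in \cite{KPZ20}. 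Evaluating the resulting identity at a point $x\in Q^y_L(d)$ yields \eqref{eq:spectral representation with delta initial data}.

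The main obstacle I anticipate is not the formal manipulation but the two quantitative estimates that make the series converge in the strong topology: (i) the polynomial-in-$\blambda_n$ bound on $\|v^y_{n,L}\|_{B^{\varrho,\beta}_{\infty,\infty}}$ (equivalently, control of the domain norm of $\sH_\xi$ in terms of the eigenvalue), and (ii) the Weyl-type lower bound on $|\blambda_n|$ in $n$. Both are available in the literature for $d=2$ (\cite{CZ21}) and $d=3$ (\cite{Lab19}), so the real work is to cite them in the correct normalization for the Dirichlet/Neumann box setup and to check that the enhanced noise $\xi^y_L$ of \cite{CZ21} is the one feeding the Anderson Hamiltonian here; once these are in place, the dominated-convergence/absolute-summability arguments are routine. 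Since this lemma is quoted verbatim from \cite[Lemma~2.11, Theorem~2.12]{KPZ20}, I would in fact present the proof as a reduction to those statements, indicating only the modifications needed for $d=3$.
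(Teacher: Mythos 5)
Your overall outline matches the paper's route: the $L^2$ identity comes from the spectral theorem applied to the self-adjoint Anderson Hamiltonian, the $d=2$ case is quoted directly from \cite{KPZ20}, and the delta case is obtained by approximation. The paper's stated proof is essentially this one-liner together with the remark that for $d=3$ one must combine \cite[Theorem~1.1]{Lab19} with Theorem~\ref{thm:extension to the delta initial data}.

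The place where your proposal diverges in substance is the $d=3$ treatment of the Dirac datum. You propose to make the series converge in $B^{\varrho,\beta}_{\infty,\infty}(Q^y_L(d))$ by using an $L^\infty$ bound $\|v^y_{n,L}\|_{L^\infty}\lesssim(1+|\blambda_n|)^\kappa$ together with Weyl-type eigenvalue asymptotics, and you assert that both are "available in the literature" for $d=3$. That is the weak point: the paper does not take this route precisely because in $d=3$ the well-posedness of the equation with a singular initial condition is not yet off-the-shelf --- the relevant paracontrolled result of \cite{CC18} covers smoother initial/terminal data, and the needed extension to $\delta_z$ is a nontrivial addition that the paper develops as Theorem~\ref{thm:extension to the delta initial data} (building on Proposition~\ref{prop:contraction mapping cM} and Lemma~\ref{lem:contraction mapping for fixed points of CM}). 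Once that extension is in hand, $u^{\delta_z}_{L,y}$ is well-defined and continuous in the initial datum, so the limit $u^{\phi_\varepsilon}_{L,y}\to u^{\delta_z}_{L,y}$ on the left-hand side is justified by local Lipschitz continuity in $\phi$ rather than by eigenfunction summability. Your approach would also work if the polynomial eigenfunction bound and the Weyl lower bound for $\sH_\xi$ in $d=3$ were on record in the exact form you need, but you should flag this as an assumption to be verified in \cite{Lab19} rather than presenting it as established; as written, your proposal underestimates the new input required for $d=3$ and replaces the paper's Theorem~\ref{thm:extension to the delta initial data} with estimates whose availability you have not checked.
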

\begin{proof} 
  We refer to \cite[Lemma~2.11, Theorem~2.12]{KPZ20} for the proof in the case of $d=2$. For $d=3$, the proof follows from a similar argument as when $d=2$ combining \cite[Theorem~1.1]{Lab19} and Theorem~\ref{thm:extension to the delta initial data}.
\end{proof}

Eigenvalues of the Anderson Hamiltonian play very important roles in proving our main results. In Section~\ref{sec:asymptotic bounds for the pam}, we discuss how  the supremum of the PAM restricted over a growing rectangle can be described in term of the largest eigenvalue of the Anderson Hamiltonian.  In the next three results, we record some useful properties of the eigenvalues of $\sH_\xi$ such as the tail probabilities, monotonocity and the independence. These results will be instrumental in obtaining asymptotics of the solution of PAM in Section~\ref{sec:asymptotic bounds for the pam}.

The first result is about the tail probabilities of  the eigenvalues of Anderson Hamiltonian. We refer to \cite[Theorem~2.17]{CZ21} for $d=2$ case and \cite[Theorem~2]{HL22} for $d=3$ case. 
\begin{proposition}\label{lemma:tail probability of eigenvalue}
  Fix $\epsilon \in (0,1)$. There exist $c_2>c_1>0$ and $s_0$ such that for all $L\geq 1$ and $s\geq s_0$ 
  \begin{align}
    \P\left( \blambda_1(Q^y_L(d))) \leq s\right) &\leq \exp \left(-c_2 s^{d/2} e ^{d\log L -(1+\epsilon)\fc_d s^{2-d/2}} \right),\\
    \P\left(  \blambda_1(Q^y_L(d))) \geq  s\right) &\leq c_1s^{\frac{d}{2}}e^{d\log L- (1-\epsilon) \fc_d s^{2-d/2}},
  \end{align} where $\fc_d$ is defined in \eqref{eq:definition of c}.
\end{proposition}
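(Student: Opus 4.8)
The statement is essentially a restatement of known tail bounds for the top eigenvalue of the continuous Anderson Hamiltonian on a box, so the plan is to reduce it cleanly to the cited results and then track the $L$-dependence. First I would recall the variational (Rayleigh--Ritz) characterization $\blambda_1(Q^y_L(d)) = \sup_{f} \big( -\tfrac12\|\nabla f\|_{L^2}^2 + \langle \xi f, f\rangle \big)/\|f\|_{L^2}^2$, interpreted in the renormalized/paracontrolled sense of \cite[Theorem~5.4]{CZ21} (for $d=2$) and \cite[Theorem~1]{Lab19} (for $d=3$), where the quadratic form is the limit of the mollified forms minus the divergent constant $c_\varepsilon$. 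By translation invariance of white noise, the law of $\blambda_1(Q^y_L(d))$ does not depend on $y$, so we may take $y=0$.

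For the \emph{upper tail} (second inequality), the strategy is a union/rescaling bound: tile $Q_L$ by $\sim L^d$ unit boxes and use that $\blambda_1(Q_L)$ is bounded above by the maximum of the $\blambda_1$ over a slightly enlarged cover (monotonicity of Dirichlet/Neumann eigenvalues under domain inclusion, cf.\ Lemma~\ref{lemma:monotonicity of eigenvalue}), then apply the single-box tail estimate of \cite[Theorem~2.17]{CZ21} for $d=2$ and \cite[Theorem~2]{HL22} for $d=3$, which gives $\P(\blambda_1(Q_1)\geq s)\leq c_1 s^{d/2} e^{-(1-\epsilon')\fc_d s^{2-d/2}}$ for $s\geq s_0$. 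Summing over the $\lesssim L^d$ boxes produces the factor $e^{d\log L}$; absorbing the resulting polynomial-in-$L^d$ prefactor into the exponent using $s\geq s_0$ large and a slightly smaller $1-\epsilon$ yields the claimed form. The constant $\fc_d$ enters precisely because the leading-order tail asymptotics of $\blambda_1$ on a unit box are governed by the Gagliardo--Nirenberg constant $\kappa_d$ through the optimization defining $\fc_d$ in \eqref{eq:definition of c}; this is exactly the content of the cited theorems.

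For the \emph{lower tail} (first inequality), I would instead exploit near-independence of the Hamiltonian restricted to disjoint sub-boxes: partition $Q_L$ into $N \asymp L^d$ disjoint translates of a fixed box $Q_1$, and note that by domain monotonicity $\blambda_1(Q_L) \geq \max_{i\leq N}\blambda_1(Q_1^{(i)})$ up to boundary corrections. Since the white noise restricted to disjoint boxes is independent, the events $\{\blambda_1(Q_1^{(i)}) \leq s\}$ are independent (this is the independence input, cf.\ Lemma~\ref{lem:independence of eigenvalues}), so $\P(\blambda_1(Q_L)\leq s) \leq \prod_i \P(\blambda_1(Q_1^{(i)})\leq s) = \P(\blambda_1(Q_1)\leq s)^N$. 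The single-box lower-tail estimate from \cite[Theorem~2.17]{CZ21}, \cite[Theorem~2]{HL22} has the form $\P(\blambda_1(Q_1)\leq s) \leq \exp(-c_2' s^{d/2} e^{-(1+\epsilon')\fc_d s^{2-d/2}})$, so raising to the power $N\asymp e^{d\log L}$ gives $\exp(-c_2 s^{d/2} e^{d\log L - (1+\epsilon)\fc_d s^{2-d/2}})$ after adjusting constants. One must be slightly careful that the enhanced/Neumann noise $\xi_L$ on the big box is not literally the concatenation of the independent small-box noises, but the Dirichlet-eigenvalue monotonicity comparison still goes through because the Dirichlet form on the big box dominates the direct sum of Dirichlet forms on the sub-boxes.

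The main obstacle I anticipate is the bookkeeping at the interface between the box-size dependence and the exponent: the single-box estimates in \cite{CZ21,HL22} are stated for a fixed box, and one must verify that the multiplicative $L^d$ prefactor (from the union bound in the upper tail, or the power $N$ in the lower tail) can be honestly moved into the exponent as an additive $d\log L$ while only degrading the constant $\fc_d$ by a factor $1\pm\epsilon$ — this requires that $s \geq s_0$ be taken large enough depending on $\epsilon$, and a careful check that the polynomial corrections $s^{d/2}$ and the $\varepsilon$-mollification errors are uniform in $L\geq 1$. The renormalization subtlety (that the quadratic form on $Q_L$ and on $Q_1$ use the same divergent constant $c_\varepsilon$, so the comparison of eigenvalues is clean) should be recorded but is routine given the constructions in \cite{CZ21,Lab19}.
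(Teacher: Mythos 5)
The paper itself provides no proof: this Proposition is a direct quotation of Theorem~2.17 of \cite{CZ21} (for $d=2$) and Theorem~2 of \cite{HL22} (for $d=3$), which already state the tail bounds with the $e^{d\log L}$ prefactor, i.e.\ with explicit dependence on the box size. No reduction from a unit box is carried out in the paper, so your proposal is attempting a genuinely different (and, if it worked, more elementary) route. The lower-tail half of that route is sound in spirit: partitioning $Q_L$ into $N\asymp L^d$ disjoint sub-boxes, using Lemma~\ref{lemma:monotonicity of eigenvalue} to obtain $\blambda_1(Q_L)\geq\max_i\blambda_1(Q^{(i)})$, and then independence of the white noise on disjoint boxes. (Note, however, that Lemma~\ref{lem:independence of eigenvalues} as stated requires $3L$-separation of centers, which a tiling does not satisfy; you would either need to invoke the sharper fact that white noise restricted to disjoint boxes is independent directly, or thin the tiling to every third box, which only changes constants.)

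The gap is in the upper tail. You invoke Lemma~\ref{lemma:monotonicity of eigenvalue} to justify $\blambda_1(Q_L)\leq\max_i\blambda_1(\widetilde Q^{(i)})$ over a slightly enlarged cover, but that lemma gives the \emph{opposite} inequality: $\blambda_1$ \emph{increases} under domain enlargement, so the maximum of $\blambda_1$ over sub-boxes of $Q_L$ is a \emph{lower} bound for $\blambda_1(Q_L)$, never an upper one. To run the union-bound reduction for $\P(\blambda_1(Q_L)\geq s)$ you would need a localization or bracketing estimate --- Neumann bracketing, or an IMS-type partition-of-unity argument showing that the top eigenfunction of $Q_L$ can be confined to a single cover box at a small energy cost --- and such an estimate is precisely the nontrivial content of the cited references, not something available from this paper's Lemma~\ref{lemma:monotonicity of eigenvalue}. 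As written, the upper-tail derivation does not close.
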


The second result says that the eigenvalues of Anderson Hamiltonian grows monotonically as the size of the box grows. For the proof of this result, we refer to \cite[Theorem~8.6]{CZ21} for $d=2$ case and \cite[Proposition~2.1]{HL22} for $d=3$ case.
\begin{proposition}[{\bf Monotonocity of eigenvalues}]\label{lemma:monotonicity of eigenvalue}
  Let $L\geq r \geq 1 $. For all $x,y \in \R^d$ such that $Q^y_r(d) \subseteq Q^x_L(d)$,
  \begin{equation}
    \blambda_{1}(Q^y_r(d)) \leq \blambda_{1}(Q^x_L(d)).
  \end{equation}
\end{proposition}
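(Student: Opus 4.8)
The plan is to obtain the asserted monotonicity as a domain--monotonicity property of the top eigenvalue, through the Rayleigh--Ritz variational characterization. Since $\sH_\xi$ on $L^2(Q)$, $Q=Q^y_L(d)$, is self-adjoint with discrete spectrum $\blambda_1(Q)>\blambda_2(Q)\ge\cdots$ bounded from above, I would first record the variational formula
\[
  \blambda_1(Q)\;=\;\sup\Big\{\,\langle \sH_\xi f,f\rangle_{L^2(Q)}\;:\;f\in\mathcal D_Q,\ \|f\|_{L^2(Q)}=1\,\Big\},
\]
where $\mathcal D_Q$ is the form domain of the Anderson Hamiltonian on $Q$ with Dirichlet boundary conditions; this formula, together with the precise paracontrolled description of $\mathcal D_Q$ and of the renormalized quadratic form $f\mapsto\langle\sH_\xi f,f\rangle$, is part of the construction of $\sH_\xi$ in \cite{CZ21} for $d=2$ and \cite{Lab19} for $d=3$. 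Given this, it suffices to show that the extension-by-zero map $f\mapsto\bar f$, with $\bar f=f$ on $Q^y_r(d)$ and $\bar f\equiv0$ on $Q^x_L(d)\setminus Q^y_r(d)$, sends $\mathcal D_{Q^y_r(d)}$ into $\mathcal D_{Q^x_L(d)}$ and preserves both $\|\cdot\|_{L^2}$ and $\langle\sH_\xi\,\cdot\,,\cdot\rangle$; taking the supremum over $f$ then gives $\blambda_1(Q^y_r(d))\le\blambda_1(Q^x_L(d))$.

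Two ingredients are needed for this extension map. First, \emph{restriction is compatible with the enhanced noise}: the object $\xi^y_L$ built in \cite[Section~6]{CZ21} agrees, up to a remainder too regular to affect any renormalization, with the restriction to $Q^y_L(d)$ of the ambient $\dR^d$ white noise $\xi$, and the renormalization that produces the enhanced datum---subtraction of $c_\varepsilon=\tfrac1{2\pi}\log\tfrac1\varepsilon$ when $d=2$, together with the further deterministic divergent corrections defining $\fX^\alpha$ when $d=3$---is a purely local procedure whose counterterms do not see the box. Hence the enhanced noise on $Q^x_L(d)$ restricts to the enhanced noise on $Q^y_r(d)$, so the paracontrolled/form structure on the smaller box is literally inherited from the larger one. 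Second, the Dirichlet condition on $\partial Q^y_r(d)$ is exactly what makes $\bar f$ land in $H^1_0(Q^x_L(d))$ and, combined with the first point, in $\mathcal D_{Q^x_L(d)}$; since the renormalized form $-\|\nabla\cdot\|^2+\langle\xi\,\cdot\,,\cdot\rangle_{\mathrm{ren}}$ is local modulo the paracontrolled corrections, which vanish where $\bar f$ does, the $L^2$-norm and the energy are preserved.

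I expect the main obstacle to be the second ingredient: one must be sure that the zero-extension of a paracontrolled function---which develops a gradient jump across the interface $\partial Q^y_r(d)$ sitting in the interior of the larger box---still lies in the paracontrolled form domain there, and that the renormalized form genuinely is local across that interface. The clean way around this, and the route I believe is taken in \cite[Theorem~8.6]{CZ21} and \cite[Proposition~2.1]{HL22}, is to first prove the inequality for the mollified operators $\tfrac12\Delta+\xi^x_{L,\varepsilon}-c_\varepsilon$ and $\tfrac12\Delta+\xi^y_{r,\varepsilon}-c_\varepsilon$: these are honest Schr\"odinger operators with smooth potentials, for which the variational principle is the classical one over $H^1_0$ and the extension-by-zero argument is elementary---the cross terms vanish because $\bar f$ is supported away from $\partial Q^x_L(d)$---yielding
\[
  \blambda_1\big(Q^y_r(d),\,\xi^x_{L,\varepsilon}|_{Q^y_r(d)}-c_\varepsilon\big)\;\le\;\blambda_1\big(Q^x_L(d),\,\xi^x_{L,\varepsilon}-c_\varepsilon\big).
\]
Then I would let $\varepsilon\downarrow0$: the right-hand side converges to $\blambda_1(Q^x_L(d))$ by the eigenvalue-convergence statements underlying \cite{CZ21,Lab19,HL22}, while the left-hand side converges to $\blambda_1(Q^y_r(d))$ by continuity of the largest eigenvalue in the enhanced-noise datum, together with the fact---again a consequence of the locality of the counterterms---that $\xi^x_{L,\varepsilon}|_{Q^y_r(d)}-c_\varepsilon$ and $\xi^y_{r,\varepsilon}-c_\varepsilon$ converge to one and the same enhanced noise on $Q^y_r(d)$. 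The $d=3$ case is heavier only in that more renormalization data must be tracked through this limit; the structure of the argument is unchanged.
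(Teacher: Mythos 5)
Your argument---mollify, use the classical Rayleigh--Ritz extension-by-zero monotonicity for the Dirichlet problems, and take $\varepsilon\downarrow 0$ using eigenvalue convergence together with the box-restriction compatibility of the enhanced noise---is correct and is essentially the argument underlying the results the paper cites for this fact, \cite[Theorem~8.6]{CZ21} for $d=2$ and \cite[Proposition~2.1]{HL22} for $d=3$; the paper itself offers no proof beyond these citations. You also correctly isolate the one genuinely nontrivial technical ingredient, namely that the enhanced noise built on $Q^x_L(d)$ restricts, up to a remainder too smooth to affect either the renormalization or the limit, to the enhanced noise on $Q^y_r(d)$ (the counterterms being local and $L$-independent), which is exactly what makes the mollified eigenvalues on the smaller box converge to $\blambda_1(Q^y_r(d))$.
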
 

The final result of this section is about the domain Markov property for the spectrum of the Anderson Hamiltonian with white-noise potential, i.e., the spectrum of $\sH_\xi$ restricted on two disjoint regions are independent of each other. We refer to Lemma~7.4 of \cite{CZ21} for the proof. 
\begin{proposition}{\bf Independence of eigenvalues}]\label{lem:independence of eigenvalues}
  Suppose that $y_1,...,y_m\in \dR^d$ satisfy $\min_{1\leq i\neq j \leq m}|y_i-y_j| \geq 3L $ and $x_i \in Q^{y_i}_L$ for each $i$. For $1\leq i \neq j \leq m$,  $\blambda_{n} (Q^{y_i}_{L}(d))$ and $\blambda_{n} (Q^{y_j}_{L}(d))$ are independent. 
\end{proposition}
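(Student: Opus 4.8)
The plan is to show that, for each $i$, the whole ordered sequence of eigenvalues $\big(\blambda_n(Q^{y_i}_L(d))\big)_{n\in\dN}$ is a measurable function of the white noise restricted to the box $Q^{y_i}_L(d)$ alone, and then to conclude from the fact that white noise on $\dR^d$ assigns independent data to disjoint regions. Concretely I would split the argument into three steps: (i) \emph{locality of the enhanced noise}, (ii) \emph{measurability of the spectrum with respect to the enhanced noise}, and (iii) \emph{disjointness of the boxes together with the product structure of white noise}. Note that the points $x_i\in Q^{y_i}_L$ are irrelevant to this particular statement; they appear only in companion results that also track the eigenfunction values $v^{y_i}_{n,L}(x_i)$.

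For step (i), recall that the regularised noise $\xi^{y}_{L,\varepsilon}=\sum_{k\in\dN^d}\tau(\tfrac{\varepsilon}{L}k)\langle\xi,\fn_{k,L}\rangle\fn_{k,L}$ is assembled from the scalar random variables $\langle\xi,\fn_{k,L}\rangle$, and each Neumann eigenfunction $\fn_{k,L}$ is supported in $Q^{y}_L(d)$. Hence every $\langle\xi,\fn_{k,L}\rangle$, and therefore $\xi^{y}_{L,\varepsilon}$ for every $\varepsilon>0$, is measurable with respect to $\sigma\big(\xi|_{Q^{y}_L(d)}\big)$. Passing to the almost-sure limit $\varepsilon\to0$ preserves this measurability, so $\xi^{y}_L$ is $\sigma\big(\xi|_{Q^{y}_L(d)}\big)$-measurable; the same holds for the renormalised second-order object entering the enhancement, since it is obtained as an a.s. (along a subsequence) limit of polynomials in $\xi^{y}_{L,\varepsilon}$ minus the deterministic constant $c_\varepsilon=\tfrac{1}{2\pi}\log\tfrac1\varepsilon$. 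Consequently the enhanced noise $\fX^\alpha(Q^{y}_L(d))$, equivalently its Neumann extension $\fX^\alpha_{\fn}(Q^{y}_L(d))$, is a $\sigma\big(\xi|_{Q^{y}_L(d)}\big)$-measurable random variable with values in the relevant space of rough distributions.

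For step (ii), the Anderson Hamiltonian $\sH_\xi$ on $Q^{y}_L(d)$ is built as a continuous, hence Borel-measurable, function of the enhanced noise $\fX^\alpha(Q^{y}_L(d))$ — this is the content of \cite[Theorem~5.4]{CZ21} for $d=2$ and \cite[Theorem~1]{Lab19} for $d=3$ — and the map sending a self-adjoint operator with compact resolvent to its ordered eigenvalue sequence is Borel-measurable. Composing, each $\blambda_n(Q^{y}_L(d))$ is $\sigma\big(\xi|_{Q^{y}_L(d)}\big)$-measurable. For step (iii), the hypothesis $\min_{i\neq j}|y_i-y_j|\geq 3L$ forces the boxes $Q^{y_1}_L(d),\dots,Q^{y_m}_L(d)$ to be pairwise disjoint (in fact separated by distance at least $2L$). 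Since $\xi$ is white noise on $\dR^d$, the families $\{\langle\xi,f\rangle:\operatorname{supp}f\subseteq Q^{y_i}_L(d)\}$ for distinct $i$ are mutually independent Gaussian families, so the $\sigma$-algebras $\sigma\big(\xi|_{Q^{y_i}_L(d)}\big)$ are mutually independent; by step (ii) the random variables $\blambda_n(Q^{y_i}_L(d))$ are measurable with respect to these independent $\sigma$-algebras, and independence follows.

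The step I expect to demand the most care is step (i): one must verify that the renormalisation and the Neumann projection used to define the enhanced noise do not secretly reintroduce a dependence on $\xi$ outside the box, and that the modes of convergence $\xi^{y}_{L,\varepsilon}\to\xi^{y}_L$ and of the enhanced data are strong enough to transport measurability to the limit. All of this is carried out in \cite[Sections~6--7]{CZ21}, where the statement we want is isolated as \cite[Lemma~7.4]{CZ21}; the $d=3$ case is identical once the enhanced noise of \cite{Lab19} is substituted for the $d=2$ construction.
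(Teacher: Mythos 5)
Your proposal is correct, and it reconstructs precisely the argument that the paper delegates to \cite[Lemma~7.4]{CZ21}: the enhanced noise on a box is measurable with respect to the white noise restricted to that box, the spectrum is a continuous (hence measurable) function of the enhanced noise, and the separation $|y_i-y_j|\geq 3L$ makes the boxes disjoint so that the restricted white noises are independent. The only small imprecision is the parenthetical claim that the boxes are ``separated by distance at least $2L$,'' which holds under the sup norm but not under the Euclidean norm for $d=2,3$; disjointness, which is all you use, holds in either case, so the proof is unaffected.
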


\section{Feynman-Kac Representation}\label{sec:feynman-kac representation}


This section is devoted to proving the Feynman-Kac representation of the Anderson Hamiltonian in 3 dimension. The main purpose of deriving such Feynman-Kac representation is to derive useful upper bound on the tail probabilities of the eigenvalues of Anderson Hamiltonian. In the 2d case, \cite[Theorem~2.17]{KPZ20} provides the modified version of the Feynman-Kac representation using Girsanov's transformation which has been used to derive tail probabilities.
However, this upper bound works only when $d=2$. We slightly refine the representation to cover the $3$-dimensional case. Before we present the representation, we introduce an equation related to the noise. For the rest of this section, we only consider the case of $d=3$.

\begin{proposition}[{\bf Resolvent equation}]\label{prop:resolvent eqaution}
  Let $L>0$ $y\in \dR^d$, $\alpha\in(\frac{2}{5}, \frac{1}{2})$ and $\xi^y_L$ be the spatial white noise on $Q^y_L$. Set $Z^y_L: = (1-\frac{1}{2}\Delta)^{-1} \xi^y_L$. Then there exists $\eta_L>0$ such that for all $\eta\geq \eta_L$ there exists a unique solution $Y^y_L\in \sC^{2\alpha}$ to the following resolvent equation
\begin{equation}\label{eq:original resolvent eq for Y}
  (\eta - \frac{1}{2}\Delta ) Y^y_L = \frac{1}{2} |\nabla Z^y_L|^2  + \nabla Y^y_L \cdot \nabla Z^y_L +\frac{1}{2}|\nabla Y^y_L|^2.
\end{equation} Furthermore, if $\{\xi^y_{L,\epsilon}\}_{\epsilon}$ is a mollification of $\xi^y_L$ such that  $\xi^y_{L,\epsilon}  \rightarrow \xi^y_L$ in  $\sC^{\alpha-2}$, then $Y^y_{L,\epsilon} \rightarrow Y^y_L $ in $\sC^{2\alpha}$.
\end{proposition}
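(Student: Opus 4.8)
The resolvent equation \eqref{eq:original resolvent eq for Y} is a fixed-point problem of the form $Y = \Phi(Y)$ where
\[
  \Phi(Y) := \big(\eta - \tfrac{1}{2}\Delta\big)^{-1}\Big( \tfrac{1}{2}|\nabla Z^y_L|^2 + \nabla Y\cdot\nabla Z^y_L + \tfrac12|\nabla Y|^2\Big),
\]
so the natural route is a Banach fixed-point argument in the Hölder–Besov space $\sC^{2\alpha}$ on the box $Q^y_L$, using the large parameter $\eta$ to gain the necessary smallness. First I would record the deterministic (pathwise) regularity input: since $\xi^y_L\in\sC^{\alpha-2}$ for any $\alpha<\tfrac12$ (Theorem~6.7 of \cite{CZ21}), Schauder estimates for the resolvent $(1-\tfrac12\Delta)^{-1}$ give $Z^y_L=(1-\tfrac12\Delta)^{-1}\xi^y_L\in\sC^\alpha$, hence $\nabla Z^y_L\in\sC^{\alpha-1}$ with $\alpha-1\in(-\tfrac35,-\tfrac12)$. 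The only genuinely singular term is $|\nabla Z^y_L|^2$: two factors in $\sC^{\alpha-1}$ with $2(\alpha-1)<0$ cannot be multiplied by Bony paraproduct estimates alone. This is the term that must be constructed probabilistically as a \emph{second-order data} (the "enhanced noise"): one mollifies, forms $|\nabla Z^y_{L,\epsilon}|^2$, subtracts the diverging constant $c_\epsilon$, and shows $L^p(\Omega)$-convergence in $\sC^{2\alpha-2}$ via Gaussian hypercontractivity / Wick-calculus and a Kolmogorov/Besov-type criterion — this is exactly where the condition $\alpha>\tfrac25$ enters, since one needs $2\alpha-2>-\tfrac65$ together with the requirement $2\alpha>1$ below, pinning $\alpha\in(\tfrac25,\tfrac12)$. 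On the box this construction is available from \cite[Section~6]{CZ21} (the enhanced noise $\xi_L$), so I would cite it rather than redo it.

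**Key steps.** Granting the enhanced-noise object $|\nabla Z^y_L|^2 - c_\epsilon \in\sC^{2\alpha-2}$, the fixed-point map acts on $Y\in\sC^{2\alpha}$, so $\nabla Y\in\sC^{2\alpha-1}$. I would then estimate the three contributions to $\Phi(Y)$:
\begin{itemize}\item the constant term lands in $\sC^{2\alpha-2}$, so after applying $(\eta-\tfrac12\Delta)^{-1}$ it is in $\sC^{2\alpha}$ with norm bounded by $C\eta^{-\theta}$ for some $\theta>0$ (the Schauder estimate with explicit $\eta$-decay, e.g.\ \cite[Lemma~A.9]{GIP2015} or its box analogue);
\item the cross term $\nabla Y\cdot\nabla Z^y_L$: here $(2\alpha-1)+(\alpha-1) = 3\alpha-2 > -\tfrac45 > -1$ for $\alpha>\tfrac25$, so paraproduct and resonant estimates give a well-defined product in $\sC^{3\alpha-2}\hookrightarrow\sC^{2\alpha-2}$ (using $\alpha<1$), again controlled after the resolvent;
\item the quadratic term $|\nabla Y|^2$: since $2\alpha-1>-\tfrac15>0$ requires care — in fact $2\alpha-1 \in(-\tfrac15,0)$, so this product is also borderline but has regularity sum $2(2\alpha-1) = 4\alpha-2 > -\tfrac25 > -1$, hence again classically multipliable in the paracontrolled sense.
\end{itemize}
Collecting these, $\Phi$ maps a ball $\{\|Y\|_{\sC^{2\alpha}}\le R\}$ into itself and is a contraction there provided $\eta\ge\eta_L$ is large enough (the $\eta^{-\theta}$ gain beats the constants, which depend on $L$ and on $\|Z^y_L\|_{\sC^\alpha}$, $\||\nabla Z^y_L|^2-c_\epsilon\|_{\sC^{2\alpha-2}}$). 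Banach's fixed point theorem yields the unique $Y^y_L\in\sC^{2\alpha}$. For the stability statement, I would run the same estimates for the mollified data: $Y^y_{L,\epsilon}$ solves the analogous equation with $Z^y_{L,\epsilon}$ and $|\nabla Z^y_{L,\epsilon}|^2-c_\epsilon$ in place of the limits, the solution map $(\text{data})\mapsto Y$ is locally Lipschitz (same multilinear estimates applied to differences), and since the data converge in $\sC^\alpha\times\sC^{2\alpha-2}$ by hypothesis, $Y^y_{L,\epsilon}\to Y^y_L$ in $\sC^{2\alpha}$.

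**Main obstacle.** The conceptual heart is not the contraction mechanics but the justification that the only term needing renormalization is $|\nabla Z^y_L|^2$, and that the paracontrolled structure closes with $\alpha$ restricted exactly to $(\tfrac25,\tfrac12)$: one must simultaneously satisfy (i) $\alpha<\tfrac12$ so white noise has the right regularity, (ii) $2\alpha>1$ so that $\nabla Y\in\sC^{2\alpha-1}$ gives $|\nabla Y|^2$ a tractable (barely negative) regularity and so that $Y$ is regular enough to define the cross term, and (iii) $2\alpha-2>-\tfrac65$, equivalently the resonant product $\nabla Y\odot\nabla Z$ makes sense, i.e.\ $3\alpha-2>-1\Leftrightarrow\alpha>\tfrac13$, with the stronger $\alpha>\tfrac25$ actually dictated by the Besov regularity of the enhanced data $|\nabla Z|^2$ one can construct. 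Checking all paraproduct/resonance regularity budgets are strictly positive sums on this window — and tracking the explicit $\eta$-dependence through the Dirichlet/Neumann Besov spaces on $Q^y_L$ of \cite{CZ21} rather than on the full space — is the technical crux; everything else is a standard Picard iteration in the mould of \cite{GIP2015, CC18}.
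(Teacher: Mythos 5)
There is a genuine gap that breaks the proof at its core. You treat $\Phi(Y) := (\eta-\tfrac12\Delta)^{-1}(\tfrac12|\nabla Z|^2 + \nabla Y\cdot\nabla Z + \tfrac12|\nabla Y|^2)$ as a map on $\sC^{2\alpha}$ and claim the products $\nabla Y\cdot\nabla Z$ and $|\nabla Y|^2$ are ``classically multipliable'' because the sums of regularities satisfy $3\alpha-2>-1$ and $4\alpha-2>-1$. But the threshold for Bony's resonant estimate (Proposition~\ref{prop:Bony's estimates (I)}, item~3) is that the sum must be \emph{strictly positive}, not $>-1$. For $\alpha\in(\tfrac25,\tfrac12)$ one has $3\alpha-2\in(-\tfrac45,-\tfrac12)<0$ and $4\alpha-2\in(-\tfrac25,0)<0$, so the resonant parts $\nabla Y\circ\nabla Z$ and $\nabla Y\circ\nabla Y$ are \emph{not} defined classically. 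Your map $\Phi$ is therefore not even well-defined on $\sC^{2\alpha}$, and the contraction argument collapses before it starts. (Your line ``$2\alpha-1>-\tfrac15>0$'' is also a sign error; $-\tfrac15$ is negative, which is precisely the problem.) The paper recognizes exactly this: it explicitly notes that even after the ansatz $Y=v+\tfrac12(Z^{\tree}+Z^{\ttree})$ (equation~\eqref{eq:definition of v}), which lifts $v$ to regularity $\alpha+1$, the resonant $\nabla v\circ\nabla Z$ has expected regularity $2\alpha-1<0$ and ``is not sufficient for the well-definedness''. The actual proof therefore requires three additional ingredients absent from your sketch: (i) the tree substitution $Y = v + \tfrac12(Z^{\tree}+Z^{\ttree})$ which absorbs the two most singular stochastic products into precomputed enhanced-noise objects; (ii) a \emph{paracontrolled ansatz} $v^\# := v - v'\prec\cQ$ for the remainder, together with a commutator estimate (Proposition~6.7 of \cite{CC18}) that produces the needed positive regularity gain; and (iii) the extra renormalized resonant object $\nabla\cQ_L\circ\nabla Z_L$ from Theorem~\ref{thm:renormalization of enhanced noise}, which is what gives meaning to $\nabla v\circ\nabla Z$. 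Only then does the contraction $\cG$ of Proposition~\ref{prop:contraction of resolvent equation} close, yielding $v\in\sC^{\alpha+1}$ and hence $Y\in\sC^{2\alpha}$. A ``standard Picard iteration'' on $Y$ alone, which is what you propose, cannot work here because the products in the nonlinearity are not defined without this full paracontrolled structure; this is the entire reason \cite{CC18} develops the machinery you cite at the end.
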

 Before proceeding to the proof of the above proposition, we state below the main result of this section which shows a Feynman-Kac representation of $u^{\phi,y}_{L,\epsilon}$.    

\begin{theorem}[{\bf Modified Feynman-Kac representation}]\label{thm:Feynman-Kac representation}
  For $L,t>0, y\in \dR^d, \epsilon\in (0,1]$ and $\phi \in C_b(Q^y_L)$, we have
\begin{equation}\label{eq:feynman-Kac representation}
  u^{\phi,y}_{L,\epsilon}(t,x) = \E_{\dQ^{x,y}_{L,\epsilon}} \left[\sD^y_{L,\epsilon}(0,t) \phi(X_t) \1_{X_{[0,t]} \subset Q^y_L}  \right], \quad \text{for }x\in Q^y_L,
\end{equation} where $\sD^y_{L,\epsilon}(r,t)$ for $r,t\in\dR_+$ is defined by 
\begin{equation}\label{eq:exponent in Feynman-Kac representation}
  \sD^y_{L,\epsilon}(r,t):= \exp\left( \int_r^t (Z^y_{L,\epsilon}+\eta Y^y_{L,\epsilon})(X_s)ds + (Z^y_{L,\epsilon}+Y^y_{L,\epsilon})(X_r)-(Z^y_{L,\epsilon}+Y^y_{L,\epsilon})(X_t) \right) 
\end{equation} with $\eta>0$ and $\dQ^{x,y}_{L,\epsilon}$ be the probability measure on $C([0,\infty), \dR^d)$ such that the coordinate process $X_t$ satisfies $\dQ^{x,y}_{L,\epsilon}$-almost surely
\begin{equation}\label{eq:diffusion_X}
  X_t=x+ \int_0^t \nabla(Z^y_{L,\epsilon} + Y^y_{L,\epsilon})(X_s)ds + B'_t, \quad t\geq0,
\end{equation} for a Brownian motion $B'$. Moreover, if $\eta = \eta_L$ as in Proposition~\ref{prop:resolvent eqaution}, we have 
\begin{equation}\label{eq:convergence of Feynman-Kac}
    \lim_{\epsilon\rightarrow 0} \E_{\dQ^{x,y}_{L,\epsilon}} \left[\sD^y_{L,\epsilon}(0,t) \phi(X_t) \1_{X_{[0,t]} \subset Q^y_L}  \right] = \E_{\dQ^{x,y}_{L}} \left[\sD^y_{L}(0,t) \phi(X_t) \1_{X_{[0,t]} \subset Q^y_L}  \right] = u^{\phi,y}_L(t,x).
\end{equation} 
\end{theorem}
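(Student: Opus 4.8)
The plan is to follow the route used for $d=2$ in \cite[Theorem~2.17]{KPZ20}, which is itself modelled on the Girsanov argument of \cite{GP17}, and to supply the analytically harder ingredients in $d=3$ via the paracontrolled machinery of \cite{CC18}. At the mollified level the potential $\xi^y_{L,\epsilon}-c_\epsilon$ is a genuine (smooth, for fixed $\epsilon$) function on $Q^y_L$, so one starts from the classical Feynman--Kac formula
\[
u^{\phi,y}_{L,\epsilon}(t,x)=\E_x\Big[\exp\Big(\int_0^t(\xi^y_{L,\epsilon}-c_\epsilon)(B_s)\,ds\Big)\phi(B_t)\,\1_{B_{[0,t]}\subset Q^y_L}\Big],
\]
with $B$ a Brownian motion started at $x$, and then rewrites the exponent — purely algebraically — so as to move the rough part of the noise out of the exponent and into the drift of a diffusion; a Girsanov transformation converts this into \eqref{eq:feynman-Kac representation}, and finally one passes to the limit $\epsilon\to0$. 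Throughout one argues pathwise in the white noise, using that $Z^y_{L,\epsilon}\to Z^y_L$ in $\sC^{\frac12-}$ and, by Proposition~\ref{prop:resolvent eqaution}, $Y^y_{L,\epsilon}\to Y^y_L$ in $\sC^{2\alpha}$ with $2\alpha>\tfrac45$, almost surely.

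For the rewriting, set $W^y_{L,\epsilon}:=Z^y_{L,\epsilon}+Y^y_{L,\epsilon}$. From $\xi^y_{L,\epsilon}=(1-\tfrac12\Delta)Z^y_{L,\epsilon}$ and the renormalised resolvent equation solved by $Y^y_{L,\epsilon}$, which after the elementary identity $\tfrac12|\nabla Z|^2+\nabla Z\cdot\nabla Y+\tfrac12|\nabla Y|^2=\tfrac12|\nabla(Z+Y)|^2$ reads $\tfrac12\Delta Y^y_{L,\epsilon}=\eta Y^y_{L,\epsilon}-\tfrac12|\nabla W^y_{L,\epsilon}|^2+\tfrac12 c'_\epsilon$ (with $c'_\epsilon$ the renormalisation constant for $|\nabla Z^y_{L,\epsilon}|^2$), one obtains the pointwise identity $\xi^y_{L,\epsilon}=Z^y_{L,\epsilon}+\eta Y^y_{L,\epsilon}-\tfrac12\Delta W^y_{L,\epsilon}-\tfrac12|\nabla W^y_{L,\epsilon}|^2+\tfrac12 c'_\epsilon$. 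Applying Itô's formula to $s\mapsto W^y_{L,\epsilon}(B_s)$ to replace $\tfrac12\int_0^t\Delta W^y_{L,\epsilon}(B_s)\,ds$ by $W^y_{L,\epsilon}(B_t)-W^y_{L,\epsilon}(B_0)-\int_0^t\nabla W^y_{L,\epsilon}(B_s)\cdot dB_s$, the classical exponent turns into
\[
\int_0^t(Z^y_{L,\epsilon}+\eta Y^y_{L,\epsilon})(B_s)\,ds-W^y_{L,\epsilon}(B_t)+W^y_{L,\epsilon}(B_0)+\int_0^t\nabla W^y_{L,\epsilon}(B_s)\cdot dB_s-\tfrac12\int_0^t|\nabla W^y_{L,\epsilon}(B_s)|^2\,ds+\Big(\tfrac12 c'_\epsilon-c_\epsilon\Big)t ,
\]
and the last term vanishes because the renormalisations in \eqref{eq:localized PAM_renorm} and in \eqref{eq:original resolvent eq for Y} are matched by the construction of the enhanced noise in \cite[Section~6]{CZ21}. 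This is the point of the manipulation: the divergent, path-dependent quantity $\int_0^t\xi^y_{L,\epsilon}(B_s)\,ds-c_\epsilon t$ has been traded for an expression in which every term involves only the convergent fields $Z^y_{L,\epsilon},Y^y_{L,\epsilon}$ and their gradients.

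Since $\nabla W^y_{L,\epsilon}$ is bounded for fixed $\epsilon$, the exponential $\mathcal E^\epsilon_t:=\exp\big(\int_0^t\nabla W^y_{L,\epsilon}(B_s)\cdot dB_s-\tfrac12\int_0^t|\nabla W^y_{L,\epsilon}(B_s)|^2\,ds\big)$ is a true martingale, and under $\dQ^{x,y}_{L,\epsilon}$ defined by $\tfrac{d\dQ^{x,y}_{L,\epsilon}}{d\P_x}\big|_{\F_t}=\mathcal E^\epsilon_t$ the coordinate process picks up the drift $\nabla(Z^y_{L,\epsilon}+Y^y_{L,\epsilon})$ of \eqref{eq:diffusion_X}; as $\1_{B_{[0,t]}\subset Q^y_L}$ is $\F_t$-measurable, the change of measure turns the classical Feynman--Kac formula into \eqref{eq:feynman-Kac representation}. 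For the limit $\epsilon\to0$ I would argue pathwise in the noise: because $Z^y_{L,\epsilon}+\eta Y^y_{L,\epsilon}$ and $W^y_{L,\epsilon}$ converge in Hölder spaces of \emph{positive} regularity, hence uniformly on $\overline{Q^y_L}$, the functionals $\gamma\mapsto\sD^y_{L,\epsilon}(0,t)[\gamma]$ converge uniformly over continuous paths to $\gamma\mapsto\sD^y_L(0,t)[\gamma]$ and stay bounded, uniformly in $\epsilon$ and in the path, by an almost-surely finite constant $\exp\big(C(1+\eta)t\,\|Z^y_{L,\epsilon}\|_\infty+Ct\,\|Y^y_{L,\epsilon}\|_\infty+2\|W^y_{L,\epsilon}\|_\infty\big)$. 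Combining this with the weak convergence $\dQ^{x,y}_{L,\epsilon}\to\dQ^{x,y}_L$ on $C([0,\infty),\dR^d)$ — the stability part of Theorem~\ref{thm:well-posedenss of martingale problem} — and with the fact that, the limit diffusion having a transition density, $\1_{X_{[0,t]}\subset Q^y_L}$ is $\dQ^{x,y}_L$-a.s.\ continuous on path space, the extended continuous mapping theorem gives $\E_{\dQ^{x,y}_{L,\epsilon}}[\sD^y_{L,\epsilon}(0,t)\phi(X_t)\1_{X_{[0,t]}\subset Q^y_L}]\to\E_{\dQ^{x,y}_L}[\sD^y_L(0,t)\phi(X_t)\1_{X_{[0,t]}\subset Q^y_L}]$. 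Since the left-hand sides also converge to $u^{\phi,y}_L(t,x)$ by the convergence of the renormalised equation recalled in Section~\ref{sec:AndersonHamiltonian}, the chain \eqref{eq:convergence of Feynman-Kac} follows.

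The principal difficulty is the assertion $\dQ^{x,y}_{L,\epsilon}\to\dQ^{x,y}_L$: in the limit the drift $\nabla(Z^y_L+Y^y_L)\in\sC^{-\frac12-}$ is a genuine distribution, so \eqref{eq:diffusion_X} must be read as a martingale problem, which has to be shown well posed, to admit a transition density with two-sided heat-kernel bounds, and to be stable under the mollification $\xi^y_{L,\epsilon}\to\xi^y_L$. This is exactly where paracontrolled calculus enters — one lifts $\nabla(Z^y_L+Y^y_L)$ to an enhanced distribution, solves the associated Kolmogorov/Cauchy problem by a fixed point in a space of paracontrolled distributions (extending \cite[Theorem~3.10]{CC18} to the singular Dirac initial data needed for the spectral applications), and derives the kernel bounds following \cite{Str08}. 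A secondary, genuinely $3$-dimensional, point to be careful with is the renormalisation bookkeeping — checking that $(\tfrac12 c'_\epsilon-c_\epsilon)t$ really cancels — which relies on the definitions of $\xi^y_L$ and $c_\epsilon$ and on the control of the resonant products in \eqref{eq:original resolvent eq for Y} provided by Proposition~\ref{prop:resolvent eqaution}; the $2$-dimensional analogue of this cancellation is already in \cite{KPZ20}.
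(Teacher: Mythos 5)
Your proposal follows essentially the same route as the paper's proof: start from the classical Feynman--Kac formula at the mollified level, use the identity $Z-\tfrac12\Delta Z=\xi$ together with the (renormalised) resolvent equation for $Y_{L,\epsilon}$ to re-express $\xi^y_{L,\epsilon}-c_\epsilon$ in terms of $Z,Y,\nabla(Z+Y),\Delta(Z+Y)$, apply It\^o's formula to $(Z+Y)(B_\cdot)$ to absorb the $\Delta$ term, invoke Girsanov to turn the stochastic integral into a change of drift, and pass to the limit by combining the weak convergence of $\dQ^{x,y}_{L,\epsilon}$ (stability of the martingale problem, Theorem~\ref{thm:well-posedenss of martingale problem} and Lemma~\ref{lem:lifting lemma}) with uniform convergence of the bounded functional $\sD^y_{L,\epsilon}$ and the fact that $\{X_{[0,t]}\subset Q^y_L\}$ is a $\dQ^{x,y}_L$-continuity set. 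The only cosmetic difference is your bookkeeping of the renormalisation constant (you carry a placeholder $\tfrac12 c'_\epsilon-c_\epsilon$, whereas the paper defines the mollified resolvent equation \eqref{eq:resolvent equation for epsilon, L} with $-c_\epsilon$ built in so the cancellation is automatic), but the substance is the same.
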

Notice that the classical Feynman-Kac representation for the smooth mollifier $\xi^y_{L,\epsilon}$ of $\xi^y_L$ is different than \eqref{eq:feynman-Kac representation}.  Let $L\in (0,\infty)$ and $y\in \dR^d$. For $\phi \in C_b(Q^y_L)$, $\epsilon>0$, and $(t,x) \in \dR_+ \times Q^y_L$, the classical Feynman-Kac formula takes the form 
\begin{equation}\label{eq:classical Feynman-Kac}
    u^{\phi, y}_{L,\epsilon}(t,x) = \dE_x \left[ \exp \left( \int^t_0(\xi^y_{L,\epsilon} - c_\epsilon ) (B_s) ds  \right)  \phi(B_t) \1_{B[0,t] \subset Q^y_L}\right]
\end{equation} where $B[0,t] : = \{ B(s) : s \in [0,t]\}$. Due to low regularity of the spatial white noise $\xi^y_{L}$, the $L_\infty-$norm of $\xi^y_{L,\epsilon} - c_\epsilon  $ blows up as $\epsilon\rightarrow 0$. We deal with this difficulty by adopting the ideas from the proof of \cite[Lemma~2.16, Theorem~2.17]{KPZ20}. In a similar way as \cite{KPZ20}, we use Girsanov's transform and Proposition~\ref{prop:resolvent eqaution} to obtain a modified version of the Feynman-Kac representation.
\begin{remark}\label{rem:3D_Feynman_Kac}
Theorem~\ref{thm:Feynman-Kac representation} is a variant of \cite[Theorem~2.17]{KPZ20} in dimension $3$. The difference in the Feynman-Kac representation of \cite[Theorem~2.17]{KPZ20} and the one in \eqref{eq:feynman-Kac representation} is the absence of the term $\frac{1}{2}|\nabla Y|^2$ (see (24) of \cite{KPZ20}). Note that since the expected regularity of $Y$ is $1^-$, $\frac{1}{2}|\nabla Y|^2$ is not controlled in $L_\infty-$norm. Thus, we used the partial Girsanov theorem 
via solving the resolvent equation \eqref{eq:original resolvent eq for Y} which is different from the expression in (23) of \cite{KPZ20}. 

\end{remark}

Without loss of generality, we drop the superscript $y$ and let $y=0$ for convenience. All the results of this section can be extended for any $y\in\dR^d$ easily. We prove Theorem~\ref{thm:Feynman-Kac representation} after showing Proposition~\ref{prop:resolvent eqaution}.
To this end, that the products $|\nabla Z_L|^2$ and $\nabla Y_L \cdot \nabla Z_L$ in Proposition~\ref{prop:resolvent eqaution} are ill-defined due to the lack of the regularity. Indeed, for instance in $d=3$, $Z_L \in \sC^{\frac{1}{2}^- }$ and the expected regularity of $Y_L$ is $1^-$. Thus the sum of the regularity of $\nabla Z_L$ and $\nabla Y_L$ is negative which makes the solution of \eqref{eq:original resolvent eq for Y} ill-posed.  In order to overcome this difficulty, we use the idea of {\it enhancing the noise} following similar arguments as in \cite[Section~6]{CC18}.

\begin{definition}[\bf Enhanced noise]\label{def:enhanced noise} Let $L>0$ and $\varrho<1/2$. For $(a,b,\theta)\in \dR\times \dR\times \sC^2$, define $\fZ_{L}$ as 
\begin{equation}
 \fZ_L := \fZ_{L}(a,b,\theta) := ( Z_L, Z_L^{\tree} -a, Z_L^{\ttree}, Z_L^{\tttree} , Z_L^{\Tree}-b,\nabla Q_L \circ \nabla Z_L ),
\end{equation} where $\cI_\eta : = (\eta -\frac{1}{2} \Delta )^{-1}$, 

\begin{equation}\label{def:DefinitionOfTree}
  \begin{aligned}
    &Z_L : = \cI_\eta (\theta), \quad Z^{\tree}_L : = \cI_\eta (|\nabla Z_L|^2),\\
    &Z_L^{\ttree} : = \cI_\eta( \nabla Z_L^{\tree}\cdot \nabla Z), \quad Z_L^{\tttree}: = \cI_\eta(\nabla Z_L^{\ttree} \cdot \nabla  Z_L),\\
    &Z_L^{\Tree}: = \cI_\eta (|\nabla Z_L^{\tree}|^2),
  \end{aligned}
\end{equation} and 
\begin{equation*}
  \cQ_L : = \cI(\nabla Z_L), \qquad \nabla \cQ_L\circ \nabla Z_L := (\partial_i(\cQ_L)^j \circ \partial_iZ_L)_{i,j=1,2,3}.
\end{equation*} We define the space $\cZ_L^{\varrho}$ of enhanced noise as 
\begin{equation*}
  \cZ_L^{\varrho}: = \textrm{cl}_{\sH^{\varrho}}\{\fZ_L(\theta,a,b) : (a,b,\theta)\in \dR\times\dR \times \sC^2 \},
\end{equation*} where $\textrm{cl}_{\sH^{\varrho}}$ denotes the closure with respect to the topology of $\sH^{\varrho} : = \sC_\fn^{\varrho} \times \sC_\fn^{2\varrho} \times \sC_\fn^{3\varrho} \times \sC_\fn^{\varrho+1} \times \sC_\fn^{4\varrho} \times \sC_\fn^{2\varrho-1}$ equipped the usual norm. We call $\fZ$ a enhancement of $\theta$. 
  
\end{definition}

In the sequel, we fix $\theta= \theta_{L,\epsilon} = \xi_{L,\epsilon}  $ so that $Z_{L,\epsilon} : = \cI_\eta( \xi_{L,\epsilon})$ where $\{ \xi_{L,\epsilon}\}_{\epsilon\in (0,1]}$ is a mollification of the spatial white noise $\xi_L$ restricted on $Q_L$. The following theorem ensures that $Z_L = (1-\frac{1}{2}\Delta)^{-1} \xi_L$ can be enhanced.

\begin{theorem}[Theorem~6.12 of \cite{CC18} ]\label{thm:renormalization of enhanced noise} Let $\varrho<\frac{1}{2}$ and $\xi_L $ is the spatial white noise on $Q_L$ on some probability space $(\Omega , \cF , \dP_\xi)$. Then there exists a mollification $\{\xi_{L,\epsilon} \}_{\epsilon\in(0,1]}$ such that there exist the renormalizing constants $c^{\tree}_\epsilon, c^{\Tree}_\epsilon\in\dR$ (not depending on $L$) and the sequence 
\begin{equation}
     \fZ_\epsilon := ( Z_{L,\epsilon}, Z_{L,\epsilon}^{\tree} -c^{\tree}_{L,\epsilon}, Z_{L,\epsilon}^{\ttree}, Z_{L,\epsilon}^{\tttree} , Z_{L,\epsilon}^{\Tree}-c^{\Tree}_\epsilon,\nabla \cQ_{L,\epsilon} \circ \nabla Z_{L,\epsilon} )
\end{equation} which converges to a limit $\fZ_L:= ( Z_L, Z_L^{\tree} , Z_L^{\ttree}, Z_L^{\tttree} , Z_L^{\Tree},\nabla \cQ_L \circ \nabla Z_L )\in \sH^\varrho$ in  $L^p(\Omega, \sH^\varrho)$ for every $p>1.$ 

\end{theorem}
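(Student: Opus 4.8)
\textbf{Proof plan for Theorem~\ref{thm:renormalization of enhanced noise} (renormalization of the enhanced noise).}
The statement asserts the existence of a mollification of the box white noise together with two divergent renormalization constants so that the full tuple of iterated stochastic objects converges in $L^p(\Omega,\sH^\varrho)$. Since this is quoted as Theorem~6.12 of \cite{CC18}, the plan is to follow that argument and indicate how each coordinate of $\fZ_\epsilon$ is handled; the only genuine work is bookkeeping the Besov regularities and identifying which products need a counterterm. First I would fix the mollification $\xi_{L,\epsilon}=\rho_\epsilon * \xi_L$ (with $\rho$ a compactly supported even mollifier, projected onto the Neumann basis as in \eqref{eq:localized PAM_renorm}), so that $\xi_{L,\epsilon}\to\xi_L$ in $\sC^{\alpha-2}$ a.s.\ for every $\alpha<\tfrac12$, and then set $Z_{L,\epsilon}:=\cI_\eta(\xi_{L,\epsilon})$. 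By Schauder estimates for $\cI_\eta=(\eta-\tfrac12\Delta)^{-1}$ and the standard white-noise regularity, $Z_{L,\epsilon}\to Z_L$ in $\sC^\varrho$ for any $\varrho<\tfrac12$, and this convergence is in $L^p(\Omega,\sC^\varrho)$ by Gaussian hypercontractivity (all the objects live in a fixed finite Wiener chaos, so $L^2$ control upgrades to $L^p$).

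The core of the proof is then a term-by-term analysis of the remaining five components. For $Z^{\tree}_{L,\epsilon}=\cI_\eta(|\nabla Z_{L,\epsilon}|^2)$: the naive product $|\nabla Z_{L,\epsilon}|^2$ has no limit because $\nabla Z$ has regularity $-\tfrac12^-$ and the resonant part diverges logarithmically; this is exactly where the counterterm $c^{\tree}_{L,\epsilon}$ (of order $\log\tfrac1\epsilon$, chosen $L$-independent by using translation invariance on the full space before restricting) enters, and one shows $Z^{\tree}_{L,\epsilon}-c^{\tree}_{L,\epsilon}\to Z^{\tree}_L$ in $\sC^{2\varrho}$ by computing the second moment of the Wick-ordered product and applying Kolmogorov's criterion in Besov spaces. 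The objects $Z^{\ttree}_{L,\epsilon}=\cI_\eta(\nabla Z^{\tree}_{L,\epsilon}\cdot\nabla Z_{L,\epsilon})$ and $Z^{\tttree}_{L,\epsilon}=\cI_\eta(\nabla Z^{\ttree}_{L,\epsilon}\cdot\nabla Z_{L,\epsilon})$ have sums of regularities $(2\varrho-1)+(\varrho-1)$ and $(3\varrho-1)+(\varrho-1)$ which for $\varrho$ close to $\tfrac12$ are strictly positive, so no renormalization is needed and the products are defined classically via paraproduct estimates; convergence follows from continuity of the paraproduct and resonant operators together with the already-established convergence of the lower-order pieces. For $Z^{\Tree}_{L,\epsilon}=\cI_\eta(|\nabla Z^{\tree}_{L,\epsilon}|^2)$ the sum of regularities is $2(2\varrho-1)<0$, so a second divergent constant $c^{\Tree}_\epsilon$ is required; again its second-moment computation in a higher chaos identifies the divergence and the Wick renormalization removes it. Finally, the resonant product $\nabla\cQ_{L,\epsilon}\circ\nabla Z_{L,\epsilon}$ with $\cQ_{L,\epsilon}=\cI(\nabla Z_{L,\epsilon})\in\sC^{\varrho+1}$ against $\nabla Z_{L,\epsilon}\in\sC^{\varrho-1}$: the sum $(\varrho)+(\varrho-1)=2\varrho-1$ is negative, so the commutator/resonance must be constructed by hand as a stochastic object in the chaos, with target regularity $2\varrho-1$; its convergence is again a Kolmogorov-in-Besov argument after a covariance estimate.

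I would organize the convergence proof through the usual two-step scheme: (i) uniform-in-$\epsilon$ bounds on $\dE\|\,\cdot\,\|_{\sH^\varrho}^2$ for each coordinate, obtained by Littlewood--Paley block estimates on the relevant kernels; (ii) a Cauchy estimate $\dE\|\fZ_\epsilon-\fZ_{\epsilon'}\|_{\sH^\varrho}^2\to0$ as $\epsilon,\epsilon'\to0$, which combined with hypercontractivity gives convergence in $L^p(\Omega,\sH^\varrho)$ for all $p$; the limit lies in $\cZ_L^\varrho$ by Definition~\ref{def:enhanced noise} since each $\fZ_\epsilon$ is (a shift of) an element of the defining family. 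The one subtlety worth flagging is the Neumann/Dirichlet boundary structure on $Q_L$: the mollified noise is built from the Neumann cosine basis as in \eqref{eq:localized PAM_renorm}, so all the kernel estimates must be done for the Neumann heat semigroup on the box rather than on $\dR^d$, and one must check that the renormalization constants can be taken independent of $L$ (they are, because the singular part of the divergence is local and therefore insensitive to the boundary); this is the main place where the box version genuinely differs from the whole-space construction, and it is handled exactly as in \cite[Section~6]{CC18}.
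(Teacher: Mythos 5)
The paper does not reprove this result; it is quoted verbatim from Theorem~6.12 of \cite{CC18} and used as a black box, so you are effectively reconstructing CC18's argument. Your overall scheme (mollify, compute Wiener-chaos moments, apply hypercontractivity to upgrade $L^2$ to $L^p$, Kolmogorov in Besov spaces, and verify the renormalization constants are $L$-independent because the UV divergence is local) is the right architecture, and your remarks on the Neumann-boundary subtlety are to the point.

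However, there is a genuine arithmetic error in the treatment of $Z^{\ttree}$ and $Z^{\tttree}$ that breaks the argument in the 3D case relevant here. You claim the regularity sums $(2\varrho-1)+(\varrho-1)=3\varrho-2$ and $(3\varrho-1)+(\varrho-1)=4\varrho-2$ are ``strictly positive for $\varrho$ close to $\tfrac12$''. For $\varrho<\tfrac12$ these are $<-\tfrac12$ and $<0$ respectively, so the resonant parts $\nabla Z^{\tree}\circ\nabla Z$ and $\nabla Z^{\ttree}\circ\nabla Z$ are \emph{not} classically defined, and your conclusion that ``the products are defined classically via paraproduct estimates'' is false. These objects do require genuine stochastic constructions (second-moment kernel estimates on Littlewood--Paley blocks, Kolmogorov continuity, etc.), exactly as for $Z^{\tree}$ and $Z^{\Tree}$. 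The reason they need no \emph{counterterm} --- which is the only thing that distinguishes them from the renormalized coordinates --- is the Wiener-chaos structure: $\nabla Z^{\tree}\cdot\nabla Z$ lives in chaos orders $1$ and $3$, and $\nabla Z^{\ttree}\cdot\nabla Z$ in orders $0$, $2$, $4$ but with the order-$0$ contribution vanishing by parity/covariance, so the divergent expectation is absent, not the stochastic singularity. Moreover, in CC18 the resonant parts of these mixed objects are not estimated directly but via commutator lemmas that reduce them to $\nabla\cQ\circ\nabla Z$ (the last coordinate of $\fZ$), which is itself one of the constructed stochastic objects precisely because the direct product has negative regularity sum. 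Your write-up would need to replace the ``classical paraproduct'' claim for the middle two coordinates with this commutator-plus-chaos-estimate route. (You may be carrying over the $d=2$ intuition, where $\varrho$ is close to $1$ and those sums are indeed positive; the present definition of $\cZ^\varrho$ with $\nabla\cQ\circ\nabla Z$ indexed over $i,j=1,2,3$ makes clear we are in $d=3$.)
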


In order to prove Proposition~\ref{prop:resolvent eqaution}, we will prove a fixed point problem by using the theory of paracontrolled distributions. This is an analogous result of \cite[Proposition~6.8]{CC18}. At this moment, we omit the subscripts for simplicity like $Z = Z_{L,\epsilon}$. We rewrite \eqref{eq:original resolvent eq for Y} as 
  \begin{equation}\label{eq:resolvent eq for Y}
    Y = \cI_\eta \Big( \frac{1}{2} |\nabla Z|^2 -c  + \nabla Y \cdot \nabla Z +\frac{1}{2}|\nabla Y|^2 \Big).
  \end{equation} Now set 
\begin{equation}\label{eq:definition of v}
  v = Y- \frac{1}{2} Z^{\tree} -\frac{1}{2} Z^{\ttree}.
\end{equation}
Substituting \eqref{eq:definition of v} into \eqref{eq:resolvent eq for Y}, we observe that \eqref{eq:resolvent eq for Y} is equivalent to 
\begin{equation}\label{eq:resolventEquation for v}
  v = \frac{1}{2} Z^{\tttree} + \cI_\eta( \nabla v \cdot \nabla Z) + R^v,
\end{equation} where $R^v$ denotes
\begin{equation}
  R^v = \frac{1}{8} Z^{\Tree} + \frac{1}{2} \cI_\eta( \nabla(v+ \frac{1}{2}Z^{\ttree}) \cdot \nabla Z^{\tree}) + \frac{1}{2} \cI_\eta ( |\nabla(v+ \frac{1}{2} Z^{\ttree})|^2).
\end{equation} Note that $v$ has a higher regularity $\alpha+1$ than $Y$ by \eqref{eq:resolventEquation for v} and the definition of $\fZ_L$ in Theorem~\ref{thm:renormalization of enhanced noise}. However, this is not sufficient for the well-definedness of $\nabla v \cdot \nabla Z$ in \eqref{eq:resolventEquation for v} since its expected regularity is $\alpha +\alpha -1 <0$. The key idea is to introduce a new object $v^\#$ with a paracontrolled term $v' \prec \cQ $ where $v'$ denotes the pseudo derivative of $v$.

\begin{definition}[\bf Paracontrolled distributions] Let $\alpha \in (\frac{2}{5}, \frac{1}{2})$. Recall the notations of paraproduct from Section~\ref{sec:Paracontrolled}. For $\cQ \in \sC^{\alpha+1}$, we define the space of paracontrolled distributions $\cD^\alpha_{Q} $ as the set of $(v, v') \in \sC^{\alpha+1} \times \sC_{\dR^3}^\alpha$ such that 
\begin{equation}
  v^\# := v - (v' \prec \cQ) \in \sC^{4\alpha}.
\end{equation} We equip $\cD^\alpha_{\cQ}$ with the norm
\begin{equation}
  \| (v,v') \|_{\cD^\alpha_{\cQ}} : = \|v \|_{3\alpha} + \| v' \|_{3\alpha-1} + \|v^\#\|_{\alpha+\beta+1},
\end{equation} where $\beta \in (0,3\alpha-1)$. 
\end{definition}

By Proposition~6.6 and 6.7 of \cite{CC18}, for $\fZ\in \cZ^\varrho$ we have $ (v,v') \in \cD^\alpha_{\cQ}$, $\nabla \cQ \circ \nabla Z\in \sC^{2\alpha-1}$ and hence, $\nabla v \circ \nabla Z$ are well-defined. Note that $v$ solves \eqref{eq:resolventEquation for v} if and only if $v^\# $ solves 
  \begin{align}\label{eq:v_hash}
    v^\# = \cI_\eta \Big( \nabla\big( \frac{1}{2}Z^{\ttree} + v\big) \prec \nabla z\Big) - v' \prec Q+\cI_\eta(\nabla v \circ \nabla Z) + R^v.
  \end{align} 
  \begin{proposition}[Proposition~6.7 of \cite{CC18}]
  Suppose that 
  \begin{equation*}
    v' : = \nabla v + \frac{1}{2}\nabla Z^{\ttree}.
  \end{equation*} Then for $\beta\in(0,3\alpha-1)$ and $\epsilon>0$, we have 
  \begin{equation}
    \| \cI(v' \prec \nabla Z) - v' \prec Q\|_{\alpha+\beta +1} \lesssim \eta^{-\epsilon}  \| v'\|_{3\alpha -1} \| \| Z\|_\varrho
  \end{equation} for the first coordinate of $Z$ of $\fZ \in \cZ^\varrho$ with $5/2< \alpha <\varrho < 1/2 $.
\end{proposition}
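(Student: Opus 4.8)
\emph{Proof idea.} This is \cite[Proposition~6.7]{CC18}, and the plan is to reprove it as a quantitative version of the commutator lemma of paracontrolled calculus between the resolvent $\cI=\cI_\eta=(\eta-\tfrac12\Delta)^{-1}$ (which enters through $\cQ=\cI(\nabla Z)$) and the low--high paraproduct. The mechanism is that $\cI_\eta$ smooths by two derivatives while the paraproduct commutator picks up one further derivative from the slowly varying factor, so that the target regularity $\alpha+\beta+1$ --- strictly below the maximal available gain because $\beta<3\alpha-1$ and $\alpha<\varrho$ --- leaves a sliver of smoothing which one trades for the factor $\eta^{-\epsilon}$. Throughout, $v'\in\sC^{3\alpha-1}$ with $3\alpha-1\in(0,1)$ (as $\alpha\in(\tfrac25,\tfrac12)$) and $\nabla Z\in\sC^{\varrho-1}$ with $\|\nabla Z\|_{\varrho-1}\lesssim\|Z\|_\varrho$.

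First I would record scale-sensitive bounds for the resolvent symbol $m_\eta(\zeta)=(\eta+|\zeta|^2/2)^{-1}$: splitting $(\eta+|\zeta|^2)^{-1}=(\eta+|\zeta|^2)^{-(1-\kappa/2)}(\eta+|\zeta|^2)^{-\kappa/2}\le\eta^{-1+\kappa/2}|\zeta|^{-\kappa}$ for $\kappa\in[0,2]$ gives $\cI_\eta\colon\sC^\sigma\to\sC^{\sigma+\kappa}$ with operator norm $\lesssim\eta^{-1+\kappa/2}$, uniformly in $\eta\ge\eta_L$ and in $L$; the same splitting applied to $\nabla m_\eta(\zeta)=-\zeta(\eta+|\zeta|^2/2)^{-2}$ shows that the operator $\cR_\eta$ with symbol $\nabla m_\eta$ gains $3-2\epsilon$ derivatives at cost $\eta^{-\epsilon}$. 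These are routine Fourier-multiplier and Bernstein estimates in the Neumann-adapted Littlewood--Paley calculus on $Q_L$ used to build $\sH^\varrho$ (cf.\ \cite{CC18,CZ21}). Since $\cI_\eta$ is a Fourier multiplier it commutes with the blocks $\Delta_j$, so writing $v'\prec\nabla Z=\sum_j S_{j-1}v'\,\Delta_j(\nabla Z)$ one obtains
\[
  \cI(v'\prec\nabla Z)-v'\prec\cQ=\sum_j\big[\cI_\eta,\,S_{j-1}v'\big]\,\Delta_j(\nabla Z),
\]
with each summand frequency-supported in a ball of radius $\lesssim 2^j$; it therefore suffices to bound its $L^\infty$ norm by a summable geometric sequence in $2^j$ (using $3\alpha+\varrho-2\epsilon>0$). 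For this I would write $\cI_\eta$ as a kernel operator and Taylor-expand the low-frequency factor $a_j:=S_{j-1}v'$: the leading term equals $\nabla a_j\cdot(\cR_\eta\Delta_j\nabla Z)$, and combining the Bernstein bounds $\|\nabla a_j\|_{L^\infty}\lesssim 2^{j(2-3\alpha)}\|v'\|_{3\alpha-1}$ and $\|\Delta_j\nabla Z\|_{L^\infty}\lesssim 2^{-j(\varrho-1)}\|\nabla Z\|_{\varrho-1}$ with the bound for $\cR_\eta$ yields $\|[\cI_\eta,S_{j-1}v']\Delta_j\nabla Z\|_{L^\infty}\lesssim\eta^{-\epsilon}\,2^{-j(3\alpha+\varrho-2\epsilon)}\,\|v'\|_{3\alpha-1}\|\nabla Z\|_{\varrho-1}$; the higher Taylor remainders (with kernels $|x-y|^2 K_\eta(x-y)$, and so on) are controlled the same way and give strictly better powers of $2^j$. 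Summing over $j$ and using that $3\alpha+\varrho-2\epsilon\ge\alpha+\beta+1$ for $\epsilon$ small --- which holds since $\beta<3\alpha-1$ and $\alpha<\varrho$ --- gives the claim, with $\|\nabla Z\|_{\varrho-1}$ bounded by $\|Z\|_\varrho$.

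The main obstacle is the joint tracking of the two scales $\eta$ and $2^j$ inside the single-block commutator $[\cI_\eta,S_{j-1}v']\Delta_j(\nabla Z)$: one must show that it simultaneously attains the expected regularity gain and can trade $2\epsilon$ of that gain for $\eta^{-\epsilon}$, which amounts to pushing the interpolated symbol bounds of the first step through the kernel estimates while keeping every constant uniform in the box size $L$ and compatible with the Neumann extension built into $\cZ^\varrho$. The uniformity in $L$ is exactly where one invokes the uniform-in-$L$ kernel bounds for $\cI_\eta$ on $Q_L$ from \cite{CC18,CZ21}; with those in hand, the argument of \cite[Proposition~6.7]{CC18} transfers without change.
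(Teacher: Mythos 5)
The paper does not prove this statement at all: it is labeled ``Proposition~6.7 of \cite{CC18}'' and quoted directly, with the role it plays being to certify the paracontrolled remainder estimate needed for Proposition~\ref{prop:contraction of resolvent equation}. Your reconstruction is therefore a proof the paper declines to give, and it follows the route that \cite{CC18} itself takes: decompose $\cI_\eta(v'\prec\nabla Z)-v'\prec\cQ$ block by block into single-frequency commutators $[\cI_\eta,S_{j-1}v']\Delta_j\nabla Z$, exploit that $\cI_\eta$ is a Fourier multiplier (hence commutes with the Littlewood--Paley projections, which is what collapses the difference to a commutator), and bound each block by Taylor-expanding the slowly varying factor $S_{j-1}v'$ and pushing the interpolated symbol bounds $\lvert\nabla m_\eta(\zeta)\rvert\lesssim\eta^{-\epsilon}\lvert\zeta\rvert^{-3+2\epsilon}$ through. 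Your exponent bookkeeping is correct: the block is $O(\eta^{-\epsilon}2^{-j(3\alpha+\varrho-2\epsilon)})$, and $3\alpha+\varrho-2\epsilon\ge\alpha+\beta+1$ holds for small $\epsilon$ precisely because $\beta<3\alpha-1$ and $\alpha<\varrho$ (the ``$5/2<\alpha$'' in the paper is a typo for $2/5<\alpha$, which you correctly read).

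Two small points to tighten. First, to pass from $L^\infty$ bounds on the blocks to a positive-regularity H\"older norm you need each block to have Fourier support in an \emph{annulus} of radius $\sim 2^j$, not merely a ball as you wrote; this is in fact the case here (for $j$ larger than a fixed constant) because $S_{j-1}v'\,\Delta_j(\cdot)$ is supported in $\{c\,2^j\le\lvert\zeta\rvert\le C\,2^j\}$ and $\cI_\eta$ preserves frequency support, so the conclusion stands but the phrasing should be corrected. Second, since $v'\in\sC^{3\alpha-1}$ with $3\alpha-1<1$, the Taylor expansion of $S_{j-1}v'$ must use that $S_{j-1}$ restores smoothness at scale $2^{-j}$ (so $\lVert\nabla S_{j-1}v'\rVert_\infty\lesssim 2^{j(2-3\alpha)}\lVert v'\rVert_{3\alpha-1}$, as you use), and the far-field part $\lvert y-x\rvert\gtrsim 2^{-j}$ should be handled by the raw H\"older bound rather than the remainder estimate; this is the usual two-regime split in commutator lemmas and costs nothing extra. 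Your deferral of the uniform-in-$L$ kernel bounds on $Q_L$ to \cite{CC18,CZ21} is appropriate and is exactly the dependence the paper itself accepts by citing the result.
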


Now we present the key proposition to obtain the contractivity of the solution map of \eqref{eq:v_hash} which is a variation of \cite[Proposition~6.8]{CC18}

\begin{proposition}\label{prop:contraction of resolvent equation}
  Let $2/5< \alpha < \varrho < 1/2$ and $\fZ\in \cZ^\varrho$. For $(v,v') \in \cD^\alpha_{\cQ}$, let $\cG : \cD^\alpha_{\cQ} \rightarrow \sC^{\alpha+1} \times \sC^\alpha $ be the map defined by $\cG(v,v') = (\tilde{v} , \tilde{v}')$ where 
  \begin{equation}
    \tilde{v} : = \frac{1}{2} Z^{\tttree} + \cI_\eta( \nabla v \cdot \nabla Z) + R^v, \qquad \tilde{v}' : =\nabla v + \frac{1}{2}\nabla Z^{\ttree}.
  \end{equation} Then $\cG(v,v')\in \mathcal{D}^{\alpha}_{Q}$ and there exists $\vartheta>0$ such that 
  \begin{equation}\label{eq:cGBound}
    \| \cG( v,v') \|_{\cD^\alpha_{\cQ}} \lesssim (1+ \| \fZ \|_{\cZ^\varrho})^2(1+ \eta^{-\vartheta}\| (v,v')\|_{\cD^\alpha_{\cQ}})^2,
  \end{equation} and for $(v_1,v_1'),(v_2,v_2') \in \cD^\alpha_{\cQ}$, 
  \begin{equation}\label{eq:cGdistanceBound}
  \begin{aligned}
      d_{\cD^\alpha_{\cQ}}&( \cG(v_1,v_1'),\cG(v_2,v_2')) \\ 
      &\lesssim \eta^{-\vartheta}  d_{\cD^\alpha_{\cQ}}((v_1,v_1'), (v_2,v_2')) (1+ \| (v_1,v_1') \|_{\cD^\alpha_{\cQ}} +   \| (v_2,v_2') \|_{\cD^\alpha_{\cQ}}  ) ( 1+ \| \fZ\|_{\cZ^\varrho})^2,
      \end{aligned}
  \end{equation} where $d_{\cD^\alpha_{\cQ}} ((v_1,v_1') , (v_2,v_2')) : = \| (v_1-v_2, v_1'-v_2') \|_{\cD^\alpha_{\cQ}}$.
  
\end{proposition}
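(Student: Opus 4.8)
The plan is to follow the strategy of \cite[Proposition~6.8]{CC18} verbatim in structure, treating \eqref{eq:v_hash} as a fixed point equation in the Banach space $\cD^\alpha_\cQ$ and exploiting the smoothing gain from the resolvent $\cI_\eta$. First I would unpack $\cG(v,v')$ term by term according to the formula for $\tilde v$ and verify that each summand lands in the correct H\"older space with the claimed norm bound. The term $\frac{1}{2}Z^{\tttree}$ is a fixed datum of the enhanced noise $\fZ\in\cZ^\varrho$, contributing $\|Z^{\tttree}\|_{3\varrho}\lesssim\|\fZ\|_{\cZ^\varrho}$. For $\cI_\eta(\nabla v\cdot\nabla Z)$ one splits the product into the two paraproducts $\nabla v\prec\nabla Z$, $\nabla v\succ\nabla Z$ and the resonant term $\nabla v\circ\nabla Z$; the first two are handled by the standard paraproduct estimates plus the Schauder-type bound for $\cI_\eta$ (with a gain of $\eta^{-\vartheta}$ as in the preceding Proposition), while the resonant term is exactly where the paracontrolled ansatz $v=v'\prec\cQ+v^\#$ is used: substituting the ansatz and invoking the commutator lemma (Proposition~6.6 of \cite{CC18}, available here since $\nabla\cQ\circ\nabla Z\in\sC^{2\alpha-1}$ is part of the enhancement) reduces $\nabla v\circ\nabla Z$ to a sum of a term bilinear in $(v',\nabla\cQ\circ\nabla Z)$ and a commutator remainder of regularity $4\alpha-1>0$. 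The remainder $R^v$ is treated similarly: $Z^{\Tree}$ is a datum, and the two $\cI_\eta(\cdots)$ pieces involving $\nabla(v+\tfrac12 Z^{\ttree})\cdot\nabla Z^{\tree}$ and $|\nabla(v+\tfrac12 Z^{\ttree})|^2$ are products of distributions whose regularities $\alpha$ and $2\alpha-1$ (resp.\ $\alpha$ and $\alpha$, then $2\alpha-1$) sum to something nonnegative after one application of $\cI_\eta$, again producing the factor $(1+\|\fZ\|_{\cZ^\varrho})^2(1+\eta^{-\vartheta}\|(v,v')\|_{\cD^\alpha_\cQ})^2$.

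Next I would verify that $\cG(v,v')$ actually lies in $\cD^\alpha_\cQ$, i.e.\ that $\tilde v^\# := \tilde v-(\tilde v'\prec\cQ)\in\sC^{4\alpha}$ with the right bound; but this is precisely the content of equation \eqref{eq:v_hash}, which identifies $\tilde v^\#$ with the explicit expression $\cI_\eta(\nabla(\tfrac12 Z^{\ttree}+v)\prec\nabla z)-v'\prec\cQ+\cI_\eta(\nabla v\circ\nabla Z)+R^v$. So one checks each of those four pieces against $\sC^{\alpha+\beta+1}\subset\sC^{4\alpha}$: the first two combine via the preceding Proposition into something bounded by $\eta^{-\epsilon}\|v'\|_{3\alpha-1}\|\fZ\|_{\cZ^\varrho}$, the third by the resonant-product analysis above, and the fourth is $R^v$ which has regularity at least $\alpha+1>4\alpha$ in its smoothest part and $4\alpha$ after the worst product. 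Collecting the three norms $\|\tilde v\|_{3\alpha}$, $\|\tilde v'\|_{3\alpha-1}$, $\|\tilde v^\#\|_{\alpha+\beta+1}$ gives \eqref{eq:cGBound}.

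The Lipschitz estimate \eqref{eq:cGdistanceBound} is obtained by the same decomposition applied to differences: every term in $\cG$ is at most quadratic in $(v,v')$, so $\cG(v_1,v_1')-\cG(v_2,v_2')$ is linear in $(v_1-v_2,v_1'-v_2')$ with coefficients controlled by $1+\|(v_1,v_1')\|_{\cD^\alpha_\cQ}+\|(v_2,v_2')\|_{\cD^\alpha_\cQ}$ and by $(1+\|\fZ\|_{\cZ^\varrho})^2$, and each application of $\cI_\eta$ again extracts the small factor $\eta^{-\vartheta}$; multilinearity of the paraproducts and resonant products makes this bookkeeping mechanical once the bilinear estimates from Step~1 are in place.

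I expect the main obstacle to be the resonant term $\nabla v\circ\nabla Z$: since $\nabla v\in\sC^\alpha$ and $\nabla Z\in\sC^{\varrho-1}$ with $\alpha+(\varrho-1)<0$, this product is genuinely ill-defined and only becomes meaningful through the paracontrolled structure, so the crux is to carry out the commutator decomposition carefully, track that the remainder indeed has regularity $>0$ (this uses $\alpha>2/5$, equivalently $4\alpha-1>0$ and more precisely $\alpha+\beta+1\le\ldots$ for the chosen $\beta\in(0,3\alpha-1)$), and confirm that the $\eta$-dependence of the commutator bound is still a negative power $\eta^{-\vartheta}$ rather than a constant. A secondary but routine point is to make sure all estimates are uniform in $L$ and in the mollification parameter $\epsilon$ (the renormalization constants $c^{\tree}_\epsilon$, $c^{\Tree}_\epsilon$ are absorbed into the definition of $\fZ_\epsilon$, so the map $\cG$ depends on the noise only through $\fZ\in\cZ^\varrho$ and the bounds pass to the limit by Theorem~\ref{thm:renormalization of enhanced noise}). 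Once \eqref{eq:cGBound} and \eqref{eq:cGdistanceBound} are established, choosing $\eta$ large enough turns $\cG$ into a contraction on a suitable ball, and Proposition~\ref{prop:resolvent eqaution} follows by Banach's fixed point theorem together with the stability in $\fZ$.
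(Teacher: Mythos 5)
Your proposal tracks the paper's proof exactly: the paper simply invokes \cite[Proposition~6.8]{CC18} and observes that the Laplace representation $(\eta-\frac{1}{2}\Delta)^{-1}=\int_0^\infty e^{-\eta t}P_t\,dt$ converts the heat-kernel gain $T^{\vartheta}$ there into the resolvent gain $\eta^{-\vartheta}$, and your detailed term-by-term unpacking (paraproduct decomposition of $\nabla v\cdot\nabla Z$, commutator treatment of the resonant piece via $\nabla\cQ\circ\nabla Z$, accounting for $R^v$) faithfully captures what that citation entails. Two small arithmetic slips --- you place $Z^{\tttree}$ in $\sC^{3\varrho}$ when the enhancement space $\sH^{\varrho}$ puts it in $\sC^{\varrho+1}$, and for $\alpha>2/5$ the inequality $\alpha+1>4\alpha$ is reversed --- do not affect the argument, since the operative norms in $\cD^\alpha_{\cQ}$ are the weaker $\|\cdot\|_{3\alpha}$ for $\tilde v$ and $\|\cdot\|_{\alpha+\beta+1}$ with $\beta\in(0,3\alpha-1)$ for $v^\#$, both of which the relevant regularity counts comfortably clear.
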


\begin{proof}
  The proof is very similar to the proof of \cite[Proposition~6.8]{CC18} where the transition density operator $\cI$ is used instead of $\cI_\eta$. Since $(\eta- \frac{1}{2}\Delta )^{-1} = \int_0^\infty e^{-\eta t } P_t dt$, the estimations in \cite[Proposition~6.8]{CC18} can be applied to our setting with replacing  $T^{\vartheta}$ \cite[Proposition~6.8]{CC18} into $\eta^{-\vartheta}$. We omit the detail.
\end{proof}

We are now in the position to prove Proposition~\ref{prop:resolvent eqaution}. 

\begin{proof}[Proof of Proposition~\ref{prop:resolvent eqaution}]
  Define $M_L : = \| \fZ_L \|_{\cZ^\varrho}$ and take 
  \begin{equation}\label{eq:definition of eta_L}
\eta_L :=  A(1+M_L)^{\aleph},
  \end{equation}
 where $\aleph := \frac{3}{2\vartheta}$ and $A>0$. Here $\vartheta$ is the same constant which appear in Proposition~\ref{prop:contraction of resolvent equation}. By taking $A$ sufficiently large, for $(v,v') \in \cD^{\alpha}_Q$ with $\| (v,v')\|_{\cD^{\alpha}_Q} \leq M_L$, we have 
\begin{equation*}
  \| \cG(v,v')\|_{\cD^\alpha_{\cQ}} \leq  \text{const.} \cdot A^{-2\vartheta} M_L \leq M_L
\end{equation*} using Proposition~\ref{prop:contraction of resolvent equation}. Using a similar argument with \eqref{eq:cGdistanceBound}, we can conclude that for all $\eta \geq \eta_L$, $\cG$ is a contraction mapping in $\cD^\alpha_{\cQ}$, hence there exists a unique solution $v\in \sC^{\alpha+1}$ (with a proper $v'\in \sC^{\alpha}) $ to \eqref{eq:resolventEquation for v}. Since we have set before $Y = v + \frac{1}{2} ( Z^{\tree} + Z^{\ttree})$, we get the unique solution $Y\in \sC^{2\alpha}$ to \eqref{eq:resolvent eq for Y}.

\end{proof}

\begin{proof}[Proof of Theorem~\ref{thm:Feynman-Kac representation}] 
By Proposition~\ref{prop:resolvent eqaution}, we know that for $\epsilon\in [0,1]$, $L>0,$ there exists a unique solution $Y_{L,\epsilon}$ to 

\begin{equation}\label{eq:resolvent equation for epsilon, L}
  (\eta_{L} - \frac{1}{2}\Delta )Y_{L,\epsilon} = \frac{1}{2}|\nabla Z_{L,\epsilon}|^2 -c_\epsilon + \nabla Y_{L,\epsilon}\cdot \nabla Z_{L,\epsilon} + \frac{1}{2} |\nabla Y_{L,\epsilon}|^2,
\end{equation} where we define $Z_{L,0} :=Z_L =  \lim_{\epsilon\rightarrow 0 } Z_{L,\epsilon}$ and $Y_{L,0} := Y_L =  \lim_{\epsilon\rightarrow 0 } Y_{L,\epsilon}$. For simplicity, we write $Z$ for $Z_{L,\epsilon}$ and $Y, \xi, \eta,c$ similarly. Since $Z- \frac{1}{2}\Delta Z = \xi$, we have 
\begin{equation*}
   \xi - c = Z + \eta Y - \frac{1}{2}\Delta (Z+Y) -\frac{1}{2}|\nabla (Z+Y)|^2, 
\end{equation*} or equivalently, 
\begin{equation}\label{eq:equation for Delta(Z+Y)}
  \frac{1}{2}\Delta(Z+Y) = -(\xi -c ) + Z+\eta Y - \frac{1}{2}|\nabla (Z+Y)|^2.
\end{equation} On the other hand, by Ito's formula, we obtain
\begin{equation}\label{eq:Ito formula for Z+Y}
  (Z+Y)(B_t)= (Z+Y)(B_0) + \frac{1}{2}\int_0^t \Delta(Z+Y)(B_s)ds + \int_0^t \nabla(Z+Y)(B_s)ds,
\end{equation} where $B$ is a Brownian motion. By substituting \eqref{eq:equation for Delta(Z+Y)} into \eqref{eq:Ito formula for Z+Y}, we have 
\begin{equation}\label{eq:rearrange}
  \exp\left(\int_0^t [\xi(B_s) -c ] ds \right) = N_t \cdot \exp\left( \int_0^t (Z+\eta Y) (B_s)ds + (Z+Y)(B_0) - (Z+Y)(B_t) \right),
\end{equation} where 
\begin{equation}
  N_t : = \exp \left( \int_0^t \nabla(Z+Y)(B_s)\cdot dB_s - \frac{1}{2} \int_0^t |\nabla(Z+Y)(B_s)|^2 ds \right).
\end{equation}
Notice that the right hand side of \eqref{eq:rearrange} is equal to $N_t\cdot \sD_{L,y}(r,t)$. Applying Girsanov's theorem shows that the right hand side of \eqref{eq:rearrange} is equal to  $\E_{\dQ^x_{L,\epsilon}} \big[\sD_{L,0}(0,t) \phi(X_t) \1_{X_{[0,t]} \subset Q_L}  \big]$
Therefore, \eqref{eq:feynman-Kac representation} holds when $\epsilon\in(0,1]$. 

In order to prove \eqref{eq:convergence of Feynman-Kac}, we first show that $\dQ^{x}_{L,\epsilon}$ converges weakly to $\dQ^{x}_{L}$. To this end, note that by Lemma~\ref{lem:lifting lemma}, we can apply Theorem~\ref{thm:well-posedenss of martingale problem} to the martingale problem associated with \eqref{eq:diffusion_X}. By Theorem~\ref{thm:well-posedenss of martingale problem} and Theorem~\ref{def:enhanced noise}, we obtain the weak convergence of $\dQ^{x}_{L,\epsilon}$ by the fact that  $\nabla(Z_{L,\epsilon}+Y_{L,\epsilon}) \rightarrow\nabla(Z_{L}+Y_{L}) $ in $L_\infty$ as $\epsilon \rightarrow \infty$. Now the proof of \eqref{eq:convergence of Feynman-Kac} follows from that the set $\{ X[0,t] \subset Q_L\}$ is a $\dQ^x_L-$continuity set, which was proven in the proof of \cite[Theorem~2.17]{KPZ20}. This completes the proof.

\end{proof}

\section{Existence of transition kernel \& its estimate}\label{sec:transition density estimate}

The goal of this section is two-fold. We first show that the existence of transition kernel for the diffusion $X_t$ of \eqref{eq:diffusion_X} of Theorem~\ref{thm:Feynman-Kac representation}. Secondly we derive bounds on the transition kernel. This bound will later be used to derive suitable bound on the escape probability such as the probability of the event $\{X_{[0,t]} \not\subset Q_L \}$.

\subsection{Existence of the transition kernel}\label{subsec:existence of transition density}
We fix $T>0$. Recall that $X$ is the solution to the SDE with a distributional drift: For any $x\in \dR^d$

\begin{equation}\label{eq:SDE with distributional distribution}
  X_t = x + \int_{0}^t \mu  (X_s) ds + B_t, \quad t\in [0,T]
 \end{equation} where  $\mu : = \nabla (Z+Y)$ and $Z,Y$ are defined in Section~\ref{sec:feynman-kac representation} (see Section~\ref{prop:resolvent eqaution}). Throughout this section, we assume that $Z\in \sC^{\alpha}, Y\in \sC^{2\alpha} $ where $\alpha<1/2$ as in the case $d=3$. Our goal is to show that $X$ has a transition density $\Gamma_t : \dR^d \times \dR^d \rightarrow \dR$ for all $t>0$. The following is the main result of this section. 

\begin{theorem}\label{thm:existence of the transition kernel}
  Suppose $\mu  = \nabla(Z+Y)$ in \eqref{eq:SDE with distributional distribution}. Then the solution $X$ of the SDE~\eqref{eq:SDE with distributional distribution} admits the transition density $\Gamma_t(x,y) = w^{\delta_y,\mu}(t,x)  $  for $(t,x,y) \in (0,T) \times \dR^{2d}$ where $w^{\delta_y,\mu}$ is the solution to the Cauchy problem \eqref{eq:cauchy problem} associated with $\mu$ and the terminal condition $\delta_y$.
\end{theorem}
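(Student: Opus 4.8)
The plan is to identify the transition density of $X$ with the solution of a backward parabolic Cauchy problem with distributional drift, and to construct that solution by a paracontrolled fixed-point argument, essentially adapting the machinery of \cite[Theorem~3.10]{CC18} to a singular (delta) terminal condition. First I would set up the PDE side: for a fixed smooth test function $\varphi$, consider the terminal-value problem $\partial_s w = \frac{1}{2}\Delta w + \mu\cdot\nabla w$ on $[0,t)$ with $w(t,\cdot)=\varphi$, where $\mu=\nabla(Z+Y)$. Since $\mu$ has regularity $\alpha-1<-\tfrac12$ in $d=3$, the product $\mu\cdot\nabla w$ is classically ill-posed, so one lifts $\mu$ to an enhanced (rough) distribution using exactly the objects $\fZ_L$ of Definition~\ref{def:enhanced noise} and Theorem~\ref{thm:renormalization of enhanced noise}, and seeks $w$ in a space of paracontrolled distributions $w = w'\prec \cQ + w^\#$ with $w^\#$ of higher regularity. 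The mollified equation (driven by $\mu_\epsilon=\nabla(Z_{L,\epsilon}+Y_{L,\epsilon})$) has a classical smooth solution $w_\epsilon^{\varphi}$ by standard parabolic theory, and one shows $w_\epsilon^{\varphi}\to w^{\varphi}$ in the appropriate paracontrolled topology, uniformly over bounded $\varphi$; this is the content I would borrow from \cite{CC18} with the resolvent operator $\cI$ replaced by the heat semigroup $P_{t-s}$.

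Second, I would pass to the singular terminal condition $\varphi=\delta_y$. The key point is the smoothing of the (rough) parabolic flow: for $s<t$ strictly, $w^{\delta_y}(s,\cdot)$ should be a genuine function, obtained as the limit of $w^{\delta_y}_\epsilon(s,\cdot)$, with quantitative control of the blow-up as $s\uparrow t$ of order $(t-s)^{-d/2}$ from the Gaussian part plus lower-order corrections from the drift. Concretely, I would approximate $\delta_y$ by $\varphi_\delta\to\delta_y$, use the bounds from the previous step to get $w^{\varphi_\delta}$, and show the family $\{w^{\varphi_\delta}\}_\delta$ is Cauchy in a weighted space on $[0,t-\epsilon']\times\dR^d$ for each $\epsilon'>0$; this extends \cite[Theorem~3.10]{CC18} exactly as the excerpt advertises. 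Denote the limit $w^{\delta_y,\mu}$ and set $\Gamma_t(x,y):=w^{\delta_y,\mu}(t,x)$ — note the time is read off the length of the interval, so this is really $w$ evaluated at the initial time of a length-$t$ problem.

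Third, I would verify that $\Gamma_t$ is indeed the transition density, i.e. $\E_{\dQ^x_{L}}[\varphi(X_t)]=\int \Gamma_t(x,y)\varphi(y)\,\mathrm{d}y$ for all bounded continuous $\varphi$. For the mollified drift this is the classical Feynman–Kac/Kolmogorov duality: $w_\epsilon^{\varphi}(t,x)=\E[\varphi(X^\epsilon_t)]$ where $X^\epsilon$ solves \eqref{eq:diffusion_X} with $\mu_\epsilon$, and Fubini gives $w_\epsilon^{\varphi}(t,x)=\int w_\epsilon^{\delta_y}(t,x)\varphi(y)\,\mathrm{d}y$. Then I let $\epsilon\to0$: the left side converges because $\dQ^x_{L,\epsilon}\Rightarrow\dQ^x_{L}$ (Theorem~\ref{thm:well-posedenss of martingale problem}, as used in the proof of Theorem~\ref{thm:Feynman-Kac representation}), the right side converges by the $L^1$-type bounds from step two together with dominated convergence; identifying limits yields the claim, and the resulting $\Gamma_t(x,\cdot)$ is a probability density by taking $\varphi\equiv 1$.

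The main obstacle I expect is step two: upgrading the paracontrolled solution theory from bounded continuous terminal data to $\delta_y$, with uniform (in $\epsilon$ and in $y$) quantitative estimates on the short-time singularity. This requires carefully tracking how the paracontrolled norms of $w^{\varphi}$ depend on $\varphi$ through Besov norms of low regularity, and showing the drift-induced corrections do not worsen the Gaussian rate $(t-s)^{-d/2}$ — in particular that the reconstruction/commutator estimates are stable as the terminal data degenerates. This is precisely the extension of \cite[Theorem~3.10]{CC18} flagged in the text, and everything downstream (the escape-probability bound $\dQ^x_L(X_{[0,t]}\not\subset Q_L)$) depends on getting these constants right.
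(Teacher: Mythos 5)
Your proposal tracks the paper's proof closely: lift $\mu=\nabla(Z+Y)$ to a rough distribution (Lemma~\ref{lem:lifting lemma}), extend the paracontrolled solution theory of \cite{CC18} to the singular terminal datum $\delta_y$ (this is Theorem~\ref{thm:extension to the delta initial data}), approximate by smooth drifts for which the transition density exists classically, and pass to the limit using the weak continuity of the martingale-problem solution together with the stability of the paracontrolled solution map (Proposition~\ref{prop:approximation of transition kernel}). The only cosmetic differences are that the paper approximates via an abstract sequence $\bmu_n\to\bmu$ in $\sH^\gamma$ and invokes a uniform convergence estimate on $\Gamma^{(n)}_t$ rather than your $L^1$-plus-dominated-convergence argument, and it cites the classical transition-density result for smooth drifts rather than deriving it from Feynman--Kac and Fubini; neither of these affects the structure of the argument.
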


\begin{remark}
It is worthwhile to mention that $Y$ defined via \eqref{eq:original resolvent eq for Y} in $d=2$ case belongs to $ \sC^{\frac{3}{2}-}$. In this occasion, \cite[Proposition~2.9]{PZ22} has proven the existence of the heat kernel. However the results in \cite{PZ22} can only be applied when $Y \in \sC^{2\alpha}$ for some $\alpha >1/2$ which is not true for $d=3$. Therefore, Theorem~\ref{thm:existence of the transition kernel} is quintessential for $d=3$.
\end{remark}

In fact, we prove a more general result than Theorem~\ref{thm:existence of the transition kernel}.  We show that any solution of \eqref{eq:SDE with distributional distribution} with a suitable $\mu\in \sC^\beta $ with $\beta \in(-\frac{2}{3}, -\frac{1}{2}]$ admits a transition density. Later we will show that the suitable class for $\mu$ is the space of {\it rough distributions} (see Definition~\ref{def:rough distribution}) and $\mu = \nabla(Z+Y)$ can be lifted to a rough distribution $\bmu$.


In order to study the SDE~\eqref{eq:SDE with distributional distribution}, we consider the martingale problem associated with the following Cauchy problem on $[0,T] \times \dR^d$: Define the differetial operator 
\begin{equation*}
  \cL w: =\frac{1}{2}\Delta + \mu \cdot \nabla
  \end{equation*} and the differential equation
\begin{equation}\label{eq:cauchy problem}
  \begin{cases}
    \partial_t w(t,x) + \cL w(t,x) =0, \quad (t,x) \in [0,T) \times \dR^d,\\
    w(T,x) = \phi, \quad x \in \dR^d
  \end{cases}
\end{equation} where $\phi:\dR^d \rightarrow \dR$ is the terminal conditions. We denote the solution to \eqref{eq:cauchy problem} with the terminal function $\phi$ by $w^\phi$. We recall the definition of the martingale problem associated to \eqref{eq:cauchy problem}. 

\begin{definition}\label{def:martingale problem}
  Let us denote $\Omega = C([0,T], \dR^d)$. A stochastic process $\boldsymbol{X} = \{X_t\}_{t\in [0,T]}$ on the probability space $(\Omega, \mathcal{B}(\Omega), \dP)$ endowed with the canonical filtration $\{\mathfrak{F}_t\}_{t\geq 0}$ is said to be a solution to the SDE \eqref{eq:SDE with distributional distribution} if $X_0 =x$ and it satisfies the martingale problem for $( \cL, x) $: for all $ f\in C([0,T], L^\infty(\dR^d))$ and all $\phi\in C^\infty_c(\dR^d)$, $w = w^\phi $ is the solution to the Cauchy problem~\ref{eq:cauchy problem} and 
  \begin{equation*}
     \left \{ w(t,X_t) - \int_0^t f( s,X_s ) ds \right\}_{t\in [0,T]} 
  \end{equation*}is a martingale.
\end{definition}

Before we mention the well-posedness of the martingale problem, we introduce the definition of the {\it rough distribution} (\cite[Definition~3.6]{CC18}) of a suitable class of the drift $\mu$ for the well-posedness.

\begin{definition}[\bf Rough distribution]\label{def:rough distribution}
  Let $\beta\in (-\frac{2}{3}, -\frac{1}{2})$, $\gamma < \beta +2$. Set $\cH^\gamma := \sC_{\dR^d}^{\gamma-2} \times \sC_{\dR^d}^{2\gamma-3} $. We define the space of {\it rough distributions} as 
  \begin{equation*}
     \sX^\gamma  = \textrm{cl}_{\cH^{\gamma}} \left \{ \cK( \theta) := ( \theta ,\cI ( \partial_j\theta^i ) \circ \theta^j )_{i,j=1,...,d},  \theta \in \sC^\infty_{\dR^d}\right\}.
   \end{equation*} We denote by $\boldsymbol{\mu} = (\mu, \mu')$ a generic element of $\sX^\gamma$ and we say that $\boldsymbol{\mu}$ is a lift (or enhancement) of $\mu.$
\end{definition}

We now present the well-posedness of the martingale problem associated to \eqref{eq:cauchy problem} proven in \cite[Theorem~1.2]{CC18}.

\begin{theorem}[Theorem~1.2 of \cite{CC18}]\label{thm:well-posedenss of martingale problem}
  Let $\beta\in (-\frac{2}{3}, -\frac{1}{2}]$ and $\mu \in \sC^{\beta}_{\dR^d}$. We assume that $\mu$ can be enhanced to a rough distribution $\bmu\in \sX^\gamma$ for some $\gamma<\beta + 2$. Then there exists a (stochastically) unique solution to the martingale problem for $(\cL, x)$ in the sense that there is a unique probability measure $\dP_x$ on $\Omega = C([0,T], \dR^d) $ such that the coordinate process $X_t(\omega) = \omega(t)$ satisfies the martingale problem for $(\cL, x)$. Moreover, $X$ is a strong Markov process under $\dP_x$
 and the $\dP_x$ depends (weakly) continuously on the drift $\mu$. 
 \end{theorem}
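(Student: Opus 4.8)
The plan is essentially to invoke \cite[Theorem~1.2]{CC18}, since the statement is quoted verbatim from there; below I sketch the strategy of that proof, adapted to the notation of this section.

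\textbf{Step 1: solving the Cauchy problem by paracontrolled calculus.} The central task is to make sense of, and solve, the backward equation $\partial_t w + \tfrac12\Delta w + \mu\cdot\nabla w = f$ with terminal data $w(T)=\phi$ when $\mu\in\sC^\beta_{\dR^d}$, $\beta\in(-\tfrac23,-\tfrac12]$. The product $\mu\cdot\nabla w$ is ill-posed, since $\nabla w$ has regularity $\gamma-1<\tfrac12$ and $\beta+(\gamma-1)<0$; one therefore postulates a paracontrolled ansatz $w = \nabla w \prec \cQ + w^\#$, with $\cQ = \cI(\mu)$ (or its enhanced analogue) and $w^\#$ living in a space of higher regularity, and defines the resonant term $\mu\circ\nabla w$ through the second component of the lift $\bmu=(\mu,\mu')\in\sX^\gamma$ together with a commutator (``paracontrolled'') estimate. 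A fixed-point argument in the paracontrolled space, exploiting the smoothing of $\cI_\eta=(\eta-\tfrac12\Delta)^{-1}=\int_0^\infty e^{-\eta t}P_t\,dt$ exactly as in the proof of Proposition~\ref{prop:resolvent eqaution}, yields a unique solution $w^\phi$ together with a priori bounds that depend continuously on $\bmu$ and on the data.

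\textbf{Step 2: from the PDE to the martingale problem, existence.} Approximate $\mu$ by mollifications $\mu_\epsilon\in\sC^\infty_{\dR^d}$ whose lifts $\bmu_\epsilon=\cK(\mu_\epsilon)$ converge to $\bmu$ in $\sX^\gamma$; for each $\epsilon$ the classical SDE \eqref{eq:SDE with distributional distribution} has a (unique) solution with law $\dP^\epsilon_x$ on $\Omega=C([0,T],\dR^d)$. Using the Cauchy-problem solutions $w^\phi_\epsilon$ as test functions, Itô's formula shows $w^\phi_\epsilon(t,X_t)-\int_0^t f_\epsilon(s,X_s)\,ds$ is a genuine martingale, while the uniform bounds from Step 1 (or, equivalently, the transition-density estimates obtained in this section) yield tightness of $\{\dP^\epsilon_x\}_\epsilon$. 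Any subsequential weak limit $\dP_x$ is then shown to solve the martingale problem for $(\cL,x)$ by passing to the limit in the martingale identity, using $w^\phi_\epsilon\to w^\phi$ and $f_\epsilon\to f$ guaranteed by the continuity in Step 1.

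\textbf{Step 3: uniqueness, strong Markov property, continuity in $\mu$.} If $\dP_x$ solves the martingale problem, then the one-dimensional marginals of $X_t$ are pinned down by testing against $w^\phi$ for $\phi\in C_c^\infty(\dR^d)$, since $\dE[\phi(X_t)]$ is determined by the unique solution of the Cauchy problem; conditioning on $\mathfrak{F}_s$ and iterating fixes all finite-dimensional distributions, which gives stochastic uniqueness. The strong Markov property follows from uniqueness by the standard regular-conditional-probability argument, and the weak continuity of $\dP_x$ in the drift is inherited from the continuity of $w^\phi$ in $\bmu$ established in Step 1 together with the tightness already obtained. The main obstacle is Step~1: in the regime $\beta\le-\tfrac12$ a single paracontrolled expansion of $w$ does not close, so one needs the full paracontrolled machinery built on the enhanced drift $\bmu$, with delicate commutator and Schauder estimates that are uniform in the regularization; by contrast, the passage to the probabilistic statements in Steps~2--3 is comparatively soft but relies crucially on the continuity-in-$\bmu$ estimates.
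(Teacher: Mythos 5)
Your approach matches the paper's: both simply invoke \cite[Theorem~1.2]{CC18}, and both supplement it with the observation that the weak continuity of $\dP_x$ in the drift—though not stated explicitly there—is a direct byproduct of the continuity in $\bmu$ of the paracontrolled Cauchy-problem solution (the paper points to the proof of \cite[Theorem~4.3]{CC18} for this, which is exactly your Step~1 $\Rightarrow$ Step~3 chain). Your three-step sketch of the CC18 strategy (paracontrolled generator equation, tightness/existence via mollified drift, uniqueness from the PDE plus the regular-conditional-probability argument for the strong Markov property) is an accurate summary of that proof.
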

\begin{remark}
In \cite[Theorem~1.2]{CC18}, the continuity of $\dP_x$ in the drift $\mu$ is not given. However, the proof of \cite[Theorem~1.2]{CC18} directly implies this fact. See the proof of \cite[Theorem~4.3]{CC18}.
\end{remark}

As in \cite[Section~2]{PZ22}, we will show that there exists a solution $w^{\delta_y}$ to the Cauchy problem~\eqref{eq:cauchy problem} with the terminal condition $\delta_y$. We will then show that the transition density $\Gamma$ of $X$ is defined as $\Gamma_{t}(x,y) = w^{\delta_y}(t,x)$. In order to deal with \eqref{eq:cauchy problem}, we employ the paracontrolled distributions and extend \cite[Theorem~3.10]{CC18} for the delta initial data.
Recall that for any $(t,x)\in [0,T]\times \bR^d$ and function $\psi$, $\mathcal{J}(\psi)(t)$ is defined as $\int^{T}_{t}  P_{r-t} \psi(r) dr$ where $P_{t}$
is the usual heat flow, i.e., $P_t = e^{\frac{1}{2}t\Delta}$.

\begin{definition}
  Let $T>0$, $\frac{4}{3}< \alpha < \theta < \beta +2$ and $\rho> \frac{\theta-1}{2}$. For $\bar{T}\in (0,T)$, $p\in[1,\infty]$ and $\delta>0$, we define the space of {\it paracontrolled distributions} $\sD^{\alpha,\theta,\delta,p}_{\rho,\bar{T},T}$ as the set of pairs of distributions $(w,w')\in C_{\bar{T},T}\sC^{\theta}_p\times C_{\bar{T},T}\sC^{\alpha-1}_p$ (see \eqref{eq:definition of the norm for Holder-Besov valued process}) such that 
  \begin{equation}\label{eq:definition of w sharp}
     w^{\#}(t) := w(t) - w'(t)\prec \cJ(\mu)(t) \in \sC^{2\alpha-1}_p
   \end{equation}  for all $t \in(T-\bar{T},T] $. Here '$\prec$' denotes the paraproduct which is defined in Section~\ref{sec:Paracontrolled}. We equip $\sD^{\alpha,\theta,\delta,p}_{\rho,\bar{T},T}$ with the norm
\begin{equation}\label{eq:norm_D}
  \| (w,w') \|_{\sD^{\alpha,\theta,\delta,p}_{\rho,\bar{T},T}} : = \| w \|_{C^\delta_{ \bar{T},T}\sC_p^\theta}+ \|\nabla w \|_{C^\delta_{\rho,\bar{T},T}L^\infty } + \| w' \|_{C^\delta_{\bar{T},T}\sC_p^{\alpha-1}} + \|w^\#\|_{C^\delta_{\bar{T},T}\sC_p^{2\alpha-1}}. 
\end{equation} and the metric $d_{\sD^{\alpha,\theta,\delta,p}_{\rho,\bar{T},T}}$ defined as 
\begin{equation}
  d_{\sD^{\alpha,\theta,\delta,p}_{\rho,\bar{T},T}} ( (w,w'), (\tilde{w},\tilde{w}')) : = \| (w-\tilde{w}, w'-\tilde{w}') \|_{\sD^{\alpha,\theta,\delta,p}_{\rho,\bar{T},T}}. 
\end{equation} Equipped with this metric, the space $(\sD^{\alpha,\theta,\delta,p}_{\rho,\bar{T},T},d_{\sD^{\alpha,\theta,\delta,p}_{\rho,\bar{T},T}})$ is complete metric space, thus it is closed.

\end{definition}

 We now introduce a solution map for the construction of a fixed point problem for \eqref{eq:cauchy problem}.

\begin{definition}
  Let $\frac{4}{3}< \alpha< \theta<\gamma <\beta+2$, $p\in[1,\infty]$, $\rho\in ( \frac{\theta-1}{2}, \frac{\gamma-1}{2})$, $T>0$ and $\bmu\in \sX^\theta$ be an enhancement of $\mu$. For $\bar{T}\in(0,T)$, define the map $M_{\bar{T}} : \sD^{\alpha,\theta,\delta,p}_{\rho,\bar{T},T} \rightarrow C_{\bar{T},T}\sC^\alpha_p$ by 
  \begin{equation}
    M_{\bar{T}}(w,w')(t) : = P_{T-t}\phi + \cJ(\nabla w\cdot \mu)(t),
  \end{equation}  for $(u,u') \in \sD^{\alpha,\theta,\delta,p}_{\rho,\bar{T},T}$ and $\phi\in\sC^\gamma_p$. We also define the map 
  \begin{equation}\label{eq:fixed point map for cM}
    \begin{aligned}
      \cM_{\bar{T}} : \sD^{\alpha,\theta,\delta,p}_{\rho,\bar{T},T} 
      &\rightarrow C_{\bar{T},T}\sC^{\alpha}_p \times C_{\bar{T},T}\sC^{\alpha-1}_p \\
      (w,w') &\mapsto (M_{\bar{T}}(w,w'), \nabla w).
    \end{aligned}
  \end{equation}
\end{definition}

The following proposition is a generalization of \cite[Proposition~3.9]{CC18} in the sense that we have used different norms for $(w,w')$ in \eqref{eq:norm_D} to allow a blowup at time $T$, which depends on $\delta>0$ (see \eqref{eq:definition of the norm for Holder-Besov valued process}). We will then present a key estimate of the solution map for the rough terminal data which lies in $\sC^{-\epsilon}_p$ for some $\epsilon>0$.

\begin{proposition}\label{prop:contraction mapping cM}
  Let $0<T<1$, $\frac{4}{3}< \alpha< \theta<\gamma <\beta+2$, $\rho\in ( \frac{\theta-1}{2}, \frac{\gamma-1}{2})$ and $\delta>2\alpha-1$. Let the terminal data $\phi\in \sC^{\gamma}_p$, $\mu \in \sC^\beta $ and $\bmu \in \sX^\gamma$ be an enhancement of $\mu$. Then, there exists $\kappa>0$ which depends only on $\alpha,\theta,\rho,\gamma,p$ such that for any for $(w,w'),(\tilde{w},\tilde{w}')\in\sD^{\alpha,\theta,\delta,p}_{\rho,\bar{T},T} $ and $\bar{T}\in (0,T)$,
\begin{equation}\label{eq:barTnorm}
  \begin{aligned}
    \| \cM_{\bar{T}}(w,w') \|_{\sD^{\alpha,\theta,\delta,p}_{\rho,\bar{T},T} } \lesssim (1+ \| \bmu \|_{\sX^\gamma})^2( 1+ \bar{T}^\kappa \| (w,w')) \|_{\sD^{\alpha,\theta,\delta,p}_{\rho,\bar{T},T}}) + \| \phi \|_{\sC^{\gamma}_p},
  \end{aligned} 
\end{equation} and 
\begin{equation}\label{eq:barTdistance}
  \begin{aligned}
    d_{\sD^{\alpha,\theta,\delta,p}_{\rho,\bar{T},T}} ( \cM_{\bar{T}}(w,w'), \cM_{\bar{T}}(v,v')) \lesssim \bar{T}^\kappa(1+ \| \bmu \|_{\sX^\gamma})^2 d_{\sD^{\alpha,\theta,\delta,p}_{\rho,\bar{T},T}} ( (w,w'), (\tilde{w},\tilde{w}')).
  \end{aligned}
\end{equation}
\end{proposition}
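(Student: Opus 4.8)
\textbf{Proof strategy for Proposition~\ref{prop:contraction mapping cM}.}
The plan is to follow the template of \cite[Proposition~3.9]{CC18} and estimate the solution map $\cM_{\bar T}$ term by term on the time interval $(T-\bar T, T]$, the only novelty being to track carefully how the time-weighted norms in \eqref{eq:norm_D} (with the blow-up exponent $\delta$ at $t=T$) interact with the smoothing estimates for $\cJ$ and the heat flow $P_{\cdot}$. Recall that $\cM_{\bar T}(w,w') = (M_{\bar T}(w,w'),\nabla w)$ with $M_{\bar T}(w,w')(t) = P_{T-t}\phi + \cJ(\nabla w\cdot\mu)(t)$, and that the paracontrolled structure dictates $\tilde w' := \nabla w$, so $\tilde w^\# := M_{\bar T}(w,w') - \tilde w'\prec \cJ(\mu)$. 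I would first fix the decomposition of the rough product $\nabla w\cdot\mu$ into its paraproduct pieces,
\begin{equation*}
  \nabla w\cdot\mu = \nabla w\prec\mu + \nabla w\succ\mu + \nabla w\circ\mu,
\end{equation*}
and then split $\cJ(\nabla w\prec\mu)$ further using the commutator between $\cJ$ and the paraproduct, exactly as in \cite{CC18}, so that the leading singular contribution is absorbed into the definition of $\tilde w^\#$ via $\cJ(\nabla w\prec\mu) - \nabla w\prec\cJ(\mu) \in \sC^{2\alpha-1}_p$ (this is where $\bmu\in\sX^\gamma$ enters, to make sense of the resonant term $\nabla w\circ\mu$ by appealing to the commutator lemma and the second component $\mu'$ of the lift).

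Next I would carry out the three bounds that make up $\| \cM_{\bar T}(w,w')\|_{\sD^{\alpha,\theta,\delta,p}_{\rho,\bar T,T}}$: the $C^\delta_{\bar T,T}\sC^\theta_p$-bound on $M_{\bar T}(w,w')$ together with the $C^\delta_{\rho,\bar T,T}L^\infty$-bound on its gradient, the trivial $C^\delta_{\bar T,T}\sC^{\alpha-1}_p$-bound on $\tilde w' = \nabla w$, and the $C^\delta_{\bar T,T}\sC^{2\alpha-1}_p$-bound on $\tilde w^\#$. For each one I would use the standard Schauder estimate for $\cJ$ — gaining two derivatives but costing a factor $\bar T^{\kappa}$ for some $\kappa>0$ coming from the length of the integration window (here the choice $\delta > 2\alpha-1$ guarantees the time-weight is integrable against the kernel, so the blow-up at $T$ is harmless) — and the smoothing estimate $\| P_{T-t}\phi\|_{\sC^\gamma_p}\lesssim \|\phi\|_{\sC^\gamma_p}$ (no blow-up since $\phi$ is already in $\sC^\gamma_p$, $\gamma>\theta$), which produces the additive $\|\phi\|_{\sC^\gamma_p}$ term in \eqref{eq:barTnorm}. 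Collecting these and using that the product and commutator estimates are all bilinear in $(w,w')$ and $\bmu$, with at most quadratic dependence on $\|\bmu\|_{\sX^\gamma}$, yields \eqref{eq:barTnorm}. The Lipschitz bound \eqref{eq:barTdistance} follows by the same estimates applied to the difference $(w-\tilde w, w'-\tilde w')$, using that $\cM_{\bar T}$ is affine in $(w,w')$ for fixed $\bmu$ — here the map being affine (the only nonlinearity being the bilinear pairing with the fixed $\mu$) makes the difference estimate strictly simpler than the norm estimate, and there is no $(1+\|(w,w')\|)$ factor.

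The main obstacle I anticipate is bookkeeping the parabolic time-weights: in \cite{CC18} the terminal datum is smooth, so all the norms $C_{\bar T,T}\sC^\bullet$ there are genuine sup-norms in time, whereas here the spaces $C^\delta_{\bar T,T}\sC^\bullet_p$ and $C^\delta_{\rho,\bar T,T}L^\infty$ carry the factor $(T-t)^{\delta}$ (resp.\ a different exponent $\rho$ for the gradient), so every Schauder estimate must be redone as a weighted estimate and one has to check that the exponents match up — in particular that the gain of $\bar T^\kappa$ survives after the weight is reinstated, which forces the constraints $\rho\in(\tfrac{\theta-1}{2},\tfrac{\gamma-1}{2})$ and $\delta>2\alpha-1$ in the hypotheses. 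Once these weighted Schauder and weighted paraproduct/commutator estimates are in place (they are elementary modifications of the unweighted ones, integrating $(T-s)^{-\delta}(s-t)^{-a}$ type singularities over $[t,T]$), the rest is a routine assembly following \cite{CC18}, so I would state the weighted estimates as lemmas (or cite them from the paper's preliminary section on paracontrolled calculus) and devote the proof mainly to verifying the exponent arithmetic.
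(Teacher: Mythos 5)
Your proposal matches the paper's own (very brief) proof: both reduce to \cite[Proposition~3.9]{CC18} and argue that the only modifications are (i) carrying the extra Lebesgue index $p$ through the paraproduct/commutator estimates, which works because Bony's estimates (Proposition~\ref{prop:Bony's estimates (II)}) hold for all $p\in[1,\infty]$, and (ii) tracking the time weight $(T-t)^\delta$ through the Schauder and $\cJ$ estimates. Your additional observations — that $\tilde w' = \nabla w$ and $\tilde w^\# = M_{\bar T}(w,w') - \tilde w'\prec\cJ(\mu)$ must each be bounded separately, that $\phi\in\sC^\gamma_p$ with $\gamma>\theta$ makes the $P_{T-\cdot}\phi$ contribution harmless, and that the affine structure of $\cM_{\bar T}$ in $(w,w')$ (for fixed $\bmu$) is what kills the $(1+\|(w,w')\|)$ factor in \eqref{eq:barTdistance} — are all correct and consistent with what a fully written-out version of the paper's argument would contain.
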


\begin{proof}
The proof follows similar line of ideas as in \cite[Proposition~3.9]{CC18}. The difference is that we use two more parameters $p\in[1,\infty]$ and $\delta>2\alpha-1$ than in \cite[Proposition~3.9]{CC18}. However, with minor modifications we can get \eqref{eq:barTnorm} and \eqref{eq:barTdistance}. Indeed, since Bony's estimate (Proposition~\ref{prop:Bony's estimates (II)}) holds for all $p\in[1,\infty]$, we can bound the $\|\cdot \|_{\sC^a_p}$-norm as in the same way with the $\|\cdot \|_{\sC^a}$-norm for any $a\in\dR.$ Moreover, by multiplying $(T-t)^\delta$ with $\delta>2\alpha-1$ on the $\|\cdot \|_{C_{\bar{T},T}\sC^a_p}$-norm, we can proceed every estimation similarly as in the proof of \cite[Proposition~3.9]{CC18}.

\end{proof}

The next lemma bounds the distance between two fixed points of the map $\mathcal{M}_{\bar{T}}$ starting from two distinct initial data. 

\begin{lemma}\label{lem:contraction mapping for fixed points of CM}
Let $0<T<1$, $\frac{4}{3}< \alpha< \theta<\gamma <\beta+2$, $\rho\in ( \frac{\theta-1}{2}, \frac{\gamma-1}{2})$, $\epsilon>0$ and $\delta >\frac{2\alpha -1 + \epsilon}{2}$. Let $\mu \in \sC^\beta $ and $\bmu \in \sX^\gamma$ be an enhancement of $\mu$. Then, there exists $\kappa>0$ which depends only on $\alpha,\theta,\rho,\gamma,p$ such that for any $\bar{T}\in (0,T)$ and $(w,w'), (\tilde{w},\tilde{w}') \in \sD^{\alpha,\theta,\delta,p}_{\rho,\bar{T},T}$ being fixed points of the map $\cM_{\bar{T}}$ with the terminal data $\phi,\tilde{\phi} \in \sC^\gamma_p$ respectively, we have 
\begin{equation}\label{eq:barTdistance for fixed points}
   \begin{aligned}
    d_{\sD^{\alpha,\theta,\delta,p}_{\rho,\bar{T},T}}( (w,w'), (\tilde{w},\tilde{w}')) \lesssim \bar{T}^\kappa(1+ \| \bmu \|_{\sX^\gamma})^2 d_{\sD^{\alpha,\theta,\delta,p}_{\rho,\bar{T},T}} ( (w,w'), (\tilde{w},\tilde{w}')) + \| \phi -\tilde{\phi} \|_{\sC^{-\epsilon}_p}.
  \end{aligned}
\end{equation}
\end{lemma}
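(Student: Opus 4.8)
The plan is to run a Picard iteration for the fixed point of $\cM_{\bar T}$ simultaneously at the two terminal data $\phi$ and $\tilde\phi$ and to track the difference in a \emph{weaker} norm, namely the one involving $\sC^{-\epsilon}_p$ on the terminal data. The starting observation is that, by Proposition~\ref{prop:contraction mapping cM}, both $(w,w')$ and $(\tilde w,\tilde w')$ are the unique fixed points of $\cM_{\bar T}$ in $\sD^{\alpha,\theta,\delta,p}_{\rho,\bar T,T}$ for $\bar T$ small enough, and they obey uniform bounds in terms of $\|\bmu\|_{\sX^\gamma}$ and $\|\phi\|_{\sC^\gamma_p}$, $\|\tilde\phi\|_{\sC^\gamma_p}$. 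First I would write the difference $(w-\tilde w, w'-\tilde w') = \cM_{\bar T}(w,w') - \cM_{\bar T}(\tilde w,\tilde w')$ and split it according to the definition of $\cM_{\bar T}$: the terminal-data contribution $P_{T-t}(\phi-\tilde\phi)$, which is estimated by the smoothing of the heat semigroup and only costs the $\sC^{-\epsilon}_p$-norm (this is where the hypothesis $\delta>\tfrac{2\alpha-1+\epsilon}{2}$ enters — the blowup rate of the $C^\delta_{\bar T,T}$-norm at time $T$ must be able to absorb the factor $(T-t)^{-(\theta+\epsilon)/2}$ or $(T-t)^{-(2\alpha-1+\epsilon)/2}$ coming from $P_{T-t}:\sC^{-\epsilon}_p\to\sC^{\theta}_p$, resp. $\sC^{2\alpha-1}_p$), and the nonlinear contribution $\cJ(\nabla w\cdot\mu)(t) - \cJ(\nabla\tilde w\cdot\mu)(t)$ together with the paracontrolled correction.

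\smallskip

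The nonlinear part is handled exactly as in the contraction estimate \eqref{eq:barTdistance}: it is multilinear in $(w-\tilde w,w'-\tilde w')$ and in the enhanced noise, so by repeating the bounds from the proof of Proposition~\ref{prop:contraction mapping cM} (which in turn mimic \cite[Proposition~3.9]{CC18}, with Bony's estimates Proposition~\ref{prop:Bony's estimates (II)} valid for all $p\in[1,\infty]$ and the extra weight $(T-t)^\delta$), one gets a factor $\bar T^\kappa(1+\|\bmu\|_{\sX^\gamma})^2$ in front of $d_{\sD^{\alpha,\theta,\delta,p}_{\rho,\bar T,T}}((w,w'),(\tilde w,\tilde w'))$. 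Here one uses that the difference of two fixed points of the \emph{same} nonlinear map is estimated the same way as the contraction bound, because the drift $\mu$ (hence $\bmu$) is common to both equations — only the terminal data differ. Collecting the two contributions yields precisely \eqref{eq:barTdistance for fixed points}.

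\smallskip

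The one genuinely new point, and the main obstacle, is the heat-semigroup estimate for rough terminal data: one needs $\|P_{T-t}(\phi-\tilde\phi)\|$ controlled in the full $\sD^{\alpha,\theta,\delta,p}_{\rho,\bar T,T}$-norm by $\|\phi-\tilde\phi\|_{\sC^{-\epsilon}_p}$, uniformly in $\bar T\in(0,T)$. This requires the Schauder-type bound $\|P_s g\|_{\sC^{b}_p}\lesssim s^{-(b+\epsilon)/2}\|g\|_{\sC^{-\epsilon}_p}$ for $b\in\{\theta,2\alpha-1\}$ and $\|\nabla P_s g\|_{L^\infty}\lesssim s^{-(1+\epsilon+d/p)/2}\|g\|_{\sC^{-\epsilon}_p}$ for the $C^\delta_{\rho,\bar T,T}L^\infty$ component, and then checking that multiplying by $(T-t)^\delta$ (and, for the $\nabla$-term, by the appropriate power $(T-t)^\rho$) makes each of these integrable/bounded up to $t=T$, which is exactly the role of the lower bound on $\delta$ in the hypothesis. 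One also has to verify that $P_{T-t}(\phi-\tilde\phi)$, which is smooth for $t<T$, contributes nothing problematic to the paracontrolled remainder $w^\#$: since it is not paracontrolled by $\cJ(\mu)$ one simply puts all of it into $w^\#$ and estimates it in $\sC^{2\alpha-1}_p$ by the same Schauder bound with $b=2\alpha-1$. Once these semigroup estimates are in place the rest is bookkeeping along the lines already carried out for Proposition~\ref{prop:contraction mapping cM}, and I would only sketch it, referring to that proof and to \cite[Section~3]{CC18} for the repeated multilinear estimates.
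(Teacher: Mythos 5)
Your proposal is correct and follows essentially the same route as the paper's proof: exploit the linearity of the fixed-point equation $w(t)=P_{T-t}\phi+\cJ(\nabla w\cdot\mu)(t)$ to reduce to the difference $w-\tilde w$, treat the nonlinear part by the same multilinear bounds as in Proposition~\ref{prop:contraction mapping cM} (yielding the $\bar T^{\kappa}(1+\|\bmu\|_{\sX^\gamma})^2$ prefactor), and control the terminal-data contribution $P_{T-t}(\phi-\tilde\phi)$ component-by-component in the $\sD^{\alpha,\theta,\delta,p}_{\rho,\bar T,T}$-norm via Schauder estimates (Lemma~\ref{lem:schauder estimate}) together with the weight $(T-t)^\delta$, with the lower bound $\delta>\tfrac{2\alpha-1+\epsilon}{2}$ entering exactly where you said, and the $P_{T-t}\phi$ term absorbed entirely into $w^{\#}$. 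The paper does not carry the $d/p$ correction you mention for the $L^\infty$-gradient bound (it passes to $\sC_p^{2\rho+1-2\delta}$ and uses $2\rho+1<2\alpha-1$), but that is a cosmetic difference, not a divergence in method.
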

\begin{proof}
Note that since $(w,w')$ is a fixed point of $\cM_{\bar{T}}$ associated with a terminal function $\phi$, we know that $M_{\bar{T}}(w,w') = w$ and $M_{\bar{T}}(w,w')' = \nabla w $. Moreover, 
\begin{equation}\label{eq:fixed point w}
   w(t) = P_{T-t}\phi + \cJ(\nabla w \cdot \mu)(t) 
\end{equation} holds for all $t \in (T-\bar{T}, T]$ and the same facts hold for $\tilde{w}$. Since \eqref{eq:fixed point w} is linear, we prove the lemma for $w$ for simplicity. By Proposition~\ref{prop:contraction mapping cM}, we have the first term on the r.h.s in \eqref{eq:barTdistance for fixed points}. For the second term, it suffices to obtain the upper bound \eqref{eq:barTnorm} for $w$ with $\|\phi\|_{\sC^{-\epsilon}_p}$ instead of $\|\phi  \|_{\sC^{\gamma}_p}$. Using \eqref{eq:fixed point w}, we need to bound 
\begin{equation}\label{eq:sD-norm of fixed point w}
    \|( w,w') \|_{\sD^{\alpha,\theta,\delta,p}_{\rho,\bar{T},T}} =  \|w \|_{C^\delta_{ \bar{T},T}\sC_p^\theta}+ \|\nabla w \|_{C^\delta_{\rho,\bar{T},T}L^\infty } 
      + \| \nabla w \|_{C^\delta_{ \bar{T},T}\sC_p^{\alpha-1}} + \|w^\#\|_{C^\delta_{ \bar{T},T}\sC_p^{2\alpha-1}} 
\end{equation} to extract the bound by $\|\phi\|_{\sC^{-\epsilon}_p}$. The first term on the r.h.s. of the above display, i.e., $ \|w \|_{C^\delta_{ \bar{T},T}\sC_p^\theta}$ is the upper bound by $\|\phi\|_{\sC^{-\epsilon}_p}$ since $\delta >\frac{2\alpha-1+\epsilon}{2} > \frac{\theta +\epsilon}{2}$ and 
\begin{equation}\label{eq:condition for delta 1}
     \| P_{T-\cdot} \phi \|_{C^\delta_{ \bar{T},T}\sC_p^\theta}  = \sup_{t\in (T-\bar{T},T]}(T- t)^{\delta } \| P_{T-t} \phi \|_{\sC_p^\theta}  \lesssim \| \phi\|_{\sC_p^{\theta-2\delta} } \leq \| \phi\|_{\sC^{-\epsilon}_p},
\end{equation} where we used Lemma~\ref{lem:schauder estimate} in the first inequality. For $\|\nabla w \|_{C^\delta_{\rho,\bar{T},T}L^\infty } $, we can see that for $t<s\in(T-\bar{T},T]$ 
\begin{equation*}
    \begin{aligned}
      \| \nabla P_{T-t}\phi -  \nabla P_{T-s}\phi\|_{L^\infty } &= \|(P_{s-t} - \textrm{Id})P_{T-s}\nabla \phi\|_{L^\infty}
   \\&\lesssim |t-s|^\rho \| P_{T-s} \nabla \phi\|_{\sC_p^{2\rho}}
   \\&\lesssim |t-s|^\rho  (T-s)^{-\delta }\| \phi\|_{\sC_p^{2\rho+1-2\delta}}.
    \end{aligned}
\end{equation*} This implies that 
\begin{equation}\label{eq:condition for delta 2}
    \| \nabla P_{T-\cdot}\phi\|_{C^\delta_{\rho,\bar{T},T}L^\infty }  \lesssim \| \phi\|_{\sC_p^{2\rho+1-2\delta}} \leq \| \phi\|_{\sC_p^{-\epsilon}},
    \end{equation} since $2\alpha-1 > 2\rho +1$ by the conditions on the parameters. For $\| \nabla w \|_{C^\delta_{ \bar{T},T}\sC_p^{\alpha-1}}$, it is enough to see that $\| \nabla w \|_{C^\delta_{ \bar{T},T}\sC_p^{\alpha-1}} \lesssim \|  w \|_{C^\delta_{ \bar{T},T}\sC_p^{\alpha}}$. To bound $\|w^\#\|_{C^\delta_{ \bar{T},T}\sC_p^{2\alpha-1}} $, note that 
    \begin{equation}
        \begin{aligned}
            w^\#(t) = w(t) - (\nabla w \prec \cJ(\mu))(t)
        \end{aligned}
    \end{equation} by \eqref{eq:definition of w sharp}. The only contribution of $\phi$ appears in the first term on the r.h.s. We use Lemma~\ref{lem:schauder estimate} to get 
    \begin{equation}\label{eq:condition for delta 3}
        (T-t)^\delta \| P_{T-t}\phi \|_{\sC_p^{2\alpha-1}} \lesssim  \| \phi\|_{\sC_p^{2\alpha-1-2\delta}}\lesssim \| \phi\|_{\sC_p^{-\epsilon}}.
    \end{equation} Putting all together and replacing $w$ by $w-\tilde{w}$, we have
    \begin{equation*}
        \| (w-\tilde{w},w' - \tilde{w}') \|_{\sD^{\alpha,\theta,\delta,p}_{\rho,\bar{T},T}} \lesssim \bar{T}^\kappa  (1+ \| \bmu \|_{\sX^\gamma})^2\| (w-\tilde{w},w' - \tilde{w}') \|_{\sD^{\alpha,\theta,\delta,p}_{\rho,\bar{T},T}}+ \| \phi -\tilde{\phi} \|_{\sC^{-\epsilon}_p},
    \end{equation*} which completes the proof.
    
\end{proof} 
 
Now we are ready to present one of the main results of this section which extends \cite[Theorem~3.10]{CC18} for a more larger class of terminal data including Dirac delta terminal data. We write $w^\phi_\mu$ for the solution $w$ of \eqref{eq:cauchy problem} with the terminal function $\phi$ and the drift $\mu$.

\begin{theorem}\label{thm:extension to the delta initial data}
  Let $p\in[1,\infty], \beta \in (-\frac{2}{3}, -\frac{1}{2}]$, $\frac{4}{3} < \theta < \gamma < \beta +2$ and $\epsilon, T>0.$ For $\phi \in  \sC^{-\epsilon}_p$ and $\mu \in\sC^{\beta }$ that can be enhanced to a lift $\bmu\in \sX^{\gamma}$ , the Cauchy problem~\eqref{eq:cauchy problem} has a unique mild solution $w_\mu^\phi$ in $C([0,T], \sC^{-2\epsilon \wedge \beta })$ such that $w_\mu^\phi(t) \in \sC^\alpha $ for all $t\in[0,T)$. Moreover, for all $t>0$ the map $\sC^{-\epsilon}_p \times \sC^{\beta} \rightarrow \sC^\alpha $ given by $(\phi , \mu) \mapsto w^{\phi}_{\mu}(t,\cdot)$ is locally Lipshitz.
\end{theorem}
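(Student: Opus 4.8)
The plan is to run a fixed-point argument for the map $\cM_{\bar T}$ on the space $\sD^{\alpha,\theta,\delta,p}_{\rho,\bar T,T}$, first for smooth terminal data and then pass to the limit using the quantitative contraction estimates of Proposition~\ref{prop:contraction mapping cM} and Lemma~\ref{lem:contraction mapping for fixed points of CM}. Concretely, I would proceed in four steps. First, I would fix the parameters: choose $\alpha$ with $\frac43<\alpha<\theta<\gamma<\beta+2$, pick $\rho\in(\frac{\theta-1}{2},\frac{\gamma-1}{2})$ and $\delta>\frac{2\alpha-1+\epsilon}{2}$ (so that all hypotheses of both Proposition~\ref{prop:contraction mapping cM} and Lemma~\ref{lem:contraction mapping for fixed points of CM} hold simultaneously), and reduce to $0<T<1$ by the usual time-splitting (the equation is autonomous and one can concatenate solutions on $[0,\bar T]$-type intervals, since $\sC^\alpha_p\hookrightarrow\sC^\gamma_{p}$-type embeddings let the endpoint data of one interval feed the next as a ``smooth enough'' terminal datum).

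\textbf{Step 1: smooth terminal data.} For $\phi\in\sC^\gamma_p$, Proposition~\ref{prop:contraction mapping cM} gives that $\cM_{\bar T}$ maps a suitable ball of $\sD^{\alpha,\theta,\delta,p}_{\rho,\bar T,T}$ into itself and is a contraction once $\bar T^\kappa(1+\|\bmu\|_{\sX^\gamma})^2<1$; choosing $\bar T$ small enough (depending only on $\|\bmu\|_{\sX^\gamma}$ and the fixed exponents), Banach's fixed point theorem yields a unique $(w,w')=(w^\phi_\mu,\nabla w^\phi_\mu)$ on $(T-\bar T,T]$, and iterating over finitely many such subintervals produces the solution on all of $[0,T)$ with $w^\phi_\mu(t)\in\sC^\alpha$ for $t<T$; standard Schauder bounds on $\cJ$ and the mild formulation show $w^\phi_\mu\in C([0,T],\sC^{-2\epsilon\wedge\beta})$ — in fact $C([0,T],\sC^\gamma_p)$ here — and that it solves \eqref{eq:cauchy problem}. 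This part is essentially \cite[Theorem~3.10]{CC18} adapted to the $p$-Besov, weighted-in-time setting.

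\textbf{Step 2: rough terminal data by approximation.} For general $\phi\in\sC^{-\epsilon}_p$, take smooth $\phi_n\to\phi$ in $\sC^{-\epsilon}_p$ (e.g. mollifications), let $w_n:=w^{\phi_n}_\mu$ be the solutions from Step~1, and apply Lemma~\ref{lem:contraction mapping for fixed points of CM} to the pair $(w_n,w_m)$: absorbing the $\bar T^\kappa(1+\|\bmu\|_{\sX^\gamma})^2$ term on the left (possible for $\bar T$ small), one obtains $d_{\sD^{\alpha,\theta,\delta,p}_{\rho,\bar T,T}}((w_n,\nabla w_n),(w_m,\nabla w_m))\lesssim\|\phi_n-\phi_m\|_{\sC^{-\epsilon}_p}$, so $(w_n,\nabla w_n)$ is Cauchy in the (complete) space $\sD^{\alpha,\theta,\delta,p}_{\rho,\bar T,T}$; its limit is the desired $w^\phi_\mu$ on $(T-\bar T,T]$, and concatenation extends it to $[0,T)$. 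The weight $(T-t)^\delta$ in the norm \eqref{eq:norm_D} is exactly what allows the $\sC^\alpha$-regularity at times $t<T$ to be uniform in $n$ while permitting a blow-up as $t\uparrow T$, which is unavoidable for Dirac-type terminal data. Passing to the limit in the mild equation and in the distributional sense of \eqref{eq:cauchy problem} (using continuity of the relevant paraproduct and resonant-product maps in $\bmu$, and the Schauder estimate $(T-t)^\delta\|P_{T-t}\phi\|_{\sC^{2\alpha-1}_p}\lesssim\|\phi\|_{\sC^{-\epsilon}_p}$ from \eqref{eq:condition for delta 3}) shows $w^\phi_\mu$ solves the Cauchy problem and lies in $C([0,T],\sC^{-2\epsilon\wedge\beta})$; uniqueness in this class follows from the same contraction estimate applied to any two solutions.

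\textbf{Step 3: joint local Lipschitz dependence.} For the final claim, fix $t>0$ and vary both $\phi$ and $\mu$. Given $(\phi,\mu)$ and $(\tilde\phi,\tilde\mu)$ with enhancements $\bmu,\tilde\bmu\in\sX^\gamma$ in a bounded set, combine the contraction estimate \eqref{eq:barTdistance} (which controls the dependence on $\bmu$, since $\cM_{\bar T}$ is jointly Lipschitz in $(w,w')$ and in $\bmu$ on bounded sets — this is visible from the multilinear structure of $\cJ(\nabla w\cdot\mu)$ and the paracontrolled correction) with the terminal-data estimate \eqref{eq:barTdistance for fixed points}; solving the resulting linear inequality for $d_{\sD^{\alpha,\theta,\delta,p}_{\rho,\bar T,T}}((w^\phi_\mu,\cdot),(w^{\tilde\phi}_{\tilde\mu},\cdot))$ (again absorbing the small factor) bounds it by $C(\|\phi-\tilde\phi\|_{\sC^{-\epsilon}_p}+\|\bmu-\tilde\bmu\|_{\sX^\gamma})$, and evaluating the $C^\delta_{\bar T,T}\sC^\theta_p$-component at the fixed $t\in(T-\bar T,T]$ (where $(T-t)^\delta$ is a positive constant) and then $\sC^\theta_p\hookrightarrow\sC^\alpha$ gives the asserted local Lipschitz continuity of $(\phi,\mu)\mapsto w^\phi_\mu(t,\cdot)$; the dependence on $\mu$ rather than $\bmu$ uses that the lift $\mu\mapsto\bmu$ is (locally Lipschitz) continuous on the relevant subspace, as in Definition~\ref{def:rough distribution}. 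Concatenation across subintervals turns this into Lipschitz dependence at every $t>0$.

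\textbf{Main obstacle.} The technical heart is Step~2: showing that the $\sD^{\alpha,\theta,\delta,p}_{\rho,\bar T,T}$-norm of the approximating solutions, and in particular of the paracontrolled remainder $w^\#_n=w_n-\nabla w_n\prec\cJ(\mu)$, can be controlled by $\|\phi_n\|_{\sC^{-\epsilon}_p}$ uniformly in $n$ despite the low terminal regularity. This is precisely what Lemma~\ref{lem:contraction mapping for fixed points of CM} is designed to deliver, but one must check that every term in \eqref{eq:sD-norm of fixed point w} — the $C^\delta_{\rho,\bar T,T}L^\infty$ gradient seminorm and the $\sC^{2\alpha-1}_p$ remainder in particular — tolerates the $(T-t)^\delta$ weight with the chosen $\delta>\frac{2\alpha-1+\epsilon}{2}$; the inequalities \eqref{eq:condition for delta 1}, \eqref{eq:condition for delta 2}, \eqref{eq:condition for delta 3} are the exact Schauder estimates that make this work, and the parameter window $\frac43<\alpha$, $2\alpha-1>2\rho+1$ is what keeps the resonant product $\nabla w\circ\nabla(\text{something})$ and the commutator estimates inside positive-regularity regimes. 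Everything else is a routine, if lengthy, adaptation of the arguments of \cite{CC18} and \cite[Section~2]{PZ22} to the present weighted $L^p$-based scales.
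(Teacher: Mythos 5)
Your proposal follows essentially the same route as the paper's own proof: Step~1 (contraction on $\sD^{\alpha,\theta,\delta,p}_{\rho,\bar T,T}$ for smooth $\phi_n\in\sC^\gamma_p$ via Proposition~\ref{prop:contraction mapping cM}), Step~2 (Cauchy sequence of fixed points from Lemma~\ref{lem:contraction mapping for fixed points of CM} and passage to the limit in the complete metric space), and Step~3 (Lipschitz dependence by splitting the telescoping difference $\cM^{\bmu_1}(w_1)-\cM^{\bmu_1}(w_2)$ plus $\cM^{\bmu_1}(w_2)-\cM^{\bmu_2}(w_2)$ and absorbing the $\bar T^\kappa$ factor) are precisely the three moves the paper makes, in the same order and with the same estimates~\eqref{eq:condition for delta 1}--\eqref{eq:condition for delta 3} doing the heavy lifting. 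The only slip is cosmetic: in your time-concatenation remark the claimed embedding ``$\sC^\alpha_p\hookrightarrow\sC^\gamma_p$'' runs the wrong way (here $\alpha<\theta<\gamma$), but this is immaterial since the endpoint datum $w^\phi_\mu(T-\bar T)\in\sC^\alpha_p\hookrightarrow\sC^{-\epsilon}_p$ is all one needs to restart the construction on the next subinterval.
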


\begin{proof}

 Fix $\phi  \in \sC^{-\epsilon}_p$. Suppose  $\{ \phi_n\}_{n\in \dN} $ be a sequence of functions in $\sC^{\gamma}_p$ such that 
\begin{equation}\label{eq:sequence converging to initial data}
  \|\phi_n - \phi\|_{\sC^{-\epsilon}_p} \rightarrow 0
\end{equation} as $n\rightarrow \infty$. Then by Proposition~\ref{lem:contraction mapping for fixed points of CM}, for each $n,$ and $\bar{T}\in (0, T)$, we have a unique solution $(w_n, w_n')\in \sD^{\alpha,\theta,\delta,p}_{\rho,\bar{T},T}$ of the fixed point problem of \eqref{eq:fixed point map for cM}. Since $(w_n,w_n')$ are fixed point of the map $\mathcal{M}_T$,  we have $\mathcal{M}_T(u_n, u_n') = (u_n, u_n')$ for all $n$ and use Lemma~\ref{lem:contraction mapping for fixed points of CM} to obtain 
\begin{equation}
  \|(w_n, w_n') - (w_m,w_m')\|_{\sD^{\alpha,\theta,\delta,p}_{\rho,\bar{T},T}} \lesssim \bar{T}^\kappa (1+ \| \bmu \|_{\sX^\gamma})^2  \|(w_n,w_n') - (w_m,w_m')\|_{\sD^{\alpha,\theta,\delta,p}_{\rho,\bar{T},T}} + \| \phi_n-\phi_m\|_{\sC^{-\epsilon}_p}.
\end{equation} 
We choose $\bar{T}>0$ (not depending on $n$) small such that we may rewrite the above inequality as 
$$\|(w_n,w_n') - (w_m,w_m')\|_{\sD^{\alpha,\theta,\delta,p}_{\rho,\bar{T},T}} \lesssim \| \phi_n-\phi_m\|_{\sC^{-\epsilon}_p}.$$
The above display implies that 
$ \{(w_n,w_n')\}_{n\in \dN}$ is a Cauchy sequence in $\sD^{\alpha,\theta,\delta,p}_{\rho,\bar{T},T}$ due to \eqref{eq:sequence converging to initial data}. Since $\sD^{\alpha,\theta,\delta,p}_{\rho,\bar{T},T}$ is closed,  there is a unique limit point $(w,w')\in \sD^{\alpha,\theta,\delta,p}_{\rho,\bar{T},T}$ of $ \{(w_n,w_n'))\}_{n\in \dN}$. Since $(w,w')$ is a limit of the set of fixed points under the map $\mathcal{M}_{\bar{T}}$, $(w,w')$ is also a fixed point under of the same map. As a result, we get  
\begin{equation}
  \begin{aligned}
    w = M_{\bar{T}}(w,w') = \cJ(\nabla w \cdot \mu) + P_{T-\cdot}\phi
  \end{aligned}
\end{equation} in $C^\delta_{\bar{T},T}\sC^{\theta}_p$.  
We now verify that $w$ solves this equation started from the initial data $\phi\in \sC^{-\epsilon}_p$. This follows from the following estimates:
\begin{equation}
  \| P_{T-t}\phi - \phi \|_{\sC^{-2\epsilon}_p}\lesssim (T-t)^{\epsilon}\|u_0 \|_{\sC^{-\epsilon}_p},
\end{equation} and 
\begin{equation}
  \| \cJ(\nabla u \cdot \mu)(t)\|_{\sC^{-2\epsilon}_p}\lesssim  (T-t)^{\vartheta} \sup_{s\in(T-\bar{T},T]} (T-s)^{\gamma} \| \nabla w \cdot \mu \|_{\sC^{\alpha+\beta-1}}, \quad \vartheta := \frac{\alpha+\beta-1}{2}+\epsilon -\gamma +1, 
\end{equation}
for some $\gamma \in (0,1)$. 
We obtain the first inequality by applying the second Schauder's estimates from Lemma~\ref{lem:schauder estimate}. The second inequality is obtained by applying part (1) of Corollary 2.5 from \cite{CC18}. Recall that $\alpha>\frac{4}{3}$, $\beta \in (-\frac{2}{3}, -\frac{1}{2}]$. We choose $\gamma$ to be very small such that $\vartheta>0$. Therefore the right hand side of the inequalities of the above display tends to $0$ as $t$ goes to $\infty$. 
This shows $w\in C([T-\bar{T},T],\sC^{-2\epsilon}_p) \cap C^\delta_{\bar{T},T}\sC^{\theta}_p  $ is the solution to the Cauchy problem
\begin{equation}
   w(t) = \cJ(\nabla w \cdot \mu)(t) + P_{T-t} u_0, \quad t\in[0,T)
\end{equation} and $w(T,\cdot)=\phi$, or equivalently to \eqref{eq:cauchy problem}.

 Now we claim that for $t>0$ the map $\sC^{-\epsilon}_p \times \sX^\gamma \rightarrow \sD^{\alpha,\theta,\delta,p}_{\rho,\bar{T},T}$ given by $(\phi, \bmu) \mapsto (u^{\phi}_{\bmu_1}(t,\cdot), \nabla u^{\phi}_{\bmu_2}(t,\cdot))$ is locally Lipschitz. If we let $\phi_1,\phi_2\in \sC^{-\epsilon}_p$, $\bmu_1,\bmu_2 \in \sC^{\beta}$, $w_i= w^{\phi_i}_{\bmu_i}$ and $w'_i= \nabla w^{\phi_i}_{\bmu_i}$ be the solution for each $i=1,2$, then we have 
\begin{equation}\label{eq:Lipschitz}
   \begin{aligned}
     \|(w_1,w'_1) - (w_2,w'_2)\|_{\sD^{\alpha,\theta,\delta,p}_{\rho,\bar{T},T}} & \leq  \|\mathcal{M}^{\bmu_1}_{\bar{T}}(w_1,w'_1) - \mathcal{M}^{\boldsymbol{\mu}_1}_{\bar{T}}(w_2,w'_2)\|_{\sD^{\alpha,\theta,\delta,p}_{\rho,\bar{T},T}} +  \|\mathcal{M}^{\boldsymbol{\mu}_1}_{\bar{T}}(w_2,w'_2)- \mathcal{M}^{\boldsymbol{\mu}_2}_{\bar{T}}(w_2,w'_2)\|_{\sD^{\alpha,\theta,\delta,p}_{\rho,\bar{T},T}}
   \end{aligned}
 \end{equation}
 Due to \eqref{eq:barTnorm} of Proposition~\ref{prop:contraction mapping cM}, we have 
 \begin{align}\label{eq:1stInequality}
 \|\mathcal{M}^{\bmu_1}_{\bar{T}}(w_1,w'_1) - \mathcal{M}^{\boldsymbol{\mu}_1}_{\bar{T}}(w_2,w'_2)\|_{\sD^{\alpha,\theta,\delta,p}_{\rho,\bar{T},T}} 
 \leq  \|\phi_1 - \phi_2\|_{\sC^{-\epsilon}_p} + \bar{T}^{\kappa} (1+\| \bmu_1 \|_{\sX^{\gamma}})^2\| (w_1, w'_1) - (w_2, w'_2)\|_{\sD^{\alpha,\theta,\delta,p}_{\rho,\bar{T},T}} 
 \end{align}
 Furthermore, notice that $\mathcal{M}^{\bmu_1}_{\bar{T}}(w_2,w'_2) = (P_{T-\cdot}\phi_2+ \mathcal{J}(\nabla w_2\cdot\bmu_1), \nabla w_2)$ and $\mathcal{M}^{\bmu_2}_{\bar{T}}(w_2,w'_2) = (P_{T-\cdot}\phi_2+ \mathcal{J}(\nabla w_2\cdot\bmu_2), \nabla w_2)$. As a result, we get 
 $$\mathcal{M}^{\bmu_1}_{\bar{T}}(w_2,w'_2) - \mathcal{M}^{\bmu_2}_{\bar{T}}(w_2,w'_2) = (\mathcal{J}(\nabla w_2\cdot(\bmu_1-\bmu_2)), 0).$$
Using Corollary~2.5 of \cite{CC18}, we have 
\begin{align}\label{eq:2ndInequality}
\|(\mathcal{J}(\nabla w_2\cdot(\bmu_1-\bmu_2)), 0)\|_{\sD^{\alpha,\theta,\delta,p}_{\rho,\bar{T},T}} 
\lesssim \bar{T}^\kappa (1+\| \bmu_1-\bmu_2\|_{\sX^{\gamma}})\| \bmu_1-\bmu_2\|_{\sX^{\gamma}} (1+ \|(w_2, w'_2) \|_{\sD^{\alpha,\theta,\delta,p}_{\rho,\bar{T},T}}).
\end{align}
Applying the bounds from \eqref{eq:1stInequality} and \eqref{eq:2ndInequality} into the right hand side of \eqref{eq:Lipschitz} yields
\begin{align*}
    \text{r.h.s. of \eqref{eq:Lipschitz}} & \lesssim  \|\phi_1 - \phi_2\|_{\sC^{-\epsilon}_p} + \bar{T}^{\kappa} (1+\| \bmu_1 \|_{\sX^{\gamma}})^2\| (w_1, w'_1) - (w_2, w'_2)\|_{\sD^{\alpha,\theta,\delta,p}_{\rho,\bar{T},T}} \\
     &+ \bar{T}^\kappa (1+\| \bmu_1-\bmu_2\|_{\sX^{\gamma}})\| \bmu_1-\bmu_2\|_{\sX^{\gamma}} (1+ \| (w_2,w'_2) \|_{\sD^{\alpha,\theta,\delta,p}_{\rho,\bar{T},T}})
 \end{align*}
The above inequality proves $(\phi,\bmu)\mapsto u^{\phi}_{\bmu}(t,\cdot)$ is locally Lipschitz. This completes the proof. 
\end{proof}

To apply Proposition~\ref{prop:contraction of resolvent equation} and Theorem~\ref{thm:extension to the delta initial data} to the SDE in Theorem~\ref{thm:Feynman-Kac representation}, we need to verify that $\nabla(Z+Y)$ can be lifted to the rough distribution which is done as follows. 

\begin{lemma}\label{lem:lifting lemma}
  Let $\alpha \in(\frac{1}{3} , \frac{1}{2}]$. Suppose that $\mu = \nabla( Z+Y)$ where $Z = (1-\frac{1}{2}\Delta)^{-1} \xi \in \sC^\alpha$ and $Y \in \sC^{2\alpha}$. Then $\mu$ can be lifted a rough distribution $\bmu \in \sX^\gamma$ in the sense of Definition~\ref{def:rough distribution} with $\frac{1}{3}<\gamma<\alpha$. 
\end{lemma}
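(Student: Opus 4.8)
The plan is to build the lift $\bmu=(\mu,\mu')$ essentially by hand, separating the one genuinely ill-defined resonant product and recognizing it as an already-constructed object. First I would write $\mu=\nabla Z+\nabla Y$, so that $\nabla Z\in\sC^{\alpha-1}$ is the rough part while $\nabla Y\in\sC^{2\alpha-1}$ is the better-behaved one (recall $\alpha\le\tfrac12$). A lift must have second coordinate $\mu':=\cI(\partial_j\mu^i)\circ\mu^j$, and expanding $\mu=\nabla Z+\nabla Y$ componentwise splits $\mu'$ into four terms,
\[
\cI(\partial_j\partial_i Z)\circ\partial_j Z,\quad \cI(\partial_j\partial_i Z)\circ\partial_j Y,\quad \cI(\partial_j\partial_i Y)\circ\partial_j Z,\quad \cI(\partial_j\partial_i Y)\circ\partial_j Y,
\]
of which only the first resists a classical definition.

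For that first term I would use that $\cI$ is a regularizing operator of order two commuting with $\partial$, so $\cI(\partial_j\partial_i Z)=\partial_j\partial_i\cI(Z)$; since $\cQ_L=\cI(\nabla Z_L)$, this term is, up to relabeling the indices, exactly the sixth coordinate $\nabla\cQ_L\circ\nabla Z_L$ of the enhanced noise $\fZ_L$ of Definition~\ref{def:enhanced noise}. Hence Theorem~\ref{thm:renormalization of enhanced noise} both gives it a meaning and hands me the approximation I will need: along a suitable mollification $\xi_\epsilon$ of the noise one has $\fZ_\epsilon\to\fZ_L$ in $L^p(\Omega,\sH^\varrho)$, hence (along a subsequence, a.s.) $Z_\epsilon\to Z$ in $\sC^\varrho$ for some $\varrho<\tfrac12$ close to $\tfrac12$ and $\nabla\cQ_{L,\epsilon}\circ\nabla Z_{L,\epsilon}\to\nabla\cQ_L\circ\nabla Z_L$ in $\sC^{2\varrho-1}$, crucially with no renormalization constant on this coordinate. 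Should the operator $\cI$ of Definition~\ref{def:rough distribution} fail to coincide with the one defining $\cQ_L$, their difference gains four derivatives, so the correction $(\cI-\cI_\eta)(\partial_j\partial_i Z)\in\sC^{\alpha+2}$ and its resonant product against $\partial_j Z\in\sC^{\alpha-1}$ is classical and continuous; I would simply move it into the next step.

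Next I would verify that the three remaining terms are classically well-defined, and this is where the hypothesis $\alpha>\tfrac13$ is consumed. Since $\cI$ gains two derivatives, $\cI(\partial_j\partial_i Z)\in\sC^{\alpha}$ and $\cI(\partial_j\partial_i Y)\in\sC^{2\alpha}$, while $\partial_j Z\in\sC^{\alpha-1}$ and $\partial_j Y\in\sC^{2\alpha-1}$; the summed regularities of the three resonant products are therefore $3\alpha-1$, $3\alpha-1$ and $4\alpha-1$, all strictly positive precisely because $\alpha>\tfrac13$. Bony's estimates then exhibit each of these terms as a continuous map of $(Z,Y)\in\sC^{\varrho}\times\sC^{2\alpha}$ into a H\"older space of positive index, and combining this with $Z_\epsilon\to Z$ in $\sC^\varrho$ and $Y_\epsilon\to Y$ in $\sC^{2\alpha}$ (Proposition~\ref{prop:resolvent eqaution}) shows these three terms, evaluated at $(Z_\epsilon,Y_\epsilon)$, converge to their values at $(Z,Y)$ in the respective spaces.

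Finally I would run the approximation. For smooth $\xi_\epsilon$ one has $Z_\epsilon=\cI_\eta(\xi_\epsilon)\in C^\infty$, and an elliptic bootstrap on \eqref{eq:resolvent equation for epsilon, L} gives $Y_\epsilon\in C^\infty$, so $\mu_\epsilon:=\nabla(Z_\epsilon+Y_\epsilon)$ is smooth (and if one prefers not to argue the bootstrap, one mollifies $\mu_\epsilon$ once more, harmlessly), whence $\cK(\mu_\epsilon)\in\sX^\gamma$ by the very definition of $\sX^\gamma$. By the previous two paragraphs, $\cK(\mu_\epsilon)=\bigl(\mu_\epsilon,\cI(\partial_j\mu_\epsilon^i)\circ\mu_\epsilon^j\bigr)$ converges to $(\mu,\mu')=\bmu$ in $\cH^\gamma=\sC^{\gamma-2}\times\sC^{2\gamma-3}$: the first coordinate converges since $\nabla Z_\epsilon\to\nabla Z$ in $\sC^{\varrho-1}$ and $\nabla Y_\epsilon\to\nabla Y$ in $\sC^{2\alpha-1}$, both of which embed into $\sC^{\gamma-2}$ for $\gamma<\alpha$; the second converges since each of the four term-types converges in a H\"older space embedding into $\sC^{2\gamma-3}$. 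Since $\sX^\gamma$ is by construction closed in $\cH^\gamma$, this yields $\bmu\in\sX^\gamma$, i.e. $\mu$ lifts. The one place where real work is needed is the regularity bookkeeping of the third paragraph (and noting that $3\alpha-1>0$ is exactly the constraint $\alpha>\tfrac13$); the singular resonant product $\cI(\partial_j\partial_i Z)\circ\partial_j Z$, which one might expect to be the obstacle, is in fact no obstacle at all here, being handed over renormalization-free as the sixth entry of $\fZ_L$ by Theorem~\ref{thm:renormalization of enhanced noise}.
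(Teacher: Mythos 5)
Your proof is correct and follows essentially the same route as the paper: split $\mu'=\cI(\partial_j\mu^i)\circ\mu^j$ into the four cross terms, recognize the one singular resonant product $\cI(\partial_j\partial_i Z)\circ\partial_j Z=\partial_j(\cQ_L)^i\circ\partial_j Z$ as the sixth coordinate of the enhanced noise handed over (renormalization-free) by Theorem~\ref{thm:renormalization of enhanced noise}, and observe the remaining three are classical via Bony since their summed regularities are at least $3\alpha-1>0$. You are in fact more explicit than the paper on two points that it glosses over---the closure/approximation step actually placing $\bmu$ in $\sX^\gamma$, and the mismatch between $\cI_\eta$ and the resolvent in Definition~\ref{def:rough distribution}---which is welcome, but the decomposition, the key theorem invoked, and the regularity bookkeeping are the same.
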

\begin{proof} We need to verify that the second component of the rough distribution $\cJ(\mu) \circ \mu $ is well-defined, i.e., can be approximated by a suitable sequence in the sense of Definition~\ref{def:rough distribution}. Recalling $Y\in\sC^{2\alpha}$ and $Z\in \sC^{\alpha}$, we know that $\cJ ( \nabla \cdot \nabla Z )\circ \nabla Y$, $\cJ(\nabla \cdot \nabla Y) \circ \nabla Z$ and   $\cJ ( \nabla \cdot \nabla Y) \circ \nabla Y $ are well defined since their regularities are at least $3\alpha-1>0$.  Furthermore, since $\cJ $ and $\nabla $ commute, we have  
   \begin{equation*}
      \cJ(\nabla \cdot \nabla Z) \circ \nabla Z = \nabla\cJ(\nabla Z) \circ \nabla Z.
   \end{equation*} Note that Theorem~\ref{thm:renormalization of enhanced noise} implies that $\nabla \cI ( \nabla Z) \circ \nabla Z)$ is well defined. Since $\cI$ and $\cJ$ have no significant difference on their estimation, Theorem~\ref{thm:renormalization of enhanced noise} ensures that the above term is well defined. Therefore, we can conclude that $\cJ( \mu ) \circ \mu$ given by 
  \begin{equation*}
     \cJ(\nabla \cdot \nabla Z) \circ \nabla Z + \cJ( \nabla \cdot \nabla Z )\circ \nabla Y + \cJ(\nabla \cdot \nabla Y) \circ \nabla Z + \cJ ( \nabla \cdot \nabla Y) \circ \nabla Y
   \end{equation*}  is well defined.
\end{proof}


Since $(\phi,\bmu)\mapsto u^{\phi}_{\bmu}(t,\cdot)$ is locally Lipschitz by Theorem~\ref{thm:extension to the delta initial data}, we can get the following proposition.

\begin{proposition}\label{prop:approximation of transition kernel} Let $\frac{4}{3} < \alpha < \gamma<\beta +2$ with $\beta\in (-\frac{2}{3},-\frac{1}{2}]$. Let $\bmu \in \sX^\gamma$ and $\{\bmu_n\}_{n\in\dN}\subseteq \sC^\infty\times \sC^\infty_{\dR^d}$ be a sequence which converges to $\bmu \in \sX^\gamma$ in $\sH^\gamma$ (see Definition~\ref{def:rough distribution}). Let $\Gamma_t(x,y) =  w^{\delta_y, \mu}(t,x) $ and $\Gamma^{(n)}_t(x,y) = w^{\delta_y, \mu_n}(t,x)$. Then  $\Gamma_t(x,y)$ and $\Gamma^{(n)}_t(x,y)$ are continuous on $\dR^d \times \dR^d$ and for all $t\in[0,T)$, we have 
\begin{equation}
  \sup_{x,y\in \dR^d} | \Gamma_t(x,y)-\Gamma^{(n)}_t(x,y)| \rightarrow 0,
\end{equation} as $n \rightarrow \infty.$

\end{proposition}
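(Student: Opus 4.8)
The plan is to deduce the statement from the local Lipschitz continuity of the solution map $(\phi,\bmu)\mapsto w^{\phi}_{\bmu}(t,\cdot)$ established in Theorem~\ref{thm:extension to the delta initial data}, together with the convergence of the approximating enhancements $\bmu_n\to\bmu$ in $\sH^\gamma$. The two assertions to prove are: (i) continuity of $(x,y)\mapsto\Gamma_t(x,y)$ and of $(x,y)\mapsto\Gamma^{(n)}_t(x,y)$ on $\dR^d\times\dR^d$; and (ii) uniform convergence $\sup_{x,y}|\Gamma_t(x,y)-\Gamma^{(n)}_t(x,y)|\to 0$ for each fixed $t\in[0,T)$.

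First I would address continuity in $y$. Fix $t\in[0,T)$ and note that $\Gamma_t(x,y)=w^{\delta_y,\mu}(t,x)$, where $w^{\delta_y,\mu}(t,\cdot)\in\sC^\alpha$ with $\alpha>\tfrac43>1$, so in particular $w^{\delta_y,\mu}(t,\cdot)$ is a $C^1$ (indeed Hölder-$C^1$) function of $x$; evaluation at $x$ is therefore continuous and, using the $\sC^\alpha$-bound, it is locally uniformly bounded in $y$. To get joint continuity it suffices to show $y\mapsto w^{\delta_y,\mu}(t,\cdot)$ is continuous as a map into $\sC^\alpha$: since $\alpha<\gamma<\beta+2$ and $\epsilon$ may be chosen so that $\delta_y\in\sC^{-\epsilon}_p$ for a suitable $p$ (translation acts continuously on Besov spaces of negative regularity, so $y\mapsto\delta_y$ is continuous into $\sC^{-\epsilon}_p$), the local Lipschitz property in the $\phi$-variable from Theorem~\ref{thm:extension to the delta initial data} transfers this continuity to $w^{\delta_y,\mu}(t,\cdot)\in\sC^\alpha$. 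Combining with the (Hölder-)continuity in $x$ gives joint continuity of $\Gamma_t$; the same argument with $\mu$ replaced by the smooth drift $\mu_n$ gives continuity of $\Gamma^{(n)}_t$ (here one may alternatively invoke classical parabolic regularity since $\mu_n$ is smooth).

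Next I would prove the uniform convergence. By Theorem~\ref{thm:extension to the delta initial data}, for each fixed $t\in[0,T)$ the map $(\phi,\bmu)\mapsto w^{\phi}_{\bmu}(t,\cdot)\in\sC^\alpha$ is locally Lipschitz; applying it with $\phi=\delta_y$ (held fixed) and the two drifts $\bmu,\bmu_n$, and using that $\|\bmu_n\|_{\sX^\gamma}$ stays in a bounded set (as $\bmu_n\to\bmu$ in $\sH^\gamma$), we obtain a constant $C=C(t,T,\alpha,\gamma,p,\sup_n\|\bmu_n\|_{\sX^\gamma})$, \emph{independent of $y$}, with
\begin{equation}
\|w^{\delta_y,\mu}(t,\cdot)-w^{\delta_y,\mu_n}(t,\cdot)\|_{\sC^\alpha}\le C\,\|\bmu-\bmu_n\|_{\sX^\gamma}.
\end{equation}
Since $\alpha>0$, the $\sC^\alpha$-norm dominates the sup-norm, so $\sup_x|\Gamma_t(x,y)-\Gamma^{(n)}_t(x,y)|\le C\|\bmu-\bmu_n\|_{\sX^\gamma}$ for every $y$, and taking the supremum over $y$ as well yields $\sup_{x,y}|\Gamma_t(x,y)-\Gamma^{(n)}_t(x,y)|\le C\|\bmu-\bmu_n\|_{\sX^\gamma}\to 0$. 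The key points making this work are that the Lipschitz constant produced by Theorem~\ref{thm:extension to the delta initial data} depends on the data only through $\|\phi\|_{\sC^{-\epsilon}_p}$ and $\|\bmu\|_{\sX^\gamma}$ — and $\|\delta_y\|_{\sC^{-\epsilon}_p}$ does not depend on $y$ — and that $\bmu_n\to\bmu$ in $\sH^\gamma$ entails $\|\bmu-\bmu_n\|_{\sX^\gamma}\to 0$ with the $\bmu_n$ remaining bounded.

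The main obstacle I anticipate is the bookkeeping around the function spaces: one must check carefully that $\delta_y\in\sC^{-\epsilon}_p$ with a $y$-independent norm for an admissible pair $(\epsilon,p)$ compatible with the hypotheses $\tfrac43<\theta<\gamma<\beta+2$ of Theorem~\ref{thm:extension to the delta initial data} (this is where the Besov parameter $p\in[1,\infty]$ introduced in that theorem is genuinely used, since $\delta_y\notin\sC^{-\epsilon}_\infty$ for $\epsilon<d$ but $\delta_y\in\sC^{-\epsilon}_p$ once $\epsilon>d(1-1/p)$, e.g. $\epsilon$ slightly above $0$ with $p=1$), and that translation $y\mapsto\delta_y$ is continuous into that space. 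Once the spaces are pinned down, the continuity and convergence statements follow purely formally from Theorem~\ref{thm:extension to the delta initial data} together with Lemma~\ref{lem:lifting lemma}, which guarantees $\mu=\nabla(Z+Y)$ admits a lift $\bmu\in\sX^\gamma$ so that the theorem applies.
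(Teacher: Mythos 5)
The paper does not give a separate proof for Proposition~\ref{prop:approximation of transition kernel}; it simply states that the result follows from the local Lipschitz property established in Theorem~\ref{thm:extension to the delta initial data}, which is exactly the route you take. Your elaboration is correct: the two points you flag as requiring care (that $\|\delta_y\|_{\sC^{-\epsilon}_p}$ is translation-invariant, hence $y$-independent, and that $\bmu_n\to\bmu$ in $\sH^\gamma$ forces $\sup_n\|\bmu_n\|_{\sX^\gamma}<\infty$) are indeed what make the Lipschitz constant from Theorem~\ref{thm:extension to the delta initial data} uniform over $y$ and $n$, and the embedding $\sC^\alpha\hookrightarrow L^\infty$ for $\alpha>0$ converts the $\sC^\alpha$-estimate into the desired sup-norm bound.
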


 Now we are in the position to prove Theorem~\ref{thm:existence of the transition kernel}, which is the main result of this section.
 \begin{proof}[\bf Proof of Theorem~\ref{thm:existence of the transition kernel}] Lemma~\ref{lem:lifting lemma} shows that $\mu = \nabla(Z+Y)\in\sC^\alpha$ can be lifted to a rough distribution $\bmu\in\sX^\gamma$. Thus, we can apply Theorem~\ref{thm:extension to the delta initial data} to get the unique solution $w^{{\delta_y},\mu}$ for $y\in\dR^d$ since $\delta_y\in \sC^{-d(1-1/p)}_p$ for any $p\in[1,\infty]$. Since $\bmu$ is a rough distribution, we can take a sequence $\bmu_n : = (\mu_n, \mu'_n)\in \sC^{\infty}\times \sC^\infty_{\dR^d}$ for $n\in \dN$, which converges to $\bmu=(\mu , \mu')$ in $\sH^\gamma$ for $\alpha < \gamma< \beta +2$ as in Proposition~\ref{prop:approximation of transition kernel}. It is known that for each $n\in\dN$, there exists the transition density $\Gamma^{(n)}_t(x,y) = w^{\delta_y, \bmu_n}(t,x)$ for all $(t,x,y) \in (0,T)\times \dR^{2d}$ (see for instance \cite[Theorem~6.5.4]{friedman2010stochastic}). Let $\dP^{(n)}_x$ be the unique probability measure on $C([t,T], \dR^d)$ such that the coordinate process $X^{(n)}$ is the solution to the martingale problem for $(\cL^{(n)}, x)$ where $\cL^{(n)}:= \frac{1}{2}\Delta + \mu_n \cdot \nabla$.  By Theorem~\ref{thm:well-posedenss of martingale problem}, we know that $\dP^{(n)}_x$ weakly converges to $\dP_x $ that appears in Theorem~\ref{thm:well-posedenss of martingale problem}. Suppose $\Gamma_t(\cdot,\cdot)$ is the transition kernel of $\mathbb{P}_x$. Therefore, with Proposition~\ref{prop:approximation of transition kernel}, we can see that for all $\psi\in C_c(\dR^d)$ 
 \begin{equation*}
     \dE_x[\psi(X_T)] = \lim_{n\rightarrow \infty}\dE_x[\psi(X^{(n)}_T)] = \lim_{n\rightarrow \infty} \int_{\dR^d}\psi(y) \Gamma^{(n)}_t(x,y) dy = \int_{\dR^d} \psi(y) \Gamma_t(x,y) dy.
 \end{equation*} This shows $\Gamma_t(x,y) = w^{\delta_y,\mu}(t,x)$ and hence, completes the proof. 
 
 \end{proof}



\subsection{Transition kernel estimate}\label{subsec:transition kernel estimates}

We now provide the upper and lower bounds for the estimate of $\Gamma_t(x,y) : = w^{\delta_y,\mu}(t,x)$. We first consider $\mu$ is the form of $\nabla U$ where $U$ denotes a smooth and bounded function. Note that Proposition~\ref{prop:approximation of transition kernel} allows us to extend the estimates to the case $\mu=\nabla (Z+Y)$ as in Section~\ref{subsec:existence of transition density}.

\begin{theorem}\label{thm:transition density estimate}
 Suppose that $\mu$ is the form of $\nabla U$ where $U:\dR^d \rightarrow \dR^d$ is a smooth and bounded function. Then, for the transition density $\Gamma_t(x,y)$ of the solution $X$ to the SDE \eqref{eq:SDE with distributional distribution}, there exist constants $C_1,C_2,C_3>0$ such that for all $t>0, x,y\in \dR^d$, we have 
\begin{align}
  \Gamma_t(x,y) &\leq \frac{1}{t^{d/2}} \exp \left( C_1 \lVert U \rVert_\infty - \frac{|y-x|^2}{4t}\right)\label{eq:upper bound of transition density},\\
  \Gamma_t(x,y) &\geq \frac{1}{t^{d/2}} \exp \left( -C_2\lVert U \rVert_\infty -\frac{C_3 \lVert U \rVert_\infty |y-x|^2}{t}\right).\label{eq:lower bound of transition density}
\end{align} In addition, the same estimates hold when $U=Z^y_L+Y^y_L$ where $Z^y_L$ and $Y^y_L$ are defined in Section~\ref{sec:feynman-kac representation}.
\end{theorem}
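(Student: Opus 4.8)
The plan is to prove the bounds first for smooth bounded $U$, where $\Gamma_t(x,y)$ is a classical heat kernel of the operator $\cL = \tfrac12\Delta + \nabla U\cdot\nabla$, and then transfer them to $U = Z^y_L+Y^y_L$ by the approximation result of Proposition~\ref{prop:approximation of transition kernel}. The key observation for the smooth case is that $\cL$ is symmetrizable: writing $\cL = \tfrac12 e^{-2U}\,\mathrm{div}\,(e^{2U}\nabla\,\cdot\,)$ shows that $\cL$ is self-adjoint with respect to the measure $\mathrm{d}m(x) = e^{2U(x)}\,\mathrm{d}x$, and the associated Dirichlet form is $\mathcal{E}(f,f) = \tfrac12\int |\nabla f|^2 e^{2U}\,\mathrm{d}x$. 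Since $\|U\|_\infty < \infty$, this form is comparable, uniformly, to the standard Dirichlet form of $\tfrac12\Delta$ on $L^2(\mathrm{d}x)$: the measures $m$ and Lebesgue measure are mutually absolutely continuous with densities bounded above and below by $e^{\pm 2\|U\|_\infty}$. This is exactly the setting in which the method of Stroock \cite{Str08} (Gaussian-type bounds via Nash/Moser iteration and Davies' perturbation method, or equivalently via the comparison of Dirichlet forms) yields both an on-diagonal bound and off-diagonal Gaussian decay with constants depending only on $\|U\|_\infty$ and $d$.

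Concretely, I would proceed as follows. First, record that the heat kernel $\Gamma_t^m(x,y)$ of $\cL$ relative to $m$ satisfies $\Gamma_t(x,y) = \Gamma_t^m(x,y)\, e^{2U(y)}$, so it suffices to bound $\Gamma_t^m$. Second, use the form comparison $e^{-2\|U\|_\infty}\mathcal{E}_{\frac12\Delta} \le \mathcal{E} \le e^{2\|U\|_\infty}\mathcal{E}_{\frac12\Delta}$ together with volume-doubling for $m$ (trivial here, as $m$ is comparable to Lebesgue measure) to invoke the equivalence between parabolic Harnack inequality / two-sided Gaussian bounds and the conjunction of volume doubling plus Poincaré inequality; all of these are stable under bounded perturbations of the form. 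This gives the on-diagonal bound $\Gamma_t^m(x,y)\le C t^{-d/2}\exp(C\|U\|_\infty)$ and the upper Gaussian tail $\exp(-|x-y|^2/(4t))$ by the standard Davies/Grigor'yan exponential weighting argument — the key point being that the ``intrinsic metric'' of $\mathcal{E}$ is comparable to the Euclidean metric with constants controlled by $\|U\|_\infty$, which is where the factor $4$ (rather than something smaller) in the upper bound comes from after tracking constants carefully. Third, for the lower bound, combine the on-diagonal lower bound $\Gamma_t^m(x,x)\ge c\, t^{-d/2}e^{-C\|U\|_\infty}$ (from the upper bound plus conservativeness of the semigroup, or directly from the parabolic Harnack inequality) with a chaining argument: iterate the Harnack inequality along a chain of $\lceil |x-y|^2/t\rceil$ balls joining $x$ to $y$, each step losing a multiplicative constant that is controlled by $\|U\|_\infty$; this produces the stated form $\exp(-C_2\|U\|_\infty - C_3\|U\|_\infty |x-y|^2/t)$, where the linear-in-$\|U\|_\infty$ dependence in the exponent of the Gaussian appears precisely because the number of Harnack steps is $\sim |x-y|^2/t$ and each contributes a factor $\sim e^{-c\|U\|_\infty}$.

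Finally, to pass to $U = Z^y_L+Y^y_L$: by Lemma~\ref{lem:lifting lemma}, $\mu = \nabla(Z^y_L+Y^y_L)$ lifts to a rough distribution $\bmu\in\sX^\gamma$, and by Proposition~\ref{prop:approximation of transition kernel} the kernels $\Gamma^{(n)}_t(x,y)$ associated to smooth approximants $U_n$ (which one may take to be mollifications of $Z^y_{L,\epsilon}+Y^y_{L,\epsilon}$, with $\|U_n\|_\infty$ converging to the finite quantity $\|Z^y_L+Y^y_L\|_\infty$) converge uniformly to $\Gamma_t(x,y)$. Since each $\Gamma^{(n)}_t$ satisfies \eqref{eq:upper bound of transition density}–\eqref{eq:lower bound of transition density} with $\|U_n\|_\infty$ in place of $\|U\|_\infty$ and with constants $C_1,C_2,C_3$ independent of $n$, taking $n\to\infty$ yields the same bounds with $\|Z^y_L+Y^y_L\|_\infty$, which is finite because $Z^y_L\in\sC^{1/2-}$ and $Y^y_L\in\sC^{2\alpha}$ are both continuous on the compact box $Q^y_L$. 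The main obstacle is the careful bookkeeping of constants in the Gaussian upper bound — in particular extracting the clean exponent $-|x-y|^2/(4t)$ rather than $-|x-y|^2/(Ct)$ with a $\|U\|_\infty$-dependent $C$; this requires exploiting that the perturbation $\nabla U\cdot\nabla$ is a \emph{gradient} (drift) perturbation, so that the intrinsic metric of $\cL$ is \emph{exactly} Euclidean (the principal symbol of $\cL$ is unchanged), and only the zeroth-order comparison of the reference measures costs $e^{\pm C\|U\|_\infty}$, which is why that cost sits as a prefactor (and, after chaining, linearly in the Gaussian exponent of the lower bound) rather than distorting the rate $1/(4t)$.
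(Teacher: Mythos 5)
Your plan is correct and follows essentially the same route as the paper: both derive the two-sided bounds from the symmetrizability of $\cL = \tfrac12\Delta + \nabla U\cdot\nabla$ with respect to $e^{2U}\,\d x$ and the resulting Dirichlet-form comparison, which is exactly the mechanism behind Stroock's estimates (4.3.6) and Lemma~4.3.8 in \cite{Str08} that the paper cites directly; your account simply unpacks the Nash/Davies upper bound and Harnack-chaining lower bound that sit underneath those citations. The only difference is one of exposition—the paper invokes Stroock's numbered results rather than re-deriving them, and leaves the transfer to $U=Z^y_L+Y^y_L$ implicit via Proposition~\ref{prop:approximation of transition kernel}, which you spell out correctly.
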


\begin{proof}
  The proof is based on ideas from \cite[Section~4.3]{Str08}. It was shown in (4.3.6) of \cite{Str08} that if $\mu$ is of the form $\nabla U$ for some $U\in C^{\infty}(^d)$, then 
  $$\Gamma_t(x,y)\leq \frac{K_d(U)}{t^{d/2}}\exp\Big(\frac{|x-y|^2}{4t}\Big).$$
  where $K_d(U)$ is a constant which is upper bounded by $\kappa_d\exp((d+4)\delta(U)/4)$ and $\delta(U):= \max_{x\in ^d} U(x)- \min_{x\in ^d}U(x)$. Note that $\delta(U)\leq 2\|U\|_{\infty}$. Thus the upper bound \eqref{eq:upper bound of transition density} follows.
 We use \cite[Lemma~4.3.8]{Str08} to show the lower bound. It is shown in the paragraph before Lemma~4.3.8 of \cite{Str08} that there exists $C_2$ such that 
 $$\Gamma_t(x,y)\geq \frac{2^{d/2}}{t^{d/2}}\exp(-C_2\|U\|_{\infty}), \quad \text{whenever} |x-y|\leq \sqrt{t}.$$
This bound is further used in Lemma~4.3.8 of \cite{Str08} to show that 
$$\Gamma_t(x,y)\geq \frac{2^{d/2}}{t^{d/2}}\exp\Big(-C_2\|U\|_{\infty}- \frac{C_3\|U\|_{\infty}|y-x|^2}{t}\Big)$$
for some constant $C_3>0$. From this bound, \eqref{eq:lower bound of transition density} follows.
  
\end{proof}

We also obtain the estimate for the escape probability of $X.$

\begin{corollary}\label{cor:escape probability of X} 
Suppose that $\mu$ is the form of $\nabla U$ where $U:\dR^d \rightarrow \dR^d$ is a smooth and bounded function. Then, for the solution $X$ to the SDE \eqref{eq:SDE with distributional distribution}, $x\in \dR^d$, $K>0$, and $T\geq 1 $,
\begin{equation}
\begin{aligned}
  \dP&\Big( \sup_{t\in [0,T]} |X_t - x| \geq K \Big)\leq C\exp \Big( C T \| U\|_\infty - \frac{K^2}{CT} \Big).
\end{aligned}
\end{equation}  Furthermore, this same upper bound holds for $U=Z^y_L+Y^y_L$ where $Z^y_L$ and $Y^y_L$ are defined in Section~\ref{sec:feynman-kac representation}.
\end{corollary}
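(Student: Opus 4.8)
The plan is to deduce the escape bound from the Gaussian upper bound \eqref{eq:upper bound of transition density} on the transition density $\Gamma_t(x,y)$ of $X$, together with the strong Markov property of $X$ (available from Theorem~\ref{thm:well-posedenss of martingale problem}). First I would record the one-sided consequence of \eqref{eq:upper bound of transition density}: for every $t\le T$,
\begin{equation*}
  \dP\big( |X_t - x| \geq R \big) = \int_{|y-x|\geq R} \Gamma_t(x,y)\,dy \leq \frac{1}{t^{d/2}} e^{C_1\|U\|_\infty} \int_{|y-x|\geq R} e^{-|y-x|^2/(4t)}\,dy \leq C\, e^{C_1\|U\|_\infty} e^{-R^2/(8t)},
\end{equation*}
where the last inequality comes from the standard tail bound for the Gaussian integral (splitting off half the exponent to absorb the $t^{-d/2}$ prefactor and the volume of the shell); here and below $C$ changes from line to line and depends only on $d$. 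Monotonicity in $t$ then gives $\dP(|X_t-x|\geq R)\leq C e^{C_1 T\|U\|_\infty} e^{-R^2/(8T)}$ uniformly for $t\in[0,T]$.

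Next I would upgrade the pointwise-in-$t$ bound to a bound on the running maximum by a Lévy–Ottaviani / reflection-type argument. Let $\sigma := \inf\{t\in[0,T] : |X_t - x|\geq K\}$. On $\{\sigma\leq T\}$ we have $|X_\sigma - x| = K$ (continuity of paths). By the strong Markov property at $\sigma$, for the remaining time interval of length $T-\sigma \le T$,
\begin{equation*}
  \dP\big( |X_T - x| \geq K/2 \,\big|\, \mathfrak{F}_\sigma \big) \geq 1 - \sup_{z : |z-x|=K}\ \sup_{s\le T}\ \dP\big( |X_{s}^{(z)} - z| > K/2 \big) \geq 1 - C e^{C_1 T\|U\|_\infty} e^{-K^2/(32T)},
\end{equation*}
using the pointwise bound above with $R=K/2$ and starting point $z$ (the Gaussian bound \eqref{eq:upper bound of transition density} holds with the same constants for every starting point). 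Hence, provided the subtracted quantity is at most $1/2$, we get $\dP(\sigma\le T) \le 2\,\dP(|X_T-x|\ge K/2) \le C e^{C_1 T\|U\|_\infty} e^{-K^2/(32T)}$. If instead the subtracted quantity exceeds $1/2$, then trivially $e^{-K^2/(32T)} \ge c\, e^{-C_1 T\|U\|_\infty}$, so $1 \le C e^{C_1 T\|U\|_\infty} e^{-K^2/(CT)}$ after enlarging $C$, and the asserted bound holds vacuously. Combining the two cases yields
\begin{equation*}
  \dP\Big( \sup_{t\in[0,T]} |X_t - x| \geq K \Big) \leq C \exp\Big( C T \|U\|_\infty - \frac{K^2}{CT} \Big).
\end{equation*}

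Finally, for the extension to $U = Z^y_L + Y^y_L$: since this $U$ is not smooth, I would invoke Proposition~\ref{prop:approximation of transition kernel}, approximating $\bmu = \nabla(Z+Y)$ by smooth $\bmu_n$ converging in $\sH^\gamma$; the transition densities $\Gamma^{(n)}_t$ converge uniformly to $\Gamma_t$, and $\dP^{(n)}_x \Rightarrow \dP_x$ weakly by Theorem~\ref{thm:well-posedenss of martingale problem}. The Gaussian upper bound \eqref{eq:upper bound of transition density} passes to the limit because Theorem~\ref{thm:transition density estimate} already asserts it directly for $U = Z^y_L+Y^y_L$ (with $\|U\|_\infty$ interpreted as the $L^\infty$-norm of the continuous function $Z^y_L+Y^y_L$, which is finite since $Z\in\sC^{1/2-}$, $Y\in\sC^{1-}$). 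So the pointwise tail bound holds verbatim, and the running-maximum argument is unchanged once one notes that $\{\sup_{t\le T}|X_t-x|\ge K\}$ — or a suitable open/closed variant — is a continuity set for $\dP_x$, so the weak limit can be taken (alternatively, run the reflection argument directly under $\dP_x$, which is legitimate since $X$ is strongly Markov under $\dP_x$). The main obstacle is the running-maximum step: one must be careful that the strong Markov property and the uniform (in starting point) pointwise tail bound are genuinely available — both are, from Theorem~\ref{thm:well-posedenss of martingale problem} and Theorem~\ref{thm:transition density estimate} respectively — and that the constants in \eqref{eq:upper bound of transition density} do not depend on the starting point, which is visible from its statement.
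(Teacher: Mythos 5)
Your proof is correct and follows the same route as the paper: the paper observes that \eqref{eq:upper bound of transition density} gives $\Gamma_t(x,y)\lesssim p_t(x,y)e^{C_1\|U\|_\infty}$ and then simply defers the running-maximum upgrade to \cite[Corollary~5.2]{PZ22}, which is exactly the strong-Markov/reflection argument you have spelled out (first exit at level $K$, then show that with conditional probability at least $1/2$ the process ends at distance $\ge K/2$, using the uniform-in-starting-point Gaussian tail). Your write-up is a faithful expansion of what the cited result provides.
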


\begin{proof}
Note that from \eqref{eq:upper bound of transition density}, we know that $\Gamma_t(x,y) \lesssim p_t(x,y) e^{C_1 \| U\|_\infty}$ where $p_t$ denotes the usual heat kernel density in $d-$dimension. Then the proof follows from the same line with the proof of \cite[Corollary~5.2]{PZ22}. 
\end{proof}

\section{Asymptotic bounds for the PAM}\label{sec:asymptotic bounds for the pam}

This section is devoted to estimating the asymptotic bounds for the solution $u$ to the PAM~\eqref{eq:PAM}. We aim to showcase two main things in this section. We first derive bounds on the moments of the \emph{enhanced noise} (see Definition~\ref{def:DefinitionOfTree}). We use these estimates to find bounds on the Feynman-Kac formula found in Section~\ref{sec:feynman-kac representation}. We first recall few facts about the white noise.

\begin{definition}
On a probability space $(\Omega, \dP),$ we define a (spatial) white noise on $\dR^d$ by a random variable $\xi:\Omega \rightarrow \sS'(\dR^d)$ such that for all $f\in\sS(\dR^d)$, $\langle \xi,f \rangle $ is a centered Gaussian random variable with covariance $\dE[\langle \xi,f \rangle\langle \xi,g \rangle]=\langle f,g\rangle$ for all $f,g\in \sS(\dR^d)$. 
\end{definition}
Note that since $f\mapsto \langle \xi ,f \rangle$ is linear and $\| \langle \xi ,f \rangle \|_{L^2(\Omega)}=\| f\|_{L^2(\dR^d)}$, we can extend $\xi $ to a bounded linear operator $W:L^2(\dR^d)\rightarrow L^2(\Omega ,\dP)$ such that $W(f)$ is also a centered Gaussian random variable with $\dE[W(f)W(g)]= \langle f,g\rangle$ for all $f,g\in L^2(\dR^d)$. We will abuse the notation using $\langle \xi, f \rangle$ for $W(f)$ when $f\in L^2(\dR^d)$. Now we define a radially symmetric and nonnegative function $\psi\in C^\infty_c(\dR^d)$ such that $\int_{\dR^d} \psi(x) dx =1 $. We define $\psi_\epsilon(x)= \frac{1}{\epsilon^d} \psi(\frac{x}{\epsilon})$. Let $\xi_{\epsilon} = \psi_\epsilon * \xi  $ be the mollification of $\xi.$ Recall that the mollification of the noise on the box $[-\frac{L}{2},\frac{L}{2}]^d$ by $\xi_{L,\epsilon}$ given as 
\begin{equation}
    \xi_{L,\epsilon} = \sum_{k\in\dN^d} \tau(\frac{\epsilon}{L}k) \langle \xi , \fn_{k,L} \rangle \fn_{k,L},
\end{equation} where $\tau\in C^\infty_c(\dR^d , [0,1])$ is an even function such that $\tau(x) =1$ for $|x|\leq \frac{1}{2}$ and $\{\fn_{k,L}\}_{k\in \dN^d}$ is the Neumann basis for the $Q_L= [-\frac{L}{2},\frac{L}{2}]^d$ as shown in Section~\ref{sec:AndersonHamiltonian}. We also define $\xi^\epsilon_{L,y}$ on $Q^y_L:= y+Q_L$ for $y\in \dR^d$ by a shifted white noise 
\begin{equation}
    \xi^y_{L,\epsilon} : = \cT_y \left(  \sum_{k\in\dN^d} \tau(\frac{\epsilon}{L}k) \langle \xi , \cT_y \fn_{k,L} \rangle \fn_{k,L}\right)  
\end{equation} where $\cT_y f(\cdot) = f(\cdot +y)$ for all $f\in C(\dR^d)$. Define $\sigma_\eta(x)=\frac{1}{\eta +\pi^2|x|^2}$ for all $x\in\dR^d$. Then, for $u\in \sS_{\fn}'(Q_L),$  
\begin{equation}
    (\eta-\Delta)^{-1} u =: \sigma_\eta(D) u: = \sum_{k\in \dN^d} \sigma_\eta(\frac{k}{L})\langle u, \fn_{k,L}\rangle \fn_{k,L}, 
\end{equation}  We write $\sigma(D) := \sigma_1(D)$. For any $\delta>0$, it is known that almost surely $\xi^y_{L,\epsilon} \rightarrow \xi^y_L$ in $\sC^{-d/2-\delta}$ for all $y\in \dR^d$ from \cite[Theorem~6.7]{CZ21}.

\subsection{Bound on the enhanced noise}\label{subsec:bound on the enhanced noise}

From the Feynman-Kac representation (see \eqref{eq:feynman-Kac representation} and \eqref{eq:exponent in Feynman-Kac representation} in Theorem~\ref{thm:Feynman-Kac representation}), we need to bound the terms $Z,Y$ and $\eta$ therein in order to estimate the solution. From the view of Proposition~\ref{prop:resolvent eqaution} and Definition~\ref{def:enhanced noise}, these terms are bounded in terms of enhanced white noise.

Recall the following notations from the previous sections:
For $L\in (1,\infty]$ and $\epsilon\in[0,1]$, we denoted $\xi_{L,\epsilon}$ to be the mollification of the spatial white noise in $d-$dimension restricted on $Q_L$ with $Q_\infty:=\dR^d$.
Set $Z_{L,\epsilon} = (1-\frac{1}{2}\Delta)^{-1}\xi_{L,\epsilon} $. Let $\eta_{L,\epsilon}>0$ be $\eta_0$ in Proposition~\ref{prop:resolvent eqaution} which will be specified later and  $Y_{L,\epsilon}$ be the solution to \eqref{eq:original resolvent eq for Y} associated with $Z_{L,\epsilon}$ and $\eta=\eta_{L,\epsilon}.$ $\fZ_{L,\epsilon}$ is defined by Theorem~\ref{thm:renormalization of enhanced noise} with $\xi_{L,\epsilon}$ instead of $\xi_\epsilon$. 

The following proposition says that $\fZ_{L,\epsilon}$ is bounded by the logarithm of the length $L$, which enables us to bound $Z_{L,\epsilon}$, $Y_{L,\epsilon}$, and $\eta_{L,\epsilon}$. The following result is for $d=3$. We refer to \cite[Lemma~6.15]{KPZ20} for $d=2$ case. 

\begin{proposition}\label{prop:bounds on the enhance noise} Let $\frac{2}{5} <\varrho<\frac{1}{2}$. Let $\epsilon\in[0,1]$ and define 
\begin{equation}\label{eq:definition of fa_epsilon}
\fa_{\epsilon}: = \max \left\{1, \sup_{L>e,L\in \dN}\frac{\| \fZ_{L,\epsilon}\|_{\cZ^\varrho}}{(\log L)^2}\right\}.
\end{equation} Then $\fa_\epsilon$ is almost surely finite. Moreover, there exists $h_0>0$ such that for all $h\in[0,h_0]$ we have $\sup_{\epsilon\in[0,1]} \dE[e^{h\sqrt{\fa_\epsilon}}]<\infty.$
\end{proposition}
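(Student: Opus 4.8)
The plan is to establish the two claims separately. First I would show that $\fa_\epsilon$ is almost surely finite for each fixed $\epsilon\in[0,1]$, and then bootstrap to the uniform exponential moment bound, which is the real content of the statement. The starting point is Theorem~\ref{thm:renormalization of enhanced noise}, which gives that $\fZ_{L,\epsilon}$ converges in $L^p(\Omega,\sH^\varrho)$ for every $p>1$, together with the observation that the renormalization constants $c^{\tree}_\epsilon, c^{\Tree}_\epsilon$ do not depend on $L$. The key structural fact to exploit is that each component of $\fZ_{L,\epsilon}$ — namely $Z_{L,\epsilon}$, $Z^{\tree}_{L,\epsilon}$, $Z^{\ttree}_{L,\epsilon}$, $Z^{\tttree}_{L,\epsilon}$, $Z^{\Tree}_{L,\epsilon}$, and $\nabla\cQ_{L,\epsilon}\circ\nabla Z_{L,\epsilon}$ — is built from Wiener chaoses of fixed (and bounded) order in the white noise $\xi$ restricted to $Q_L$. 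Hence hypercontractivity of Gaussian chaos applies: for a random variable $F$ living in a fixed finite sum of Wiener chaoses, all $L^p$ norms are comparable, $\|F\|_{L^p}\lesssim p^{k/2}\|F\|_{L^2}$ with $k$ the maximal chaos order, and one obtains stretched-exponential tail bounds of the form $\P(|F|>\lambda)\lesssim \exp(-c\lambda^{2/k})$.

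The first main step would be to produce a bound of the form $\dE\big[\|\fZ_{L,\epsilon}\|_{\cZ^\varrho}^p\big] \lesssim C_p (\log L)^{2p}$ for all $p\ge 1$, uniformly in $\epsilon\in[0,1]$ and in $L>e$. To get the $(\log L)^2$ growth one localizes: cover $Q_L$ by order $L^d$ unit boxes, use that the Besov/Hölder norm $\|\cdot\|_{\sH^\varrho}$ can be controlled by a supremum (over dyadic scales and over these unit-scale pieces) of quantities each having a stretched-exponential tail with parameters uniform in $L$ and $\epsilon$, and then a union bound over the $\lesssim L^d$ pieces together with the tail estimate $\P(\text{piece}>\lambda)\lesssim e^{-c\lambda^{2/k}}$ turns the $L^d$ factor into an additive $(\log L)^{k/2}$ contribution in the typical size; since the worst chaos order appearing is four (the component $Z^{\Tree}=\cI_\eta(|\nabla Z^{\tree}|^2)$ and the resonant product $\nabla\cQ\circ\nabla Z$ sit in chaoses up to order four), $k/2\le 2$, which is exactly the exponent in \eqref{eq:definition of fa_epsilon}. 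The uniformity in $\epsilon$ is inherited from Theorem~\ref{thm:renormalization of enhanced noise}: the mollified objects converge in every $L^p(\Omega,\sH^\varrho)$, so $\sup_{\epsilon\in[0,1]}\dE[\|\fZ_{L,\epsilon}\|_{\sH^\varrho}^p]<\infty$ for each $L$, and the scale-by-scale and box-by-box estimates are uniform in $\epsilon$ because the Fourier multiplier $\tau(\tfrac{\epsilon}{L}k)$ is bounded by $1$ and one can estimate the $\epsilon=0$ limit object directly.

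From the moment bound $\dE[\|\fZ_{L,\epsilon}\|_{\cZ^\varrho}^p]\lesssim C_p(\log L)^{2p}$, finiteness of $\fa_\epsilon$ follows from Borel–Cantelli: $\P(\|\fZ_{L,\epsilon}\|_{\cZ^\varrho} > \lambda (\log L)^2) \le C_p \lambda^{-p} (\log L)^{-2p}$, and summing over $L\in\dN$, $L>e$, for $p$ large enough (say $p\ge 2$) shows that almost surely $\|\fZ_{L,\epsilon}\|_{\cZ^\varrho}\le \lambda(\log L)^2$ for all large $L$, hence $\fa_\epsilon<\infty$ a.s. For the uniform exponential moment, I would instead use that $\sqrt{\fa_\epsilon}\le 1 + \sup_{L>e}\frac{\|\fZ_{L,\epsilon}\|_{\cZ^\varrho}^{1/2}}{\log L}$, bound $\dE[e^{h\sqrt{\fa_\epsilon}}]$ by $e^h\prod$-type estimates, and control $\dE\big[\exp\big(h\sup_{L}\tfrac{\|\fZ_{L,\epsilon}\|^{1/2}}{\log L}\big)\big]$ by a union bound over $L$ of $\dE[\exp(h\|\fZ_{L,\epsilon}\|^{1/2}/\log L)]$; since $\|\fZ_{L,\epsilon}\|^{1/2}$ has a tail like $\exp(-c(\cdot)^{4/k})=\exp(-c(\cdot)^{\ge 2})$ after taking the square root of a chaos-$k$ variable (i.e. $\|\fZ\|$ has tail $e^{-c\lambda^{2/k}}$, so $\|\fZ\|^{1/2}$ has tail $e^{-c\lambda^{4/k}}$ with $4/k\ge1$), the exponential moment $\dE[e^{h\|\fZ_{L,\epsilon}\|^{1/2}/\log L}]$ is finite and, more importantly, is $\le 1 + L^{-d}$ for $h$ small enough (using that the typical size of $\|\fZ_{L,\epsilon}\|^{1/2}$ is $\lesssim \log L$ and the tail decays fast), so that $\sum_L \dE[e^{h\|\fZ_{L,\epsilon}\|^{1/2}/\log L}]<\infty$ uniformly in $\epsilon$.

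The main obstacle I anticipate is not the chaos/hypercontractivity machinery, which is standard, but rather the careful tracking of \emph{uniformity in both $L$ and $\epsilon$} through the Besov-space estimates on the Dirichlet/Neumann box $Q_L$: one must verify that the Littlewood–Paley or heat-semigroup characterization of $\|\cdot\|_{\cZ^\varrho}$ admits bounds whose constants do not degenerate as $L\to\infty$ (this is where the localization to unit boxes and the stationarity/near-stationarity of the enhanced noise under spatial translations is used) and simultaneously do not blow up as $\epsilon\to 0$ (this is where Theorem~\ref{thm:renormalization of enhanced noise} and the boundedness of the mollifier $\tau$ enter). Getting precisely the exponent $2$ on $\log L$ — rather than some larger power — requires that the worst Wiener chaos order among the six components of $\fZ_L$ is exactly four, which should be checked component by component using the definitions in \eqref{def:DefinitionOfTree}; in particular one uses that $\cI_\eta$ is regularizing so that the resonant products are finite-order chaoses and not merely formal. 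I would organize the write-up as a lemma giving the uniform moment bound $\dE[\|\fZ_{L,\epsilon}\|_{\cZ^\varrho}^p]\lesssim_p (\log L)^{2p}$, followed by the two short Borel–Cantelli / union-bound arguments for the two assertions of the proposition.
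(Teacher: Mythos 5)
Your overall strategy is the same as the paper's: use the Wiener-chaos structure of each component of $\fZ_{L,\epsilon}$ (maximal order $4$), hypercontractivity to control all moments, and the $L^d$-scale covering of $Q_L$ to produce the $(\log L)^{2}$ normalization. The paper likewise reduces to a Littlewood--Paley $L^2$ estimate $\dE[|\Delta_i\zeta_{L,\epsilon}(x)|^2]\le C_02^{ai}$ for each component $\zeta\in\Xi_{L,\epsilon}$, upgrades it to $L^p$ via hypercontractivity and a Besov embedding, and tracks uniformity in $\epsilon$ exactly as you anticipate. So the conceptual direction is correct.

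However, two of your intermediate claims are wrong as stated, and the second one leaves a genuine gap. First, the moment bound you propose, $\dE[\|\fZ_{L,\epsilon}\|^p_{\cZ^\varrho}]\lesssim C_p(\log L)^{2p}$, is false: the Besov embedding on the box $Q_L$ unavoidably produces a volume factor, and the correct estimate (this is what the paper proves) is $\dE[\|\zeta_{L,\epsilon}\|^p_{\sC^{-a/2-2\kappa}}]\lesssim C_\kappa C^p L^d\,p^{pk/2}$. With this $L^d$ factor present, your Borel--Cantelli step does not work for any fixed $p$, since $\sum_{L}L^d(\log L)^{-2p}=\infty$; one must let $p$ grow, i.e.\ use the full stretched-exponential tail of the chaos variable, which is what the paper's exponential-moment argument accomplishes. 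Second, and more seriously, the summation step you sketch for the uniform exponential moment does not close. You want to bound $\dE[e^{h\sup_L Y_L}]$ by $\sum_L\dE[e^{hY_L}]$ with $Y_L:=\|\fZ_{L,\epsilon}\|_{\cZ^\varrho}^{1/2}/\log L\ge 0$; but then each term satisfies $\dE[e^{hY_L}]\ge 1$, so the claimed bound ``$\le 1+L^{-d}$'' cannot make $\sum_L\dE[e^{hY_L}]$ finite --- the constant $1$ sums to infinity. The device the paper uses to repair exactly this point is the weighted sum
\[
\tilde\zeta_\epsilon:=\sum_{L\in\dN}L^{-b}\exp\bigl(h_0\|\zeta_{L,\epsilon}\|^{2/k}_{\sC^{-a/2-2\kappa}}\bigr),\qquad b>\tfrac{d}{k}\vee d+1,
\]
which has finite expectation uniformly in $\epsilon$ by the $L^{d}$-moment bound above. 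On the event $\{\tilde\zeta_\epsilon<\infty\}$ one reads off $\|\zeta_{L,\epsilon}\|\le\fa_{\zeta,\epsilon}(\log L)^{k/2}$ with $\fa_{\zeta,\epsilon}:=\bigl((b+\log\tilde\zeta_\epsilon)/h_0\bigr)^2$, and the exponential moment $\dE[e^{h\sqrt{\fa_{\zeta,\epsilon}}}]=e^{hb/h_0}\dE[\tilde\zeta_\epsilon^{\,h/h_0}]$ is then finite for $h\le h_0$ by Jensen. This weighted-sum/Jensen step is the missing ingredient in your argument and is not a cosmetic rewriting: without it, the passage from per-$L$ stretched-exponential tails to a uniform-in-$L$ exponential moment for $\sqrt{\fa_\epsilon}$ does not go through. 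A minor additional slip: the resonant product $\nabla\cQ_L\circ\nabla Z_L$ lies in the second Wiener chaos, not the fourth; the chaos-$4$ components are $Z^{\tttree}_L$ and $Z^{\Tree}_L$, which still gives the maximal order $4$ you need.
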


\begin{proof}
Before proceeding to the main body of the proof, let us briefly explain the main idea. Below, we first show how to express $\mathfrak{a}_{\epsilon}$ with the help of inhomogeneous terms (of bounded order) from the Wiener chaos expansion of the white noise. Due to the hyper-contractivity of the Wiener chaos (see \cite{Nual2006}), for every random variable $X$ in the $k$-th inhomogeneous Wiener chaos generated by the white noise and for any $p>2$ we have 
\begin{equation}\label{eq:hyper-contractivity of the Wiener chaos}
  \dE[|X|^p]\leq C_{k,p} \dE[|X|^2]^{p/2},
\end{equation} where $C_{k,p} = (p-1)^{pk/2}.$ This allows us to bound higher moments $\mathbb{E}[(\sqrt{\mathfrak{a}_{\epsilon}})^p]$ by some constant multiple time $\mathbb{E}[\mathfrak{a}_{\epsilon}]^p$ which finally leads to the desired results. Details of this argument is given as follows. 
Recall that $ \Xi_{L,\epsilon}:= \{Z_{L,\epsilon},  Z^{\tree}_{L,\epsilon}-c^{\tree}_\epsilon, Z^{\ttree}_{L,\epsilon}  ,  Z^{\tttree}_{L,\epsilon} ,Z^{\Tree}_{L,\epsilon}-c^{\Tree}_\epsilon , \nabla Q_{L,\epsilon} \circ \nabla Z_{L,\epsilon}  \}$. We know that 
\begin{align}
\|\fZ_{L,\epsilon} \|_{\cZ^\varrho} =\sum_{\zeta_{L,\epsilon}\in\Xi_{L,\epsilon}} \| \zeta_{L,\epsilon}\|_{\sC^{-\alpha}} 
\end{align} where $\alpha>0$ depends on each $\zeta_{L,\epsilon}\in \Xi_{L,\epsilon}$. We bound each term of the right hand side of the above identity to derive the desired bound.
Let $\zeta_{L,\epsilon} \in \Xi_{L,\epsilon}$. Observe that $\zeta_{L,\epsilon}$ is in the $k$-th Wiener chaos where $k=k(\zeta)$ denotes the number of the occurrences of the noise $\xi_{L,\epsilon}$ in $\zeta_{L,\epsilon}$. 
For instance, $k(Z)=1$, $k({Z^{\ttree}})=3$, and $k({Z^{\tttree}})=4$. Note also that $\max_{\zeta}k(\zeta) =4$.  At this moment, we assume that for every $\zeta_{L,\epsilon} \in \Xi_{L,\epsilon} $, there exist some constant $a\in \dR$ and $C_0>0$ (not depending on $L,\epsilon$) such that all $i\in \dN_{-1}, x\in [-L/2,L/2]^d$, we have 
\begin{equation}\label{eq:bound of Delta zeta}
     \dE[|\Delta_i \zeta_{L,\epsilon}(x)|^2] \leq C_0 2^{a i }.
\end{equation} 
We prove these bounds towards the end of the proof of this proposition. 
Set $C_{\kappa}= \sum_{i=-1}^{\infty} 2^{-\kappa i}$. Observe that we can use the definition of the Besov space (see Section~\ref{sec:Paracontrolled}) and  \eqref{eq:hyper-contractivity of the Wiener chaos} to obtain
$$\mathbb{E}\Big[\|\zeta_{L,\epsilon}\|^p_{B^{n,-\frac{a}{2}-\kappa}_{p,p}}\Big]= \sum_{i=-1}^{\infty} 2^{(-\frac{a}{2}-\kappa)pi} \mathbb{E}\big[\|\Delta_i\zeta_{L,\epsilon}\|^{p}_{L^p}\big]\leq C_0^{\frac{p}{2}}p^{\frac{pk}{2}}L^d\Big(\sum_{i=-1}^{\infty} 2^{-p\kappa i}\Big)\leq C_\kappa (C_0p^k)^{\frac{p}{2}} L^d .$$
By the embedding property of the Besov space, there exists $C>0$ such that $\|\cdot\|_{\sC^{-\frac{a}{2}-\kappa -\frac{2}{p}}}\leq C\|\cdot\|_{B^{n,-\frac{a}{2}-\kappa}_{p,p}}$. Combining this with the observation in the above display implies that
for all $\kappa>0$ there exists $C>0$ independent of $\zeta$ such that for all $p\geq1$,
\begin{equation}
  \dE[\|\zeta_{L,\epsilon}\|^p_{\sC^{-\frac{a}{2}-\kappa -\frac{2}{p}}} ] \leq C_\kappa C^p L^d p^{\frac{pk}{2}}. 
\end{equation} Choose $p_0\in \dN$ large so that $\frac{2}{p_0}<\kappa$ and $\frac{2p_0}{k}> 2 $. Then there exists $C_k>0$ such that for $p\geq p_0$
\begin{equation}
     \dE[\|\zeta_{L,\epsilon}\|^{\frac{2p}{k}}_{\sC^{-\frac{a}{2}-2\kappa}} ] \leq C_\kappa C_{k}^{p} L^d p^p. 
\end{equation}
For $h\geq0$, by the above upper bounds, we have 
 \begin{equation*}
   \begin{aligned}
     \dE [\exp(h \|\zeta_{L,\epsilon}\|^{\frac{2}{k}}_{\sC^{-\frac{a}{2}-2\kappa}})] & = \sum_{n=0}^\infty \frac{h^n}{n!} \dE[\|\zeta_{L,\epsilon}\|^{\frac{2n}{k}}_{\sC^{-\frac{a}{2}-2\kappa}} ] \leq \sum_{n=0}^{p_0} \frac{h^n}{n!} \dE[\|\zeta_{L,\epsilon}\|^{2p_0}_{\sC^{-\frac{a}{2}-2\kappa}} ]^{\frac{n}{kp_0}} + \sum_{n=p_0}^\infty \frac{h^n}{n!} \dE[\|\zeta_{L,\epsilon}\|^{\frac{2n}{k}}_{\sC^{-\frac{a}{2}-2\kappa}} ] \\
     &\leq \sum_{n=0}^{p_0-1} \frac{h^n }{n!}  ( C_\kappa  C^{2p_0} L^d (2p_0)^{p_0k})^{\frac{n}{kp_0}} + \sum_{n=p_0}^\infty \frac{h^n}{n!} C_\kappa  C_k^n L^d n^n\\
     &\leq C_\kappa^k L^{\frac{d}{k}} \exp ( 2hC^{\frac{2}{k}}p_0 ) + C_\kappa L^d \sum_{n=p_0}^\infty (heC_k)^n,
   \end{aligned} 
 \end{equation*} where we have used Jensen's inequality and $\frac{1}{n!} \leq (\frac{e}{n})^n$. Then, there exists $h_0$ such that for all $h\in [0,h_0]$ we have 
 \begin{equation*}
   \dE [\exp(h \|\zeta_{L,\epsilon}\|^{\frac{2}{k}}_{\sC^{-\frac{a}{2}-2\kappa}})] \leq A L^{\frac{2}{k}\vee d}
 \end{equation*} for some $A>0$ which is independent of $k$. We can choose $b>\frac{d}{k} \vee d+1$ such that 
 \begin{equation*}
    \sum_{L\in \dN} L^{-b} \dE [ \exp (h_0 \|\zeta_{L,\epsilon}\|^{\frac{2}{k}}_{\sC^{-\frac{a}{2}-2\kappa}})] < \infty.
  \end{equation*} This implies that for $\tilde{\zeta}_{\epsilon} : = \sum_{L\in \dN} L^{-b} \exp (h_0 \|\zeta_{L,\epsilon}\|^{\frac{2}{k}}_{\sC^{-\frac{a}{2}-2\kappa}})$, we have 
  \begin{equation*}
    \frac{ \|\zeta_{L,\epsilon}\|^{\frac{2}{k}}_{\sC^{-\frac{a}{2}-2\kappa}}}{\log L } \leq \frac{1}{h_0} (b + \log \tilde{\zeta}_{\epsilon}) 
  \end{equation*} for all $L\in \dN$ with $L>e$. We can rewrite this as 
  \begin{equation}\label{eq:upper bound of zeta_L,epsilon}
    \|\zeta_{L,\epsilon}\|_{\sC^{-\frac{a}{2}-2\kappa}} \leq \fa_{\zeta,\epsilon} (\log L)^{\frac{k}{2}},
  \end{equation} where $\fa_{\zeta,\epsilon}:= (\frac{b+\log \tilde{\zeta}_\epsilon}{h_0})^2$ since $k(\zeta)\leq 4$. Moreover, Jensen's inequality implies that 
 \begin{equation}\label{eq:finiteness of a_zeta,epsilon}
     \sup_{\epsilon\in[0,1]} \dE[e^{ h \sqrt{\fa_{\zeta,\epsilon}}}] \leq e^{\frac{hb}{h_0}} \sup_{\epsilon\in[0,1]} \dE[\tilde{\zeta}_{\epsilon}]^{\frac{h}{h_0}}<\infty
 \end{equation} for all $h\in[0,h_0]$. From \eqref{eq:upper bound of zeta_L,epsilon}, we have 
  \begin{equation}
      \|\fZ_{L,\epsilon} \|_{\cZ^\varrho} =\sum_{\zeta_{L,\epsilon}\in\Xi_{L,\epsilon}} \| \zeta_{L,\epsilon}\|_{\sC^{-\frac{a}{2}-2\kappa}} \leq (\log L)^2\sum_{\zeta_{L,\epsilon}\in\Xi_{L,\epsilon}} \fa_{\zeta,\epsilon} =: (\log L)^2\tilde{\fa}_\epsilon,
  \end{equation} where $\tilde{\fa}_\epsilon: = \sum_{\zeta_{L,\epsilon}\in\Xi_{L,\epsilon}} \fa_{\zeta,\epsilon} .$  Since $\sqrt{x+y}\leq \sqrt{x}+\sqrt{y}$ for $x,y\geq0$, we can take $\tilde{h}_0>0$ such that for all $\tilde{h}\in[0,\tilde{h_0}]$
  \begin{equation}
      \sup_{\epsilon\in[0,1]}\dE [e^{\tilde{h}\sqrt{\tilde{\fa}_\epsilon}}] \leq   \sup_{\epsilon\in[0,1]}\dE \Big[\exp\Big(\tilde{h}\sum_{\zeta_{L,\epsilon}\in\Xi_{L,\epsilon}} \sqrt{\fa_{\zeta,\epsilon}}\Big)\Big] \leq \sup_{\epsilon\in[0,1]}\prod_{\zeta_{L,\epsilon}\in \Xi_{L,\epsilon}} (\dE[e^{5\tilde{h}_0\sqrt{\fa_{\zeta,\epsilon}}}])^{\frac{1}{5}}<\infty,
  \end{equation} where we used H\"older's inequality and \eqref{eq:finiteness of a_zeta,epsilon}. Since $\fa_\epsilon$ in \eqref{eq:definition of fa_epsilon} satisfies $\fa_\epsilon \leq \tilde{\fa}_\epsilon$, we get the desired result.  Now it remains to show \eqref{eq:bound of Delta zeta}. 
  In particular, we can show that for any fixed $\delta\in(0,1)$,
   \begin{equation*}
     \begin{aligned}
     &\dE[|\Delta_i Z_{L,\epsilon}(x)|^2] \lesssim 2^{(-1+\delta)i}, \quad 
\dE[|\Delta_i (Z^{\tree}_{L,\epsilon}(x)-c^{\tree}_\epsilon)|^2] \lesssim 2^{(-2+\delta)i}, \quad
      \dE[|\Delta_i Z^{\ttree}_{L,\epsilon}(x)|^2] \lesssim 2^{(-3+\delta)i}\\
     & \dE[|\Delta_i Z^{\tttree}_{L,\epsilon}(x)|^2] \lesssim 2^{(-3+\delta)i},\quad \dE[|\Delta_i (Z^{\Tree}_{L,\epsilon}(x) - c^{\Tree}_\epsilon)|^2] \lesssim 2^{(-4+\delta)i},\quad \dE[|\Delta_i (\nabla Q_{L,\epsilon} \circ \nabla Z_{L,\epsilon})(x)|^2] \lesssim 2^{\delta i}.
     \end{aligned}
 \end{equation*}
 Indeed, these estimates were shown for the parabolic case on the torus in the proof of \cite[Theorem~6.12]{CC18}. With a similar argument as in the proof of Proposition~\ref{prop:contraction of resolvent equation}, we can obtain the same estimates for the $L_2$ bound of the Wiener-chaos components for the elements in $\Xi_{L,\epsilon}$ with minor modifications. We demonstrate below the bound on $\dE[|\Delta_i Z_{L,\epsilon}(x)|^2] $. Other cases follow by similar arguments. 

Recall from Appendix~\ref{sec:Paracontrolled} that for $f\in \sS'$
 and for any $\delta>0$,
$$\Delta_i f  = \sum_{k\in\dN_0^d} \langle f, \fn_{k,L} \rangle \varrho_i\Big(\frac{k}{L}\Big)\fn_{k,L},\quad \text{where } \quad  \varrho_i(x) \lesssim \left( \frac{2^i}{1+|x|}\right)^{\frac{3+3\delta}{2}}.
$$  
Since $\varrho_i(x) = 0$ if $|x| \leq 2^i$, we have 
\begin{equation*}
    \sigma \Big(\frac{k}{L}\Big)\varrho_i\Big(\frac{k}{L}\Big) \lesssim 2^{-2i} \varrho_i\Big(\frac{k}{L}\Big) \lesssim 2^{\frac{-1+3\delta}{2}i} \left( \frac{1}{1+\left|\frac{k}{L}\right|}\right)^{\frac{3+3\delta}{2}} ,
\end{equation*} where $\sigma(x) = (1 + \pi|x|^2)^{-1}$. 
Using similar computation as in \cite[Lemma~6.11]{KPZ20}, one gets $\dE[\langle \xi_{L,\epsilon}, \mathfrak{n}_{k,L}\rangle \langle \xi_{L,\epsilon}, \mathfrak{n}_{l,L}\rangle]\leq \prod_{i=1}^3(\epsilon\wedge ((k_i+\ell_i)\vee 1)^{-1})$. 
Since $\|\fn_{k,L}\|_{L^\infty} \lesssim L^{-3/2}$ and  noting that $Z_{L,\epsilon}(x) = \cI(\xi_{L,\epsilon})(x) = (1-\frac{1}{2}\Delta)^{-1}\xi_{L,\epsilon}(x)$, we have  
\begin{equation*}
    \begin{aligned}
       2^{-(-1+3\delta)i} \dE[|\Delta_i Z_{L,\epsilon}(x)|^2] &\lesssim L^{-3} \sum_{k,l\in \dN^3_0} \frac{1}{(1+ |\frac{k}{L}|)^{\frac{3+3\delta}{2}}} \frac{1}{(1+ |\frac{l}{L}|)^{\frac{3+3\delta}{2}}} \big| \dE[\langle \xi_{L,\epsilon}, \mathfrak{n}_{k,L}\rangle \langle \xi_{L,\epsilon}, \mathfrak{n}_{l,L}\rangle]\big|\\
       &\lesssim \left( \sum_{k,l \in \frac{1}{L}\dN_0 } L^{-3} \frac{1}{(1+k)^{\frac{1+\delta}{2}}} \frac{1}{(1+l)^{\frac{1+\delta}{2}}} \frac{1}{(k+l)\vee 1 }\right)^{3}\\
       &\lesssim \left( \sum_{k\in \frac{1}{L} \dN_0 } L^{-1} \frac{1}{(1+k)^{ 1+ \frac{\delta}{2}}}  \right) ^9 \leq C,
    \end{aligned}
\end{equation*}  where $C$ is an universal constant. The second inequality is obtained by symmetrizing the sum. The above display shows the bound on $\dE[|\Delta_i Z_{L,\epsilon}(x)|^2]$. This completes the proof. 
 
\end{proof} 

Using Proposition~\ref{prop:bounds on the enhance noise}, we can obtain the following bounds on $Z_{L,\epsilon}$, $Y_{L,\epsilon}$, and $\eta_{L,\epsilon}$.

\begin{proposition}\label{prop:bounds on Z,Y,eta} Let $\frac{2}{5} < \alpha <\frac{1}{2}$ and $\aleph>0$ be the constant in \eqref{eq:definition of eta_L}. Then for any $\epsilon\in[0,1]$, we have 
\begin{equation}\label{eq:estimates of Z,Y,eta}
  \|Z_{L,\epsilon} \|_{\sC^\alpha} \leq \fa_\epsilon (\log L)^2 , \quad \| Y_{L,\epsilon}\|_{\sC^{2\alpha}} \leq 2\fa_\epsilon (\log L)^2 , \quad \eta_{L,\epsilon}\leq C\fa_\epsilon^\aleph  (\log L)^{2\aleph}
\end{equation} where $\fa_\epsilon$ is defined in Proposition~\ref{prop:bounds on the enhance noise}.
\end{proposition}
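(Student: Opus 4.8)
All three inequalities in \eqref{eq:estimates of Z,Y,eta} reduce to the single estimate $\|\fZ_{L,\epsilon}\|_{\cZ^\varrho}\le\fa_\epsilon(\log L)^2$ furnished by Proposition~\ref{prop:bounds on the enhance noise}, combined with elementary Besov embeddings, the explicit formula \eqref{eq:definition of eta_L} for the resolvent parameter, and the a priori bound on the fixed point $(v,v')$ that is implicit in the proof of Proposition~\ref{prop:resolvent eqaution}. I would first fix once and for all a regularity $\varrho\in(\alpha,\tfrac12)$, so that Proposition~\ref{prop:bounds on the enhance noise} applies with this $\varrho$ and all of $\sC^\varrho,\sC^{2\varrho},\sC^{3\varrho},\sC^{3\alpha}$ embed continuously into the target spaces $\sC^\alpha$ and $\sC^{2\alpha}$; the embedding constants are universal, and if one wants the exact numerical factors $1$ and $2$ one may absorb them by taking $\varrho$ close to $\alpha$ and, if necessary, enlarging $\fa_\epsilon$ by a fixed factor (an operation which does not affect the finiteness and exponential-moment conclusions of Proposition~\ref{prop:bounds on the enhance noise}). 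Throughout I restrict to integers $L>e$, the only relevant regime.

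\textbf{Step 1: the bounds on $Z_{L,\epsilon}$ and $\eta_{L,\epsilon}$.} Since $Z_{L,\epsilon}$ is literally the first coordinate of $\fZ_{L,\epsilon}$ and the $\cZ^\varrho$-norm dominates each coordinate in its natural H\"older--Besov space, one has $\|Z_{L,\epsilon}\|_{\sC^\varrho}\le\|\fZ_{L,\epsilon}\|_{\cZ^\varrho}\le\fa_\epsilon(\log L)^2$, and the embedding $\sC^\varrho\hookrightarrow\sC^\alpha$ gives the first inequality. For the resolvent parameter, \eqref{eq:definition of eta_L} reads $\eta_{L,\epsilon}=A\bigl(1+\|\fZ_{L,\epsilon}\|_{\cZ^\varrho}\bigr)^{\aleph}$ with $\aleph=\tfrac{3}{2\vartheta}$; using $\fa_\epsilon\ge1$ and $\log L>1$ one bounds $1+\|\fZ_{L,\epsilon}\|_{\cZ^\varrho}\le2\fa_\epsilon(\log L)^2$, whence $\eta_{L,\epsilon}\le A2^{\aleph}\fa_\epsilon^{\aleph}(\log L)^{2\aleph}$, i.e. the third inequality with $C:=A2^{\aleph}$.

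\textbf{Step 2: the bound on $Y_{L,\epsilon}$.} Here I would recall from the construction in the proof of Proposition~\ref{prop:resolvent eqaution} that $Y_{L,\epsilon}=v+\tfrac12 Z^{\tree}_{L,\epsilon}+\tfrac12 Z^{\ttree}_{L,\epsilon}$, where $(v,v')$ is the unique fixed point of $\cG$ in $\cD^\alpha_\cQ$ and the two tree terms are the (renormalized) coordinates of $\fZ_{L,\epsilon}$. The contraction estimate used there — with $\eta=\eta_{L,\epsilon}$ as in Step 1 — shows that $\cG$ maps the closed $\cD^\alpha_\cQ$-ball of radius $M_{L,\epsilon}:=\|\fZ_{L,\epsilon}\|_{\cZ^\varrho}$ into itself, so the fixed point satisfies $\|(v,v')\|_{\cD^\alpha_\cQ}\le M_{L,\epsilon}\le\fa_\epsilon(\log L)^2$. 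In particular $\|v\|_{\sC^{3\alpha}}\le\|(v,v')\|_{\cD^\alpha_\cQ}$, hence $\|v\|_{\sC^{2\alpha}}\lesssim\fa_\epsilon(\log L)^2$; likewise $Z^{\tree}_{L,\epsilon}$ and $Z^{\ttree}_{L,\epsilon}$ are coordinates of $\fZ_{L,\epsilon}$ lying in $\sC^{2\varrho}$ and $\sC^{3\varrho}$, so $\|Z^{\tree}_{L,\epsilon}\|_{\sC^{2\alpha}},\|Z^{\ttree}_{L,\epsilon}\|_{\sC^{2\alpha}}\le\fa_\epsilon(\log L)^2$. Adding these with the $\tfrac12$ weights and absorbing the universal embedding constants yields $\|Y_{L,\epsilon}\|_{\sC^{2\alpha}}\le2\fa_\epsilon(\log L)^2$. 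The degenerate case $\epsilon=0$ is identical with $\fZ_{L,0}=\fZ_L$, or follows by passing to the $\epsilon\to0$ limit via the convergence statements of Proposition~\ref{prop:resolvent eqaution} and Proposition~\ref{prop:bounds on the enhance noise}.

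\textbf{Main obstacle.} None of the steps is genuinely hard: the proposition is essentially a repackaging of Proposition~\ref{prop:bounds on the enhance noise} together with the invariant-ball bound hidden inside the proof of Proposition~\ref{prop:resolvent eqaution}. The only point demanding attention is the constant bookkeeping required to land on exactly the factors $1$ and $2$ in \eqref{eq:estimates of Z,Y,eta}; this is handled by choosing $\varrho$ sufficiently close to $\alpha$ (so that $\sC^\varrho\hookrightarrow\sC^\alpha$ is nearly isometric) and, if needed, by replacing $\fa_\epsilon$ by a fixed multiple of itself, under which the finiteness and exponential-moment bounds of Proposition~\ref{prop:bounds on the enhance noise} persist.
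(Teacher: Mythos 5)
Your proposal is correct and follows essentially the same route as the paper: the bound on $Z_{L,\epsilon}$ reads off from the definition of $\fa_\epsilon$ and coordinate domination in $\cZ^\varrho$, the bound on $\eta_{L,\epsilon}$ from the explicit formula \eqref{eq:definition of eta_L}, and the bound on $Y_{L,\epsilon}$ from the decomposition $Y=v+\tfrac12(Z^{\tree}+Z^{\ttree})$ together with the invariant-ball estimate $\|(v,v')\|_{\cD^\alpha_\cQ}\le\|\fZ\|_{\cZ^\varrho}$ extracted from the fixed-point argument of Proposition~\ref{prop:resolvent eqaution}. Your explicit remark that the numerical factors $1$ and $2$ require absorbing embedding constants into $\fa_\epsilon$ is a reasonable observation (the paper's own proof in fact lands on $(C+1)\fa_\epsilon(\log L)^2$ for $Y$), but does not constitute a deviation from the argument.
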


\begin{proof}
From the definition of $\mathfrak{a}_{\epsilon}$, we know $\|Z_{L,\epsilon} \|_{\sC^\alpha} \leq \fa_\epsilon (\log L)^2$. Furthermore, it also clear that if we take $\eta_{L,\epsilon}:= C \| \fZ \|^\upsilon_{\cZ^\varrho}$, then $\eta_{L,\epsilon}\leq C^\aleph \fa^\aleph_\epsilon(\log L)^{2\aleph}$. We are left to show the bound on $\|Y_{L,\epsilon}\|_{\sC^{2\alpha}}$. Recall from Section~\ref{sec:feynman-kac representation} that $Y_{L,\epsilon} =v+\frac{1}{2}(Z^{\tree}_{L,\epsilon}+ Z^{\ttree}_{L,\epsilon})$ where $v$ is the fixed point solution of the equation \eqref{eq:resolventEquation for v}. Recall the map $\cG : \cD^\alpha_{\cQ} \rightarrow \sC^{\alpha+1} \times \sC^\alpha $ from Proposition~\ref{prop:contraction of resolvent equation}. Note that $(v,v')$ is the fixed point of the map $\cG$. By choosing $\eta$ large in \eqref{eq:cGBound} of Proposition~\ref{prop:contraction of resolvent equation}, it follows that $\|v\|_{\sC^{\alpha+1}}\leq C\| \fZ \|_{\cZ^\varrho}$ for some $C>0$. Since $Y_{L,\epsilon} =v+\frac{1}{2}(Z^{\tree}_{L,\epsilon}+ Z^{\ttree}_{L,\epsilon})$, we have for $\alpha<\varrho<\frac{1}{2}$
\begin{equation*}
    \|Y_{L,\epsilon}\|_{\sC^{2\alpha}} \leq (C+1)\| \fZ\|_{\cZ^\varrho} \leq (C+1) \fa_\epsilon (\log L)^2.
\end{equation*}
This completes the proof.
\end{proof}


\subsection{Asymptotics of PAM started from constant initial data}\label{subsec:reduction to a box and asymtotic bounds for the solution}
In this section, we reveal how the solution of the parabolic Anderson model started from constant initial data is related to the largest point of the spectrum of Anderson Hamiltonian. We achieve this goal in Lemma~\ref{lem:large time asymptotics of the local proxy} and~\ref{lem:large time aysmptotics of the solution}. It is worthwhile to note that similar claims have been proved for $d=2$ case in \cite{KPZ20}. Showing those results for $d=3$ needs new estimates which are provided in Lemma~\ref{lem:BoxEstimate} and Proposition~\ref{prop:upper and lower bound of the localized solution}. 

\begin{lemma}\label{lem:BoxEstimate}
Recall $\dQ^{x,y}_{L,\epsilon}$ and the diffusion $X$ from Section~\ref{sec:AndersonHamiltonian}. Furthermore, recall $\sD^y_{L,\epsilon}$ defined in terms of $Z^y_{L,\epsilon}$, $Y^y_{L,\epsilon}$ and $\eta$ from Section~\ref{sec:transition density estimate}. Then, we have 
\begin{equation}\label{eq:escape probability on box}
  \begin{aligned}
    \dQ^{x,y}_{L,\epsilon}( X{[0,t]} \not\subset Q^y_L )\leq C\exp \left(Ct\fa_\epsilon (\log L)^2- \frac{L^2}{C t}\right),
  \end{aligned}
\end{equation} and 
\begin{equation}\label{eq:upper bound of localized solution}
    \dE_{\dQ^{x,y}_{L,\epsilon}} \left[ \1_{ X[0,t] \not\subset Q^y_r, X[0,t] \subset Q^y_L} \cdot \sD^y_{L,\epsilon}(0,t)\right] \leq C  \exp \left( C\fa_\epsilon^{\aleph+1}(\log L)^{2\aleph +2} - \frac{r^2}{Ct}\right)
\end{equation}
where $\fa_\epsilon$ is same as in the one defined in \eqref{eq:definition of fa_epsilon}. 
\end{lemma}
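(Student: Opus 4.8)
The plan is to combine the transition kernel estimate of Theorem~\ref{thm:transition density estimate} and the escape probability Corollary~\ref{cor:escape probability of X} with the bounds on $Z_{L,\epsilon},Y_{L,\epsilon},\eta_{L,\epsilon}$ from Proposition~\ref{prop:bounds on Z,Y,eta}. First I would prove \eqref{eq:escape probability on box}. Under $\dQ^{x,y}_{L,\epsilon}$ the coordinate process $X$ solves the SDE~\eqref{eq:diffusion_X} with drift $\mu = \nabla(Z^y_{L,\epsilon}+Y^y_{L,\epsilon}) = \nabla U$ for $U = Z^y_{L,\epsilon}+Y^y_{L,\epsilon}$, which is exactly the setting of Corollary~\ref{cor:escape probability of X}. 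Since $x\in Q^y_L$, the event $\{X_{[0,t]}\not\subset Q^y_L\}$ forces $\sup_{s\in[0,t]}|X_s - x|\geq L/2$ (the distance from $x$ to $\partial Q^y_L$ is at least... well, one can only say $\geq 0$, so more carefully: if $X$ exits $Q^y_L$ it must travel at least the distance from $x$ to the boundary; since we only need a bound of the stated form, it suffices to note that $\{X_{[0,t]}\not\subset Q^y_L\}\subseteq\{\sup_{s}|X_s-y|\geq L/2\}\subseteq\{\sup_s|X_s-x|\geq L/2 - |x-y|\}$, and since $|x-y|\leq \sqrt d\,L/2$ this is not directly useful — instead I would run the argument of Corollary~\ref{cor:escape probability of X} directly with the heat-kernel domination $\Gamma_t(x,z)\lesssim p_t(x,z)e^{C_1\|U\|_\infty}$ to bound $\dQ^{x,y}_{L,\epsilon}(X_{[0,t]}\not\subset Q^y_L)$ by integrating $\Gamma$ over the complement of the box; a reflection/union bound over time then produces $C\exp(Ct\|U\|_\infty - (L/C t))$ for a distance of order $L$, e.g. covering $\partial Q^y_L$ by controlling each coordinate). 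Finally I would insert the bound $\|U\|_\infty \leq \|Z^y_{L,\epsilon}\|_{\sC^\alpha} + \|Y^y_{L,\epsilon}\|_{\sC^{2\alpha}}\lesssim \fa_\epsilon(\log L)^2$ from Proposition~\ref{prop:bounds on Z,Y,eta}, giving \eqref{eq:escape probability on box}.

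Next I would prove \eqref{eq:upper bound of localized solution}. The key is to bound the exponential weight $\sD^y_{L,\epsilon}(0,t)$ defined in \eqref{eq:exponent in Feynman-Kac representation}. On the event $\{X_{[0,t]}\subset Q^y_L\}$ one has, pointwise,
\begin{equation*}
  \sD^y_{L,\epsilon}(0,t) \leq \exp\!\left( t\,\|Z^y_{L,\epsilon}+\eta Y^y_{L,\epsilon}\|_\infty + 2\|Z^y_{L,\epsilon}+Y^y_{L,\epsilon}\|_\infty\right),
\end{equation*}
and using Proposition~\ref{prop:bounds on Z,Y,eta} together with $\eta=\eta_{L,\epsilon}\leq C\fa_\epsilon^{\aleph}(\log L)^{2\aleph}$ the exponent is $\lesssim \fa_\epsilon^{\aleph+1}(\log L)^{2\aleph+2}(1+t)$; absorbing the $t$-dependence into the constant (or keeping it, as it is dominated for the regime of interest) yields a deterministic bound $\sD^y_{L,\epsilon}(0,t)\leq C\exp(C\fa_\epsilon^{\aleph+1}(\log L)^{2\aleph+2})$ on $\{X_{[0,t]}\subset Q^y_L\}$. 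Then
\begin{equation*}
  \dE_{\dQ^{x,y}_{L,\epsilon}}\!\left[\1_{X[0,t]\not\subset Q^y_r,\,X[0,t]\subset Q^y_L}\sD^y_{L,\epsilon}(0,t)\right]
  \leq C e^{C\fa_\epsilon^{\aleph+1}(\log L)^{2\aleph+2}}\,\dQ^{x,y}_{L,\epsilon}\!\left(X[0,t]\not\subset Q^y_r\right),
\end{equation*}
and the remaining probability is bounded by Corollary~\ref{cor:escape probability of X} applied with $K$ of order $r$ (again noting $x\in Q^y_r$ so that leaving $Q^y_r$ requires displacement $\gtrsim r$ from an appropriate reference point, or handling each coordinate separately), giving $C\exp(Ct\|U\|_\infty - r^2/(Ct))$. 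Since $t\|U\|_\infty\lesssim t\fa_\epsilon(\log L)^2 \lesssim \fa_\epsilon^{\aleph+1}(\log L)^{2\aleph+2}$ up to constants (as $\aleph>0$ and $\fa_\epsilon\geq 1$), this term is absorbed into the prefactor, producing \eqref{eq:upper bound of localized solution}.

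The main obstacle I anticipate is the bookkeeping of where $x$ sits relative to the boxes: the events $\{X[0,t]\not\subset Q^y_L\}$ and $\{X[0,t]\not\subset Q^y_r\}$ need to be translated into displacement events $\{\sup_s|X_s-\text{(center)}|\geq \text{const}\cdot L\}$ or coordinatewise exit events to which Corollary~\ref{cor:escape probability of X} (which is phrased in terms of $\sup_t|X_t-x|$) applies cleanly, and one must check the Gaussian tail exponent survives with an $L^2/(Ct)$ (resp.\ $r^2/(Ct)$) rate after a union bound over the $2d$ faces; since $x\in Q^y_r\subseteq Q^y_L$ the displacement needed to exit $Q^y_r$ is at least $\mathrm{dist}(x,\partial Q^y_r)$, which could be small, so the correct move is to bound by exit of the \emph{larger} concentric box of radius $\Theta(r)$ around $y$, using that from $x$ this still requires displacement $\gtrsim r$ only when $r$ is, say, a fixed multiple larger — in practice one simply replaces $r$ by $r/2$ or similar and adjusts the constant $C$. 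A second, minor technical point is ensuring all these estimates hold uniformly in $\epsilon\in[0,1]$ and pass to the $\epsilon=0$ limit, which follows since the bounds of Proposition~\ref{prop:bounds on Z,Y,eta} and Corollary~\ref{cor:escape probability of X} are uniform in $\epsilon$ by construction.
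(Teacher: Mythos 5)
Your proposal matches the paper's proof: both inequalities are obtained by combining the uniform $L^\infty$ bounds $\|Z^y_{L,\epsilon}\|,\|Y^y_{L,\epsilon}\|\lesssim\fa_\epsilon(\log L)^2$, $\eta_{L,\epsilon}\lesssim\fa_\epsilon^{\aleph}(\log L)^{2\aleph}$ from Proposition~\ref{prop:bounds on Z,Y,eta} (giving a pointwise bound on $\sD^y_{L,\epsilon}$ restricted to $Q^y_L$) with the Gaussian escape estimate of Corollary~\ref{cor:escape probability of X}, applied with $K\asymp L$ for \eqref{eq:escape probability on box} and $K\asymp r$ for \eqref{eq:upper bound of localized solution}. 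The bookkeeping concern you raise about the location of $x$ relative to the box centre is a real subtlety, but it is present in the paper's proof in exactly the same form (the bound $\dQ^{x,y}_{L,\epsilon}(X[0,t]\not\subset Q^y_L)\leq\P(\sup_s|X_s-x|\geq L/2)$ uses that $x$ is near $y$, as is the case in all applications), so your treatment is not a deviation from, but a faithful reproduction of, the argument in the text.
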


\begin{proof}
  In what follows, we  use the upper bound on $\|Z^y_{L,\epsilon} \|_{\sC^\alpha}$, $ \| Y^y_{L,\epsilon}\|_{\sC^{2\alpha}} $ and  $\eta^y_{L,\epsilon}$ from Proposition~\ref{prop:bounds on Z,Y,eta} to derive upper and lower bound on $\sD^{y}_{L,\epsilon}$. 
  Combining \eqref{eq:estimates of Z,Y,eta} with the  definition of $\sD^y_{L,\epsilon}$ \eqref{eq:exponent in Feynman-Kac representation}, for $L>1$, $s,t\geq 0$ and $s<t$, we have 
\begin{equation}\label{eq:upper and lower bound of sD}
  \begin{aligned}
    e^{-C\fa_\epsilon^{\aleph+1}(t-s)(\log L)^{2\aleph+2}} \leq \1_{X{[s,t]} \subset Q^y_L} \cdot \sD^{y}_{L,\epsilon}(s,t) \leq e^{C\fa_\epsilon^{\aleph+1}(t-s)(\log L)^{2\aleph+2}},
  \end{aligned}
\end{equation} where $C>0$ is an absolute constant and $\fa_{\epsilon}$ is same as in \eqref{eq:definition of fa_epsilon}. To complete the proof of the inequalities in \eqref{eq:escape probability on box} and \eqref{eq:upper and lower bound of sD}, we further need bound on the transition probability of the diffusion $X$ as defined in \eqref{eq:diffusion_X}. We derive those in the following using the estimates of Theorem~\ref{thm:transition density estimate}.      
Recall the upper and lower bound on  the transition kernel $\Gamma_t^L(x,y)$ from \eqref{eq:upper bound of transition density} and \eqref{eq:lower bound of transition density} respectively. We may bound $\|U_{L,\epsilon}\|=\|Z_{L,\epsilon}+Y_{L,\epsilon}\|_{\infty}$ by $\|Z_{L,\epsilon}\|_{\sC^{\alpha}}+ \|Y_{L,\epsilon}\|_{2\sC^{\alpha}}$ which is further bounded above by $C\mathfrak{a}_{\epsilon}(\log L)^2$  due to \eqref{eq:estimates of Z,Y,eta}. As a result, we get for $L>r>e$ and $L,r\in\dN$
\begin{equation}\label{eq:upper and lower bound of transition kernel on box}
  \begin{aligned}
   &\Gamma^L_t(x,y)  \geq  \frac{1}{t^{d/2}} \exp\left( -C \fa_\epsilon(\log L )^2 (1+ \frac{r^2}{t})\right) \\
   &\Gamma^L_t(x,y) \leq p_t(x,y) e^{\fa_\epsilon(\log L)^{2}},
  \end{aligned}
\end{equation} 
where $p_t$ is the transition density of the $d-$dimensional Brownian motion. Note that $\dQ^{x,\epsilon}_{L,y}( X{[0,t]} \not\subset Q^y_L )$ is bounded above by $\mathbb{P}\big(\sup_{s\in [0,t]}|X_s-x|\geq L/2\big)$. Corollary~\ref{cor:escape probability of X} bounds the last probability by $C\exp(Ct\|Z_{L,\epsilon}+Y_{L,\epsilon}\|_{\infty}- \frac{L^2}{CT})$. Due to Proposition~\ref{prop:bounds on Z,Y,eta}, we may bound $\|Z_{L,\epsilon}+Y_{L,\epsilon}\|_{\infty}$ by $C\fa_{\epsilon}(\log L)^2$. Plugging this into the bound $C\exp(Ct\|Z_{L,\epsilon}+Y_{L,\epsilon}\|_{\infty}- \frac{L^2}{CT})$ yields the inequality \eqref{eq:escape probability on box}. To prove \eqref{eq:upper bound of localized solution}, we first apply \eqref{eq:upper and lower bound of sD} to obtain
$$\dE_{\dQ^{x,y}_{L,\epsilon}} \left[ \1_{ X[0,t] \not\subset Q^y_r, X[0,t] \subset Q^y_L} \cdot \sD^y_{L,\epsilon}(0,t)\right] \leq \dQ^{x,y}_{L,\epsilon}( X{[0,t]} \not\subset Q^y_L )e^{C\fa_\epsilon^{\aleph+1}(t-s)(\log L)^{2\aleph+2}}.$$
   Substituting \eqref{eq:escape probability on box} into the right hand side of the above display yields \eqref{eq:upper bound of localized solution}.  
\end{proof}

 Now we use the bounds of Lemma~\ref{lem:BoxEstimate} to upper and lower bound of the solution of PAM \eqref{eq:localized PAM}
 started from constant initial data. Our proof ideas will be similar in spirit to \cite[Lemma~3.2]{KPZ20}.
\begin{proposition}\label{prop:upper and lower bound of the localized solution}
Let $L>1$. Recall that $ u^{\1,y}_{L}$ is the solution of PAM restricted on box $Q^y_L$ under Dirichlet boundary condition started from constant initial data $\1$. Then we have
\begin{equation}\label{eq:upper bound of the local proxy}
   u^{\1,y}_{L}(t,x)\leq C\exp \Big( t \blambda_1(Q^y_{L})+ C\fa_0^{\aleph+1} (\log L)^{2\aleph+2} \Big),
\end{equation} where $\fa_0: = \lim_{\epsilon\to 0}\fa_\epsilon$. Moreover, for $t>\delta>1$
\begin{align}\label{eq:lower bound of the local proxy}
   u^{\1,y}_L(t,x) &\geq\exp\left( -C \fa_0^{\aleph+1}\delta (\log L)^{2\aleph +2} -\frac{C\fa_0(\log L)^2r^2}{\delta } +(t-\delta)\blambda_1(Q^y_r) \right) \\ &-\exp\left(C\fa_0^{2\aleph+1}t(\log L)^{2\aleph+2}-\frac{L^2}{C\delta } \right)
\end{align}
where $\aleph$ is the same constant as in Proposition~\ref{prop:bounds on Z,Y,eta}. 
\end{proposition}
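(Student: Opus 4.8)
The plan is to prove the upper bound \eqref{eq:upper bound of the local proxy} and the lower bound \eqref{eq:lower bound of the local proxy} separately; in each case I would first argue for the mollified solution $u^{\1,y}_{L,\epsilon}$ and then let $\epsilon\downarrow0$ using \eqref{eq:convergence of Feynman-Kac} and the a.s.\ convergence $\fa_\epsilon\to\fa_0$ of Proposition~\ref{prop:bounds on the enhance noise}. Throughout write $p^{L,y}_s(x,z):=u^{\delta_z}_{L,y}(s,x)=\sum_n e^{s\blambda_n(Q^y_L)}v^y_{n,L}(z)v^y_{n,L}(x)$ for the Dirichlet heat kernel of the Anderson Hamiltonian on $Q^y_L$ (Lemma~\ref{lemma:spectral representation}); it is nonnegative by Feynman--Kac and obeys Chapman--Kolmogorov together with the Parseval identities $\int_{Q^y_L}p^{L,y}_{2s}(x,w)^2\,dw=p^{L,y}_{2s}(x,x)$ and $\|u^{\1,y}_L(s,\cdot)\|^2_{L^2(Q^y_L)}=\sum_n e^{2s\blambda_n(Q^y_L)}\langle v^y_{n,L},\1\rangle^2$. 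The two quantitative inputs, fed into the Feynman--Kac formula of Theorem~\ref{thm:Feynman-Kac representation}, are the pointwise bound \eqref{eq:upper and lower bound of sD} for $\sD^y_{L,\epsilon}$ together with the two--sided transition kernel estimate of Theorem~\ref{thm:transition density estimate} (with $\|Z^y_{L,\epsilon}+Y^y_{L,\epsilon}\|_\infty\le C\fa_\epsilon(\log L)^2$ from Proposition~\ref{prop:bounds on Z,Y,eta}); combining them and the semigroup property yields, for $x\in Q^y_L$ and $0<s\le1$, the short--time bounds $p^{L,y}_s(x,z)\le Cs^{-d/2}e^{C\fa_\epsilon^{\aleph+1}(\log L)^{2\aleph+2}-|x-z|^2/(4s)}$ and $\sup_{w\in Q^y_L}u^{\1,y}_L(s,w)\le Ce^{C\fa_\epsilon^{\aleph+1}(\log L)^{2\aleph+2}}$.

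For the upper bound I would write $u^{\1,y}_L(t,x)=\sum_n e^{t\blambda_n(Q^y_L)}\langle v^y_{n,L},\1\rangle v^y_{n,L}(x)$, pull out $e^{(t-1)\blambda_1(Q^y_L)}$ (legitimate for $t\ge1$ since $\blambda_n\le\blambda_1$), apply the triangle inequality and Cauchy--Schwarz to get $u^{\1,y}_L(t,x)\le e^{(t-1)\blambda_1(Q^y_L)}\,\|u^{\1,y}_L(1/2,\cdot)\|_{L^2(Q^y_L)}\,p^{L,y}_1(x,x)^{1/2}$, and then bound both time--$\le1$ factors by the short--time estimates above, obtaining $u^{\1,y}_L(t,x)\le CL^{d}e^{(t-1)\blambda_1(Q^y_L)+C\fa_\epsilon^{\aleph+1}(\log L)^{2\aleph+2}}$. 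The factor $L^{d}$ is absorbed since $2\aleph+2>1$, and the discrepancy $e^{-\blambda_1(Q^y_L)}$ is absorbed into the error using the a priori lower bound $\blambda_1(Q^y_L)\ge-C\fa_\epsilon^{\aleph+1}(\log L)^{2\aleph+2}$, which follows from the fact that $e^{\blambda_1(Q^y_L)}$ is the $L^2(Q^y_L)\to L^2(Q^y_L)$ operator norm of $\phi\mapsto u^\phi_{L,y}(1,\cdot)$ and the Feynman--Kac lower bound on $u^{\1,y}_L(1,\cdot)$ in the bulk of $Q^y_L$; the range $0<t<1$ is handled directly from Theorem~\ref{thm:Feynman-Kac representation}. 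Letting $\epsilon\downarrow0$ gives \eqref{eq:upper bound of the local proxy}.

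For the lower bound I would first reduce to the concentric small box: confining the Feynman--Kac path of Theorem~\ref{thm:Feynman-Kac representation} to $Q^y_r$ and using $\sD^y_{L,\epsilon}\ge0$ gives $u^{\1,y}_L(t,x)\ge\E_{\dQ^{x,y}_{L,\epsilon}}[\sD^y_{L,\epsilon}(0,t)\1_{X[0,t]\subset Q^y_r}]$, and running the Girsanov transformation of Theorem~\ref{thm:Feynman-Kac representation} backwards identifies the right--hand side (after $\epsilon\downarrow0$) with $u^{\1,y}_r(t,x)$, since the Anderson Hamiltonian on $Q^y_r$ with Dirichlet boundary condition depends only on the restriction of $\xi$ to $Q^y_r$. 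It then remains to bound $u^{\1,y}_r(t,x)$ from below; here I would split $[0,t]$ into $[0,\delta]$ and $[\delta,t]$, use the semigroup identity $u^{\1,y}_r(t,x)=\int_{Q^y_r}p^{r,y}_\delta(x,w)u^{\1,y}_r(t-\delta,w)\,dw$, and restrict the integral to a small ball around the maximizer of $u^{\1,y}_r(t-\delta,\cdot)$, which lies in the bulk of $Q^y_r$ by the Dirichlet condition. The transition kernel lower bound of Theorem~\ref{thm:transition density estimate}, for displacement $\lesssim r$ over time $\delta\lesssim r^2$, contributes the factor $e^{-C\fa_0(\log r)^2 r^2/\delta}$ and the lower bound in \eqref{eq:upper and lower bound of sD} contributes $e^{-C\fa_0^{\aleph+1}\delta(\log r)^{2\aleph+2}}$, while $\|u^{\1,y}_r(t-\delta,\cdot)\|_\infty\ge r^{-d/2}\|u^{\1,y}_r(t-\delta,\cdot)\|_{L^2(Q^y_r)}\ge r^{-d/2}e^{(t-\delta)\blambda_1(Q^y_r)}\langle v^y_{1,r},\1\rangle$, with $\langle v^y_{1,r},\1\rangle\ge\|v^y_{1,r}\|_\infty^{-1}\ge c\,e^{\blambda_1(Q^y_r)}e^{-C\fa_0^{\aleph+1}(\log r)^{2\aleph+2}}$ by the same short--time kernel bound. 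Using $\log r\le\log L$ and absorbing the $e^{\blambda_1(Q^y_r)}$ discrepancy as in the upper bound collects the first exponential in \eqref{eq:lower bound of the local proxy}, while the paths discarded in the confinement steps are controlled by the escape estimates \eqref{eq:escape probability on box}--\eqref{eq:upper bound of localized solution} of Lemma~\ref{lem:BoxEstimate}, which produce the subtracted term $\exp(C\fa_0^{2\aleph+1}t(\log L)^{2\aleph+2}-L^2/(C\delta))$.

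The main obstacle is the lower bound. Unlike in the two--dimensional situation of \cite{KPZ20}, neither the existence nor the two--sided estimates of the transition density of the drift--$\nabla(Z+Y)$ diffusion are available off the shelf for $d=3$ — this is exactly the content of Sections~\ref{sec:feynman-kac representation}--\ref{sec:transition density estimate} — and the reduction from $Q^y_L$ to $Q^y_r$ has to be routed through the Feynman--Kac/Girsanov identity and the intrinsic nature of the Dirichlet Anderson Hamiltonian rather than through a naive domain comparison, since the enhanced noises on the two boxes genuinely differ through the nonlocal resolvent. Recovering the exact exponential rate $\blambda_1(Q^y_r)$ with prefactors that are only polynomial in the noise bound, and organizing the various boundary losses so that they land in the error term rather than contaminating the exponent, is where the bulk of the work lies; this is the analogue for $d=3$ of \cite[Lemma~3.2]{KPZ20}.
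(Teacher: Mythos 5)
Your upper bound is correct and takes a genuinely different, arguably cleaner, route from the paper. Both arguments ultimately bound $u^{\1,y}_L(t,x)$ by the product of an $e^{(t-1)\blambda_1(Q^y_L)}$ factor and a small-time factor controlled via Feynman--Kac, \eqref{eq:upper and lower bound of sD}, and the transition-kernel bound of Theorem~\ref{thm:transition density estimate}. You obtain this by applying Cauchy--Schwarz directly on the spectral expansion of $u^{\1,y}_L(t,\cdot)$, giving $\|u^{\1,y}_L(1/2,\cdot)\|_{L^2(Q^y_L)}\cdot u^{\delta_x,y}_L(1,x)^{1/2}$, whereas the paper goes through Chapman--Kolmogorov with $\phi=u^{x,y}_L(1,\cdot)$ and the estimate $\int_{Q^y_L} u^{\phi,y}_L(t-1,z)\,dz\le e^{(t-1)\blambda_1(Q^y_L)}\|\phi\|_{L^2}\|\1\|_{L^2}$ together with the short-time $L^q$ bound \eqref{eq:L_q bound of the solution with delta initial data}. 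The absorption of the leftover $e^{-\blambda_1(Q^y_L)}$ and the polynomial-in-$L$ prefactors into $C\fa_0^{\aleph+1}(\log L)^{2\aleph+2}$ is fine in both cases.

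Your lower bound, as sketched, has concrete gaps. The overall structure is inconsistent: you begin by a clean confinement $u^{\1,y}_L(t,x)\ge u^{\1,y}_r(t,x)$ and then work entirely on $Q^y_r$, yet you assert that the subtracted error $\exp(C\fa_0^{2\aleph+1}t(\log L)^{2\aleph+2}-L^2/(C\delta))$ comes from "paths discarded in the confinement steps". A confinement from $Q^y_L$ to $Q^y_r$ cannot produce an escape scale $L^2/\delta$; at best it yields $r^2/\delta$. The paper never reduces to $Q^y_r$ in this way: it performs a single Markov step on $Q^y_L$, inserts $\1_{X_\delta\in Q^y_r}$ as a \emph{localization} while retaining the \emph{unkilled} $Q^y_L$-transition kernel $\Gamma^L_\delta$, and splits off $\1_{X[0,\delta]\not\subset Q^y_L}$ as an explicit error controlled by \eqref{eq:escape probability on box}; it is precisely the exit probability from $Q^y_L$ over $[0,\delta]$ that gives the $L^2/(C\delta)$. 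Second, your semigroup identity $u^{\1,y}_r(t,x)=\int_{Q^y_r}u^{\delta_w,y}_r(\delta,x)u^{\1,y}_r(t-\delta,w)\,dw$ involves the \emph{killed} (Dirichlet) Anderson heat kernel on $Q^y_r$, but Theorem~\ref{thm:transition density estimate} supplies two-sided bounds only for the unkilled diffusion kernel $\Gamma$; lower-bounding the killed kernel requires subtracting the exit mass, which you do not address and which does not follow from the cited ingredients. Third, "restrict the integral to a small ball around the maximizer of $u^{\1,y}_r(t-\delta,\cdot)$" needs a quantitative modulus of continuity to guarantee the integrand is comparable to its maximum on that ball; no such estimate is available here. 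The paper sidesteps the latter two issues entirely by integrating a uniform pointwise lower bound on the unkilled $\Gamma^L_\delta$ against the spectral trace identity $\int_{Q^y_r}u^{\delta_z,y}_r(s,z)\,dz\ge e^{s\blambda_1(Q^y_r)}$, which requires no regularity of $u^{\1,y}_r$ and no killed-kernel lower bound. To repair your argument you would need to either (i) perform the localization on $Q^y_L$ with the unkilled kernel as the paper does, or (ii) establish a Dirichlet heat-kernel lower bound and a uniform H\"older estimate for $u^{\1,y}_r(t-\delta,\cdot)$ with constants polynomial in $\fa_0(\log r)^2$ — neither of which is in the toolbox the paper provides.
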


\begin{proof}  We start by writing that 
\begin{equation}\label{eq:representation of u on the box with flat initial data}
  u^{\1,y}_{L}(t,x) = \dE_{\dQ^{x,y}_{L} } \left[\sD^{y}_{L}(0,t) \1_{X[0,t]\subset Q^y_{L}} \right] = \int_{Q^y_{L}} u^{x,y}_{L}(t,z) dz.
\end{equation} 
where  $u^{x,y}_{r}(t,z) : =u^{\delta_x,y}_{L}(t,z)$. Note that $u^{x,y}_{r}(t,z) : = u^{\delta_x,y}_{L}(t,z) =  \lim_{\epsilon\rightarrow 0} u^{\psi^x_\epsilon,y}_{L}(t,z)$ where $\psi^x_\epsilon(z): = \psi_\epsilon(z-x)\in C^{\infty}(Q_L)$ (see \cite{Lab19}). For $\delta\in(1,t), \epsilon\in [0,1]$ and $0<r<L$, we have 
\begin{equation}\label{eq:tower property}
\begin{aligned}
  u^{x,y}_{r}(t,z) & = \lim_{\epsilon\rightarrow 0 } \dE_{\dQ^{x,y}_{r,\epsilon}}\left[\sD^y_{r,\epsilon}(0,t) \psi^x_\epsilon(X_t)\1_{X[0,t]\subset Q^y_r} \right]\\
  &\leq \lim_{\epsilon\rightarrow 0} e^{C\fa^{\aleph+1}_\epsilon\delta(\log r)^{2\aleph +2}} \dE_{\dQ^{x,y}_{r,\epsilon}}\left[ \sD^y_{r,\epsilon}(0,t-\delta) \1_{X[0,t-\delta]\subset Q^y_r} \dE_{\dQ^{x,y}_{r,\epsilon}} [\psi^x_\epsilon(X_t)|X_{t-\delta}]\right]\\
  &= \lim_{\epsilon\rightarrow 0} e^{C\fa^{\aleph+1}_\epsilon\delta(\log r)^{2\aleph +2}} \dE_{\dQ^{x,y}_{r,\epsilon}}\left[ \sD^y_{r,\epsilon}(0,t-\delta) \1_{X[0,t-\delta]\subset Q^y_r}\int_{\dR^3} \Gamma^r_\delta(X_{t-\delta},z') \psi^x_\epsilon(z')dz'\right]\\
  &\leq Ce^{C\fa^{\aleph+1}_0\delta(\log r )^{2\aleph+2} + \fa_0(\log r)^2}\dE_{\dQ^{x,y}_{r}}\left[ \sD^y_{r}(0,t-\delta) \1_{X[0,t-\delta]\subset Q^y_r} \right]\\
    &\leq C e^{C\fa^{\aleph+1}_0\delta (\log r)^{2\aleph +2}} u_r^{\1,y}(t-\delta,z).
\end{aligned}
\end{equation} The first inequality in the above display is obtained by splitting $\sD^y_{r,\epsilon}(0,t) \1_{X[0,t]\subset Q^y_r} $ as a product of $\sD^y_{r,\epsilon}(0,t-\delta) \1_{X[0,t-\delta]\subset Q^y_r} $ and $\sD^y_{r,\epsilon}(t-\delta,t) \1_{X[t-\delta,t]\subset Q^y_r} $ and  \eqref{eq:upper and lower bound of sD} to bound $\sD^y_{r,\epsilon}(t-\delta,t) \1_{X[t-\delta,t]\subset Q^y_r} $. The second to last inequality is obtained by applying the bound on the transition kernel $\Gamma^r_{\delta}(X_{t-\delta},z')$ as shown in \eqref{eq:upper and lower bound of transition kernel on box}. From \eqref{eq:representation of u on the box with flat initial data} and \eqref{eq:upper and lower bound of sD} again, we have for $q\in[1,\infty)$
\begin{equation*}
   \left(  \int_{Q^y_r} |u_r^{\1,y}(t-\delta,z)|^q dz \right)^{\frac{1}{q}} \lesssim e^{C\fa^{\aleph+1}_0 (t-\delta)(\log r)^{2\aleph+2}}.
\end{equation*} This implies that for $q\in[1,\infty]$,
\begin{equation}\label{eq:L_q bound of the solution with delta initial data}
  \begin{aligned}
    \| u^{x,y}_{r}(t,\cdot)\|_{L^q} \leq Ce^{ C\fa^{\aleph+1}_0 t(\log r)^{2\aleph +2}}.
  \end{aligned}
\end{equation} Now observe that for $\phi\in C(Q_L)$, using H\"older inequality,
\begin{equation}
    \begin{aligned}
     \int_{Q^y_L} u^{\phi,y}_{L}(t,x) d x= \sum_{n\in \dN } e^{t\blambda(Q^y_L)} \langle v^y_{n,L},\phi \rangle  \langle v^y_{n,L},\1_{Q^y_L} \rangle \leq e^{t\blambda(Q^y_L)} \| \phi\|_{L^2} \| \1_{Q^y_L} \|_{L^2}. 
    \end{aligned} 
\end{equation} Set $\phi = u^{x,y}_{L}(1,\cdot) $. Then by the Chapman-Kolmogorov equation, we know $u^{\1,y}_{L} (t,x) = \int_{Q^y_L} u^{\phi,y}_{L}(t-1,z) dz$. Applying \eqref{eq:representation of u on the box with flat initial data} to the right hand side of the latter equation in conjuction with \eqref{eq:tower property} and \eqref{eq:L_q bound of the solution with delta initial data} yields  
\begin{equation}
    u^{\1,y}_{L} (t,x)\lesssim \exp ( t \blambda(Q^y_L)+C\fa^{\aleph+1}_0(\log L)^{2\aleph+2}).
\end{equation}

Now we proceed to prove the lower bound. Using Markov property at time $\delta\in(1,t)$ and lower bound on $\1_{X{[0,\delta]} \subset Q^y_L} \cdot \sD^{y}_{L,\epsilon}(0,\delta) $ from \eqref{eq:upper and lower bound of sD}, we have
\begin{equation}
    u^{\1, y}_L(t,x) \geq e^{-C\fa^{\aleph+1}_0\delta(\log L)^{2\aleph+2}} \dE_{\dQ^{x,y}_{L}} \Big[\1_{X[0,\delta]\subset Q^y_L} u^{\1,y}_{L}(t-\delta,X_\delta)\Big].
\end{equation} Then for $r\in(0,L), $ we have 
\begin{equation}\label{eq:lower bound of the solution into two boxes}
    \begin{aligned}
      \dE_{\dQ^{x,y}_L} &\Big[\1_{X[0,\delta]\subset Q^y_L} u^{\1,y}_{L}(t-\delta,X_\delta)\Big] \\&\geq   \dE_{\dQ^{x,y}_L} \Big[\1_{X_\delta \in Q^y_r} u^{\1,y}_{r}(t-\delta,X_\delta)\Big]  -\dE_{\dQ^{x,y}_L} \Big[ \1_{X[0,\delta]\not\subset Q^y_L}\1_{X_\delta \in Q^y_r} u^{\1,y}_{r}(t-\delta,X_\delta)\Big].
    \end{aligned} 
\end{equation} The second term on the r.h.s. can be bounded as
    \begin{align}\label{eq:second term in the lower bound of the solution}
     \dE_{\dQ_L^{x,y}} \left[ \1_{X[0,\delta]\not\subset Q_L}\1_{X_\delta \in Q^y_r} u^{\1,y}_{r}(t-\delta,X_\delta)\right] &\leq \dP(X[0,\delta] \not\subset Q_L) \sup_{z\in Q^y_r} u^{\1,y}_{r}(t-\delta,z) \nonumber\\
     &\leq C\exp\Big( C\fa_0\delta(\log L)^2 - \frac{L^2}{C\delta} +C\fa_0^{\aleph+1}(t-\delta)(\log r)^{2\aleph+2} \Big) \nonumber\\
     &\leq C \exp \Big(C\fa_0^{\aleph+1} t(\log L)^{2\aleph+2} - \frac{L^2}{C\delta}\Big) ,
    \end{align}
 where we used \eqref{eq:escape probability on box} and \eqref{eq:upper and lower bound of sD} in the second inequality. Now we bound the first term in the r.h.s. of \eqref{eq:lower bound of the solution into two boxes}. Using \eqref{eq:upper and lower bound of transition kernel on box}, we have 
\begin{equation}
\begin{aligned}
\dE_{\dQ^{x,y}_L} \left[\1_{X_\delta \in Q^y_r} u^{\1,y}_{r}(t-\delta,X_\delta)\right]& = \int_{Q^y_r} \Gamma^L_\delta(x,z)u^{\1,y}_{r}(t-\delta,z) dz \\
 &\geq \frac{C}{t^{d/2}} e^{-C\fa_0(\log L^2) - C\frac{\fa_0(\log L)^2r^2}{\delta}} \int_{Q^y_r} u^{\1,y}_{r}(t-\delta,z)dz.
\end{aligned}
\end{equation} Note that 
\begin{equation}
    u^{\1,y}_{r}(t-\delta,z) \geq C^{-1}e^{-C\fa_0^{\aleph+1}\delta (\log r)^{2\aleph+2}} u^{\delta_x,y}_r(t-\delta,z)
\end{equation} from \eqref{eq:tower property}. This gives 
\begin{equation*}
    \dE_{\dQ^{x,y}_L} \left[\1_{X_\delta \in Q^y_r} u^{\1,y}_{r}(t-\delta,X_\delta)\right] \geq \frac{C}{t^{d/2}} e^{-C\fa_0(\log L^2) - C\frac{\fa_0(\log L)^2r^2}{\delta}-C\fa_0^{\aleph+1}\delta (\log r)^{2\aleph+2}} \int_{Q^y_r}  u^{\delta_x,y}_r(t-\delta,z) dz 
\end{equation*}

We pose to observe that using the spectral decomposition of the solution (Lemma~\ref{lemma:spectral representation}).
\begin{equation*}
   u^{\delta_z,y}_{L}(t,x) = \sum_{n \in \dN} e^{t \blambda_{n} (Q^y_L)}  v^y_{n,L}(z) v^y_{n,L}(x), \quad \text{for }x,z\in Q^y_L.
\end{equation*}
Using this, we can derive 
  $\int_{Q^y_L} u^{\delta_z,y}_{L}(t,z)dz \geq e^{t \blambda_{n}(Q^y_L) }.$
Combining these with the above lower bound, we obtain 
\begin{equation}\label{eq:first term in the lower bound of the solution}
    \dE_{\dQ^{x,y}_L} \left[\1_{X_\delta \in Q^y_r} u^{\1,y}_{r}(t-\delta,X_\delta)\right] \geq \frac{C}{t^{d/2}} \exp\Big( -C\fa^{\aleph+1}_0(\log L)^{2\aleph+2}) - \frac{C\fa_0(\log L)^2r^2}{\delta} + (t-\delta)\blambda_1(Q^y_r) \Big) .
\end{equation} Putting \eqref{eq:second term in the lower bound of the solution} and \eqref{eq:first term in the lower bound of the solution} together, we have \eqref{eq:lower bound of the local proxy}. This completes the proof.
\end{proof}

The following lemma says that the solution $u$ can be represented as the sum of the localized solution on $A^y_k$ where $A^y_k:= \left\{ X[0,t] \subset ( Q^y_{L_t^{k+1}} \setminus Q^y_{L_t^{k}}) \right\}$. This helps us to employ Proposition~\ref{prop:upper and lower bound of the localized solution} for the solution $u$ on $\dR^d$.

\begin{lemma}\label{lemma:convergence of series to the solution} Let  $L_t:= \lfloor t^b \rfloor$ for $b>1$.   With probability one, for all $x\in Q^y_{L_t}$ and $y\in \dR^d$, we have for $\epsilon\in[0,1]$,
\begin{equation*}
  u^{\1,y}_\epsilon(t,x) = \sum_{k \in \dN_0} \cU^y_{k,\epsilon}(t,x),
\end{equation*} where  
$\cU^y_{k,\epsilon}(t,x) : =\dE_{\dQ^{x,y}_{L_t^{k+1},\epsilon} } \left[\sD^{y}_{L_t^{k+1},\epsilon}(0,t) \1_{A^y_k} \right]. $
\end{lemma}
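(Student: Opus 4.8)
The plan is to obtain the series representation by decomposing the Feynman--Kac expectation \eqref{eq:feynman-Kac representation} according to which annular shell $Q^y_{L_t^{k+1}}\setminus Q^y_{L_t^k}$ the diffusion path $X_{[0,t]}$ is confined to, and then pass to the limit $\epsilon\to 0$. Throughout I work with fixed $y$ (by the translation-invariance remarked after Theorem~\ref{thm:Feynman-Kac representation} we may take $y=0$) and fixed $x\in Q^y_{L_t}$. First I would recall from Theorem~\ref{thm:Feynman-Kac representation} that for $\phi\equiv\1$ and any box $Q^y_M$ containing $x$,
\[
  u^{\1,y}_{M,\epsilon}(t,x) = \dE_{\dQ^{x,y}_{M,\epsilon}}\left[\sD^y_{M,\epsilon}(0,t)\,\1_{X_{[0,t]}\subset Q^y_M}\right],
\]
while the \emph{global} solution $u^{\1,y}_\epsilon(t,x)$ on $\dR^d$ has its own Feynman--Kac representation against a diffusion $X$ with drift $\nabla(Z_\epsilon+Y_\epsilon)$ on all of $\dR^d$ (no confinement indicator). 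The events $A^y_k:=\{X_{[0,t]}\subset Q^y_{L_t^{k+1}}\setminus Q^y_{L_t^k}\}$ for $k\in\dN_0$ are disjoint; since $X$ has continuous paths, almost surely $X_{[0,t]}$ is a compact subset of $\dR^d$, hence contained in some $Q^y_{L_t^{k+1}}$ for $k$ large, and since $x\in Q^y_{L_t}$ the path starts inside $Q^y_{L_t}$. Thus $\{X_{[0,t]}\text{ bounded}\}=\bigsqcup_{k\ge0}\big(A^y_k\cup\{X_{[0,t]}\subset Q^y_{L_t}\}\big)$ up to the $k=0$ term; more precisely the collection $\{A^y_k\}_{k\ge0}$ together with $\{X_{[0,t]}\subset Q^y_{L_t}\}$ (which equals $A^y_{-1}$ in the obvious convention, or is absorbed into the $k=0$ shell depending on the exact convention; I would state the convention explicitly that $Q^y_{L_t^0}=Q^y_{1}\subset Q^y_{L_t}$ and fold the innermost region into the $k=0$ term) partitions a full-measure event. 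Decomposing the $\dR^d$-Feynman--Kac expectation over this partition and using that on $A^y_k$ the path never leaves $Q^y_{L_t^{k+1}}$ — so the drift, the functional $\sD^y$, and the law of $X$ restricted to this event coincide with those built from the $Q^y_{L_t^{k+1}}$-localized noise $\xi^y_{L_t^{k+1},\epsilon}$ — gives
\[
  u^{\1,y}_\epsilon(t,x)=\sum_{k\in\dN_0}\dE_{\dQ^{x,y}_{L_t^{k+1},\epsilon}}\left[\sD^y_{L_t^{k+1},\epsilon}(0,t)\,\1_{A^y_k}\right]=\sum_{k\in\dN_0}\cU^y_{k,\epsilon}(t,x).
\]

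The one genuinely substantive point is the interchange of the sum with the construction of the solution, i.e.\ justifying that the tail $\sum_{k\ge K}\cU^y_{k,\epsilon}(t,x)\to0$ as $K\to\infty$, uniformly enough to make the above rearrangement legitimate and, ultimately, to let $\epsilon\to0$. This is exactly where the estimates of Section~\ref{subsec:bound on the enhanced noise} enter. On $A^y_k$ the path must travel from $x\in Q^y_{L_t}$ to the complement of $Q^y_{L_t^k}$, so $\sup_{s\le t}|X_s-x|\ge \tfrac12(L_t^k-L_t)$; by Corollary~\ref{cor:escape probability of X} together with the bound $\|Z^y_{L_t^{k+1},\epsilon}+Y^y_{L_t^{k+1},\epsilon}\|_\infty\le C\fa_\epsilon(\log L_t^{k+1})^2 = C\fa_\epsilon (k+1)^2(\log L_t)^2$ from Proposition~\ref{prop:bounds on Z,Y,eta}, and the uniform-in-$\epsilon$ upper bound \eqref{eq:upper and lower bound of sD} on $\sD^y_{L_t^{k+1},\epsilon}$, one gets
\[
  \cU^y_{k,\epsilon}(t,x)\le C\exp\!\Big(C\fa_\epsilon^{\aleph+1}t\,(k+1)^{2\aleph+2}(\log L_t)^{2\aleph+2} - \frac{(L_t^k-L_t)^2}{Ct}\Big).
\]
Since $L_t^k$ grows exponentially in $k$ while the positive exponent grows only polynomially in $k$, this is summable in $k$, and the bound is uniform over $\epsilon\in[0,1]$ because $\fa_\epsilon\le\fa_0$ — or, to be safe, because $\sup_{\epsilon}\dE[e^{h\sqrt{\fa_\epsilon}}]<\infty$ by Proposition~\ref{prop:bounds on the enhance noise}, so $\fa_\epsilon$ is a.s.\ finite and one may argue on the event $\{\fa_\epsilon\le N\}$ and let $N\to\infty$. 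I expect this tail estimate, and in particular checking the competition between the polynomial-in-$k$ growth of the noise bound and the Gaussian-in-$L_t^k$ decay of the escape probability, to be the main (though not deep) obstacle; everything else is a bookkeeping decomposition of a Feynman--Kac expectation over a path-space partition.

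Finally I would record why the identity holds "with probability one simultaneously for all $x\in Q^y_{L_t}$ and all $y$": for fixed $t$ the map $x\mapsto u^{\1,y}_\epsilon(t,x)$ is continuous (it lies in the Besov space $B^{\varrho,\beta}_{\infty,\infty}$ as recalled in Section~\ref{sec:AndersonHamiltonian}), each $\cU^y_{k,\epsilon}(t,\cdot)$ is continuous, and the series converges uniformly on $Q^y_{L_t}$ by the tail bound above, so the identity, once known on a countable dense set of $(x,y)$, extends to all $(x,y)$ off a single null set. The passage to $\epsilon=0$ then follows from Theorem~\ref{thm:Feynman-Kac representation} (convergence $\cU^y_{k,\epsilon}\to\cU^y_{k,0}$ termwise) together with the uniform-in-$\epsilon$ tail bound, via dominated convergence for series. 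This completes the proof.
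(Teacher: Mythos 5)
Your decomposition over the disjoint shell events $A^y_k$, the tail estimate via Corollary~\ref{cor:escape probability of X} and Proposition~\ref{prop:bounds on Z,Y,eta}, and the reduction to $x=y=0$ by stationarity all match the paper's approach. The gap is in the passage to $\epsilon=0$. First, the claim that $\fa_\epsilon\le\fa_0$ is unjustified (and almost certainly false): $\fa_\epsilon$ is a supremum over $L$ of the norm of the mollified enhanced noise, and there is no monotonicity in $\epsilon$. Your fallback --- restrict to $\{\fa_\epsilon\le N\}$ and send $N\to\infty$ --- also does not close the gap, because that event depends on $\epsilon$, so it does not furnish a single full-measure event on which the tail $\sum_{k\ge K}\cU^y_{k,\epsilon}$ is small uniformly in $\epsilon$. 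Second, and more fundamentally, ``dominated convergence for series'' requires pointwise a.s.\ convergence of both $u^{\1,y}_\epsilon\to u^{\1,y}_0$ and $\cU^y_{k,\epsilon}\to\cU^y_{k,0}$, but the paper only has these in probability (from \cite[Theorem~1.1]{hairer2018multiplicative} for the former, and from the $L^p$ convergence of $\fZ_{L,\epsilon}$ together with weak convergence of $\dQ^{x}_{L,\epsilon}$ for the latter). You cannot simply invoke dominated convergence $\omega$-by-$\omega$.

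The paper closes the loop with a three-way union bound plus a monotonicity observation, and you should adopt it. For fixed $K$ and $\delta>0$ one bounds
$\dP\bigl(u^{\1,y}_0(t,x)-\sum_{k=0}^K\cU^y_{k,0}(t,x)>\delta\bigr)$
by the sum of
$\dP\bigl(|u^{\1,y}_0-u^{\1,y}_\epsilon|>\delta/3\bigr)$,
$\dP\bigl(|u^{\1,y}_\epsilon-\sum_{k=0}^K\cU^y_{k,\epsilon}|>\delta/3\bigr)$,
and
$\sum_{k=0}^K\dP\bigl(|\cU^y_{k,\epsilon}-\cU^y_{k,0}|>\delta/(3K)\bigr)$;
the first and third vanish as $\epsilon\to0$ by the in-probability convergences, and the middle term is handled by intersecting with a high-probability event $\Upsilon_\epsilon$ on which $\fa_\epsilon$ is controlled (the uniform bound $\sup_\epsilon\dE[e^{h_0\sqrt{\fa_\epsilon}}]<\infty$ gives $\sup_\epsilon\dP(\neg\Upsilon_\epsilon)$ small), then using the tail estimate on $\Upsilon_\epsilon$ and letting $K\to\infty$. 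This yields $\sum_{k=0}^K\cU^y_{k,0}(t,x)\to u^{\1,y}_0(t,x)$ in probability; since the partial sums are non-decreasing in $K$ (each $\cU^y_{k,0}\ge 0$) and dominated by $u^{\1,y}_0(t,x)$, convergence in probability automatically upgrades to almost sure convergence. It is this monotonicity argument --- not dominated convergence --- that finishes the proof, and it is worth stating explicitly.
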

\begin{proof} For simplicity, we prove this result for $x=y=0$. Let us denote $u^\epsilon_t:=u^{\1,0}_\epsilon(t,0)$.  The general case follows easily from the stationarity of the solution $u$. We let $\cU_{k,\epsilon}(t) : = \cU^0_{k,\epsilon}(t,0)$ for $\epsilon\in[0,1]$. When $\epsilon\in(0,1]$, the lemma is proved from the classical Feynmann-Kac representation in the proof of Theorem~\ref{thm:Feynman-Kac representation}. To deal with the case of $\epsilon=0$, we first estimate $\cU_{k,\epsilon}(t)$. Recall that $\cU_{k,\epsilon}(t)$ is equal to  
$$\dE_{\dQ^{0,0}_{L^{k+1}_t,\epsilon}} \left[ \1_{ X[0,t] \not\subset Q^{0}_{L^{k}_t}, X[0,t] \subset Q^{0}_{L^{k+1}_t}} \cdot \sD^{0}_{L^{k+1}_t,\epsilon}(0,t)\right].$$
Using \eqref{eq:upper bound of localized solution} to bound the above display yields   
\begin{equation}\label{eq:upper bound of cU_k,epsilon}
  \begin{aligned}
    \cU_{k,\epsilon}(t) &\leq  C \exp \Big( Ct \fa_\epsilon^{\aleph+1} ((k+1)\log L_t)^{2\aleph+2} -\frac{L_t^{2k} }{Ct}  \Big)\\
    &\leq C \exp \Big( Ct \fa_\epsilon^{\aleph+1}( b(k+1)\log t)^{\aleph+2} -\frac{t^{2bk} }{Ct}  \Big)
  \end{aligned}
\end{equation} 
where the last inequality is obtained by substituting $L_t= \lfloor t^b\rfloor$. For a small $\delta>0$, define the following event:  
\begin{align*}
    \Upsilon_{\epsilon}: =\Big\{\fa_{\epsilon}  \leq C_b t^{\frac{2bk-2-\delta_0}{\upsilon+1}} \Big\}.
\end{align*}
Under the event $\Upsilon_{\epsilon}$, we have that for all $t\geq t_0$ and all $k\geq 1 $,
\begin{equation}\label{eq:upper bound of local proxy when fa bounded}
   \cU_{k,\epsilon}(t) \leq C\exp \Big( -\frac{t^{2bk-1}}{2C} \Big).
\end{equation} 
By the union bound of the probability to obtain that for all $K\in \dN$ and for all $\epsilon\in(0,1]$
  \begin{equation}\label{eq:UnionBd}
    \begin{aligned}
      \dP\Big( u^0_t - \sum^{K}_{k=0} \cU_k(t) > \delta \Big) \leq & \dP \Big(|u^0_t -u^\epsilon_t|> \frac{\delta}{3} \Big)+ \dP \Big( | u^\epsilon_t - \sum_{k=0}^{K} \cU_{k,\epsilon}(t)  |>\frac{\delta}{3}  \Big)\\
       &+ \sum_{k=0}^K \dP\Big( | \cU_{k,\epsilon}(t) - \cU_k(t)| > \frac{\delta }{3K} \Big)
    \end{aligned}
  \end{equation}
  By \cite[Theorem~1.1]{hairer2018multiplicative}, we know $u^{\epsilon}_t$ converges in probability to $u_t$ as $\epsilon\to 0$. This shows the first term in the right hand side of \eqref{eq:UnionBd} converges to $0$ as $\epsilon\to 0$.  
  Theorem~\ref{thm:Feynman-Kac representation} shows that the convergence of $u^{\epsilon}$ in every box $Q_{L_t}$. As a result, the third term in the right hand side of the above display also converges to $0$.  
   Therefore, we need to show that the second term goes to zero taking $K$ large. Note that $u^\epsilon_t = \sum_{k=0}^\infty \cU_{k,\epsilon}(t)$ by the classical Feynman-Kac representation. Then, we have \begin{equation*}
      \begin{aligned}
       \dP &\Big( | u^\epsilon_t- \sum_{k=0}^{K} \cU_{k,\epsilon}(t)  |>\frac{\delta}{3}  \Big)\leq \dP \Big(\Big\{ \Big|\sum_{k=K+1}^\infty \cU_{k,\epsilon}(t)\Big| > \frac{\delta}{3} \Big\}\cap \Upsilon_{\epsilon} \Big) + \dP(\neg \Upsilon_{\epsilon}).
      \end{aligned}
  \end{equation*} 
  Due to \eqref{eq:upper bound of local proxy when fa bounded}, $\sum_{k=K+1}^\infty \cU_{k,\epsilon}(t)$ is bounded above by $\exp(-t^{2b(K-1)}/C)$ for all $K>1$ on the event $\Upsilon_{\epsilon}$. This shows that the first term of the right hand side of the above display converges to $0$ as $K$ approaches to $\infty$.
   We now seek to bound $\dP(\neg \Upsilon_{\epsilon})$. By Markov's inequality, we have 
\begin{equation*}
  \dP(\Upsilon_{\epsilon}) = \dP \big( \fa_{\epsilon}  > C_b t^{\frac{2bK-2-\delta_0}{\aleph+1}}  \big) \leq \dE [e^{h_0 \sqrt{\fa_\epsilon}}] \cdot \exp\Big(- h_0\sqrt{C_b} t^{\frac{2bK-2-\delta_0}{2(\aleph+1)}} \Big).
\end{equation*} 
Recall that $\dE [e^{h_0 \sqrt{\fa_\epsilon}}]$ is uniformly bounded as shown in Proposition~\ref{prop:bounds on the enhance noise}. Letting $K\to \infty$ sends the right hand side of the above display to $0$.
This shows that the middle term of the right hand side of \eqref{eq:UnionBd} also converges to $0$ as $\epsilon\to 0$ and $K\to \infty$. As a result, we get  $u(t,0) = \sum_{k=0}^\infty \cU_k(t)$ in probability. Since each term of the series is non-negative, the latter identity also holds in the almost sure sense. This completes the proof.  
\end{proof}


\begin{lemma}\label{lem:large time asymptotics of the local proxy} Let $L_t: = t^b$ for some $b\in (\frac{1}{2},1]$. With probability one, for all $y\in \dR^d$ with $d=2,3$, 
\begin{equation}\label{eq:large time asymptotics of the local proxy}
\lim_{t\rightarrow \infty} \sup_{x\in B(y,1)} \frac{\log u^{\1,y}_{L_t}(t,x)}{t\blambda_1(Q^y_{L_t})}   = 1.    
\end{equation}
\end{lemma}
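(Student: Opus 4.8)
\emph{Setup and strategy.} Write $\nu_d:=\tfrac{2}{4-d}$ (so $\nu_2=1$, $\nu_3=2$) and $m_L:=\big(\tfrac{d\log L}{\fc_d}\big)^{\nu_d}$; by Proposition~\ref{lemma:tail probability of eigenvalue} this is the order of magnitude of $\blambda_1(Q^y_L(d))$. The proof is a sandwich based on the two bounds of Proposition~\ref{prop:upper and lower bound of the localized solution}, together with the observation that every noise-dependent correction appearing there is at most a fixed power of $\log L_t=b\log t$ times a finite power of the a.s.\ finite random constant $\fa_0$ of Proposition~\ref{prop:bounds on the enhance noise}, hence $o\big(t\,\blambda_1(Q^y_{L_t})\big)$ as $t\to\infty$ once we know $\blambda_1(Q^y_{L_t})\to\infty$. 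By stationarity of the white noise we fix $y$ and take $y=0$; the version holding on a single null set for all $y$ simultaneously follows by the usual covering argument (the eigenvalue concentration below being insensitive to $O(1)$ translations of the box since $\log(L_t\pm 1)/\log L_t\to1$), so we suppress $y$ and write $Q_{L_t}=Q^0_{L_t}(d)$.

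\emph{Eigenvalue concentration and the upper bound.} Applying the two tail bounds of Proposition~\ref{lemma:tail probability of eigenvalue} with $s=(1\pm\epsilon)m_L$, and using $\fc_d\,s^{2-d/2}=(1\pm\epsilon)^{1/\nu_d}\,d\log L$, one sees that both probabilities are bounded by $L^{-c(\epsilon)}$ up to polylogarithmic factors, for some $c(\epsilon)>0$, hence summable over $L\in\dN$. By Borel--Cantelli and the monotonicity in Proposition~\ref{lemma:monotonicity of eigenvalue} (to interpolate between integer $L$ and $L_t=t^b$), there is an a.s.\ event on which, for all large $t$, $(1-\epsilon)\,m_{L_t}\le \blambda_1(Q_{L_t})\le (1+\epsilon)\,m_{L_t}$, and the same with $L_t$ replaced by $r_t:=\lfloor L_t^{1-\gamma}\rfloor$; in particular $\blambda_1(Q_{L_t})\to\infty$ and $\blambda_1(Q_{r_t})/\blambda_1(Q_{L_t})\to(1-\gamma)^{\nu_d}$. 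Taking logarithms in \eqref{eq:upper bound of the local proxy} gives, uniformly in $x$, $\log u^{\1,0}_{L_t}(t,x)\le \log C+t\,\blambda_1(Q_{L_t})+C\fa_0^{\aleph+1}(b\log t)^{2\aleph+2}$; dividing by $t\,\blambda_1(Q_{L_t})\to\infty$ and using $\fa_0<\infty$ a.s.\ yields $\limsup_t\sup_{x\in B(0,1)}\tfrac{\log u^{\1,0}_{L_t}(t,x)}{t\,\blambda_1(Q_{L_t})}\le 1$ a.s.

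\emph{The lower bound.} Apply \eqref{eq:lower bound of the local proxy} with $r=r_t=\lfloor L_t^{1-\gamma}\rfloor$ and a time split $\delta=\delta_t$ of polynomial order, chosen so that the three error contributions are negligible: take $\delta_t=\lceil t^{s}\rceil\vee 2$ for a suitable $s=s(b,\gamma)$ with $\max\{0,\,2b(1-\gamma)-1\}<s<2b-1$ (such $s$ exists for $b\in(\tfrac12,1]$ and $\gamma>0$). Then $1<\delta_t=o(t)$, both $\delta_t(\log t)^{2\aleph+2}$ and $r_t^2/\delta_t$ are $o(t\,m_{L_t})$ (so the exponents $C\fa_0^{\aleph+1}\delta_t(\log L_t)^{2\aleph+2}$ and $\tfrac{C\fa_0(\log L_t)^2 r_t^2}{\delta_t}$ are $o(t\,\blambda_1(Q_{L_t}))$), while $L_t^2/\delta_t\gg t(\log t)^{2\aleph+2}$, so the subtracted term $\exp\!\big(C\fa_0^{2\aleph+1}t(\log L_t)^{2\aleph+2}-\tfrac{L_t^2}{C\delta_t}\big)$ is eventually smaller than half the first exponential in \eqref{eq:lower bound of the local proxy}. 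Hence, uniformly over $x\in B(0,1)$, $\log u^{\1,0}_{L_t}(t,x)\ge (t-\delta_t)\,\blambda_1(Q_{r_t})-o\big(t\,\blambda_1(Q_{L_t})\big)$, and the concentration above gives $(t-\delta_t)\blambda_1(Q_{r_t})\ge(1-o(1))\tfrac{1-\epsilon}{1+\epsilon}(1-\gamma)^{\nu_d}\,t\,\blambda_1(Q_{L_t})$. Thus $\liminf_t\inf_{x\in B(0,1)}\tfrac{\log u^{\1,0}_{L_t}(t,x)}{t\,\blambda_1(Q_{L_t})}\ge\tfrac{1-\epsilon}{1+\epsilon}(1-\gamma)^{\nu_d}$ on this event; intersecting over $\epsilon=\gamma=1/m$, $m\in\dN$, and letting $m\to\infty$ gives $\liminf\ge1$. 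Since $\inf_x\le\sup_x$, combining with the upper bound shows $\lim_t\sup_{x\in B(0,1)}\tfrac{\log u^{\1,0}_{L_t}(t,x)}{t\,\blambda_1(Q_{L_t})}$ exists and equals $1$.

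\emph{Main obstacle.} The delicate point is the lower bound: \eqref{eq:lower bound of the local proxy} contains a restriction scale $r$, a time split $\delta$, and an escape-probability term, all of which must be balanced simultaneously. Keeping $(t-\delta)>0$ forces $\delta<t$, whereas controlling the localization (transition-kernel) error forces $\delta\gtrsim r^2/(t\cdot\mathrm{polylog})$; together these force $r^2\ll t^2\cdot\mathrm{polylog}$, i.e.\ $L_t^{1-\gamma}\ll t$, which for $L_t=t^b$ is achievable with $\gamma\downarrow0$ precisely when $b\le1$. This is why one cannot take $r=L_t$ directly, why the sharp constant $1$ emerges only after the $\gamma\downarrow0$ limit, and why the lemma is stated for $b\in(\tfrac12,1]$; the remaining ingredients — the eigenvalue asymptotics via Proposition~\ref{lemma:tail probability of eigenvalue} and the upper bound — are routine once those bounds are in place.
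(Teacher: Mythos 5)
Your proof is correct and follows essentially the same sandwich strategy as the paper: the upper bound is identical (take logarithms in \eqref{eq:upper bound of the local proxy} and divide by $t\blambda_1(Q^y_{L_t})\to\infty$), and the lower bound uses \eqref{eq:lower bound of the local proxy} with the same three-parameter balancing of $r_t$, $\delta_t$, and the escape probability. Your choices $r_t=\lfloor L_t^{1-\gamma}\rfloor$ and $\delta_t\asymp t^s$ with $\max\{0,2b(1-\gamma)-1\}<s<2b-1$ are a reparameterization of the paper's $r_t=t^{b_1}$ and $\delta_t=t^{b_2}$ with $b_1\in(0,b)$, $b_2\in(2b_1-1,2b-1)$ (identify $b_1=b(1-\gamma)$, $b_2=s$), so the bookkeeping is the same.

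The one genuine difference is how you pass from $\blambda_1(Q^y_{r_t})/\blambda_1(Q^y_{L_t})$ to its limit $(1-\gamma)^{\nu_d}$. The paper invokes the deterministic almost-sure limit $\lim_{L\to\infty}\blambda_1(Q^y_L)/(\log L)^{2/(4-d)}=\chi$ (citing Theorem~1 of \cite{HL22}), whereas you re-derive a two-sided concentration $\blambda_1(Q^y_L)\in[(1-\epsilon)m_L,(1+\epsilon)m_L]$ from the tail bounds of Proposition~\ref{lemma:tail probability of eigenvalue} via Borel--Cantelli and the monotonicity of Proposition~\ref{lemma:monotonicity of eigenvalue}. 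Both routes are valid; yours is more self-contained within the tools already developed in Section~\ref{sec:AndersonHamiltonian} (and gives the ratio asymptotics as a byproduct of the tail bounds), while the paper's is shorter by offloading the hard analysis to the cited limit law. Everything else — the a.s.\ finiteness of $\fa_0$ absorbing all polylog corrections, the $\inf_x\le\sup_x$ reasoning to combine the two one-sided bounds, the intersection over $\epsilon,\gamma\downarrow0$ — matches the paper's argument.
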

\begin{proof}
We only prove the lemma when $d=3$. The $d=2$ case follows from \cite[Lemma~3.6]{KPZ20}. By \eqref{eq:upper bound of the local proxy}, we have 
\begin{equation*}
    \sup_{x\in B(y,1)} \frac{\log u^{\1,y}_{L_t}(t,x)}{t\blambda_1(Q^y_{L_t})}  \leq 1 + \frac{ C\fa_0^{\aleph+1}(\log L_t)^{2\aleph+2}}{t \blambda_1(Q^y_{L_t})}.
\end{equation*} By Section~\ref{sec:AndersonHamiltonian}, we know that for enough large $t>0$, $\blambda_1(Q^y_{L_t}) >0$ almost surely. Since $\fa_0$ is almost surely finite, we have the upper bound
\begin{equation}
    \limsup_{t\rightarrow \infty} \sup_{x\in B(y,1)} \frac{\log u^{\1,y}_{L_t}(t,x)}{t\blambda_1(Q^y_{L_t})}   \leq 1.    
\end{equation} Let $b \in (\frac{1}{2} ,1], b_1 \in (0, b), $ and $b_2\in (2b_1-1, 2b-1)$. For the lower bound, by \eqref{eq:lower bound of the local proxy} with $L_t : = t^{b}, r_t = t^{b_1},$ and $\delta_t: = t^{b_2} $,  we have 
\begin{equation*}
   \sup_{x\in B(y,1)} u^{\1,y}_{L_t}(t,x) \geq \log A_t + \log (1- B_t) 
\end{equation*} where 
\begin{equation*}
    \begin{aligned}
     A_t  := \text{const}\cdot \exp \Big( (t-\delta_t) \blambda_1(Q^y_{r_t}) - \frac{\fa_0r^2(\log L_t)^2}{C \delta_t} - C\fa_0^{\upsilon+1}\delta_t(\log L_t)^{2\upsilon +2 }\Big)
    \end{aligned} 
\end{equation*} and 
\begin{equation*}
    B_t : =  \text{const}\cdot \exp \Big( -(t-\delta_t) \blambda_1(Q^y_{r_t}) + \frac{\fa_0r_t^2(\log L_t)^2}{C \delta_t} - \frac{L_t^2}{C\delta_t} - C\fa_0^{\upsilon+1}\delta_t(\log L_t)^{2\upsilon +2 } +C\fa_0^{\upsilon+1}t(\log L_t)^{2\upsilon+2}\Big).
\end{equation*} Since $2b -b_2 > 1$, $b> b_1$ and $\blambda(Q^y_{r_t})>0$ for all large $t,$ we have $B_t \rightarrow 0$. Furthermore, since $2b_1-b_2 <1 $ and $b_2<2b-1\leq 1$, we have 
\begin{equation*}
    \liminf_{t\rightarrow\infty}\log A_t \geq  \liminf_{t\rightarrow\infty} \frac{\blambda_1(Q^y_{r_t})}{\blambda_1(Q^y_{L_t})}. 
\end{equation*} Note that $\lim_{L\rightarrow \infty} \frac{ \blambda_1(Q^y_L)}{(\log L)^{2}}  = \chi$ for some constant $\chi>0$ (see \cite[Theorem~1]{HL22}). This shows that 
\begin{equation*}
    \liminf_{t\rightarrow \infty} \sup_{x\in B(y,1)} \frac{\log u^{\1,y}_{L_t}(t,x)}{t \blambda_1(Q^y_{L_t})} \geq \liminf_{t\rightarrow\infty} \frac{\blambda_1(Q^y_{r_t})}{\blambda_1(Q^y_{L_t})} \geq \liminf_{t\rightarrow\infty} \frac{\chi(\log r_t)^2}{\chi(\log L_t)^2} \geq \left({\frac{b_1}{b}}\right)^2.
\end{equation*} Letting $b_1 \uparrow b$, we have the lower bound. This completes the proof.

\end{proof}

\begin{lemma}\label{lem:large time aysmptotics of the solution} Let $L_t: = t(\log t)^{2\aleph+2}$. With probability one, for all $y\in \dR^d$ with $d=2,3$, 
\begin{equation}\label{eq:large time approximation of the solution by the local proxy}  
    \limsup_{t\rightarrow \infty } \left|\sup_{x\in B(y,1)}\frac{\log u^{\1,y}(t,x)}{t\blambda_1(Q^y_{L_t})} - \sup_{x\in B(y,1)}\frac{\log u^{\1,y}_{L_t}(t,x)}{t\blambda_1(Q^y_{L_t})} \right| = 0,
    \end{equation} and 
\begin{equation}
    \lim_{t\rightarrow \infty} \sup_{x\in B(y,1)} \frac{\log u^{\1,y}(t,x)}{t\blambda_1(Q^y_{L_t})}   = 1.    
\end{equation}
\end{lemma}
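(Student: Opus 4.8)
\textbf{Proof plan for Lemma~\ref{lem:large time aysmptotics of the solution}.}
The plan is to deduce both displays from Lemma~\ref{lem:large time asymptotics of the local proxy}, Lemma~\ref{lemma:convergence of series to the solution}, the tail bounds of Proposition~\ref{lemma:tail probability of eigenvalue}, and the asymptotics $\blambda_1(Q^y_L)/(\log L)^2 \to \chi$ from \cite{HL22}. Since $u^{\1,y}\geq u^{\1,y}_{L_t}$ by the monotonicity built into the series expansion (each $\cU^y_{k,\epsilon}\geq 0$ and $u^{\1,y}_{L_t}$ is the $k=0$ term up to the Dirichlet restriction, cf.\ Lemma~\ref{lemma:convergence of series to the solution}), the lower bound direction of \eqref{eq:large time approximation of the solution by the local proxy} is immediate and the second display's lower bound follows from Lemma~\ref{lem:large time asymptotics of the local proxy}. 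So the real content is the \emph{upper} bound: one must show that the tail of the series $\sum_{k\geq 1}\cU^y_{k}(t,x)$ contributes nothing on the exponential scale $t\blambda_1(Q^y_{L_t})$.

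The key steps, in order. First, fix $y$ (by stationarity take $y=0$) and use Lemma~\ref{lemma:convergence of series to the solution} with $L_t = t(\log t)^{2\aleph+2}$ to write $u^{\1,0}(t,x) = u^{\1,0}_{L_t}(t,x)\,(1+\text{error}) + \sum_{k\geq 1}\cU^0_{k}(t,x)$, where the first term absorbs the $k=0$ contribution. Second, invoke the bound \eqref{eq:upper bound of cU_k,epsilon}--\eqref{eq:upper bound of local proxy when fa bounded} from the proof of Lemma~\ref{lemma:convergence of series to the solution}: on the almost-sure event $\{\fa_0 \leq (\log t)^{3}\}$ (say), which holds for all large $t$ since $\fa_0<\infty$ a.s., one has $\cU^0_k(t,x)\leq C\exp(-t^{2bk-1}/(2C))$ for a suitable $b>1/2$ with $L_t\leq t^b$ for large $t$; summing over $k\geq 1$ gives $\sum_{k\geq 1}\cU^0_k(t,x)\leq C\exp(-t^{2b-1}/(2C))\to 0$. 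Third, combine with the fact that $t\blambda_1(Q^0_{L_t}) \sim t\chi(\log L_t)^2 \to \infty$ a.s.\ (Proposition~\ref{lemma:tail probability of eigenvalue} plus \cite{HL22} give $\blambda_1(Q^0_{L_t})>0$ eventually and in fact $\to\infty$), so that both $\log u^{\1,0}_{L_t}(t,x)$ and the negligible-tail correction are $o(t\blambda_1(Q^0_{L_t}))$ — the former because Lemma~\ref{lem:large time asymptotics of the local proxy} already controls it (note $t(\log t)^{2\aleph+2}$ is of the form $t^b$-comparable so that lemma applies with $b$ arbitrarily close to $1$, or one reruns its short argument directly). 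Fourth, take $\sup_{x\in B(0,1)}$ throughout; the uniformity over $x\in B(0,1)$ is already present in all the cited estimates since the constants there do not depend on the starting point $x$. This yields \eqref{eq:large time approximation of the solution by the local proxy}, and then the second display follows by adding \eqref{eq:large time asymptotics of the local proxy} of Lemma~\ref{lem:large time asymptotics of the local proxy}.

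\textbf{Main obstacle.} The delicate point is matching scales: one must check that the choice $L_t = t(\log t)^{2\aleph+2}$ is simultaneously (i) large enough that the escape/tail terms $\exp(-L_t^{2k}/(Ct))$ crush the growing prefactors $\exp(Ct\fa_0^{\aleph+1}((k+1)\log L_t)^{2\aleph+2})$ for every $k\geq 1$ and uniformly over $x\in B(0,1)$, and (ii) still comparable to a power $t^b$ with $b$ close enough to $1$ (in fact any $b\in(1/2,1)$ works eventually, since $t(\log t)^{2\aleph+2}\leq t^{b}$ for $b>1$ but one needs $b<1$ in Lemma~\ref{lem:large time asymptotics of the local proxy}—so one must instead apply the lemma with $L_t$ itself, rerunning its proof, or observe $t(\log t)^{2\aleph+2}$ lies between $t^{b_1}$ and $t^{b_2}$ for $1/2<b_1<b_2<1$ for large $t$ and use monotonicity of $\blambda_1$). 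Resolving this tension cleanly — i.e.\ controlling $\blambda_1(Q^0_{L_t})$ for the specific slowly-varying $L_t$ by sandwiching it between $\blambda_1(Q^0_{t^{b_1}})$ and $\blambda_1(Q^0_{t^{b_2}})$ via Proposition~\ref{lemma:monotonicity of eigenvalue} and then applying Lemma~\ref{lem:large time asymptotics of the local proxy} at exponents $b_1, b_2$ and letting $b_1,b_2\to 1$ — is where the bookkeeping is most easily botched, so I would carry out that sandwich argument explicitly rather than quoting Lemma~\ref{lem:large time asymptotics of the local proxy} verbatim.
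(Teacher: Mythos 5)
Your proposal follows essentially the same route as the paper: decompose $u^{\1,y}=\sum_{k\geq 0}\cU^y_k$ via Lemma~\ref{lemma:convergence of series to the solution}, identify $\cU^y_0=u^{\1,y}_{L_t}$, show the tail $\sum_{k\geq 1}\cU^y_k$ is superexponentially small from the escape-probability bound (the analogue of \eqref{eq:upper bound of cU_k,epsilon} with $L_t=t(\log t)^{2\aleph+2}$), and combine with Lemma~\ref{lem:large time asymptotics of the local proxy} and the almost-sure convergence $\blambda_1(Q^y_L)/(\log L)^2\to\chi$. Your observation that $u^{\1,y}\geq u^{\1,y}_{L_t}$ by nonnegativity of the series terms is a slightly more direct way to dispose of one direction of \eqref{eq:large time approximation of the solution by the local proxy}; the paper instead uses the ``$\log$ of a sum equals $\max$ of $\log$s up to $O(1)$'' observation \eqref{eq:sum of log is similar to max of log}, but these are interchangeable on the scale $t(\log L_t)^2\to\infty$.

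One concrete slip in your ``main obstacle'' paragraph: you claim $t(\log t)^{2\aleph+2}$ lies between $t^{b_1}$ and $t^{b_2}$ with $\frac12<b_1<b_2<1$, which is false, since $L_t=t(\log t)^{2\aleph+2}>t>t^{b_2}$ for any $b_2<1$ and $t>e$. The correct sandwich has exponents straddling $1$ (say $b_1=1$ and $b_2>1$). The error is immaterial to the conclusion because all that matters is $\log L_t/\log t\to 1$, but the claim as written is wrong. Moreover the paper sidesteps this bookkeeping altogether: rather than sandwiching $\blambda_1(Q^y_{L_t})$ by monotonicity, it simply passes from the denominator $t\blambda_1(Q^y_{L_t})$ to $t(\log L_t)^2$ via $\blambda_1(Q^y_L)/(\log L)^2\to\chi$, after which Lemma~\ref{lem:large time asymptotics of the local proxy} applies because the only fact its proof needs is $(\log r_t)/(\log L_t)\to 1$ with $r_t=t^{b_1}$, $b_1\uparrow 1$. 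You identified the right tension but proposed a miscalibrated resolution; the paper's rescaling is cleaner and avoids the sandwich entirely.
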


\begin{proof} 
 As in the previous lemma, we only prove the case when $d=3$ since the case of $d=2$ follows from \cite[Proposition~4.5]{KPZ20}. Due to the fact $\lim_{L\rightarrow \infty} \frac{\blambda_1(Q^y_L)}{(\log L)^2} = \chi$, it suffices to show 
\begin{equation}\label{eq:log of solution is similar to log of local proxy}
    \limsup_{t\rightarrow \infty } \left|\sup_{x\in B(y,1)}\frac{\log u^{\1,y}(t,x)}{t(\log L_t)^2} - \sup_{x\in B(y,1)}\frac{\log u^{\1,y}_{L_t}(t,x)}{t(\log L_t)^2} \right| = 0.
    \end{equation}  Letting $\epsilon \to 0$ in  Lemma~\ref{lemma:convergence of series to the solution}, we have $u^{\1,y} = \sum_{k\in \dN_0}\cU^y_k $. By simple observation, it follows that  
    \begin{equation}\label{eq:sum of log is similar to max of log}
        \lim_{t\rightarrow \infty} \left| \sup_{x\in B(y,1)} \frac{\log \sum_{k\in \dN_0} \cU^y_k(t,x)  }{t(\log L_t)^2 }  - \max\left\{  \sup_{x\in B(y,1)} \frac{\log\cU^y_0(t,x)}{t(\log L_t)^2 } , \sup_{x\in B(y,1)} \frac{\log\sum_{k\geq 1}\cU^y_k(t,x)}{t(\log L_t)^2 } \right\} \right| =0.
    \end{equation}  Since $\cU^y_0 = u^{\1,y}_{L_t}$, applying Lemma~\ref{lem:large time asymptotics of the local proxy} yields that
    \begin{equation}\label{eq:positivity of the first log}
        \sup_{x\in B(y,1)} \frac{\log\cU^y_0(t,x)}{t(\log L_t)^2 } =  \sup_{x\in B(y,1)} \frac{\log\cU^y_0(t,x)}{t\blambda_1(Q^y_{L_t}) } \cdot \frac{\blambda_1(Q^y_{L_t})}{(\log L_t)^2} \geq \frac{\chi}{2}
    \end{equation} for all large $t.$ Moreover, from \eqref{eq:upper bound of cU_k,epsilon}, we have  
    \begin{equation*}
       \sup_{x\in B(y,1)} \cU^y_k(t,x) \leq C\exp \Big( Ct((k+1)\log L_t)^{2\aleph+2} \Big[ \fa_0^{\aleph+1} - \frac{L_t^{2k}}{C^2t^2 ((k+1)\log L_t)^{2\aleph+2}} \Big] \Big).
    \end{equation*} Since $L_t = t(\log t)^{2\aleph+2}$, we have for all $k\geq 1 $ 
    \begin{equation}
        \sup_{x\in B(y,1)} \cU^y_k(t,x) \leq e^{-Ct (\log t)^{2\aleph+2} }
    \end{equation} for all large $t$. This implies that for all large $t$,
    \begin{equation}\label{eq:positivity of the second log}
        \sup_{x \in B(y,1)} \log \Big( \sum_{k\geq 1 }\cU^y_k(t,x) \Big)\leq 0.  
    \end{equation} By combining \eqref{eq:positivity of the first log} and \eqref{eq:positivity of the second log} with \eqref{eq:sum of log is similar to max of log}, we can conclude \eqref{eq:log of solution is similar to log of local proxy}, which is the first part of the lemma. The second part of the lemma follows immediately from the first part and Lemma~\ref{lem:large time asymptotics of the local proxy}. 
\end{proof}

\section{Spatial Multifractality and Asymptotics of the PAM: Proof of Theorem~\ref{thm:spatial multifractality}}\label{sec:proof of spatial multifractality in 2d}

In the remaining sections, we will show the multifractality of the solution to \eqref{eq:PAM}. We only consider the solution with flat initial data and write $u$ for $ u^{\1}$.

\subsection{Proof of the lower bound in Theorem~\ref{thm:spatial multifractality} }\label{subsec:lower bound in thm1}
In this section, we prove the lower bound of the dimension in Theorem~\ref{thm:spatial multifractality}. The following proposition is one of the key tools for proving such lower bound.

\begin{proposition}\label{prop:left tail probability of spatial maximum}
 Let $\epsilon>0$ and $\theta>0$. There exists $n_0>0$ such that for all $n\geq n_0$ and $x_1,...,x_m \in (e^n,e^{n+1}]^d$ satisfying $\min_{i\neq j} \|x_i-x_j\|_\infty > e^{n\theta},$ we have 
\begin{equation}
 \P \left( \max_{1\leq j \leq m } \frac{\log u(t,x_j)}{(\log \|x_j\|_\infty)^{\frac{2}{4-d}}} \leq \alpha t \right) \leq \exp \left ( -c m(\alpha +\epsilon)^{\frac{d}{2}} n^{\frac{d}{4-d}} e^{d\log r_n - \fc_d (1+\epsilon)(\alpha+\epsilon)^{\frac{4-d}{2}}n}\right) + e^{ -c m\log n },
\end{equation} where $r_n : =n^{\frac{1}{2}\log t}.$
\end{proposition}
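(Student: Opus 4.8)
The plan is to reduce the lower-tail event for $\max_j \log u(t,x_j)$ to a joint lower-tail event for independent copies of the largest eigenvalue $\blambda_1$ of the Anderson Hamiltonian on well-separated boxes, and then apply Proposition~\ref{lemma:tail probability of eigenvalue} together with Proposition~\ref{lem:independence of eigenvalues}. First I would fix a good length scale $L = L_n$ for localization — the natural choice, dictated by Lemma~\ref{lem:large time aysmptotics of the solution}, is $L_n := t(\log t)^{2\aleph+2}$ up to constants, or more precisely a scale comparable to $r_n = n^{\frac12\log t}$ as it appears in the statement — and consider the boxes $Q^{x_j}_{L_n}$. Since $\min_{i\ne j}\|x_i-x_j\|_\infty > e^{n\theta} \gg 3L_n$ for $n$ large (here $n_0$ is chosen so that $e^{n_0\theta} > 3L_{n_0}$, using $L_n$ polynomial in $n$ after substituting $t$'s dependence), Proposition~\ref{lem:independence of eigenvalues} gives that $\blambda_1(Q^{x_j}_{L_n})$, $j=1,\dots,m$, are mutually independent.

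Next I would use the lower bound $u(t,x_j) \geq u^{\1,x_j}_{L_n}(t,x_j)$ (monotonicity of the PAM under Dirichlet restriction to a subdomain) together with Proposition~\ref{prop:upper and lower bound of the localized solution}, or more directly the asymptotic comparison in Lemma~\ref{lem:large time aysmptotics of the solution} and Lemma~\ref{lem:large time asymptotics of the local proxy}, to write, on an event of probability at least $1 - e^{-cm\log n}$ (coming from the uniform control of the enhanced-noise constant $\fa_0$ via Proposition~\ref{prop:bounds on the enhance noise} and a union bound over the $m$ boxes),
\begin{equation*}
  \log u(t,x_j) \geq (1-\epsilon')\, t\, \blambda_1(Q^{x_j}_{L_n}) - C\fa_0^{\aleph+1}(\log L_n)^{2\aleph+2}.
\end{equation*}
Absorbing the lower-order error term and using $(\log\|x_j\|_\infty)^{2/(4-d)} \asymp n^{2/(4-d)}$ for $x_j \in (e^n,e^{n+1}]^d$, the event $\{\max_j \log u(t,x_j)/(\log\|x_j\|_\infty)^{2/(4-d)} \leq \alpha t\}$ is contained (on the good event) in $\bigcap_{j=1}^m \{\blambda_1(Q^{x_j}_{L_n}) \leq (\alpha+\epsilon) n^{2/(4-d)}\}$ after adjusting $\epsilon'$ and $\epsilon$ and choosing $n_0$ large. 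By independence this probability factorizes as $\prod_{j=1}^m \P(\blambda_1(Q^{x_j}_{L_n}) \leq (\alpha+\epsilon)n^{2/(4-d)})$, and the lower-tail bound in Proposition~\ref{lemma:tail probability of eigenvalue} with $s = (\alpha+\epsilon)n^{2/(4-d)}$ and $\log L = \log r_n$ gives each factor bounded by $\exp(-c_2 s^{d/2} e^{d\log r_n - (1+\epsilon)\fc_d s^{2-d/2}})$; raising to the $m$-th power and simplifying $s^{d/2} = (\alpha+\epsilon)^{d/2} n^{d/(4-d)}$ and $s^{2-d/2} = (\alpha+\epsilon)^{(4-d)/2} n$ yields exactly the stated bound. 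Finally I would add the two error probabilities.

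The main obstacle, and the step requiring the most care, is the passage from the full-space solution $u(t,x_j)$ to the localized eigenvalue $\blambda_1(Q^{x_j}_{L_n})$ with an error that is genuinely negligible at the relevant exponential scale $e^{\Theta(n)}$ — one must verify that the correction $C\fa_0^{\aleph+1}(\log L_n)^{2\aleph+2}$ is of smaller order than $t\cdot n^{2/(4-d)}$ (which forces constraints relating the growth of $t$, the choice $L_n \asymp r_n$, and the polylogarithmic exponent $2\aleph+2$), and simultaneously that the event $\{\fa_0 \leq (\text{polynomial in } n)\}$ fails with probability only $e^{-cm\log n}$, which follows from the sub-Gaussian-in-$\sqrt{\fa_0}$ estimate of Proposition~\ref{prop:bounds on the enhance noise} and a union bound over $m \leq e^{O(n)}$ points — here one needs the threshold for $\fa_0$ to grow like a power of $n$ large enough that $e^{-h_0\sqrt{\text{threshold}}} \leq e^{-c\log n}$ per box, i.e. threshold $\gtrsim (\log n)^2$, which is comfortably satisfied. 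Care is also needed to ensure the comparison $u \geq u^{\1}_{L_n}$ holds with the right boundary conditions and that the spatial stationarity of $u$ lets us recenter each box at $x_j$; both are already available from Section~\ref{sec:AndersonHamiltonian} and Section~\ref{sec:asymptotic bounds for the pam}.
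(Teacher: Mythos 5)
Your proposal correctly identifies the high-level architecture of the argument — localize to well-separated boxes, pass to the largest eigenvalue of the Anderson Hamiltonian, use Proposition~\ref{lem:independence of eigenvalues} to factorize, and apply the eigenvalue lower-tail bound of Proposition~\ref{lemma:tail probability of eigenvalue}. However, there are two substantive gaps that would prevent the argument from closing.

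First, you write the key lower bound as $\log u(t,x_j)\geq (1-\epsilon')\,t\,\blambda_1(Q^{x_j}_{L_n}) - C\fa_0^{\aleph+1}(\log L_n)^{2\aleph+2}$, using the \emph{same} scale $L_n$ in the eigenvalue box as in the localization box, and you suggest that $L_n\asymp r_n$. This misses the essential two-scale structure of Proposition~\ref{prop:upper and lower bound of the localized solution}: the lower bound there reads
\begin{equation*}
u^{\1,y}_{L}(t,x)\geq \exp\!\left(-C\fa_0^{\aleph+1}\delta(\log L)^{2\aleph+2}-\frac{C\fa_0(\log L)^2 r^2}{\delta}+(t-\delta)\blambda_1(Q^y_{r})\right)-\exp\!\left(C\fa_0^{2\aleph+1}t(\log L)^{2\aleph+2}-\frac{L^2}{C\delta}\right),
\end{equation*}
with $r\ll L$. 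The eigenvalue lives on the \emph{inner} box $Q^y_r$, not $Q^y_L$; the outer box $L$ controls the escape probability (the negative second term), while the inner box $r$ must be small enough that the error $\fa_0(\log L)^2 r^2/\delta$ is negligible compared to $tn$. In the paper one takes $L_n=t^{\log n}$ and $r_n=t^{\frac12\log n}=n^{\frac12\log t}$; the statement's $r_n$ is precisely this inner scale, and it appears explicitly in the bound through $e^{d\log r_n-\cdots}$. You also cite Lemmas~\ref{lem:large time asymptotics of the local proxy} and~\ref{lem:large time aysmptotics of the solution} as alternatives, but these give asymptotic statements as $t\to\infty$, not the quantitative control for fixed $t\geq t_0$ that the proposition requires — only Proposition~\ref{prop:upper and lower bound of the localized solution} supplies the explicit error terms needed here.

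Second, your justification of the error term $e^{-cm\log n}$ via ``a union bound over the $m$ boxes'' does not produce the claimed estimate. The quantity $\fa_0$ of Proposition~\ref{prop:bounds on the enhance noise} is a \emph{single global} random variable; there is no collection of $m$ independent events to take a union over, and even if one introduced local copies $\fa_0^{(j)}$, a union bound would yield $m\cdot e^{-c\log n}$, not $e^{-cm\log n}$ — these are exponentially far apart, and $m\cdot e^{-c\log n}$ can exceed $1$ when $m$ is exponentially large, as in the application to Theorem~\ref{thm:lower bound of dimension for spatial peaks}. The correct mechanism in the paper is: first factorize $\P(\cap_j\{\mathcal U_0^{x_j}\leq\cdots\})=\prod_j\P(\mathcal U_0^{x_j}\leq\cdots)$ via the independence coming from the series expansion (Lemma~\ref{lemma:convergence of series to the solution}); then bound each factor by $\P(\blambda_1(Q^{x_j}_{r_n})\leq\cdots)+\P(\neg\Upsilon_n)$ where $\Upsilon_n=\{\fa_0\leq(\log n)^2\}$; then use the elementary convexity inequality $(b+c)^m\leq 2^{m-1}(b^m+c^m)$. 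The power $m$ on $\P(\neg\Upsilon_n)$ comes from this algebraic step, not from a union bound, and it is what produces $e^{-cm\log n}$. Absorbing the $\fa_0$-control before factorizing, as your write-up suggests, would give only $\P(\neg\Upsilon_n)\leq e^{-c\log n}$ with no factor of $m$, which is far too weak for the dimension lower bound that follows.

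Your use of domain monotonicity $u\geq u^{\1,x_j}_{L_n}$ in place of the series expansion is acceptable in spirit — indeed $u^{\1,y}_{L}=\mathcal U_0^y$ and all series terms are nonnegative — but for the singular PAM this positivity and comparison is precisely what Lemma~\ref{lemma:convergence of series to the solution} establishes, so one cannot entirely bypass that machinery.
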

\begin{proof} Since $x_1,\ldots x_m\in (e^n, e^{n+1}]^d$, we have 
  \begin{equation}\label{eq:left tail of solution using independence}
    \begin{aligned}
      \P \Big( \max_{1\leq j \leq m } \frac{\log u(t,x_j)}{(\log \|x_j\|_{\infty})^{\frac{2}{4-d}}} \leq \alpha t \Big)\leq \P \Big( \max_{1\leq j \leq m } u(t,x_j) \leq e^{\alpha t (n+1)^{\frac{2}{4-d}}} \Big).
    \end{aligned} 
  \end{equation} Let $L:= L_n:= t^{\log n}$. By Proposition~\ref{lemma:convergence of series to the solution}, there exists $t_0>0$ such that for all $t\geq t_0$ the series $\sum_{k\in \dN_0}\cU^y_k=u$ for all $y\in \dR^d$. Moreover, since $0\leq \cU^y_0 \leq \sum_{k\in \dN_0}\cU^y_k = u $, we have 
  \begin{equation}\label{eq:left tail of local proxy}
    \begin{aligned}
     \P \left( \max_{1\leq j \leq m } \frac{\log u(t,x_j)}{(\log \|x_j\|_\infty)^{\frac{2}{4-d}}} \leq \alpha t \right)&\leq \P \Big( \max_{1 \leq j\leq m} \cU^{x_j}_0(t,x_j) \leq e^{\alpha t (n+1)^{\frac{2}{4-d}}}\Big)\\
     &= \prod_{j=1}^m \P \Big( \cU^{x_j}_0(t,x_j) \leq e^{\alpha t (n+1)^{\frac{2}{4-d}}}\Big),
    \end{aligned}
  \end{equation} where the last equality follows from the independence of $\cU^{x_j}_0(t,x_j)$  shown in Lemma~\ref{lem:independence of eigenvalues} thanks to the fact that $\min_{i\neq j} \|x_i-x_j\|_\infty > e^{n\theta} \gg 3L_t = 3t^{\log n}.$  
We also set $r:=r_n:=t^{\frac{1}{2}\log n },$. Let $\epsilon>0$. Observe that for any fixed $t\geq t_0$, there exists $n_0>0$ such that for all $n\geq n_0$ 
\begin{equation}\label{eq:approximations}
    \begin{aligned}
     \epsilon t n \gg (\fa_0 \log L_n)^{2\aleph+2} + \frac{\fa_0(\log L_n)^2r_n^2}{C\delta}, \quad \frac{L_n^2}{C\delta} \gg  t(\fa_0 \log L_n)^{2\aleph+2}+ \frac{\fa_0(\log L_n)^2r_n^2}{C\delta}
    \end{aligned}
\end{equation} on the event $
    \Upsilon_n : = \{ \fa_0 \leq (\log n)^2 \}. $
 Using this observation, when $d=3$, there exist $t_0>0$ and $n'_0>0$ such that for all $n\geq n'_0$ and all $t\geq t_0$
\begin{equation*}
  \begin{aligned}
   &\P \Big( \cU^{x_j}_0(t,x_j) \leq e^{\alpha t (n+1)^2}\Big) \\
   &\leq \P\Big( e^{ -C \fa_0^{\aleph+1}\delta (\log L_n)^{2\aleph +2} -\frac{C\fa_0(\log L_n)^2r_n^2}{\delta } +(t-\delta)\blambda_1(Q^y_{r_n})} -e^{C\fa_0^{2\aleph+1}t(\log L_n)^{2\aleph+2}-\frac{L_n^2}{C\delta }}\leq e^{\alpha t (n+1)^2}\Big)\\
      &\leq  \P\Big(  \Big\{\frac{1}{2}\exp \Big( (t-\delta)\blambda_1(Q^{x_j}_{r_n}) -C \fa_0^{\aleph+1}\delta (\log L_n)^{2\aleph +2}- \frac{C\fa_0(\log L_n)^2r^2}{\delta } \Big) \leq e^{\alpha t (n+1)^2} \Big\} \cap \Upsilon_n \Big) + \dP(\neg \Upsilon_n ) \\
   &\leq  \P \left( \blambda_1(Q^{x_j}_{r_n})  \leq (\alpha +\epsilon)n^2  \right)+ \dP(\neg \Upsilon_n ) 
  \end{aligned} 
\end{equation*} The first inequality in the above display is obtained by using the lower bound on $\cU^{x_j}_0(t,x_j) $ from  Proposition~\ref{prop:upper and lower bound of the localized solution}. While the second inequality is just consequence of the union bound, the last inequality utilizes \eqref{eq:approximations}. When $d=2$, we use the similar argument except the fact that the lower bound on $\cU_0^{x_j}(t,x_j)$ is now provided by  Lemma~5.2 of \cite{KPZ20}. We obtain similarly
\begin{equation*}
  \begin{aligned}
    &\dP ( \cU_0^{x_j}(t,x_j) \leq e^{\alpha t (n+1)} ) \\
&\leq \dP \Big( \exp \Big( (t-\delta)\blambda_1(Q^{x^j}_{r_n})  - \frac{r_n^2}{C\delta}  - C\delta(\fa_0\log L_n)^5) -\exp(Ct(\fa_0\log L_n)^5 -\frac{L_n^2}{C\delta}\Big) \leq e^{\alpha t (n+1)}\Big)\\
&\leq \dP \Big( \Big\{ \frac{1}{2}\exp \Big( (t-\delta)\blambda_1(Q^{x^j}_{r_n})  - \frac{r_n^2}{C\delta}- C\delta(\fa_0\log L_n)^5\Big) \leq e^{\alpha t (n+1)} \Big\} \cap \Upsilon_n\Big) + \dP(\neg \Upsilon_n)\\
&\leq \dP ( \blambda_1(Q^{x^j}_{r_n})  \leq (\alpha +\epsilon)n)+\dP(\neg \Upsilon_n)
  \end{aligned} 
\end{equation*} 

Here we note that $t_0$ can be chosen independently of $\alpha$. Now we use Lemma~\ref{lemma:tail probability of eigenvalue} to obtain 
\begin{equation*}
  \max_{1\leq j \leq m} \P \left( \blambda_1(Q^{x_j}_{r_n}) \leq (\alpha +\epsilon)n^{\frac{2}{4-d}}  \right) \leq\exp \left ( -c_2 (\alpha +\epsilon)^{\frac{d}{2}} n^{\frac{d}{4-d}} e^{d\log r_n - \fc_d (1+\epsilon)(\alpha+\epsilon)^{\frac{4-d}{2}}n} \right)
\end{equation*} Substituting this into \eqref{eq:left tail of local proxy} and \eqref{eq:left tail of solution using independence}, we have 
\begin{equation*}
\begin{aligned}
  \P &\left( \max_{1\leq j \leq m } \frac{\log u(t,x_j)}{(\log \|x_j\|_\infty)^{\frac{2}{4-d}}} \leq \alpha t \right)
  \\ &\leq 2^{m-1} \exp \left ( -c_2 m(\alpha +\epsilon)^{\frac{d}{2}} n^{\frac{d}{4-d}} e^{d\log r_n - \fc_d (1+\epsilon)(\alpha+\epsilon)^{\frac{4-d}{2}}n} \right) +  2^{m-1} \dP(\neg \Upsilon_n )^m  
\end{aligned}
\end{equation*} On the other hand, since $\dE[e^{h_0 \sqrt{\fa_0}}] < \infty$ (see \ref{eq:definition of fa_epsilon}), we use Markov's inequality to get $\dP(\neg \Upsilon_n ) \leq e^{-h_0\log n }$. This shows that 
\begin{equation*}
     \P \left( \max_{1\leq j \leq m } \frac{\log u(t,x_j)}{(\log \|x_j\|_\infty)^{\frac{2}{4-d}}} \leq \alpha t \right) \leq \exp \left ( -c m(\alpha +\epsilon)^{\frac{d}{2}} n^{\frac{d}{4-d}} e^{d\log r_n - \fc_d (1+\epsilon)(\alpha+\epsilon)^{\frac{4-d}{2}}n} \right) + e^{ -c m\log n }
\end{equation*} for some constant $c>0$ and hence, completes the proof.
\end{proof}
Now we proceed to prove Theorem~\ref{thm:spatial multifractality}. Recall that 
$$\cP^d_t(\alpha) =  \left\{ x\in \R^d \, : \, u(t,x) \geq e^{\alpha t(\log|x|)^{\frac{2}{4-d}}} \right\}.$$
The following result proves a lower bound to the macroscopic Hausdorff dimension of the set $\cP^d_t(\alpha)$. The proof of the upper bound is deferred to the subsection after the following result.
\begin{theorem}\label{thm:lower bound of dimension for spatial peaks} There exists a non-random finite constant $t_0>0$ such that for all $t\geq t_0$, 
\begin{equation*}
   \Dim[\cP^d_t(\alpha)] \geq d-\alpha^{\frac{4-d}{2}} \fc_d, \quad \text{a.s.}
\end{equation*}
\end{theorem}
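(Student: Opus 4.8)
The plan is to establish the lower bound $\Dim[\cP^d_t(\alpha)] \geq d - \alpha^{\frac{4-d}{2}}\fc_d$ by the standard first-moment/second-moment strategy for macroscopic Hausdorff dimension, using the ``thickness'' criterion sketched in Figure~\ref{fig:Fig2}: namely, to show $\Dim[\cP^d_t(\alpha)] \geq \rho$ it suffices to show that for $\theta$ slightly below $(d-\rho)/d$ (or the appropriate normalization matching the definition), with probability one, for all large $n$ every cell of side $e^{n\theta}$ in the outer shell $[-e^n,e^n]^d \setminus [-e^{n-1},e^{n-1}]^d$ contains a point of $\cP^d_t(\alpha)$. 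Equivalently, one shows the probability that $\cP^d_t(\alpha)$ \emph{misses} a positive fraction of such cells is summable in $n$, so Borel–Cantelli applies. Write $\gamma := d - \alpha^{\frac{4-d}{2}}\fc_d$; I would fix $\rho < \gamma$ arbitrary and show the thickness holds at the corresponding scale, then let $\rho \uparrow \gamma$.

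First I would set up the grid: partition the shell at scale $e^n$ into $\asymp e^{n(d - d\theta)}$ subcubes of side $e^{n\theta}$, and in each subcube pick a representative point $x_i$; by spacing out to every third subcube we can guarantee $\min_{i\neq j}\|x_i - x_j\|_\infty > e^{n\theta}$, which is exactly the separation hypothesis of Proposition~\ref{prop:left tail probability of spatial maximum}. The number of such points is $m \asymp e^{n d(1-\theta)}$. Now the key input is Proposition~\ref{prop:left tail probability of spatial maximum}: the probability that \emph{none} of a separated family of $m$ points lies in $\cP^d_t(\alpha)$ — i.e. that $\max_j \log u(t,x_j)/(\log\|x_j\|_\infty)^{2/(4-d)} \leq \alpha t$ — is at most
\[
\exp\!\left(-cm(\alpha+\epsilon)^{d/2} n^{\frac{d}{4-d}} e^{d\log r_n - \fc_d(1+\epsilon)(\alpha+\epsilon)^{\frac{4-d}{2}}n}\right) + e^{-cm\log n},
\]
with $r_n = n^{\frac12 \log t}$, so $e^{d\log r_n} = n^{\frac{d}{2}\log t}$ is only polylogarithmically large — negligible next to the exponential factors. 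Because $m \asymp e^{nd(1-\theta)}$, the first term behaves like $\exp(-c\, e^{nd(1-\theta)} e^{-\fc_d(1+\epsilon)(\alpha+\epsilon)^{(4-d)/2} n}\cdot(\text{polylog}))$. This is super-exponentially small in $n$, hence summable, provided
\[
d(1-\theta) - \fc_d(1+\epsilon)(\alpha+\epsilon)^{\frac{4-d}{2}} > 0,
\]
i.e. $d\theta < d - \fc_d(1+\epsilon)(\alpha+\epsilon)^{(4-d)/2}$. Sending $\epsilon \downarrow 0$, this is possible for any $\theta$ with $d\theta < d - \alpha^{(4-d)/2}\fc_d = \gamma$. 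The second term $e^{-cm\log n}$ is also trivially summable since $m$ grows exponentially.

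Next I would run the Borel–Cantelli argument: summability over $n$ gives that almost surely, for all large $n$ and the chosen separated family at scale $\theta$, at least one $x_i$ lies in $\cP^d_t(\alpha)$. To upgrade ``at least one point in the whole shell'' to ``a point in every cell,'' one applies the same estimate locally — for each fixed subcube of side $e^{n\theta}$ the single-cell miss probability is already $\leq e^{-c n^{\cdots}e^{\cdots}} + e^{-c\log n}$ from the $m=1$ case, but this alone is not summable after a union bound over the $\asymp e^{nd(1-\theta)}$ cells. The correct move (standard in \cite{KKX17,KKX18,DG21}) is: one shows thickness at a slightly coarser scale $\theta'$ and uses that a $\theta'$-thick set is automatically $\theta$-thick for $\theta < \theta'$ after a monotonicity/covering comparison; concretely, tile each side-$e^{n\theta}$ cell by $\asymp e^{nd(\theta'-\theta)}$ cells of side $e^{n\theta'}$ and run the $m \asymp e^{nd(\theta'-\theta)}$-point version of Proposition~\ref{prop:left tail probability of spatial maximum} inside each, so that the per-cell failure probability is super-exponentially small and survives the union bound over all $\asymp e^{nd(1-\theta)}$ cells. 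Hence the set is $\theta$-thick for every $\theta$ with $d\theta < \gamma$, which by the definition of macroscopic Hausdorff dimension gives $\Dim[\cP^d_t(\alpha)] \geq \rho$ for all $\rho < \gamma$, hence $\geq \gamma$. The constant $t_0$ is the one furnished by Proposition~\ref{prop:left tail probability of spatial maximum} (equivalently Lemma~\ref{lemma:convergence of series to the solution}), and it does not depend on $\alpha$.

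The main obstacle — the place where care is genuinely needed rather than routine — is the bookkeeping in the union bound to pass from ``one point in the shell'' to ``a point in every cell,'' i.e. choosing the two scales $\theta < \theta'$ so that simultaneously (a) the per-cell tail bound from Proposition~\ref{prop:left tail probability of spatial maximum} at $m \asymp e^{nd(\theta'-\theta)}$ beats the number $e^{nd(1-\theta)}$ of cells, and (b) the resulting constraint still permits $\theta \uparrow \gamma/d$ as $\epsilon\downarrow 0$. One must also confirm the polylogarithmic factor $e^{d\log r_n} = n^{\frac{d}{2}\log t}$ and the $n^{d/(4-d)}$ prefactor do not interfere: they don't, since the dominant term in the exponent is $-c\,e^{n(d(\theta'-\theta) - \fc_d(1+\epsilon)(\alpha+\epsilon)^{(4-d)/2})}$ times something polynomially bounded, which is summable as soon as the bracketed exponent is positive. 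Everything else — the lattice construction, the separation by skipping cells, Borel–Cantelli, the limit $\rho\uparrow\gamma$ — is routine given the earlier results.
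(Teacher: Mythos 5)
Your overall strategy is the same one the paper uses: embed the argument into a two-scale skeleton, apply Proposition~\ref{prop:left tail probability of spatial maximum} to a separated family inside each cell of the coarser scale, union-bound over cells, sum over $n$, invoke Borel--Cantelli, and then let the scales and $\epsilon$ collapse. You correctly identify Proposition~\ref{prop:left tail probability of spatial maximum} as the key probabilistic input and correctly observe that the $r_n$-factor is only polylogarithmic and hence harmless. The paper's proof is exactly the cell-by-cell version you sketch in your second half: it calls the outer thickness scale $\gamma$ and the inner separation scale $\theta$ with $\theta < \gamma$, puts $m(n)\asymp e^{dn(\gamma-\theta)}$ points of $\cI_n(\theta)$ into each ball $B(x,e^{n\gamma})$ for $x\in\cI_n(\gamma)$, imposes $\kappa := d(\gamma-\theta) - \fc_d(1+\epsilon)(\alpha+\epsilon)^{(4-d)/2} > 0$, and finally sends $\theta\downarrow 0$, $\gamma\downarrow\fc_d(1+\epsilon)(\alpha+\epsilon)^{(4-d)/2}/d$, $\epsilon\downarrow 0$.

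However, your two-scale bookkeeping contains a sign confusion that, followed literally, gives the wrong bound. Your initial global computation with $m\asymp e^{nd(1-\theta)}$ (which, as you yourself note, only shows the shell is hit) yields the constraint $d\theta < d-\fc_d(1+\epsilon)(\alpha+\epsilon)^{(4-d)/2}$; this is not the constraint governing thickness and you never re-derive the correct one. When you pass to the cell-by-cell argument you write ``tile each side-$e^{n\theta}$ cell by $\asymp e^{nd(\theta'-\theta)}$ cells of side $e^{n\theta'}$'' with $\theta < \theta'$, but that requires the inner cells to be \emph{smaller}, i.e.\ $\theta' < \theta$; and you assert ``a $\theta'$-thick set is automatically $\theta$-thick for $\theta<\theta'$,'' which is backwards (shrinking the cell side \emph{strengthens} thickness, so $\theta'$-thick implies $\theta$-thick only when $\theta\geq\theta'$). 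Running the cell-by-cell estimate correctly, with $\theta'<\theta$ and $\asymp e^{nd(\theta-\theta')}$ points per $\theta$-cell, the summability condition becomes $d(\theta-\theta') > \fc_d(1+\epsilon)(\alpha+\epsilon)^{(4-d)/2}$ --- a \emph{lower} bound on $\theta$; taking $\theta'\downarrow 0$ and $\epsilon\downarrow 0$ then lets $\theta\downarrow \alpha^{(4-d)/2}\fc_d/d$, so $\Dim\geq d(1-\theta)\uparrow d-\alpha^{(4-d)/2}\fc_d$. By contrast, your stated criterion ``$\theta$-thick for every $\theta$ with $d\theta < d-\alpha^{(4-d)/2}\fc_d$'' would, on taking $\theta$ to the edge of the allowed range, only produce $\Dim\geq d(1-\theta)\downarrow \alpha^{(4-d)/2}\fc_d$, which is not the claimed bound; so your conclusion does not actually follow from your stated intermediate step, and the inequality direction needs to be fixed throughout for the argument to close.
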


\begin{proof} 
  Let us choose $\alpha>0$ satisfying $\alpha^{\frac{4-d}{2}} \fc_d <d$. Fix $\epsilon>0$ such that $\fc_d(1+\epsilon)(\alpha+\epsilon)^{\frac{4-d}{2}} <d$. Define
  \begin{equation*}
    \widetilde{\cP}^d_t(\alpha) := \cP^d_t(\alpha) \cap \bigcup_{n=0}^\infty \left( e^n, e^{n+1}\right]^d.
  \end{equation*} Then it suffices to show that 
  \begin{equation*}
      \Dim[\widetilde{\cP}^d_t(\alpha)]\geq d-\alpha^{\frac{4-d}{2}} \fc_d,
     \end{equation*} with probability one. We now choose $\gamma \in (\fc_d(1+\epsilon)(\alpha+\epsilon)^{\frac{4-d}{2}}/d, 1)$ and define, for all integers $n\geq0$,
     \begin{equation*}
       a_{j,n}(\gamma) : = e^n + je^{n\gamma}, \quad j \in [0, e^{n(1-\gamma)}) \cap \dZ,
     \end{equation*}
     and
     \begin{equation*}
       I_n(\gamma) : = \bigcup_{ j \in [0, e^{n(1-\gamma)}) \cap \dZ} \{ a_{j,n}(\gamma)\}, \qquad \cI_n(\gamma) : =  \prod_{k=1}^d I^k_n(\gamma), 
     \end{equation*}  where $I^k_n(\gamma)$ is a copy of $I_n(\gamma)$ for all $1\leq j\leq d$. We choose $x \in \cI_n(\gamma)$ and $\theta\in (0, \gamma -\frac{\fc_d(1+\epsilon)(\alpha+\epsilon)^{\frac{4-d}{2}}}{d})$. By the construction of the set $\mathcal{I}_n(\gamma)$, we first find the points $\{x_i\}_{i=1}^{m(n)}$ satisfying the followings: 
     (a.1) $x_i\in \cI_n(\gamma) \cap B(x,e^{n\gamma})$ for all $i=1,...,m(n)$;
    (a.2) $\|x_i-x_j\|_{\infty} \geq e^{n\theta}$ whenever $1\leq i<j\leq m(n)$; (a.3) $d^{-1}e^{dn(\gamma-\theta)}\leq m(n) \leq de^{dn(\gamma-\theta)}$. 
     Then by Proposition~\ref{prop:left tail probability of spatial maximum}, we have 
    \begin{equation*}
    \begin{aligned}
      \P &\left( \max_{1\leq j \leq m(n) } \frac{\log u(t,x_j)}{(\log \|x_j\|_\infty)^{\frac{2}{4-d}}} \leq \alpha t \right) 
      \\&\leq\exp \left ( -c m(n)(\alpha +\epsilon)^{\frac{d}{2}} n^{\frac{d}{4-d}} e^{d\log r_n - \fc_d (1+\epsilon)(\alpha+\epsilon)^{\frac{4-d}{2}}n} \right) + e^{ -c m(n)\log n }\\
      &\leq \exp \left ( -\frac{c}{2}(\alpha +\epsilon)^{\frac{d}{2}} n^{\frac{d}{4-d}}  e^{c\log n+ [d(\gamma-\theta) - \fc_d (1+\epsilon)(\alpha+\epsilon)^{\frac{4-d}{2}}]n} \right) + e^{ -\frac{c}{2}e^{dn(\gamma-\theta)} \log n }.
    \end{aligned}
    \end{equation*} By our choice of $\gamma $ and $\theta$, $\kappa : = d(\gamma-\theta) - \fc_d (1+\epsilon)(\alpha+\epsilon)^{\frac{4-d}{2}}>0$. Therefore, 

  \begin{equation*}
    \begin{aligned}
      \P \left(\max_{\{x_i\}_{i=1}^{m(n)} \subseteq \cI_n(\theta) \cap B(x,e^{n\gamma})} \frac{\log u(t,x_i)}{(\log\|x_i\|_\infty)^2} \leq \alpha t  \right)
      \leq \exp\left( -C_1 e^{ \kappa n -C_2\log n } \right)+ e^{ -\frac{c}{2}e^{dn(\gamma-\theta)} \log n },
    \end{aligned}
  \end{equation*} for some constant $C_1,C_2>0$. Then we have
\begin{equation*}
  \begin{aligned}
    \sum_{n=0}^\infty \P &\left( \min_{x\in \cI_n(\gamma)} \max_{\{x_i\}_{i=1}^{m(n)} \subseteq \cI_n(\theta) \cap B(x,e^{n\gamma})} \frac{\log u(t,x_i)}{(\log\|x_i\|_\infty)^2} \leq \alpha t   \right)\\&\leq \sum_{n=0}^\infty \sum_{x\in \cI_n(\gamma) } \P \left( \max_{1\leq i\leq m(n)}\frac{\log u(t,x_i)}{(\log\|x_i\|_\infty)^2} \leq \alpha t   \right)\\
    &\leq \sum_{n=0}^\infty C e^{2n(1-\gamma)} \left( \exp\left( -C_1 e^{ \kappa n -C_2\log n } \right)+ e^{ -\frac{c}{2}e^{3n(\gamma-\theta)} \log n }\right)<\infty.
  \end{aligned}
\end{equation*} for some constant $C>0$. Hence, the Borel-Cantelli lemma implies that $\cP^d$ contains a $\gamma -$thick set (see Definition~\ref{def:ThickSet}) almost surely. By Proposition~\ref{prop:thick set}, We get $\Dim(\widetilde{\cP}^d_t(\alpha)) \geq d(1-\gamma)$ with probability one. Letting $\gamma \downarrow \frac{\fc_d(1+\epsilon)(\alpha+\epsilon)^{\frac{4-d}{2}}}{d}$ and $\theta \downarrow 0 $ with  $0 < \theta < \gamma- \frac{\fc_d(1+\epsilon)(\alpha+\epsilon)^{\frac{4-d}{2}}}{d}$, we get $\Dim(\widetilde{\cP}^d_t(\alpha)) \geq d-\fc_d(1+\epsilon)(\alpha+\epsilon)^{\frac{4-d}{2}}.$ Since $\epsilon>0$ is arbitrary, by the monotonicity of the macroscopic Hausdorff dimension, we conclude that 
\begin{equation*}
  \Dim[\cP^d_t(\alpha)] \geq \Dim [\widetilde{\cP}^d_t(\alpha)] \geq d-\alpha^{\frac{4-d}{2}} \fc_d,
\end{equation*} almost surely.

\end{proof}

\subsection{Proof of the upper bound in Theorem~\ref{thm:spatial multifractality}}\label{subsec:upper bound in thm1}

In this section, we prove the upper bound in Theorem~\ref{thm:spatial multifractality}. The following proposition will be used to complete the proof. 

\begin{proposition}\label{prop:right tail probability of spatial maximum}
    Let $\epsilon>0$ and $M>0$. There exist $b:=b(M)>1 $ and $n_0:= n_0(M,b,\epsilon)>0$ such that for all $n\geq n_0$ and $t\geq 1$ 
    \begin{equation}\label{eq:upper bound of maximum of spatial tall peaks when a_0 bounded}
    \dP\Big( \sup_{x \in \B(y,1)}  \frac{\log u(t,x)}{ (\log\|x\|_\infty)^{\frac{2}{4-d}}} \geq \alpha t  , \fa_0 \leq M \Big) \leq c_1 (\alpha-\epsilon)^{\frac{d}{2}}n^{\frac{d}{4-d}} e^{d \log L_t - (1-\epsilon)\fc_d (\alpha-\epsilon)^{\frac{4-d}{2}}n },
\end{equation} where $L_t : = t^b$ and  $y\in \dS_n$ for $n \in \dN$. 
\end{proposition}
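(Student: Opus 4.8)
The plan is to dominate $u(t,x)$ on $\B(y,1)$ by a constant times the exponential of the top eigenvalue $\blambda_1(Q^y_{L_t})$ of the Anderson Hamiltonian on the box $Q^y_{L_t}$, and then read off the bound from the right-tail estimate of Proposition~\ref{lemma:tail probability of eigenvalue}. Since the law of $\xi$ is translation invariant it suffices to fix one $y\in\dS_n$.

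First I would reduce $u$ to its restriction to $Q^y_{L_t}$. Fix $b=b(M)>1$ large (it is convenient to take $L_t:=\max\{t^b,c_0(M)\}$, which changes $\log L_t$ only by $O(1)$, so that $\B(y,1)\subset Q^y_{L_t}$ and the shell terms decay). By Lemma~\ref{lemma:convergence of series to the solution} one has $u(t,x)=u^{\1,y}_{L_t}(t,x)+\sum_{k\ge1}\cU^y_k(t,x)$, and on the event $\{\fa_0\le M\}$ the uniform-in-$x$ bound \eqref{eq:upper bound of cU_k,epsilon} makes the tail $\sup_{x\in\B(y,1)}\sum_{k\ge1}\cU^y_k(t,x)$ super-exponentially small in $t$, in particular at most $\tfrac12$ once $b$ is large (the narrow range of small $t$ is dealt with crudely below). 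Hence, on the event of the statement intersected with $\{\fa_0\le M\}$, there is some $x\in\B(y,1)$ with $u^{\1,y}_{L_t}(t,x)\ge\tfrac12\exp\big(\alpha t(\log\|x\|_\infty)^{2/(4-d)}\big)$; since $y\in\dS_n$ forces $\log\|x\|_\infty\ge n$ for every $x\in\B(y,1)$, this gives $u^{\1,y}_{L_t}(t,x)\ge\tfrac12 e^{\alpha t\,n^{2/(4-d)}}$.

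Next I would invoke the upper bound \eqref{eq:upper bound of the local proxy} of Proposition~\ref{prop:upper and lower bound of the localized solution}, namely $u^{\1,y}_{L_t}(t,x)\le C\exp\big(t\blambda_1(Q^y_{L_t})+C\fa_0^{\aleph+1}(\log L_t)^{2\aleph+2}\big)$. On $\{\fa_0\le M\}$ the correction is at most $CM^{\aleph+1}(b\log t)^{2\aleph+2}+O(1)$; dividing through by $t\ge1$ and using $\sup_{t\ge1}(\log t)^{2\aleph+2}/t<\infty$ gives $\blambda_1(Q^y_{L_t})\ge\alpha\,n^{2/(4-d)}-C_1(M,b)$. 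Choosing $n_0=n_0(M,b,\epsilon)$ so large that $\epsilon\,n_0^{2/(4-d)}\ge C_1(M,b)$ and $(\alpha-\epsilon)n_0^{2/(4-d)}$ exceeds the threshold $s_0$ of Proposition~\ref{lemma:tail probability of eigenvalue}, the event then implies $\blambda_1(Q^y_{L_t})\ge(\alpha-\epsilon)n^{2/(4-d)}$ for all $n\ge n_0$ and $t\ge1$. Applying the right-tail bound of Proposition~\ref{lemma:tail probability of eigenvalue} with $s=(\alpha-\epsilon)n^{2/(4-d)}$ and $L=L_t$, and simplifying $s^{d/2}=(\alpha-\epsilon)^{d/2}n^{d/(4-d)}$ and $s^{2-d/2}=(\alpha-\epsilon)^{(4-d)/2}n$, yields exactly \eqref{eq:upper bound of maximum of spatial tall peaks when a_0 bounded}, the constant $\tfrac12$ and the loss from splitting the event being absorbed into $c_1$ (or into a harmless shrinking of $\epsilon$).

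The main obstacle is this first reduction: making the passage from $u$ to $u^{\1,y}_{L_t}$ quantitative and uniform over $x\in\B(y,1)$ and over all $t\ge1$, since the shell bound \eqref{eq:upper bound of cU_k,epsilon} degenerates as $t\downarrow 1$, where $L_t$ is tiny. This forces choosing $b$ large together with a constant lower floor on the box size, and for the residual bounded range of $t$ one argues more crudely with a fixed large box --- which is harmless, since for such $t$ the asserted right-hand side is nontrivial only when $d\log L_t<(1-\epsilon)\fc_d(\alpha-\epsilon)^{(4-d)/2}n$, i.e.\ when $t$ is not too large relative to $e^n$. Everything else is bookkeeping: checking that the $\fa_0$-correction, the multiplicative constants and the discrepancy between $\log\|x\|_\infty$ and $n$ are all $o(n^{2/(4-d)})$ uniformly on $\{\fa_0\le M\}$, hence absorbable in the loss from $\alpha$ to $\alpha-\epsilon$ and from $1$ to $1-\epsilon$ in the exponent.
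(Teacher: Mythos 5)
Your proposal is correct and follows essentially the same route as the paper: decompose $u$ via Lemma~\ref{lemma:convergence of series to the solution} into $\cU^y_0 = u^{\1,y}_{L_t}$ plus a tail, kill the tail on $\{\fa_0\le M\}$ using \eqref{eq:upper bound of cU_k,epsilon} after taking $b=b(M)$ large, bound $\cU^y_0$ via \eqref{eq:upper bound of the local proxy} to reduce to $\blambda_1(Q^y_{L_t})\ge(\alpha-\epsilon)n^{2/(4-d)}$, and close with Proposition~\ref{lemma:tail probability of eigenvalue}. Your flag about the $t\downarrow 1$ regime (where $L_t=\lfloor t^b\rfloor$ is tiny and the shell bound \eqref{eq:upper bound of cU_k,epsilon} degenerates) is a fair observation — the paper's own proof only verifies $(\mathbf{C_2})=0$ for $t\ge e$ while the statement claims $t\ge 1$ — and your proposed fix (flooring $L_t$ at a large constant, or handling the bounded $t$-range crudely using that $n\ge n_0$ is large) is the right patch.
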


\begin{proof}
    Since we can write $u(t,x) = \sum_{k=0}^\infty \cU_k^y (t,x)$ due to Lemma~\ref{lemma:convergence of series to the solution}, we have 
    \begin{equation*}
       \dP\Big( \sup_{x \in \B(y,1)}  \frac{\log u(t,x)}{ (\log\|x\|_\infty)^{\frac{2}{4-d}}} \geq \alpha t  , \fa_0 \leq M \Big) \leq   \dP\Big( \sup_{x\in B(y,1)} u(t,x) \geq \frac{1}{2}e^{\alpha t n^{\frac{2}{4-d}}}  , \fa_0 \leq  M \Big) \leq ({\bf C_1}) + ({\bf C_1}),
    \end{equation*} where 
    \begin{equation*}\begin{aligned}
         ({\bf C_1}) &: = \dP\Big( \sup_{x\in B(y,1)} \cU_0^y (t,x) \geq \frac{1}{2}e^{\alpha t n^{\frac{2}{4-d}}}, \fa_0 \leq  M \Big),\\  ({\bf C_2})&: =  \dP\Big( \sup_{x\in B(y,1)} \sum_{k=1}^\infty \cU_k^y (t,x) \geq \frac{1}{2}e^{\alpha t n^{\frac{2}{4-d}}}, \fa_0 \leq  M \Big)
    \end{aligned}
    \end{equation*}We first bound $({\bf C_2})$. When $d=3$, using \eqref{eq:upper bound of cU_k,epsilon}, on the event $\{ \fa_0 \leq M\}$ we have 
    \begin{equation*}
        \cU^y_k(t,x) \leq C \exp \left( Ct M^{\aleph+1} ( b ( k+1) \log t )^{2\aleph+2} - \frac{t^{2bk}}{Ct} \right). 
    \end{equation*} Therefore,  we can choose a large $b:=b(M)>0$ such that for all $t\geq e$ and $n\geq 1$,
    \begin{equation*}
       \sum_{k=1}^\infty \cU_k^y(t,x) \leq\sum_{k=1}^\infty C \exp \left ( - C_1 t^{2bk -1 } \right) \leq \frac{1}{2}e^{\alpha t n^{\frac{2}{4-d}}} . 
    \end{equation*} for some constant $C_1>0$. This implies that $({\bf C_2}) = 0 $ for all $t\geq e$. For $d=2$ case, one can use Lemma~5.2 of \cite{KPZ20} to get the same result. 

     Now we proceed to bound $({\bf C_1})$. Applying  Proposition~\ref{prop:upper and lower bound of the localized solution} for $d=3$, there exists $n_0 = n_0(M,b,\epsilon)>0$ such that for all $n \geq n_0$  
    \begin{equation*}
    \begin{aligned}
         ({\bf C_1})  &\leq \dP \Big ( C\exp ( t \blambda_1(Q^y_{L_t}) + CM^{\upsilon+1} (\log L_t)^{2\upsilon +2 }) \geq  \frac{1}{2}e^{\alpha t n^{\frac{2}{4-d}}}   \Big)\\
         &\leq \dP\Big( \blambda_1(Q^y_{L_t}) \geq \alpha n^{\frac{2}{4-d}} -\frac{\log (2C) + CM^{\upsilon+1} (b\log t)^{2\upsilon +2 }}{t}  \Big)\\
         &\leq \dP ( \blambda_1(Q^y_{L_t}) \geq (\alpha-\epsilon) n^{\frac{2}{4-d}}),
    \end{aligned}
    \end{equation*}  The $d=2$ case follows similarly from using Lemma~5.2 of \cite{KPZ20}. By Lemma~\ref{lemma:tail probability of eigenvalue}, we further have 
    \begin{equation*}
        ({\bf C_1})\leq  \dP ( \blambda_1(Q^y_{L_t}) \geq (\alpha-\epsilon) n^{\frac{2}{4-d}}) \leq c_1 (\alpha-\epsilon)^{\frac{d}{2}}n^{\frac{d}{4-d}} e^{d b\log t - (1-\epsilon)\fc_d (\alpha-\epsilon)^{\frac{4-d}{2}}n }.
    \end{equation*} 
Summing the bounds $({\bf C_1})$ and $({\bf C_2})$, we arrive at 
    \begin{equation*}
         ({\bf C_1})  +  ({\bf C_2}) \leq   c_1 (\alpha-\epsilon)^{\frac{d}{2}}n^{\frac{d}{4-d}} e^{d b \log t - (1-\epsilon)\fc_d (\alpha-\epsilon)^{\frac{4-d}{2}}n },
    \end{equation*} which completes the proof. 
    \end{proof} 

\begin{theorem}
      For all $t\geq e$ and all $\alpha \in (0, \left(d/\fc_d\right)^{\frac{2}{4-d}})$, 
  \begin{equation}\label{eq:upper bound of dimension for spatial peaks}
    \Dim[\cP^d_t(\alpha)]= \Dim\Big[ \Big\{ x\in \R^d \, : \, u(t,x) \geq e^{\alpha t(\log\|x\|_\infty)^{\frac{2}{4-d}}} \Big\}\Big]  \leq d- \alpha^{\frac{4-d}{2}} \fc_d,
  \end{equation} with probability one. 
\end{theorem}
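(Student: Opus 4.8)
The plan is to bound the $\rho$-dimensional macroscopic Hausdorff content of $\cP^d_t(\alpha)$ in expectation, for any $\rho > d - \alpha^{\frac{4-d}{2}}\fc_d$, using the standard first-moment / Borel--Cantelli scheme of Barlow--Taylor together with the right-tail bound of Proposition~\ref{prop:right tail probability of spatial maximum}. Fix $\rho$ with $d - \alpha^{\frac{4-d}{2}}\fc_d < \rho < d$ (if $\rho \geq d$ there is nothing to prove). Pick $\epsilon>0$ small so that $(1-\epsilon)\fc_d(\alpha-\epsilon)^{\frac{4-d}{2}} > d - \rho$, and work on the dyadic shells $\dS_n = [-e^n,e^n]^d \setminus [-e^{n-1},e^{n-1}]^d$. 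Cover $\dS_n$ by $O(e^{dn})$ unit balls $B(y,1)$ centered at points $y$ of an $O(1)$-net; by monotonicity of $\Dim[\cdot]$ it suffices to control $\cP^d_t(\alpha)$ restricted to these shells. For each such ball, if $\cP^d_t(\alpha) \cap B(y,1) \neq \emptyset$ then $\sup_{x\in B(y,1)} \frac{\log u(t,x)}{(\log\|x\|_\infty)^{\frac{2}{4-d}}} \geq \alpha t$ (up to a harmless $(\log\|x\|_\infty) \sim n$ comparison over the shell), so Proposition~\ref{prop:right tail probability of spatial maximum} gives, on the event $\{\fa_0 \leq M\}$, a probability bound $\lesssim c_1 (\alpha-\epsilon)^{d/2} n^{\frac{d}{4-d}} e^{d\log L_t - (1-\epsilon)\fc_d(\alpha-\epsilon)^{\frac{4-d}{2}} n}$.

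Next I would assemble the expected content. Following Definition~\ref{def:definition of dimension}, $\nu^n_\rho(\cP^d_t(\alpha))$ is obtained by covering the part of the set lying in the shell $\dS_n$ by boxes of sidelength at most $e^n$ and summing $(\text{sidelength})^\rho/e^{n\rho}$; taking each covering box to be a unit ball when that ball meets the set, one gets $\E[\nu^n_\rho] \lesssim e^{-n\rho} \cdot \#\{\text{unit balls in } \dS_n\} \cdot \P(\text{one ball meets the set})$. There are $\asymp e^{dn}$ balls, so on $\{\fa_0 \le M\}$,
\begin{equation}
  \E\big[\nu^n_\rho(\cP^d_t(\alpha)); \fa_0 \le M\big] \lesssim e^{-n\rho} \cdot e^{dn} \cdot c_1(\alpha-\epsilon)^{d/2} n^{\frac{d}{4-d}} e^{db\log t} e^{-(1-\epsilon)\fc_d(\alpha-\epsilon)^{\frac{4-d}{2}} n}.
\end{equation}
Collecting the exponents of $e^n$ this is $\lesssim_t n^{\frac{d}{4-d}} e^{-(\,\rho - d + (1-\epsilon)\fc_d(\alpha-\epsilon)^{\frac{4-d}{2}}\,) n}$, and the exponent is strictly negative by the choice of $\epsilon$. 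Hence $\sum_{n} \E[\nu^n_\rho(\cP^d_t(\alpha)); \fa_0 \le M] < \infty$, so $\sum_n \nu^n_\rho < \infty$ a.s.\ on $\{\fa_0 \le M\}$, which by definition forces $\Dim[\cP^d_t(\alpha)] \le \rho$ a.s.\ on that event. Finally, since $\fa_0$ is a.s.\ finite (Proposition~\ref{prop:bounds on the enhance noise}), letting $M \to \infty$ gives $\Dim[\cP^d_t(\alpha)] \le \rho$ almost surely, and letting $\rho \downarrow d - \alpha^{\frac{4-d}{2}}\fc_d$ through a countable sequence completes the proof of \eqref{eq:upper bound of dimension for spatial peaks}.

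A few technical points need care but are routine. First, the replacement of $(\log\|x\|_\infty)^{\frac{2}{4-d}}$ by $n^{\frac{2}{4-d}}$ across the shell $\dS_n$ costs only a multiplicative factor $(1+o(1))$ in the exponent, which can be absorbed by shrinking $\epsilon$; one should state the shell version of the event carefully so that Proposition~\ref{prop:right tail probability of spatial maximum} applies with a slightly adjusted $\alpha$. Second, one must check that the unit-ball cover is a legitimate cover in the sense of the macroscopic Hausdorff content (sidelength $1 \le e^n$, and the balls lie in the $n$-th generation), which is immediate. Third, the factor $e^{db\log t} = t^{db}$ is a finite $n$-independent constant and does not affect summability; this is why the statement only requires $t \ge e$ rather than $t$ large, and the constant $b = b(M)$ must be chosen before $M$ via Proposition~\ref{prop:right tail probability of spatial maximum} — actually $b$ depends on $M$, so one fixes $M$ first, then $b(M)$, then $n_0$, and the sum over $n \ge n_0$ still converges. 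The main obstacle, such as it is, is purely bookkeeping: making sure the $\epsilon$'s, the shell-to-point comparison, and the order of quantifiers ($\rho$, then $\epsilon$, then $M$, then $b$, then $n_0$) are threaded consistently so that the negative exponent survives; there is no genuinely hard analytic input beyond the tail bound already in hand.
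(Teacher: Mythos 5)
Your proposal reproduces the paper's argument essentially verbatim: the same truncation on the event $\{\fa_0 \le M\}$, the same application of Proposition~\ref{prop:right tail probability of spatial maximum} to bound $\E[\sum_n \nu^n_\rho(\cP^d_t(\alpha))\,\1(\fa_0\le M)]$ by a geometrically convergent series, the same passage $M\to\infty$ using $\dP(\fa_0\ge M)\to 0$, and the same limits $\rho\downarrow d-\fc_d\alpha_\epsilon^{(4-d)/2}$, $\epsilon\downarrow 0$ at the end. The only cosmetic difference is that you fix $\rho$ first and choose $\epsilon$ afterward, whereas the paper does the reverse; both orderings are valid and give the identical conclusion.
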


\begin{proof} Fix $M>0$. By Markov's inequality and Proposition~\ref{prop:bounds on the enhance noise}, we have 
\begin{equation}\label{eq:probability of fa bounded}
    \dP (\fa_0 \geq M ) \leq  \delta_M : =\dE[e^{h_0 \sqrt{\fa_0}}]\cdot e^{-h_0 \sqrt{M}}.
\end{equation} Note that $\delta_M \rightarrow 0 $ as $M \rightarrow \infty$. Choose  $\epsilon\in(0, \alpha \wedge 1 ) $ and $ \rho \in ( d- (1-\epsilon) \fc_d (\alpha-\epsilon)^{\frac{4-d}{2}} , d)$. Using Proposition~\ref{prop:right tail probability of spatial maximum}, we have that for all $t\geq 1$
 \begin{equation}\label{eq:finiteness of Hausdorff content when fa bounded}
    \begin{aligned}
          \sum_{n=0}^\infty &e^{-n\rho} \sum_{\substack{y\in \dN^d \\ B(y,1) \subset \dS_n }} \dP\Big( \sup_{x\in B(y,1) } \frac{\log u (t,x) }{(\log \|x \|_\infty)^{\frac{2}{4-d}}} \geq \alpha t  , \fa_0 \leq M  \Big)\\ 
          &\leq \sum_{n=0}^\infty c_1 (\alpha-\epsilon)^{\frac{d}{2}}n^{\frac{d}{4-d}} \exp\Big(d b\log t + [d- \rho - \fc_d \alpha_\epsilon^{\frac{4-d}{2}} ] n \Big) < \infty,
    \end{aligned} 
    \end{equation} where $b=b(M)>0$ is taken as in Proposition~\ref{prop:right tail probability of spatial maximum} and $\alpha_\epsilon: = (1-\epsilon)^{\frac{2}{4-d}}(\alpha-\epsilon)$. Recall the definition of Hausdorff content $\nu^n_\rho(E)$ of any set $E$ from Section~\ref{subsec2.1}. The first line in \eqref{eq:finiteness of Hausdorff content when fa bounded} is an upper bound to $\mathbb{E}[\sum_{n=0}^\infty \nu^n_\rho (\cP^d_t (\alpha) ) \mathbbm{1}(\mathfrak{a}_0\leq M)]$. As a result, we have 
    \begin{equation*}
        \dP \Big(\{ \fa_0 \leq M \} \cap \Big\{\sum_{n=0}^\infty \nu^n_\rho (\cP^d_t (\alpha) ) < \infty  \Big\}  \Big) = \dP(\{ \fa_0 \leq M \}).
    \end{equation*} Combining \eqref{eq:probability of fa bounded} with the above display yields  
    \begin{equation*}
        \dP \Big( \sum_{n=0}^\infty \nu^n_\rho (\cP^d_t (\alpha) ) < \infty \Big) \geq 1 - \dE[e^{h_0 \sqrt{\fa_0}}]\cdot e^{-h_0 \sqrt{M}}.
        \end{equation*} Note that the l.h.s. of the above inequality does not depend on $M$. Thus, by letting $M\rightarrow \infty$, we have $\sum_{n=0}^\infty \nu^n_\rho (\cP^d_t (\alpha) ) < \infty  $ a.s., which implies that $\Dim [\cP^d_t (\alpha) ] \leq \rho $ for any $\rho \in (d- \fc_d \alpha_\epsilon^{\frac{4-d}{2}},d)$, a.s. By taking $\rho \downarrow d - \fc_d \alpha_\epsilon^{\frac{4-d}{2}}$ and $\epsilon\downarrow 0 $, we get $\Dim [\cP^d_t (\alpha) ] \leq d - \fc_d \alpha^{\frac{4-d}{2}}.$

\end{proof}

\subsection{Spatial asymptotics of the PAM}\label{subsec:spatial asymptotics}

This section is devoted to proving the spatial asymptotics  \eqref{eq:limsup of spatial tall peaks}. This result is a consequence of the first part of Theorem~\ref{thm:spatial multifractality}, i.e., there exists $t_0>0$ such that 
\begin{equation}\label{eq:dimension of spatial peaks for spatial asymptotics}
   \Dim[\cP^d_t(\alpha)] = \Dim \left( \left\{ x\in \R^d \, : \, \frac{\log u(t,x)}{\log|x|} \geq \alpha t  \right\}\right) = (d-\alpha^{\frac{4-d}{2}}\fc_d) \vee 0, \quad \text{a.s.,}
\end{equation} for $t\geq t_0$ and $\alpha>0$. We provide the details of the proof below.

\begin{proof}[\bf Proof of \eqref{eq:limsup of spatial tall peaks}] 
   By the definition of the macroscopic Hausdorff dimension, $\Dim(A)>0,A\subset\dR^d $ implies that $A$ is an unbounded set. This fact and \eqref{eq:dimension of spatial peaks for spatial asymptotics} imply that for any $\alpha < (d/\fc_d)^{\frac{2}{4-d}}$, the set $\cP^d_t(\alpha)$ is unbounded, hence in particular
\begin{equation}
  \limsup_{\|x\|_\infty\rightarrow \infty } \frac{\log u(t,x)}{(\log\|x\|_\infty)^{\frac{2}{4-d}}} \geq \alpha t, \quad \text{a.s.} 
\end{equation} This implies that 
\begin{equation}
  \limsup_{\|x\|_\infty\rightarrow \infty } \frac{\log u(t,x)}{(\log\|x\|_\infty)^{\frac{2}{4-d}}}  \geq \left(\frac{d }{\fc_d}\right)^{\frac{2}{4-d}}t, \quad \text{a.s.} 
\end{equation}
Now we prove the upper bound. Fix $\epsilon\in(0,\alpha\wedge1)$. Let $M>0$ and $\delta_M$ be defined as in \eqref{eq:probability of fa bounded}. Note that we can rewrite \eqref{eq:finiteness of Hausdorff content when fa bounded} as 
\begin{equation}
    \begin{aligned}
          \sum_{n=0}^\infty  \sum_{\substack{y\in \dN^d \\ B(y,1) \subset \dS_n }} \dP\Big( \sup_{x\in B(y,1) } \frac{\log u (t,x) }{(\log \|x \|_\infty)^{\frac{2}{4-d}}} \geq g(d,\epsilon) t  , \fa_0 \leq M  \Big)< \infty,
    \end{aligned} 
\end{equation} where 
$g(d,\epsilon): = \big( d/\fc_d(1-\epsilon)\big)^{\frac{2}{4-d}} +\epsilon.$ 
The Borel-Cantelli lemma yields that with probability greater than $1-e^{-h\sqrt{M}}\mathbb{E}[e^{h\sqrt{\mathfrak{a}_0}}]$,
\begin{equation*}
     \limsup_{\|x\|_\infty\rightarrow \infty } \frac{\log u(t,x)}{(\log\|x\|_\infty)^{\frac{2}{4-d}}} \leq \left(\frac{d }{\fc_d(1-\epsilon)} +\epsilon\right)^{\frac{2}{4-d}}t,
\end{equation*} for all $t\geq t_0.$ Letting $\epsilon\to 0$ and $M\to \infty$, we can conclude that for all $t\geq t_0$ 
\begin{equation*}
     \limsup_{\|x\|_\infty\rightarrow \infty } \frac{\log u(t,x)}{(\log\|x\|_\infty)^{\frac{2}{4-d}}} \leq\left(\frac{d }{\fc_d}\right)^{\frac{2}{4-d}}t
\end{equation*} with probability one. This completes the proof.
\end{proof}

\section{Spatio-temporal Multifractality: Proof of Theorem~\ref{thm:spatio-temporal multifractality}}\label{sec:proof of spatio-temporal multifractality}

\subsection{Proof of the lower bound in Theorem~\ref{thm:spatio-temporal multifractality}}\label{subsec:lower bound in thm2}
 
\begin{proposition}\label{prop:left tail probability of spatio-temporal maximum}
 Let $\epsilon>0$ and $\theta>0$. There exists $c,t_0>0$ such that for all $t\geq t_0$ and $x_1,...,x_m \in \dR^d$ satisfying $\min_{i\neq j} \|x_i-x_j\|_\infty > 3L_t$ where $L_t:=t$, we have 
\begin{equation}\label{eq:Spatio-temporalUpperBound}
 \P \left( \max_{1\leq j \leq m } \log u(t,x_j) \leq \beta t^{\frac{6-d}{4-d}} \right) \leq \exp \left ( -c m(\beta  +\epsilon)^{\frac{d}{2}} t^{\frac{d}{4-d}} e^{d\log r_t - \fc_d (1+\epsilon)(\beta+\epsilon)^{\frac{4-d}{2}}t} \right) + e^{ -c m\log t },
\end{equation} where $r_t : =t^{\frac{1}{2}}.$
\end{proposition}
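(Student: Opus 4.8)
\textbf{Proof proposal for Proposition~\ref{prop:left tail probability of spatio-temporal maximum}.}

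The plan is to mirror closely the proof of the spatial counterpart, Proposition~\ref{prop:left tail probability of spatial maximum}, with the replacements $n \rightsquigarrow t$, $r_n = t^{\frac12\log n} \rightsquigarrow r_t = t^{1/2}$, $L_n = t^{\log n} \rightsquigarrow L_t = t$, and the spatial level $\alpha t(\log|x|)^{2/(4-d)}$ replaced by the spatio-temporal level $\beta t^{(6-d)/(4-d)}$. First I would use the series expansion $u = \sum_{k\in\dN_0}\cU^y_k$ from Lemma~\ref{lemma:convergence of series to the solution}, valid for $t\geq t_0$, together with the trivial bound $0\le \cU^y_0\le u$, to reduce the event $\{\max_j \log u(t,x_j)\le \beta t^{(6-d)/(4-d)}\}$ to $\{\max_j \cU^{x_j}_0(t,x_j)\le e^{\beta t^{(6-d)/(4-d)}}\}$. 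Since $\min_{i\ne j}\|x_i-x_j\|_\infty > 3L_t$ and $\cU^{x_j}_0$ is a measurable functional of the noise restricted to $Q^{x_j}_{L_t}$, the events $\{\cU^{x_j}_0(t,x_j)\le e^{\beta t^{(6-d)/(4-d)}}\}$ are independent by Proposition~\ref{lem:independence of eigenvalues} (more precisely by the localization built into $\cU^y_0=u^{\1,y}_{L_t}$ via Lemma~\ref{lemma:spectral representation} and Proposition~\ref{lem:independence of eigenvalues}), so the probability factorizes into a product of $m$ identical terms.

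Next I would estimate a single factor $\dP(\cU^{x_j}_0(t,x_j)\le e^{\beta t^{(6-d)/(4-d)}})$. Apply the lower bound \eqref{eq:lower bound of the local proxy} of Proposition~\ref{prop:upper and lower bound of the localized solution} with $L=L_t=t$, $r=r_t=t^{1/2}$, and $\delta=\delta_t$ chosen as a small power of $t$ (e.g.\ $\delta_t = t^{b_2}$ with $b_2$ chosen so that the two error exponentials are negligible; note here that, unlike the spatial case, the relevant length scale is $\Theta(t)$ rather than $\Theta(e^n)$, so one must check that $t$-power choices of $\delta_t$ actually kill the $-L_t^2/(C\delta_t) = -t^2/(C\delta_t)$ term against the $t$-polynomial corrections). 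Intersecting with the event $\Upsilon_t := \{\fa_0 \le (\log t)^2\}$ and using that on $\Upsilon_t$ the $\fa_0$-dependent corrections are of lower order than $\epsilon\, \beta\, t^{(6-d)/(4-d)}$ (because $L_t = t$ gives corrections like $t(\log t)^{O(1)}$, while the main term is $t^{(6-d)/(4-d)}$ and $(6-d)/(4-d) > 1$), I reduce to $\dP(\blambda_1(Q^{x_j}_{r_t}) \le (\beta+\epsilon)\, t^{2/(4-d)}) + \dP(\neg\Upsilon_t)$. Here one uses that the main term $\beta t^{(6-d)/(4-d)}$ matches the eigenvalue scale: since $\blambda_1$ over a box of side $r_t=t^{1/2}$ lives at scale $(\log r_t)^2 \asymp (\log t)^2$ only for \emph{spatially} growing boxes, but in the spatio-temporal regime the box has \emph{fixed} (in the relevant sense) side $t^{1/2}$ and one instead exploits $(t-\delta_t)\blambda_1(Q^{x_j}_{r_t}) \gtrsim \beta t^{(6-d)/(4-d)}$ iff $\blambda_1(Q^{x_j}_{r_t}) \gtrsim \beta t^{(6-d)/(4-d)}/t = \beta t^{2/(4-d)}$, which is exactly the threshold appearing. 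For $d=2$ the lower bound on $\cU^{x_j}_0$ is instead taken from \cite[Lemma~5.2]{KPZ20}, as in the spatial proof.

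Then I would invoke Proposition~\ref{lemma:tail probability of eigenvalue} (the left-tail estimate) with $s = (\beta+\epsilon) t^{2/(4-d)}$ and $L = r_t = t^{1/2}$ to get
\[
\dP\big(\blambda_1(Q^{x_j}_{r_t}) \le (\beta+\epsilon)t^{2/(4-d)}\big) \le \exp\!\Big(-c_2(\beta+\epsilon)^{d/2} t^{d/(4-d)}\, e^{\,d\log r_t - (1+\epsilon)\fc_d (\beta+\epsilon)^{(4-d)/2} t}\Big),
\]
using $s^{d/2} = (\beta+\epsilon)^{d/2} t^{d/(4-d)}$ and $s^{2-d/2} = (\beta+\epsilon)^{(4-d)/2} t$ (one checks $\frac{2}{4-d}\cdot\frac{d}{2} = \frac{d}{4-d}$ and $\frac{2}{4-d}(2-\frac{d}{2}) = 1$). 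For $\dP(\neg\Upsilon_t)$, Markov's inequality together with the uniform exponential moment bound $\sup_\epsilon\dE[e^{h_0\sqrt{\fa_0}}] < \infty$ from Proposition~\ref{prop:bounds on the enhance noise} gives $\dP(\neg\Upsilon_t) \le e^{-h_0\log t}$. Finally, multiplying the $m$ independent factors and absorbing the $2^{m-1}$ combinatorial constants (as in the spatial proof) yields
\[
\P\Big(\max_{1\le j\le m}\log u(t,x_j)\le \beta t^{(6-d)/(4-d)}\Big) \le \exp\!\Big(-cm(\beta+\epsilon)^{d/2} t^{d/(4-d)} e^{\,d\log r_t - (1+\epsilon)\fc_d(\beta+\epsilon)^{(4-d)/2}t}\Big) + e^{-cm\log t},
\]
which is \eqref{eq:Spatio-temporalUpperBound}.

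The main obstacle I expect is the bookkeeping in the single-factor estimate: one must verify that with $L_t = t$ (rather than a slowly growing $t^{\log n}$), the choice of $\delta_t$ in \eqref{eq:lower bound of the local proxy} can simultaneously (i) make the second exponential $\exp(C\fa_0^{2\aleph+1}t(\log L_t)^{2\aleph+2} - L_t^2/(C\delta_t))$ negligible, which needs $\delta_t \ll t^2/(t(\log t)^{O(1)}) = t/(\log t)^{O(1)}$, and (ii) make the correction $\fa_0 r_t^2(\log L_t)^2/\delta_t = \fa_0\, t (\log t)^2/\delta_t$ on the event $\Upsilon_t$ small compared to $\epsilon\beta t^{(6-d)/(4-d)}$, which needs $\delta_t \gg t\cdot t^{-(6-d)/(4-d)}(\log t)^{O(1)} = t^{2/(4-d)}(\log t)^{O(1)}$; since $2/(4-d) < 1$ for $d\in\{2,3\}$, there is room (e.g.\ $\delta_t$ a suitable power of $t$ strictly between $t^{2/(4-d)}$ and $t$), but one must also confirm the $\fa_0^{\aleph+1}\delta_t(\log L_t)^{2\aleph+2}$ term stays below $\epsilon\beta t^{(6-d)/(4-d)}$ on $\Upsilon_t$, i.e.\ $\delta_t(\log t)^{O(1)} \ll t^{(6-d)/(4-d)}$, which again holds for any $\delta_t \le t$. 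Carefully pinning down these exponents is routine but is where the proof's real content lies.
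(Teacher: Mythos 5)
Your approach matches the paper's almost exactly: series expansion via Lemma~\ref{lemma:convergence of series to the solution} with $L_t=t$, reduction to $\cU^{x_j}_0$, factorization via independence (from $\min_{i\neq j}\|x_i-x_j\|_\infty>3L_t$ and the fact that $\cU^{x_j}_0$ depends only on the noise in $Q^{x_j}_{L_t}$), the lower bound of Proposition~\ref{prop:upper and lower bound of the localized solution} with $r_t=t^{1/2}$, intersection with $\Upsilon_t=\{\fa_0\le(\log t)^2\}$, the left-tail estimate for $\blambda_1$ from Proposition~\ref{lemma:tail probability of eigenvalue} with $s=(\beta+\epsilon)t^{2/(4-d)}$ and $L=r_t$, and the Markov bound $\dP(\neg\Upsilon_t)\le e^{-h_0\log t}$. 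The paper in fact leaves $\delta$ unspecified (effectively $O(1)$), whereas you propose $\delta_t=t^{b_2}$; both are fine once the constraints are derived correctly.

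However, your bookkeeping at the end contains a sign error that, if trusted, would break the argument for $d=3$. You write that controlling $\fa_0 r_t^2(\log L_t)^2/\delta_t$ against the level $\epsilon\beta t^{(6-d)/(4-d)}$ requires $\delta_t\gg t\cdot t^{-(6-d)/(4-d)}(\log t)^{O(1)}=t^{2/(4-d)}(\log t)^{O(1)}$, and then assert $2/(4-d)<1$ for $d\in\{2,3\}$. Both steps are wrong: $t\cdot t^{-(6-d)/(4-d)}=t^{1-(6-d)/(4-d)}=t^{-2/(4-d)}$ (negative exponent), and in any case $2/(4-d)=1$ for $d=2$ and $=2$ for $d=3$, so $2/(4-d)\ge1$, not $<1$. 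With the exponent as you wrote it, the required window $t^{2/(4-d)}(\log t)^{O(1)}\ll\delta_t\ll t/(\log t)^{O(1)}$ would be empty for $d=3$. The correct constraint is the trivial one $\delta_t\gg t^{-2/(4-d)}(\log t)^{O(1)}$, which is satisfied by any $\delta_t\ge1$; combined with the genuine upper constraint $\delta_t\ll t/(\log t)^{O(1)}$ (from keeping $L_t^2/(C\delta_t)$ dominant in the second exponential) and the additional constraint $\delta_t\le c\,\epsilon t$ (from the $(t-\delta_t)^{-1}$ factor needed to pass from the exponent scale $t^{(6-d)/(4-d)}$ to the eigenvalue scale $t^{2/(4-d)}$), there is ample room. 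Once this is fixed, the proof is sound and matches the paper's.
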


\begin{proof} The proof is similar to the proof of Proposition~\ref{prop:left tail probability of spatial maximum}. By Lemma~\ref{lemma:convergence of series to the solution}, we have 
\begin{equation*}
    \begin{aligned}
      \P \left( \max_{1\leq j \leq m } \log u(t,x_j) \leq \beta t^{\frac{6-d}{4-d}} \right) \leq \dP \Big( \max_{1\leq j \leq m}\cU^{x_j}_0(t,x_j) \leq e^{\beta t^{\frac{6-d}{4-d}}}  \Big) = \prod_{j=1}^m \dP \Big( \cU^{x_j}_0(t,x_j) \leq e^{\beta t^{\frac{6-d}{4-d}}}  \Big),
    \end{aligned}
\end{equation*} whenever $\min_{i\neq j} \|x_i-x_j\|_\infty > 3L_t$ where $L_t : = t$. Let $r:=r_t:= t^{1/2}$. The last equality follows due to the independence  between $\{\cU^{x_j}_0(t,x_j)\}_{1\leq j\leq m}$. There exist $t_0>0$ such that for all $t\geq t_0$ 
\begin{equation*}
\epsilon t^2 \gg (\fa_0 \log L_t)^{2\aleph+2} + \frac{\fa_0(\log L_t)^2r_t^2}{C\delta} , \quad \frac{L_t^2}{C\delta} \gg  t(\fa_0 \log L_t)^{2\aleph+2}+ \frac{\fa_0(\log L_t)^2r_t^2}{C\delta}
\end{equation*} on the event 
    $\Upsilon_t: = \{ \fa_0 \leq (\log t)^2\}.$
For $d=3$, by Proposition~\ref{prop:upper and lower bound of the localized solution} there exists $t_0>0$ such that for all $t\geq t_0$,
\begin{equation*}
  \begin{aligned}
   \P &\Big( \cU^{x_j}_0(t,x_j) \leq e^{\beta t^3}\Big) \\
   &\leq \P\Big( e^{ -C \fa_0^{\aleph+1}\delta (\log L_t)^{2\aleph +2} -\frac{C\fa_0(\log L_t)^2r_t^2}{\delta } +(t-\delta)\blambda_1(Q^y_{r_t})} -e^{C\fa_0^{2\aleph+1}t(\log L_t)^{2\aleph+2}-\frac{L_t^2}{C\delta }}\leq e^{\beta t^3}\Big)\\
      &\leq  \P\Big(  \Big\{ \frac{1}{2}\exp \Big( (t-\delta)\blambda_1(Q^{x_j}_{r_n}) -C \fa_0^{\aleph+1}\delta (\log L)^{2\aleph +2}- \frac{C\fa_0(\log L)^2r^2}{\delta } \Big) \leq e^{\beta t^3}  \Big\} \cap \Upsilon_t \Big) + \dP(\neg \Upsilon_t ) \\
   &\leq  \P \left( \blambda_1(Q^{x_j}_{r_t})  \leq (\beta +\epsilon)t^2  \right)+ \dP(\neg \Upsilon_t). 
  \end{aligned} 
\end{equation*}  For $d=2$, we can proceed similarly using Lemma~5.2 of \cite{KPZ20} to obtain 
\begin{equation*}
  \begin{aligned}
    &\dP ( \cU_0^{x_j}(t,x_j) \leq e^{\beta t^2} ) \\
&\leq \dP \Big( \exp \Big( (t-\delta)\blambda_1(Q^{x^j}_{r_t})  - \frac{r_t^2}{C\delta}  - C\delta(\fa_0\log L_t)^5) -\exp(Ct(\fa_0\log L_t)^5 -\frac{L_t^2}{C\delta}\Big) \leq e^{\beta t^2}\Big)\\
&\leq \dP \Big( \Big\{ \frac{1}{2}\exp ( (t-\delta)\blambda_1(Q^{x^j}_{r_t})  - \frac{r_t^2}{C\delta}- C\delta(\fa_0\log L_t)^5) \leq e^{\beta t^2} \Big\} \cap \Upsilon_t\Big) + \dP(\neg \Upsilon_n)\\
&\leq \dP ( \blambda_1(Q^{x^j}_{r_t})  \leq (\beta +\epsilon)t)+\dP(\neg \Upsilon_t)
  \end{aligned} 
\end{equation*}

By Lemma~\ref{lemma:tail probability of eigenvalue}, we have 
\begin{equation*}
    \max_{1\leq j \leq m} \P \left( \blambda_1(Q^{x_j}_{r_t})  \leq (\beta +\epsilon)t^{\frac{6-d}{4-d}}  \right) \leq \exp ( -c_2 (\beta  +\epsilon)^{\frac{d}{2}} t^{\frac{d}{4-d}} e^{d\log r_t - \fc_d (1+\epsilon)(\beta+\epsilon)^{\frac{4-d}{2}}t} ). 
\end{equation*} Moreover, we have $\dP(\neg \Upsilon_t) \leq e^{-h_0 \log t}$ for some $h_0>0$ by the fact that $\dE[e^{h_0 \sqrt{\fa_0}}]<\infty$.  This yields that 
\begin{equation*}
     \P \left( \max_{1\leq j \leq m } \log u(t,x_j) \leq \beta t^3 \right) \leq 2^{m-1}\exp ( -c m(\beta  +\epsilon)^{\frac{d}{2}} t^{\frac{d}{4-d}} e^{d\log r_t - \fc_d (1+\epsilon)(\beta+\epsilon)^{\frac{4-d}{2}}t} ) + 2^{m-1} e^{-h_0 m\log t}.
\end{equation*} By choosing $t_0$ large such that $2^{m-1}e^{-h_0m\log t }< e^{-cm\log t}$ for some constant $c$, we achieve the bound in \eqref{eq:Spatio-temporalUpperBound}. This completes the proof. 
\end{proof}

Now we are ready to prove the lower bound in Theorem~\ref{thm:spatio-temporal multifractality}. Recall that 
\begin{align*}
    \cP^d(\beta,v) = \left\{ (t,x) \in (1,\infty) \,:\, u(v\log t ,x) > e^{\beta (v\log t )^{\frac{2}{4-d}}}\right\}
\end{align*}

\begin{theorem}\label{thm:lower bound of dim for spatio-temporal peaks in 2d}
  For every $\beta ,v>0$, with probability one,
  \begin{equation}
    \Dim[\cP^d(\beta,v)]\geq (d+1-\beta^{\frac{4-d}{2}} v \fc) \vee d .
  \end{equation}
\end{theorem}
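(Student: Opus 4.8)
The plan is to follow the proof of Theorem~\ref{thm:lower bound of dimension for spatial peaks}, but in $\R^{d+1}$ and with an \emph{anisotropic} thick set: coarse in the time variable, and of fine spatial scale governed by $\theta_\star:=\fc_d\beta^{\frac{4-d}{2}}v/d$. The point is that on the $n$-th shell, where $\max(s,\|x\|_\infty)\asymp e^n$, the macroscopic time $s=e^{t/v}$ is of order $e^n$, so the PDE time satisfies $t\asymp vn$; at such a time, Proposition~\ref{prop:left tail probability of spatio-temporal maximum} (with localization box of side $r_t=t^{1/2}$ and $m$ well-separated candidate points) makes the event ``there is no peak inside a given spatial cube of side $e^{n\theta}$ at time $t$'' doubly exponentially unlikely in $n$ as soon as $d\theta>\fc_d\beta^{\frac{4-d}{2}}v$, i.e.\ as soon as $\theta>\theta_\star$; this is exactly the mechanism of Proposition~\ref{prop:left tail probability of spatial maximum} with $\alpha$ replaced by $\beta$ and the length scale $n$ by $vn$. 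In the time variable, by contrast, I would discretize only at a crude scale $e^{n\gamma_0}$ with $\gamma_0\downarrow0$, so that it contributes a \emph{full} unit of macroscopic dimension.

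\textbf{Construction of the thick set.} Fix $\gamma_s\in(\theta_\star,1)$, $\gamma_0\in(0,\gamma_s)$, and then $\epsilon>0$ small enough that $d\gamma_s>\fc_d(1+\epsilon)(\beta+\epsilon)^{\frac{4-d}{2}}v$. For each $n$, partition $[-e^n,e^n]$ into $\asymp e^{n(1-\gamma_0)}$ ``time cells'' of length $e^{n\gamma_0}$, pick a representative macroscopic time $s_0>e$ in each, and partition each spatial coordinate axis into $\asymp e^{n(1-\gamma_s)}$ intervals of length $e^{n\gamma_s}$. Given a time cell, set $t_0:=v\log s_0\asymp vn$; given a spatial cell $Q$, pick $m=m(n)\asymp(e^{n\gamma_s}/t_0)^d$ points $x_1,\dots,x_m\in Q$ with pairwise $\|\cdot\|_\infty$-distance $>3L_{t_0}=3t_0$. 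Proposition~\ref{prop:left tail probability of spatio-temporal maximum} then bounds $\P(\max_{1\le j\le m}\log u(t_0,x_j)\le\beta t_0^{(6-d)/(4-d)})$ by $e^{-ce^{\kappa n}}+e^{-ce^{\kappa'n}}$ for some $\kappa,\kappa'>0$ depending on $\beta,v,d,\gamma_s,\epsilon$ — faster than any exponential in $n$, whereas the number of (time cell, spatial cell) pairs inside $[-e^n,e^n]^{d+1}$ is only $e^{O(n)}$. A union bound over all pairs, summation over $n$, and Borel--Cantelli then give that almost surely, for every large $n$, each box of the form (time cell)$\times$(spatial cell) contains a point of $\cP^d(\beta,v)$; i.e.\ $\cP^d(\beta,v)$ contains a set $E$ which, in each shell, meets every box of dimensions $e^{n\gamma_0}\times e^{n\gamma_s}\times\cdots\times e^{n\gamma_s}$. (As in Lemma~\ref{lemma:convergence of series to the solution}, Proposition~\ref{lem:independence of eigenvalues} and Proposition~\ref{prop:upper and lower bound of the localized solution}, the last event is really about the independent local proxies $\cU_0^{x_j}$ and the top eigenvalues of the Anderson Hamiltonian on the boxes $Q^{x_j}_{r_{t_0}}$, and the control of $\fa_0$ is already absorbed into the error term of Proposition~\ref{prop:left tail probability of spatio-temporal maximum}.)

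\textbf{From thickness to dimension.} The covering/Hausdorff-content argument behind Proposition~\ref{prop:thick set}, run with these anisotropic cells — equivalently, the product inequality $\Dim[A\times B]\ge\Dim[A]+\Dim[B]$ applied to the full time axis and a $\theta_\star$-thick spatial set in $\R^d$ — gives $\Dim[\cP^d(\beta,v)]\ge\Dim[E]\ge(1-\gamma_0)+d(1-\gamma_s)$; letting $\gamma_0\downarrow0$, $\gamma_s\downarrow\theta_\star$, $\epsilon\downarrow0$ yields $\Dim[\cP^d(\beta,v)]\ge d+1-\fc_d\beta^{\frac{4-d}{2}}v$ (when $\fc_d\beta^{\frac{4-d}{2}}v<d$, so that $\theta_\star<1$ and the cells above fit; otherwise $d+1-\fc_d\beta^{\frac{4-d}{2}}v\le d$ and the bound just below already suffices). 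For the ``$\vee d$'' part, fix a deterministic large time $t_\star$: since the single-site peak probability $\P(u(t_\star,x)\ge e^{\beta t_\star^{(6-d)/(4-d)}})$ is a positive constant not depending on $x$, the same use of Proposition~\ref{prop:left tail probability of spatio-temporal maximum} shows that for every $\theta>0$ the spatial slice $\{x:u(t_\star,x)\ge e^{\beta t_\star^{(6-d)/(4-d)}}\}$ meets every cube of side $e^{n\theta}$ in $[-e^n,e^n]^d$ for all large $n$; that slice is thus $0$-thick in $\R^d$, so $\{e^{t_\star/v}\}\times(\text{slice})\subset\cP^d(\beta,v)$ has macroscopic dimension $d$, giving $\Dim[\cP^d(\beta,v)]\ge d$. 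Together these yield $\Dim[\cP^d(\beta,v)]\ge(d+1-\fc_d\beta^{\frac{4-d}{2}}v)\vee d$.

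\textbf{Main obstacle.} The delicate point is the bookkeeping in the time direction: a uniform $\gamma$-thick construction in all $d+1$ directions would give only $\Dim\ge(d+1)(1-\gamma)$, which is strictly below the sharp value $d+1-d\gamma$, so one genuinely needs the anisotropic cover (coarse in time, fine in space) together with the matching anisotropic lower bound on the $\rho$-dimensional Hausdorff content — the natural $(d+1)$-dimensional anisotropic analogue of Proposition~\ref{prop:thick set} (equivalently, a macroscopic product-dimension inequality). The rest — reducing $u$ to the independent proxies $\cU_0^{x_j}$ via Lemma~\ref{lemma:convergence of series to the solution}, the proxy lower bound of Proposition~\ref{prop:upper and lower bound of the localized solution}, and the uniform exponential-moment control of $\fa_0$ over the exponentially many cells — transplants from the spatial argument with only cosmetic changes.
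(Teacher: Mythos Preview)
Your strategy is sound and in essence parallels the paper's, but the paper handles the two places you flag as delicate differently, and one of your proposed shortcuts does not literally work.

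\textbf{On the anisotropic lower bound.} You correctly identify that an isotropic $(d{+}1)$-dimensional thick set would give only $(d{+}1)(1-\gamma)$, which is too weak. Your fix is to use anisotropic cells (coarse time scale $e^{n\gamma_0}$, fine spatial scale $e^{n\gamma_s}$) and then invoke an ``anisotropic analogue of Proposition~\ref{prop:thick set}, equivalently the product inequality $\Dim[A\times B]\ge\Dim[A]+\Dim[B]$''. These are \emph{not} equivalent, and the product inequality does not apply here because the peak set is not a Cartesian product (the spatial slice depends on the time). The anisotropic thick-set lemma you need can be proved directly by a covering count, but the paper avoids this issue entirely: it takes the time discretization at unit scale (your $\gamma_0=0$, i.e.\ all integer times $t\in(e^n,e^{n+1}]$) and then applies the Barlow--Taylor density theorem (Proposition~\ref{prop:DensityTheorem}). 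That proposition converts the count $\mu_n(E)\gtrsim e^{dn(1-\gamma)+n}$ of occupied (integer-time, $\gamma$-spatial-cell) pairs directly into $\nu^n_{d+1-d\gamma}(E)\gtrsim 1$, hence $\Dim\ge d+1-d\gamma$. This is cleaner than your two-parameter limit $\gamma_0\downarrow0$, $\gamma_s\downarrow\theta_\star$ and requires no new lemma.

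\textbf{On the ``$\vee d$'' part.} Your argument (fix $t_\star$, show the spatial slice is $\theta$-thick for every $\theta>0$ via Proposition~\ref{prop:left tail probability of spatio-temporal maximum}) is correct. The paper instead reduces to the already-proved spatial multifractality: for $\|x\|_\infty\ge M$ the fixed threshold $e^{\beta(v\log t_0)^{(6-d)/(4-d)}}$ lies below $e^{\alpha t_0(\log\|x\|_\infty)^{2/(4-d)}}$ for $\alpha=\alpha(M)\to0$, so the slice contains $\cP^d_{t_0}(\alpha)\cap\{\|x\|_\infty\ge M\}$; Theorem~\ref{thm:lower bound of dimension for spatial peaks} gives dimension $\ge d-\alpha^{(4-d)/2}\fc_d\to d$. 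This buys a shorter proof by recycling the spatial result rather than rerunning the thick-set machinery.
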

\begin{proof}
 Choose $\beta,\epsilon,v>0$ such that $(\beta +\epsilon)^{\frac{4-d}{2}} (1+\epsilon)v\fc_d <d+1$.
  Let us define 
\begin{equation*}  
  \widetilde{\cP}^d(\beta,v) : = \cP^d(\beta, v) \cap \bigcup_{n=0}^\infty \left( e^{n}, e^{n+1}\right]^{d+1} .
\end{equation*}  Since $ \widetilde{\cP}^d(\beta,v) \subseteq  \cP^d(\beta,v) $, it is enough to show  $\Dim[\widetilde{\cP}^d(\beta,v)] \geq (d +1- \beta^{\frac{4-d}{2}} v \fc_d) \vee d$ with probability one. We choose $\gamma \in (\frac{(\beta +\epsilon)^{\frac{4-d}{2}} (1+\epsilon)v\fc_d }{d}, 1)$  and $\theta \in (0, \gamma -\frac{(\beta +\epsilon)^{\frac{4-d}{2}} (1+\epsilon)v\fc_d }{d})$.  
We borrow the notations $a_{j,n}$ and $I_n(\gamma)$ 
 from Theorem~\ref{thm:lower bound of dimension for spatial peaks}. Suppose $I^k_n(\gamma)$ are copies of $I_k(n)$ for $1\leq k\leq d+1$. We introduce further 
     \begin{equation*}
       \tilde{\cI}_n(\gamma) : =\prod_{k=1}^{d+1} I^k_n(\gamma), 
     \end{equation*} where $I^k_n(\gamma)$ is a copy of $I_n(\gamma)$ for all $1\leq j\leq d+1$. We choose $x \in \tilde{\cI}_n(\gamma)$ and take the points $\{x_i\}_{i=1}^{m(n)}$ such that they satisfy the following conditions: $(b.1)$  $x_i\in \tilde{\cI}_n(\gamma) \cap B(x,e^{n\gamma})$ for all $i=1,...,m(n)$; $(b.2)$ $|x_i-x_j| \geq e^{n\theta}$ whenever $1\leq i<j\leq m(n)$; $(b.3)$ $d^{-1}e^{dn(\gamma-\theta)}\leq m(n) \leq de^{dn(\gamma-\theta)}$. 
     
     Observe that $e^{n\theta} \gg 3 L_{v\log t}  = 3 (v\log t )$ for all $t\in (e^n , e^{n+1}]$. We now notice that there exists $n_0>0$ such that for all $n\geq n_0$,
     \begin{equation*}
       \begin{aligned}
         \P &\Big( \cP^d_1 \cap ( \{ t\} \times B(x,e^{n\gamma})) = \varnothing \text{ for some }t\in (e^n,e^{n+1}] \text{ and }x\in \tilde{\cI}_n(\gamma)\Big)\\
         &\leq \P \Big( \min_{\substack{t\cap \dZ \\t\in (e^n,e^{n+1}]}} \min_{x\in \tilde{\cI}_n(\gamma)} \max_{\{x_i\}_{i=1}^{m(n)} \subseteq \tilde{\cI}_{n}(\theta)\cap B(x,e^{n\gamma})} \Big\{e^{-\beta (v\log t )^{\frac{6-d}{4-d}}}u(v\log t, x ) \Big\}\leq 1 \Big)\\
         &\leq \sum_{\substack{t\cap \dZ \\t\in (e^n,e^{n+1}]}}\sum_{x\in \tilde{\cI}_n(\gamma)} \P \left( \max_{\{x_i\}_{i=1}^{m(n)} \subseteq \tilde{\cI}_{n}(\theta)\cap B(x,e^{n\gamma})} u(v\log t ,x) \leq e^{\beta (v\log t )^{\frac{6-d}{4-d}}}\right)\\
         &\leq C e^{dn(1-\gamma) + n} \cdot\exp \left ( -c m(n)(\beta  +\epsilon)^{\frac{d}{2}} (v(n+1))^{\frac{d}{4-d}} e^{d\log r_{v(n+1)} - \fc_d (1+\epsilon)(\beta+\epsilon)^{\frac{4-d}{2}}v(n+1)} \right) + e^{ -c m(n)v(n+1) }\\
         &\leq C e^{dn(1-\gamma) + n} \cdot \Bigg[\exp \left( -c(\beta+\epsilon)^{\frac{d}{2}}v(n+1)^{\frac{d}{4-d}} e^{\log[v(n+1)] + \kappa n -\fc_d(1+\epsilon)(\beta+\epsilon)^{\frac{4-d}{2}}v} \right) \\ &\quad + \exp\left(-\frac{c}{2}e^{dn(\gamma-\theta)}v(n+1) \right)\Bigg],
       \end{aligned}
     \end{equation*} where $\kappa: = d(\gamma-\theta) - \fc_d(1+\epsilon)(\beta+\epsilon)^{\frac{4-d}{2}}v>0$ by the choice of $\gamma$ and $\theta$. The first inequality is straightforward. The inequality follows by applying the union bound and the third inequality is obtained by applying Proposition~\ref{prop:left tail probability of spatio-temporal maximum}. The right hand side of the above inequality is  summable w.r.t. $n$. Hence, by the Borel-Cantelli lemma, there exists $n_0>0$ such that for all $n\geq n_0$ 
    \begin{equation}
        \widetilde{\cP}^d(\beta,v)\cap ( \{ t\} \times B(x,e^{n\gamma})) \neq \varnothing \text{ for all }t\in (e^n,e^{n+1}] \text{ and all }x\in \cI_n(\gamma).     
     \end{equation} This implies that $\mu_n( \widetilde{\cP}^d(\beta,v)) \geq Ce^{dn(1-\gamma)+n}$ where $\mu_n$ is defined in Proposition~\ref{prop:DensityTheorem}. Therefore, by Proposition~\ref{prop:DensityTheorem} we can deduce that $\sum_{n}\nu_{n,d+1-d\gamma}(\widetilde{\cP}^d(\beta,v))= \infty $ almost surely, which shows that $\Dim(\widetilde{\cP}^d(\beta,v))\geq 4-3\gamma $. Letting $\gamma \downarrow \frac{\fc_d(1+\epsilon)(\beta+\epsilon)^{\frac{4-d}{2}}v}{d}$ and $\theta \downarrow 0 $ without violating $\theta \in (0, \gamma- \frac{\fc_d(1+\epsilon)(\beta+\epsilon)^{\frac{4-d}{2}}v}{d})$, we get $\Dim(\widetilde{\cP}^d(\beta,v))\geq d+1-\fc_d(1+\epsilon)(\beta+\epsilon)^{\frac{4-d}{2}}v $. Since $\epsilon>0$ is arbitrary, $\Dim(\widetilde{\cP}^d(\beta,v)) \geq d+1- \beta^{\frac{4-d}{2}}  v \fc_d$.

     Now it remains to show $\Dim(\widetilde{\cP}^d(\beta,v)) \geq d $, a.s. for any $\beta,v>0$. First note that 
     \begin{equation*}
       \left\{ (s,x) \in (1,\infty) \times \dR^d \,:\, u(v\log s ,x ) > e^{\beta (v\log s )^{\frac{6-d}{4-d}}} \right\} \supseteq \{ t\} \times \left\{ x\in \dR^d \,:\, u(v\log t ,x) > e^{\beta (v\log t )^{\frac{6-d}{4-d}}} \right\},
     \end{equation*} for all $t\geq 1 $. Let us define 
     \begin{equation*}
       \cP^{(t)} : = \left\{ x\in \dR^d \,:\, u(v\log t ,x) > e^{\beta (v\log t )^{\frac{6-d}{4-d}}} \right\}, 
     \end{equation*} for any $t>1$. Then, it suffices to show $\Dim(\cP^{(t)}) \geq d$, a.s. for some $t>1$. Indeed, 
     \begin{equation*}
        \Dim \left(\{ t\} \times \left\{ x\in \dR^d \,:\, u(v\log t ,x) > e^{\beta (v\log t )^{\frac{6-d}{4-d}}} \right\} \right) = \Dim(\cP^{(t)}),
      \end{equation*} for any fixed $t\geq1$ (see \cite[Section~9]{BT92}). Let $t_0$ be the constant in Theorem~\ref{thm:lower bound of dimension for spatial peaks}. Let $\beta, v>0$. Observe that for all $(M,\alpha)\in \dR^2$ such that 
      \begin{equation*}
          M\geq e , \quad \alpha \geq \frac{\beta (v\log t_0)^{\frac{6-d}{4-d}}}{t_0 (\log M)^{\frac{2}{4-d}}},
      \end{equation*} we have 
      \begin{equation*}
      \begin{aligned}
           \cP^{(t_0)}_M := \left\{  \|x\|_\infty\geq M \,:\, u(v\log t_0, x) > e^{\beta (v\log t_0)^{\frac{6-d}{4-d}}}  \right\}\supseteq \left\{ \|x\|_\infty\geq M  \,:\, u(v\log t_0 , x ) > e^{\alpha t_0 (\log \|x\|_\infty)^{\frac{2}{4-d}}} \right\}.
      \end{aligned} 
     \end{equation*} Note that $\cP^{(t_0)}_M$ and $\cP^{(t_0)}$ have the same macroscopic Hausdorff dimension since $\Dim[E]=0$ for every bounded set $E\in \dR^d$. Therefore, by Theorem~\ref{thm:lower bound of dimension for spatial peaks}, we have 
     \begin{equation*}
          \Dim( \cP^{(t_0)} ) = \Dim(\cP^{(t_0)}_M) \geq d - \frac{\beta (v\log t_0)^{\frac{6-d}{4-d}}}{t_0 (\log M)^{\frac{2}{4-d}}}.
     \end{equation*} By taking $M \uparrow \infty$, we can conclude that $\Dim( \cP^{(t_0)} )\geq d$, a.s.

\end{proof}

\subsection{Proof of the upper bound in Theorem~\ref{thm:spatio-temporal multifractality}}\label{subsec:upper bound in thm2}

\begin{proposition}\label{prop:right tail probability of spatio-temporal maximum}

    Let $0<\epsilon<\beta$ and $M>0$. There exist $b:=b(M)>1 $ and $n_0:= n_0(M,b,\epsilon)>0$ such that for all $n\geq n_0$ and $t\geq 1$ 
    \begin{equation}\label{eq:upper bound of maximum of spatio-temporal tall peaks when a_0 bounded}\begin{aligned}
        \dP &\Big( \text{ For some } t\in (a,a+l] \text{ s.t. }\sup_{x\in B(y,1)} \log u(t,x) \geq \beta t^{\frac{6-d}{4-d}} , \fa_0 \leq M \Big)\\ & \leq c_1 (\beta-\epsilon)^{\frac{d}{2}}a^{\frac{d}{4-d}} e^{d b\log (a+l) - (1-\epsilon)\fc_d (\beta-\epsilon)^{\frac{4-d}{2}}a },
    \end{aligned}
\end{equation} where $b>1$ and  $y\in \dR^d$.

\end{proposition}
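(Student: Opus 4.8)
The plan is to mirror the proof of Proposition~\ref{prop:right tail probability of spatial maximum}, with the additional twist that here one must control the supremum over an entire time window $(a,a+l]$ rather than a single time. As before, start from the series expansion $u(t,x)=\sum_{k\in\dN_0}\cU^y_k(t,x)$ of Lemma~\ref{lemma:convergence of series to the solution} (valid for $t\geq t_0$) and split $u=\cU^y_0+\sum_{k\geq1}\cU^y_k$, where $\cU^y_0=u^{\1,y}_{L_t}$ with $L_t:=t^b$. On the event $\{\fa_0\leq M\}$, the bound \eqref{eq:upper bound of cU_k,epsilon} gives, for $k\geq1$, $\cU^y_k(t,x)\leq C\exp\big(CtM^{\aleph+1}(b(k+1)\log t)^{2\aleph+2}-t^{2bk-1}/C\big)$; choosing $b=b(M)>1$ large enough that $2b-1>\tfrac{6-d}{4-d}$ (which also absorbs the lower-order factor $tM^{\aleph+1}(\log t)^{2\aleph+2}$) yields $\sum_{k\geq1}\cU^y_k(t,x)\leq\tfrac12 e^{\beta t^{(6-d)/(4-d)}}$ uniformly over $x\in B(y,1)$ and all large $t$; for $d=2$ the analogous bound follows from \cite[Lemma~5.2]{KPZ20}. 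Hence, on $\{\fa_0\leq M\}$, the event $\{\sup_{x\in B(y,1)}\log u(t,x)\geq\beta t^{(6-d)/(4-d)}\}$ forces $\sup_{x\in B(y,1)}\cU^y_0(t,x)\geq\tfrac12 e^{\beta t^{(6-d)/(4-d)}}$.

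Next I would plug this into the upper bound \eqref{eq:upper bound of the local proxy} of Proposition~\ref{prop:upper and lower bound of the localized solution} (for $d=2$, \cite[Lemma~5.2]{KPZ20}), namely $\cU^y_0(t,x)=u^{\1,y}_{L_t}(t,x)\leq C\exp\big(t\blambda_1(Q^y_{L_t})+C\fa_0^{\aleph+1}(\log L_t)^{2\aleph+2}\big)$. Taking logarithms, dividing by $t$, using $\fa_0\leq M$ and $(\log L_t)^{2\aleph+2}=(b\log t)^{2\aleph+2}=o(t)$, and recalling $\tfrac{6-d}{4-d}-1=\tfrac{2}{4-d}$, we find that once $n\geq n_0$ for $n_0=n_0(M,b,\epsilon)$ large (so that $a$, and hence every $t\in(a,a+l]$, is large), the above event implies $\blambda_1(Q^y_{L_t})\geq\beta t^{2/(4-d)}-\tfrac{CM^{\aleph+1}(b\log t)^{2\aleph+2}+\log(2C)}{t}\geq(\beta-\epsilon)\,t^{2/(4-d)}$.

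The genuinely new step is eliminating the supremum over $t\in(a,a+l]$, and here I would invoke the monotonicity of the top eigenvalue (Proposition~\ref{lemma:monotonicity of eigenvalue}): since $L_t=t^b$ is increasing, the boxes $Q^y_{L_t}$ for $t\in(a,a+l]$ are nested inside $Q^y_{L_{a+l}}$, so $\blambda_1(Q^y_{L_t})\leq\blambda_1(Q^y_{L_{a+l}})$, while $t^{2/(4-d)}\geq a^{2/(4-d)}$ for $t\geq a$. Combining these, the event $\{\exists\,t\in(a,a+l]:\ \sup_{x\in B(y,1)}\log u(t,x)\geq\beta t^{(6-d)/(4-d)}\}\cap\{\fa_0\leq M\}$ is contained in $\{\blambda_1(Q^y_{L_{a+l}})\geq(\beta-\epsilon)a^{2/(4-d)}\}$, which no longer involves $t$. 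Applying the tail bound of Proposition~\ref{lemma:tail probability of eigenvalue} with $L=(a+l)^b$ and $s=(\beta-\epsilon)a^{2/(4-d)}$, and noting $s^{d/2}=(\beta-\epsilon)^{d/2}a^{d/(4-d)}$ and $s^{2-d/2}=(\beta-\epsilon)^{(4-d)/2}a$, gives precisely the right-hand side of \eqref{eq:upper bound of maximum of spatio-temporal tall peaks when a_0 bounded}. The main obstacle is the uniform control over the time window: one must ensure that both the error term in \eqref{eq:upper bound of the local proxy} (after division by $t$) and the negligibility of the high-$k$ tail hold simultaneously for every $t\in(a,a+l]$, which is why $b$ is taken large depending on $M$ and $n_0$ large depending on $M,b,\epsilon$; the monotonicity reduction then disposes of the supremum at essentially no extra cost.
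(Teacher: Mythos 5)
Your argument matches the paper's proof essentially step for step: the same decomposition $u=\cU^y_0+\sum_{k\geq1}\cU^y_k$ via Lemma~\ref{lemma:convergence of series to the solution}, the same choice of $b=b(M)$ large so that the tail sum is dominated on $\{\fa_0\leq M\}$, the same reduction of $\cU^y_0$ to an eigenvalue event via Proposition~\ref{prop:upper and lower bound of the localized solution}, and the same use of monotonicity (Proposition~\ref{lemma:monotonicity of eigenvalue}) followed by the tail bound (Proposition~\ref{lemma:tail probability of eigenvalue}) with $L=(a+l)^b$ to dispose of the supremum over $t\in(a,a+l]$. The proposal is correct and identical in approach.
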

\begin{proof}
    We use similar argument as in Proposition~\ref{prop:right tail probability of spatial maximum}. Applying Lemma~\ref{lemma:convergence of series to the solution} with $L_t : = t^b$, we can write $u(t,x) = \sum_{k=0}^\infty \cU_k^y(t,x)$ for any $y\in \dR^d.$ Then we have 
    \begin{equation*}
        \begin{aligned}
            \dP \Big( \text{For some $t\in (a,a+l]$}\sup_{x\in B(y,1)} \log u(t,x) \geq \beta t^{\frac{6-d}{4-d}} , \fa_0 \leq M \Big) \leq ({\bf D_1}) + ({\bf D_2}),
        \end{aligned}
    \end{equation*} where 
    \begin{equation*}
        \begin{aligned}
            ({\bf D_1}) &:=  \dP \Big( \text{For some $t\in (a,a+l]$}\sup_{x\in B(y,1)}\cU_0^y(t,x) \geq \frac{1}{2} e^{\beta t^{\frac{6-d}{4-d}}} , \fa_0 \leq M \Big), \\
            ({\bf D_2}) &:=  \dP \Big( \text{For some $t\in (a,a+l]$}\sup_{x\in B(y,1)}\sum_{k=1}^\infty\cU_k^y(t,x) \geq \frac{1}{2} e^{\beta t^{\frac{6-d}{4-d}}} , \fa_0 \leq M \Big).
        \end{aligned}
    \end{equation*} For $d=3$, on the event $\{ \fa_0 \leq M \}$,  we have 
     \begin{equation*}
        \cU^y_k(t,x) \leq C \exp \left( Ct M^{\aleph+1} ( b ( k+1) \log t )^{2\aleph+2} - \frac{t^{2bk}}{Ct} \right). 
    \end{equation*} by applying \eqref{eq:upper bound of cU_k,epsilon}. For $d=2$, similar bounds follows again from Lemma~5.2 of \cite{KPZ20}. Now we can choose a large $b:= b(M)>0$ such that for all $t\geq e $ and $n\geq 1 $
    \begin{equation}
         \sum_{k=1}^\infty \cU_k^y(t,x) \leq\sum_{k=1}^\infty C \exp \left ( - C_1 t^{2bk -1 } \right) \leq \frac{1}{2}e^{\alpha t n^{\frac{2}{4-d}}},
    \end{equation} which shows that $({\bf D_2}) =0$ for all $a\geq e $. We now bound $({\bf D_1})$. Fix $\epsilon\in ( 0, \beta \wedge 1 )$ and use Proposition~\ref{prop:upper and lower bound of the localized solution} to obtain that there exists $a_0:= a_0 (b,M,\epsilon)>0$ such that for all $a\geq a_0$ 
    \begin{equation}
        \begin{aligned}
            ({\bf D_1}) &\leq \dP \Big ( \text{For some $t\in (a,a+l]$, } C\exp ( t \blambda_1(Q^y_{L_t}) + CM^{\upsilon+1} (\log L_t)^{2\upsilon +2 }) \geq  \frac{1}{2}e^{\beta t^{\frac{6-d}{4-d}}}   \Big)\\
         &\leq \dP\Big( \text{For some $t\in (a,a+l]$, }\blambda_1(Q^y_{L_t}) \geq \beta t^{\frac{2}{4-d}} -\frac{\log (2C) + CM^{\upsilon+1} (b\log t)^{2\upsilon +2 }}{t}  \Big)\\
         &\leq \dP ( \text{For some $t\in (a,a+l]$, } \blambda_1(Q^y_{L_t}) \geq (\beta -\epsilon) a^{\frac{2}{4-d}}).
        \end{aligned}
    \end{equation} By Lemma~\ref{lemma:tail probability of eigenvalue} and Lemma~\ref{lemma:monotonicity of eigenvalue}, we have 
    \begin{equation}
      ({\bf D_1})\leq   \dP ( \blambda_1(Q^y_{L_{a+l}}) \geq (\beta -\epsilon) a^{\frac{2}{4-d}}) )\leq c_1 (\beta-\epsilon)^{\frac{d}{2}}a^{\frac{d}{4-d}} e^{d b\log (a+l) - (1-\epsilon)\fc_d (\beta-\epsilon)^{\frac{4-d}{2}}a }.
    \end{equation} This completes the proof.
\end{proof}

Now we proceed to prove the upper bound of macroscopic Hausdorff dimension of the set $\cP^d(\beta,v)$.
\begin{theorem}\label{thm:upper bound of dim for spatio-temporal peaks} 
  For every $v>0$ and $\beta \in(0, \left(d/(v\fc_d)\right)^{\frac{2}{4-d}})$, with probability one,
  \begin{equation}\label{eq:upper bound of dimension of spatio-temporal peaks}
    \Dim[\cP^d(\beta,v)]\leq (d+1-\beta^{\frac{4-d}{2}} v \fc_d) \vee d.
  \end{equation}
\end{theorem}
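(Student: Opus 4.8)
The plan is to mirror the structure of the upper-bound argument for the spatial case (the proof of \eqref{eq:upper bound of dimension for spatial peaks}), replacing the spatial localization by a joint space-time localization and using Proposition~\ref{prop:right tail probability of spatio-temporal maximum} in place of Proposition~\ref{prop:right tail probability of spatial maximum}. First I would reduce to showing that for every $\rho > (d+1-\beta^{\frac{4-d}{2}}v\fc_d)\vee d$ one has $\sum_{n}\nu^n_\rho(\cP^d(\beta,v))<\infty$ almost surely; since $\Dim$ of any bounded set is $0$ and $\Dim\leq d+1$ trivially, it suffices to treat the case $d+1-\beta^{\frac{4-d}{2}}v\fc_d > d$, i.e.\ $\beta^{\frac{4-d}{2}}v\fc_d<1$, and show $\Dim[\cP^d(\beta,v)]\leq d+1-\beta^{\frac{4-d}{2}}v\fc_d$; the bound $\Dim\geq d$ from the lower-bound theorem already forces the $\vee d$. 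Throughout I work on the event $\{\fa_0\leq M\}$, whose complement has probability $\leq \delta_M:=\dE[e^{h_0\sqrt{\fa_0}}]e^{-h_0\sqrt M}\to 0$ by Proposition~\ref{prop:bounds on the enhance noise}, exactly as in the spatial proof.

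The core step is a covering estimate for the Barr\-low--Taylor content. Recall $\cP^d(\beta,v)$ consists of points $(e^{t/v},x)\in(e,\infty)\times\dR^d$ with $u(t,x)\geq e^{\beta t^{\frac{6-d}{4-d}}}$; the change of variables $s=e^{t/v}$, so $t=v\log s$, turns the $n$-th dyadic shell $\dS_n$ in the $(d+1)$-dimensional space into a time window $t\in(vn,v(n+1)]$ together with a spatial shell of radius $\sim e^n$. For each unit box $B(y,1)$ with $y\in\dN^d$, $B(y,1)\subset\{|x|\sim e^n\}$, I apply Proposition~\ref{prop:right tail probability of spatio-temporal maximum} with $a=vn$, $l=v$, to bound
\[
  \dP\Big(\exists\, t\in(vn,v(n+1)]:\ \sup_{x\in B(y,1)}\log u(t,x)\geq \beta t^{\frac{6-d}{4-d}},\ \fa_0\leq M\Big)
  \leq c_1(\beta-\epsilon)^{d/2}(vn)^{\frac{d}{4-d}}e^{db\log(v(n+1))-(1-\epsilon)\fc_d(\beta-\epsilon)^{\frac{4-d}{2}}vn}.
\]
There are $O(e^{dn})$ such boxes $y$ in the shell and, along the time direction, the window has $O(1)$ worth of length (the time slab contributes a bounded factor after the $s=e^{t/v}$ substitution, or at worst a factor $e^n$ if one discretizes time — this needs to be tracked carefully but does not change the exponent). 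Hence
\[
  \dE\Big[\sum_{n=0}^\infty \nu^n_\rho(\cP^d(\beta,v))\1_{\fa_0\leq M}\Big]
  \lesssim \sum_{n=0}^\infty n^{\frac{d}{4-d}}(\log n)^{?}\,e^{(d+1-\rho)n}\,e^{-(1-\epsilon)\fc_d(\beta-\epsilon)^{\frac{4-d}{2}}vn}\,e^{db\log(vn)},
\]
and since $\rho>d+1-(1-\epsilon)^{2/(4-d)}(\beta-\epsilon)^{\frac{4-d}{2}}v\fc_d$ for $\epsilon$ small (by the strict inequality in the choice of $\rho$), the exponential rate is negative and the series converges. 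Note the log-in-$t$ factor $e^{db\log(vn)}$ is only polynomial in $n$ and is absorbed.

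From finiteness of the expectation I get $\sum_n\nu^n_\rho(\cP^d(\beta,v))<\infty$ on $\{\fa_0\leq M\}$ up to a null set, hence $\dP(\sum_n\nu^n_\rho(\cP^d(\beta,v))<\infty)\geq \dP(\fa_0\leq M)\geq 1-\delta_M$; letting $M\to\infty$ gives this almost surely, so $\Dim[\cP^d(\beta,v)]\leq\rho$ a.s.\ for every such $\rho$; taking $\rho\downarrow d+1-(1-\epsilon)^{2/(4-d)}(\beta-\epsilon)^{\frac{4-d}{2}}v\fc_d$ and then $\epsilon\downarrow 0$ yields $\Dim[\cP^d(\beta,v)]\leq d+1-\beta^{\frac{4-d}{2}}v\fc_d$. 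Combined with $\Dim[\cP^d(\beta,v)]\leq d+1$ trivially and the fact that the lower bound already gives $\geq d$, one concludes \eqref{eq:upper bound of dimension of spatio-temporal peaks}, which completes the proof. The main obstacle I expect is the bookkeeping of the time direction: a point of $\cP^d(\beta,v)$ records the first coordinate $e^{t/v}$, so one must (i) pass correctly between $t$ and $s=e^{t/v}$ when counting which $(d+1)$-dimensional shell a point lies in, (ii) discretize time into $O(1)$-length windows so that Proposition~\ref{prop:right tail probability of spatio-temporal maximum} (which is a \emph{uniform-over-a-window} statement, hence already handles all $t$ in the slab) applies with the right $a,l$, and (iii) confirm that the number of such time windows inside $\dS_n$ contributes only a sub-exponential factor so the exponent $d+1-\rho-\fc_d(1-\epsilon)(\beta-\epsilon)^{\frac{4-d}{2}}v$ is what governs convergence. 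The $\vee d$ clause requires no work on the upper-bound side since $\Dim\leq d+1$ always and the statement is a maximum with $d$; it is the lower bound (already proved) that shows the dimension never drops below $d$.
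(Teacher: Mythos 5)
Your high-level plan (mirror the spatial proof, use Proposition~\ref{prop:right tail probability of spatio-temporal maximum} on unit space-time boxes, work on $\{\fa_0\le M\}$ and then send $M\to\infty$, $\rho$ down, $\epsilon\downarrow 0$) is the right one and matches the paper's in spirit, but your covering estimate has a genuine gap in the bookkeeping of the time coordinate, and this gap is precisely where the $\vee d$ in the statement comes from, so it cannot be waved away. The shell $\dS_n\subset\dR^{d+1}$ contains far more than the corner $\{s\sim e^n,\ |x|\sim e^n\}$: it also contains the $d$-dimensional slab $\{(s,x):\ s\le e^{n-1},\ |x_i|\ge e^{n-1}\text{ for some }i\}$. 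On this slab $t=v\log s$ is small, Proposition~\ref{prop:right tail probability of spatio-temporal maximum} gives no useful decay (it only applies for $a\ge a_0$ and the exponent is $e^{-\fc_d(\cdot)a}$ with $a=v\log s$ small), and in fact $\dP(u(t,x)\ge e^{\beta t^{(6-d)/(4-d)}})$ is bounded below; so $\cP^d(\beta,v)$ occupies a positive density of the slab. Your displayed bound $\dE[\nu^n_\rho\,\1_{\fa_0\le M}]\lesssim n^{d/(4-d)}e^{(d+1-\rho)n}e^{-(1-\epsilon)\fc_d(\beta-\epsilon)^{(4-d)/2}vn}\cdots$ implicitly puts the probability $e^{-\fc_d(\cdot)vn}$ on \emph{every} unit box in $\dS_n$, which is false for the small-$s$ boxes; consequently it is not a valid upper bound on $\nu^n_\rho(\cP^d(\beta,v))$. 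Your item (iii) claiming the time direction contributes only a ``sub-exponential factor'' is incorrect (it contributes $\sim e^n$ boxes, with a probability that varies from $\Theta(1)$ to $e^{-\fc_d(\cdot)vn}$ as $s$ ranges over the shell), and your claim that ``the $\vee d$ clause requires no work on the upper-bound side since $\Dim\le d+1$ always'' is also wrong: when $\beta^{(4-d)/2}v\fc_d\ge 1$ the right-hand side of \eqref{eq:upper bound of dimension of spatio-temporal peaks} equals $d$, strictly less than $d+1$, so you really must prove $\Dim\le d$, and the lower bound is of no help for an upper bound.

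The paper's proof resolves exactly this by a decomposition step you are missing. It first restricts to a positive orthant (the positive-orthant restriction is what makes the next step applicable) and then splits $\cP^d_{\epsilon_+}(\beta,v)$ into the part lying in the blocks $\cI^{(q)}_n=(e^{n/q},e^{n+1}]^{d+1}$ (where \emph{every} coordinate, and in particular the time coordinate, is at least $e^{n/q}$, so $t\ge vn/q$ and Proposition~\ref{prop:right tail probability of spatio-temporal maximum} gives a uniform bound $\exp(-(1-\epsilon)\fc_d(\beta-\epsilon)^{(4-d)/2}vn/q)$) and the residual. The residual is shown to have $\Dim\le d$ by the purely geometric Block Lemma~\ref{lemma:block lemma} -- no probability at all -- and this is what supplies the $\vee d$. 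On $\bar{\cL}=\bigcup_n\cL_n$ the covering estimate then gives $\Dim\le d+1-\fc_d\beta_\epsilon^{(4-d)/2}v/q$, and one takes $q\downarrow1$ and $\epsilon\downarrow0$. One could in principle avoid the Block Lemma by summing the probability bound over dyadic slabs $s\sim e^m$ for $0\le m\le n$ and carefully separating the geometric series according to whether $\fc_d\beta^{(4-d)/2}v$ is $>1$ or $<1$, but that computation is not the one you wrote down, and as stated your argument does not establish the claimed bound in either regime.
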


\begin{proof} For $\epsilon : = (\epsilon_1, ..., \epsilon_d)\in \{ -1,1\}^d$, define an (open) orthant as 
\begin{equation*}
    \cO_\epsilon:= \left\{ (t,x) = (t,x_1,...,x_d) \in (1,\infty) \times \dR^d \,:\, \epsilon_1x_1>0, \epsilon_2x_2>0 ,..., \epsilon_dx_d> 0  \right\}. 
\end{equation*} We then define 
\begin{equation*}
    \cP^d_{\epsilon}(\beta,v) : = \cP^d(\beta,v) \cap \cO_\epsilon.
\end{equation*} In order to prove this theorem, it suffices to prove that $\Dim[\cP^d_{\epsilon_+}(\beta,v)] (d+1-\beta^{\frac{4-d}{2}} v \fc_d) \vee d$ for any $\epsilon \in \{ -1,1\}^d$. Due to symmetry between different orthants, it further suffices to prove for $\epsilon_+: = (1,...,1)\in\{-1,1\}^d$ that 
\begin{equation}\label{eq:upper bound of dimension in first orthant}
    \Dim[\cP^d_{\epsilon_+}(\beta,v)] \leq  (d+1-\beta^{\frac{4-d}{2}} v \fc_d) \vee d.
\end{equation}  For $q>1$ and $n\in \dN$, let us denote $
    \cL_n : = \cL_n(q,n,\beta,v,d) : = \cP^d_{\epsilon_+}(\beta,v) \cap \cI^{(q)}_n$ where $\cI^{(q)}_n : = (e^{n/q},e^{n+1}]^{d+1}$. By Lemma~\ref{lemma:block lemma}, we have 
\begin{equation*}
    \Dim\left[ \cP^d_{\epsilon_+}(\beta,v) \setminus \bigcup_{n=0}^\infty \cL_n\right] \leq d.
\end{equation*} Since $\Dim(A\cup B) = \max \{ \Dim(A) , \Dim(B)\}$ for any two sets $A,B\subseteq \dR^d$, we have 
\begin{equation*}
     \Dim\left[ \cP^d_{\epsilon_+}(\beta,v) \right]\leq  \Dim\left[ \cP^d_{\epsilon_+}(\beta,v) \setminus \bigcup_{n=0}^\infty \cL_n\right] \vee \Dim\left[  \bigcup_{n=0}^\infty \cL_n\right].
\end{equation*} Let $\Bar{\cL}:= \bigcup_{n=0}^\infty \cL_n $. The above inequality implies that to show \eqref{eq:upper bound of dimension in first orthant}, it is enough to prove
\begin{equation*}
    \Dim\left(\Bar{\cL}\right)\leq (d+1-\beta^{\frac{4-d}{2}} v \fc_d).
\end{equation*} To this end, observe that  Proposition~\ref{prop:right tail probability of spatio-temporal maximum} implies for all $a\in (e^{n/q}, e^{n+1}]$ 
\begin{equation}\label{eq:right tail probability of spatio-temporal peaks in L_n}
 \begin{aligned}
      \dP &\Big( \text{For some $t\in (a,a+1]$}\sup_{x\in B(y,1)} \log u(v\log t,x) \geq \beta (v \log t)^{\frac{6-d}{4-d}} , \fa_0 \leq M \Big) \\
       & = \dP ( \text{For some $t\in (v\log a,v\log(a+1)]$}\sup_{x\in B(y,1)} \log u(t,x) \geq \beta t^{\frac{6-d}{4-d}} , \fa_0 \leq M \Big)\\       
       &\leq c_1 (\beta-\epsilon)^{\frac{d}{2}}\left( \frac{vn}{q}\right)^{\frac{d}{4-d}} \exp\left( d b\log \left( 2v n\right) - \frac{(1-\epsilon)\fc_d (\beta-\epsilon)^{\frac{4-d}{2}}vn}{q} \right),
 \end{aligned}
\end{equation} For all sufficiently large $n\in \dN$, we cover $\cL_n \subseteq \cI^{(q)}_n$ with $O(e^{d+1})$-many boxes of the form $(a,a+1] \times B(y,1)$ satisfying that for some $t\in (a,a+1]$
\begin{equation}\label{eq:covering of spatio-temporal peaks}
    \sup_{x\in B(y,1)} \log  u(v \log t ,x ) \geq \beta (v\log t )^{\frac{6-d}{4-d}}.
\end{equation} on the event $\Upsilon_M : = \{ \fa_0 \leq M \}$. Choose 
\begin{equation*}
    \rho \in \Big(  d+1 -\frac{\fc_d \beta_\epsilon^{\frac{4-d}{2}} v n}{q} , d+1 \Big].
\end{equation*}

By \eqref{eq:right tail probability of spatio-temporal peaks in L_n}, we have that for all sufficiently large $n\geq 1$ and for all $\rho>0$,
\begin{equation*}
    \begin{aligned}
        \dE[ \nu_\rho^n(\cL_n) \1_{\Upsilon_M} ] &\leq  \dE\Big[ \sum_{ \substack{(a,a+1] \times B(y,1) \subseteq \cI_n^{q} :  \\ \text{\eqref{eq:covering of spatio-temporal peaks} holds}}} e^{-n\rho} \cdot \1_{\Upsilon_M}\Big]\\
        &\leq C \exp\Big( \Big\{ d+1 - \rho - \frac{\fc_d \beta_\epsilon^{\frac{4-d}{2}} v n}{q}    \Big\}n     + C b \log n  \Big),
    \end{aligned}
\end{equation*} where $C>0$ is a constant which depends only on $(v,\beta)$ and $\beta_\epsilon:= ( 1-\epsilon)^{\frac{2}{4-d}} (\beta-\epsilon) $. This implies that 
\begin{equation*}
   \dP \Big( \Upsilon_M \cap \Big\{  \sum_{n=0}^\infty\nu_\rho^n ( \Bar{\cL}) <  \infty \Big\} \Big) =\dP \big( \Upsilon_M \big).
 \end{equation*} Because $\dP ( \Upsilon_M) \geq 1-e^{-h\sqrt{M}}\mathbb{E}[e^{\sqrt{\mathfrak{a}_0}}]$ (see \eqref{eq:probability of fa bounded}), we have 
 \begin{equation*}
      \dP \Big( \sum_{n=0}^\infty\nu_\rho^n ( \Bar{\cL}) <  \infty \Big) \geq 1-e^{-h\sqrt{M}}\mathbb{E}[e^{\sqrt{\mathfrak{a}_0}}],
 \end{equation*} which in turn implies 
 $
    \dP(  \Dim( \Bar{\cL}) \leq \rho ) \geq 1- e^{-h\sqrt{M}}\mathbb{E}[e^{\sqrt{\mathfrak{a}_0}}].
 $ Since the definition of $\Bar{\cL}$ does not depend on $M$ and $M>0$ can be arbitrarily large, we have $ \Dim( \Bar{\cL}) \leq \rho $ almost surely. Taking $\rho \downarrow d+1 -\frac{\fc_d \beta_\epsilon^{\frac{4-d}{2}} v n}{q} $, we also have $ \Dim( \Bar{\cL}) \leq  d+1 -\frac{\fc_d \beta_\epsilon^{\frac{4-d}{2}} v }{q}$.  Moreover, since $q>1$ and $\epsilon\in(0, \beta\wedge 1 )$ are arbitrary, we can let $q\to 1$ and $\epsilon \to 0$ to conclude that $\Dim( \Bar{\cL}) \leq d+1 - \fc_d \beta^{\frac{4-d}{2}} v $. This completes the proof.

\end{proof}

\appendix

\section{Besov space and paracontrolled generator} \label{sec:Paracontrolled}

We start with introducing few notations about the function spaces and the paracontrolled calculus. Let $\chi$ and $\varrho$ be non-negative radial function such that 

\begin{enumerate}
\item the support of $\chi$ is contained in a ball and the support of $\varrho$ is contained in an annulus $\{x \in \dR^d : 1< |x| <2 \}$. 

\item $\chi(\xi) + \sum_{j\geq 0} \varrho(2^{-j}\xi) = 1$ for all $\xi \in \bR^d$. 

\item $\mathrm{Supp}(\chi)\cap \mathrm{Supp}(\varrho(2^{-j}\cdot)) = \emptyset$ for $i\geq 1$ and $\mathrm{Supp}(\varrho(2^{-i}\cdot)\cap \mathrm{Supp}(\varrho(2^{-j}\cdot)) = \emptyset$ when $|i-j|>1$. 
\end{enumerate}
To this end,  $(\chi,\varrho)$ satisfying the above properties are said to form a dyadic partition of unity. For
the existence, we refer to \cite[Proposition 2.10]{BCD11}.

For any Schwartz distribution $f$, we define the Littlewood-Paley blocks by 
\begin{equation}
    \Delta_i f  = \sum_{k\in\dN_0^d} \langle f, \fn_{k,L} \rangle \varrho_i\Big(\frac{k}{L}\Big)\fn_{k,L}
\end{equation} where $\{\fn_{k,L}: k\in \dN_0 \}$ forms an orthonormal basis of $L^2([0,L]^d)$ given in \cite[Section~4]{CZ21} and $\varrho_{j}(\cdot) = \varrho(2^{-j}\cdot)$. We also note that since $\varrho_j$ is supported in a ball with radius $2^j$ and $\varrho_j \leq 1$, for all $j\in\dN_0$, $x\in \dR^d$ and $\gamma>0$
\begin{equation}\label{eq:bound of varrho}
    \varrho_j(x) \lesssim \left( \frac{2^j}{1+|x|}\right)^{\gamma}.
\end{equation} 

For $u\in \sS'$, we define $(1-\frac{1}{2}\Delta )^{-1} u $ by 
\begin{equation}\label{eq:definition of inverse laplacian}
    (1-\frac{1}{2}\Delta )^{-1} u  := \sum_{k\in \dN^d_0} \sigma\Big( \frac{k}{L}\Big) \langle  u, \fn_{k,L}\rangle \fn_{k,L},
\end{equation} where $\sigma (x) := (1+\pi|x|^2)^{-1}$.

Denote the Fourier transform operator by $\mathfrak{F}$ and let $(\chi,\varrho)$ be the dyadic partition of unity. Then the Littlewood-Paley blocks are defined as 
\begin{align}
    \Delta_{-1} u = \mathfrak{F}^{-1}(\chi\mathfrak{F}(u)), \quad \Delta_{j} u  = \mathfrak{F}^{-1}\big(\varrho_{j}(\cdot) \mathfrak{F}(u)\big), \quad j\geq 0  
\end{align}
where $\varrho_{j}(\cdot) = \varrho(2^{-j}\cdot)$ and, for $\alpha \in \bR$, $p,q\in [1,\infty]$, the Besov space $B^{\alpha}_{p,q}(\bR^d, \bR^n)$ is 
\begin{align}
    B^{\alpha}_{p,q}(\bR^d, \bR^n) :=\{u\in \mathscr{S}^{\prime}(\bR^d, \bR^n); \quad \|u\|^{q}_{B^{\alpha}_{p,q}(\bR^d, \bR^n)} = \sum_{j\geq -1}2^{jq\alpha}\|\Delta_{j} u\|^{q}_{L^p(\bR^d, \bR^n)}<\infty\}
\end{align}
 We often use the notation $\mathscr{C}_p^{\alpha}(\bR^d, \bR^n)$ to denote $B^{\alpha}_{p,p}(\bR^d, \bR^n)$ for $p\in [1,\infty]$ and write $\sC^\alpha(\bR^d, \bR^n)$ for $\sC_\infty^\alpha(\bR^d, \bR^n)$. This notation is consistent with the fact that $\mathscr{C}^{\alpha}(\bR^d, \bR^n)$ is indeed space of all $\alpha$-H\"older continuous function. For simplicity, we sometimes use the notation $\sC^\alpha_p$ for $\sC^\alpha_p(\dR^d,\dR)$. 
 
 Let $\delta,\rho>0$, $T>0,$ and $\bar{T}\in [0,T)$. Let $(D, \| \cdot \|_D)$ be a Banach space and $u,v:[T-\bar{T},T]\rightarrow D $ be function (or distribution) valued processes. We say that $u\in C^{\delta}_{\rho, \bar{T},T}D $ if $ \| u\|_{C^{\delta}_{\rho,\bar{T},T}D} < \infty $ and $v\in C^{\delta}_{\bar{T},T}D$ if $\|v\|_{C^{\delta}_{\bar{T},T}D}<\infty$ where 
 \begin{equation}\label{eq:definition of the norm for Holder-Besov valued process}\begin{aligned}
     \| u\|_{C^{\delta}_{\rho,\bar{T},T}D} &: = \sup_{s<t\in(T-\bar{T},T]} (T-t)^\delta\frac{ \| u(t)-u(s)\|_D}{|t-s|^\rho},\\
     \|v\|_{C^{\delta}_{\bar{T},T}D} &: = \sup_{t\in(T-\bar{T},T]}(T-t)^\delta\|v(t)\|_D.
 \end{aligned}
 \end{equation} In the case of $\delta = 0$ or $\bar{T} = T$, then we drop the respective subscripts in the above definition.

\subsection{Some properties of the Besov-H\"older continuous distributions}

Let $f$ and $g$ be two distributions in $\mathscr{S}^{\prime}(\bR^d)$. Then the Paley-Littlewood decomposition of $fg$ is written as 
\begin{align*}
    fg = f\prec g+ f\circ g+ f\succ g
\end{align*}
 where $f\prec g$ and $f\succ g$ are called \emph{paraproducts} and $f\circ g$ is called the \emph{resonant terms} and they are defined as 
 \begin{align*}
     f\prec g = f\succ g = \sum_{j\geq -1} \sum_{i<j-1} \Delta_{i}\Delta_{j} g, \quad \text{and } \quad f\circ g = \sum_{j\geq -1}\sum_{|i-j|\leq 1} \Delta_{i}\Delta_{j} g.  
 \end{align*}
In the following propositions, we note few useful properties of the paraproduct. 

\begin{proposition}[Bony's estimates (I), \cite{BCD11}]\label{prop:Bony's estimates (I)}
    Let $\alpha,\beta \in \dR$. Let $f\in \mathscr{C}^{\alpha}$ and $f\in \mathscr{C}^{\beta}$,
    \begin{enumerate}
        \item If $\alpha>0$, then $f\prec g \in \mathscr{C}^{\beta}$ and $\|f\prec g\|_{\beta}\leq \|f\|_{L^{\infty}}\|g\|_{\beta}$ 
      \item If $\alpha <0$, then $f\prec g \in \mathscr{C}^{\alpha+\beta}$  and $\|f\prec g\|_{L^{\infty}}\lesssim \|f\|_{\alpha} \|g\|_{\beta}$. 
     \item If $\alpha +\beta >0$, then $f\circ g \in \mathscr{C}^{\alpha+\beta}$ and $\|f\circ g\|_{\alpha+\beta} \lesssim \|f\|_{\alpha}\|g\|_{\beta}$.
    \end{enumerate}
\end{proposition}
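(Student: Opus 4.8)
The statement is the classical Bony paraproduct estimate, and the plan is to derive it directly from the Littlewood--Paley characterization of the Besov--H\"older spaces together with the support properties of the dyadic partition of unity $(\chi,\varrho)$. Throughout I would use the two elementary facts recorded earlier in the appendix: first, that $\Delta_j$ is a Fourier multiplier supported (in frequency) in an annulus of size $\sim 2^j$ for $j\ge 0$ and in a ball for $j=-1$; second, the Bernstein inequalities, namely that for a distribution $h$ with $\operatorname{Supp}\mathfrak{F}(h)$ in a ball of radius $\sim 2^j$ one has $\|h\|_{L^\infty}\lesssim 2^{j d/p'}\|h\|_{L^p}$ and $\|D^k h\|_{L^\infty}\sim 2^{jk}\|h\|_{L^\infty}$, and if the support is in an annulus then $\|h\|_{L^\infty}\lesssim 2^{-jk}\|D^k h\|_{L^\infty}$. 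The whole proof is a bookkeeping exercise: decompose $f\prec g=\sum_{j}S_{j-1}f\,\Delta_j g$ (with $S_{j-1}f=\sum_{i<j-1}\Delta_i f$) and $f\circ g=\sum_{|i-j|\le1}\Delta_i f\,\Delta_j g$, localize each block in frequency, and sum the resulting geometric series.

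\emph{Step 1 (paraproduct, $\alpha>0$).} For each $j$, the block $S_{j-1}f\,\Delta_j g$ has Fourier support in an annulus $\{|\xi|\sim 2^j\}$, so $\|\Delta_k(f\prec g)\|_{L^\infty}\lesssim \sum_{j\sim k}\|S_{j-1}f\|_{L^\infty}\|\Delta_j g\|_{L^\infty}$. Since $\alpha>0$ we have $\|S_{j-1}f\|_{L^\infty}\le\sum_{i<j-1}\|\Delta_i f\|_{L^\infty}\lesssim\|f\|_\alpha\sum_{i<j-1}2^{-i\alpha}\lesssim\|f\|_\alpha$ (in fact $\|S_{j-1}f\|_{L^\infty}\le\|f\|_{L^\infty}$ by the partition of unity, which is the sharper bound quoted), and $\|\Delta_j g\|_{L^\infty}\lesssim 2^{-j\beta}\|g\|_\beta$. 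Multiplying by $2^{k\beta}$ and summing over $j\sim k$ gives $\|f\prec g\|_\beta\lesssim\|f\|_{L^\infty}\|g\|_\beta$.

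\emph{Step 2 (paraproduct, $\alpha<0$).} Here one cannot control $S_{j-1}f$ uniformly; instead $\|S_{j-1}f\|_{L^\infty}\lesssim\|f\|_\alpha\sum_{i<j-1}2^{-i\alpha}\lesssim 2^{-j\alpha}\|f\|_\alpha$ because $\alpha<0$ makes the series grow geometrically and dominated by its last term. Then $\|\Delta_k(f\prec g)\|_{L^\infty}\lesssim\sum_{j\sim k}2^{-j\alpha}\|f\|_\alpha\,2^{-j\beta}\|g\|_\beta\lesssim 2^{-k(\alpha+\beta)}\|f\|_\alpha\|g\|_\beta$, which gives $\|f\prec g\|_{\alpha+\beta}\lesssim\|f\|_\alpha\|g\|_\beta$; in particular $\|f\prec g\|_{L^\infty}\lesssim\|f\|_\alpha\|g\|_\beta$ when $\alpha+\beta\le0$, as stated.

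\emph{Step 3 (resonant term, $\alpha+\beta>0$).} For the resonant product, $\Delta_i f\,\Delta_j g$ with $|i-j|\le1$ has Fourier support in a \emph{ball} of radius $\sim 2^j$, not an annulus, so $\Delta_k(f\circ g)$ picks up contributions only from $j\gtrsim k$: $\|\Delta_k(f\circ g)\|_{L^\infty}\lesssim\sum_{j\gtrsim k}\sum_{|i-j|\le1}\|\Delta_i f\|_{L^\infty}\|\Delta_j g\|_{L^\infty}\lesssim\|f\|_\alpha\|g\|_\beta\sum_{j\gtrsim k}2^{-j(\alpha+\beta)}$. The condition $\alpha+\beta>0$ makes this tail sum convergent and $\lesssim 2^{-k(\alpha+\beta)}\|f\|_\alpha\|g\|_\beta$, whence $\|f\circ g\|_{\alpha+\beta}\lesssim\|f\|_\alpha\|g\|_\beta$. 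The main (and only real) subtlety is precisely this point: recognizing that the three pieces localize differently in frequency — the paraproducts in annuli (forcing $j\sim k$) and the resonant term in balls (forcing $j\gtrsim k$) — and that it is exactly the sign conditions on $\alpha$, on $\alpha$, and on $\alpha+\beta$ respectively that turn each divergent-looking geometric sum into a convergent one. Since all of this is standard and the excerpt already cites \cite{BCD11}, I would in fact simply refer to \cite[Chapter~2]{BCD11} for the full details rather than reproduce the computation, noting only the frequency-support observations that make each case work.
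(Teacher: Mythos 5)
The paper gives no proof at all for this proposition---it is simply cited from \cite{BCD11}---so your sketch is filling in what the authors left implicit. Your argument is the standard Littlewood--Paley one, and Steps~1 and~3 are correct: the key observations you single out (paraproduct blocks localize in annuli so only $j\sim k$ contribute to $\Delta_k$; resonant blocks localize in balls so the tail $j\gtrsim k$ contributes and convergence needs $\alpha+\beta>0$) are exactly the right ones.

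There is one genuine slip at the end of Step~2. You correctly derive $\|f\prec g\|_{\alpha+\beta}\lesssim\|f\|_\alpha\|g\|_\beta$, but then assert that ``in particular $\|f\prec g\|_{L^\infty}\lesssim\|f\|_\alpha\|g\|_\beta$ when $\alpha+\beta\le 0$.'' This is backwards: the embedding $\mathscr{C}^{\gamma}\hookrightarrow L^\infty$ holds for $\gamma>0$, not $\gamma\le0$ (indeed $\mathscr{C}^0$ already fails to embed into $L^\infty$), so with $\alpha+\beta\le0$ you cannot extract an $L^\infty$ bound from membership in $\mathscr{C}^{\alpha+\beta}$. The estimate your computation actually yields, $\|f\prec g\|_{\alpha+\beta}\lesssim\|f\|_\alpha\|g\|_\beta$, is the correct and standard form of Bony's estimate; the ``$\|f\prec g\|_{L^\infty}$'' appearing in item~(2) of the proposition as printed is almost certainly a typographical slip for $\|f\prec g\|_{\alpha+\beta}$, and you should not have tried to force your (correct) computation to match it via a false embedding. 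State the $\mathscr{C}^{\alpha+\beta}$ bound and leave it there.
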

 
\begin{proposition}[Bony's estimates (II), \cite{BCD11}]\label{prop:Bony's estimates (II)} Let $\alpha<0,\beta>0$ and $\alpha+\beta >0$. Let $p, p_1,p_2,q_1,q_2\in [1,\infty]$ be satisfy $\frac{1}{p}= \frac{1}{p_1} + \frac{1}{p_2}$. Let $f\in B^\alpha_{p_1,q_1}$ and $g\in B^\beta_{p_2,q_2}$. For $q\geq q_1$
\begin{enumerate}
    \item $\| f \prec g\|_{B^{\alpha+\beta}_{p,q}} \lesssim \| f \|_{B^\alpha_{p_1,q_1}} \|g \|_{ B^\beta_{p_2,q_2}}$.
    \item $\| f \succ g  \|_{B^{\alpha}_{p,q}} \lesssim \| f \|_{B^\alpha_{p_1,q_1}} \|g \|_{ B^\beta_{p_2,q_2}}.$
    \item  $\| f \circ g \|_{B^{\alpha+\beta}_{p,q}} \lesssim \| f \|_{B^\alpha_{p_1,q_1}} \|g \|_{ B^\beta_{p_2,q_2}}.$
\end{enumerate}
    
\end{proposition}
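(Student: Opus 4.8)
The statement is the standard $L^p$-based version of Bony's paraproduct and resonance estimates, so the plan is to reduce it to three classical Littlewood--Paley facts and then treat the three items essentially in parallel. The inputs I would invoke are: \emph{(i)} the Bernstein inequalities, exchanging derivatives and $L^p\to L^q$ control for distributions spectrally supported in a dilated ball, resp.\ annulus; \emph{(ii)} an \emph{annulus summation lemma} --- if $u=\sum_{j\geq-1}u_j$ with $\mathrm{supp}\,\widehat{u_j}\subseteq 2^j\cA$ for a fixed annulus $\cA$, then $\|u\|_{B^s_{p,q}}\lesssim\big\|\big(2^{js}\|u_j\|_{L^p}\big)_{j}\big\|_{\ell^q}$ for \emph{every} $s\in\dR$; and \emph{(iii)} a \emph{ball summation lemma} --- the same conclusion when $\mathrm{supp}\,\widehat{u_j}\subseteq 2^jB$ for a fixed ball $B$, but only if $s>0$. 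I would also recall at the outset the spectral localizations underlying $fg=f\prec g+f\circ g+f\succ g$: each block $S_{j-1}f\,\Delta_jg$ of a paraproduct has Fourier transform supported in a dilated annulus $2^j\cA$ (the low frequencies of $S_{j-1}f$ cannot reach the origin once added to the annulus of $\Delta_jg$), whereas each block $\Delta_if\,\Delta_jg$ with $|i-j|\leq1$ of the resonant term has Fourier transform supported in a dilated ball $2^jB$.

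For item (1) I would write $f\prec g=\sum_{j}S_{j-1}f\,\Delta_jg$ and, using $\alpha<0$, estimate the low-frequency factor by
\begin{equation*}
\|S_{j-1}f\|_{L^{p_1}}\leq\sum_{i\leq j-2}\|\Delta_if\|_{L^{p_1}}\lesssim 2^{-j\alpha}\,\|f\|_{B^\alpha_{p_1,q_1}},
\end{equation*}
the geometric series being dominated by its last term (for general $q_1$ one inserts a H\"older step in $i$ against an $\ell^{q_1}$ sequence). H\"older's inequality with $\tfrac1p=\tfrac1{p_1}+\tfrac1{p_2}$ then gives $2^{j(\alpha+\beta)}\|S_{j-1}f\,\Delta_jg\|_{L^p}\lesssim\|f\|_{B^\alpha_{p_1,q_1}}\big(2^{j\beta}\|\Delta_jg\|_{L^{p_2}}\big)$, and taking $\ell^q$ norms in $j$ and applying the annulus summation lemma at regularity $s=\alpha+\beta$ finishes it. Item (2) is the mirror image: for $f\succ g=\sum_jS_{j-1}g\,\Delta_jf$ I would use $\beta>0$ to bound $\|S_{j-1}g\|_{L^{p_2}}\lesssim\|g\|_{B^0_{p_2,\infty}}\lesssim\|g\|_{B^\beta_{p_2,q_2}}$, then H\"older, then the annulus summation lemma at $s=\alpha$ (legitimate even when $\alpha<0$).

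For item (3) I would group $f\circ g=\sum_{j\geq-1}R_j$ with $R_j:=\sum_{|i|\leq1}\Delta_{j+i}f\,\Delta_jg$, so that $\widehat{R_j}$ is ball-localized at scale $2^j$. H\"older's inequality, after redistributing the weight $2^{j(\alpha+\beta)}$, gives
\begin{equation*}
2^{j(\alpha+\beta)}\|R_j\|_{L^p}\lesssim\sum_{|i|\leq1}\big(2^{(j+i)\alpha}\|\Delta_{j+i}f\|_{L^{p_1}}\big)\big(2^{j\beta}\|\Delta_jg\|_{L^{p_2}}\big),
\end{equation*}
the factors $2^{-i\alpha}$ with $|i|\leq1$ being absorbed into the constant; taking $\ell^q$ norms in $j$ and applying discrete H\"older/Young to this product of Littlewood--Paley sequences bounds the right-hand side by $\|f\|_{B^\alpha_{p_1,q_1}}\|g\|_{B^\beta_{p_2,q_2}}$, and, crucially, since $\alpha+\beta>0$ the \emph{ball} summation lemma applies and yields the claim. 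The main obstacle here is not really a difficulty but a constraint one must respect: the resonant blocks are only ball-localized, the ball summation lemma fails at nonpositive regularity, and so the hypothesis $\alpha+\beta>0$ is genuinely needed in (3). Everything else --- verifying the spectral-support claims and tracking the Lebesgue exponent identity together with the interplay of $q_1,q_2$ and $q$, which is where the stated relation on $q$ enters --- is routine Littlewood--Paley bookkeeping, which I would carry out exactly as in \cite{BCD11}.
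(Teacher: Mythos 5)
The paper states this proposition with only a citation to \cite{BCD11} and gives no proof of its own, so there is nothing in the text for your argument to diverge from. Your proof is the standard Littlewood--Paley argument (spectral localization of the blocks, H\"older on each dyadic piece, then the annulus summation lemma for the two paraproducts and the ball summation lemma for the resonance), which is exactly the proof in \cite{BCD11}; the structure and the role of $\alpha<0$, $\beta>0$, $\alpha+\beta>0$ are all correctly identified. One remark on the bookkeeping you deferred: following your own estimates, item~(2) is precisely the one that uses $q\geq q_1$ (the $\ell^q$ norm lands on the $\Delta_j f$ sequence), whereas item~(1) by the same reasoning needs $q\geq q_2$ and item~(3) needs $1/q\leq 1/q_1+1/q_2$; the paper's blanket hypothesis ``$q\geq q_1$'' is a slight under-specification relative to \cite{BCD11}, not a flaw in your argument.
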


 \begin{proposition}[Schauder's estimate,Lemma~2.5 of \cite{CC2018}, Lemma~A.8 of  \cite{GIP2015}]\label{lem:schauder estimate} Let $P_t$ be the heat semigroup for $\frac{1}{2}\Delta$. Let $\theta\geq 0, p\in [1,\infty]$ and $\alpha\in \dR$. Then for $\phi \in \sC^\alpha_p$ and $0\leq s \leq t$ we have 
\begin{equation}
  \| P_t \phi \|_{\sC^{\alpha+2\theta}_p} \lesssim t^{-\theta}\| \phi \|_{\sC^\alpha_p} , \qquad \|(P_{t-s} - \textrm{Id} )\phi\|_{\sC^{\alpha-2\theta}_p} \lesssim |t-s|^\theta \| \phi \|_{\sC^\alpha_p}.
\end{equation}
\end{proposition}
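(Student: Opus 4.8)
The plan is to reduce both inequalities to frequency-localised estimates via the Littlewood--Paley decomposition, using that $P_t=e^{\frac12 t\Delta}$ is a Fourier multiplier and hence commutes with every block $\Delta_j$, so that $\Delta_j P_t\phi=P_t\Delta_j\phi$. Once the problem is cut into dyadic pieces, both bounds become elementary calculus estimates on the functions $2^{2j\theta}e^{-c2^{2j}t}$ and $(t2^{2j})^\theta$, followed by the usual $\ell^p$ bookkeeping of the Besov norm $\|\cdot\|_{\sC^\alpha_p}=\|\cdot\|_{B^\alpha_{p,p}}$.

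First I would record the underlying multiplier lemma: for $\psi$ with Fourier support in a dyadic annulus $\{|\xi|\sim 2^j\}$ (resp. in a ball of radius $2^j$ when $j=-1$) one has, uniformly in $j$ and in $p\in[1,\infty]$,
\[
\|P_t\psi\|_{L^p}\lesssim e^{-c2^{2j}t}\|\psi\|_{L^p},\qquad \|(P_t-\textrm{Id})\psi\|_{L^p}\lesssim (t2^{2j})^\theta\|\psi\|_{L^p}\quad(\theta\in[0,1]).
\]
The first follows by writing $P_t\psi=K_{t,j}\ast\psi$ with $K_{t,j}=\mathfrak{F}^{-1}(e^{-\frac12 t|\xi|^2}\tilde\varrho(2^{-j}\xi))$ for a fattened $\tilde\varrho$, rescaling $\xi=2^j\zeta$ so that $K_{t,j}(x)=2^{jd}G_j(2^j x)$ with $G_j=\mathfrak{F}^{-1}(e^{-\frac12(t2^{2j})|\zeta|^2}\tilde\varrho(\zeta))$, and integrating by parts in $G_j$ using that $\tilde\varrho$ is supported away from the origin, which gives $\|G_j\|_{L^1}\lesssim e^{-c\,t2^{2j}}$ with constants independent of $j$; Young's inequality then concludes. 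The second uses $|e^{-\frac12 t|\xi|^2}-1|\le (t|\xi|^2)^\theta$ on the support of the block together with the same kernel bound.

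Second I would substitute $\psi=\Delta_j\phi$ and sum over $j$. For the smoothing estimate,
\[
\|P_t\phi\|_{\sC^{\alpha+2\theta}_p}^p=\sum_{j\ge -1}2^{j(\alpha+2\theta)p}\|\Delta_j P_t\phi\|_{L^p}^p\lesssim \sum_{j\ge -1}\bigl(2^{2j\theta}e^{-c2^{2j}t}\bigr)^p\,2^{j\alpha p}\|\Delta_j\phi\|_{L^p}^p,
\]
and writing $x=2^{2j}t$ one gets $2^{2j\theta}e^{-c2^{2j}t}=t^{-\theta}x^\theta e^{-cx}\le t^{-\theta}\sup_{x>0}x^\theta e^{-cx}\lesssim t^{-\theta}$ uniformly in $j$, so the right-hand side is $\lesssim t^{-\theta p}\|\phi\|_{\sC^\alpha_p}^p$ (the $j=-1$ block is also bounded by $\|\phi\|_{L^p}\le\|\phi\|_{\sC^\alpha_p}$, which is subsumed since $t^{-\theta}$ is bounded below on the bounded time intervals occurring in applications). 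For the time-regularity estimate,
\[
\|(P_{t-s}-\textrm{Id})\phi\|_{\sC^{\alpha-2\theta}_p}^p=\sum_{j\ge -1}2^{j(\alpha-2\theta)p}\|\Delta_j(P_{t-s}-\textrm{Id})\phi\|_{L^p}^p\lesssim |t-s|^{\theta p}\sum_{j\ge -1}2^{j\alpha p}\|\Delta_j\phi\|_{L^p}^p=|t-s|^{\theta p}\|\phi\|_{\sC^\alpha_p}^p.
\]
For $p=\infty$ the $\ell^p$ sums are replaced by suprema and the argument is identical.

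The main obstacle is the multiplier lemma itself, i.e. the uniform-in-$j$ bound $\|K_{t,j}\|_{L^1}\lesssim e^{-c\,t2^{2j}}$ (a Bernstein/Mikhlin-type statement for the heat kernel restricted to a dyadic annulus); the scaling-plus-integration-by-parts argument above makes it routine, and it is exactly what underlies the results cited in the statement of the proposition, so in a full write-up I would simply invoke those and devote the remaining effort to the Besov summation.
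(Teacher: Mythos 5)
The paper does not prove this proposition: it simply cites the two references in the statement, and your sketch is precisely the standard Littlewood--Paley argument that those sources use, so the approach is the expected one and is correct up to two small points a full write-up should repair. First, in the smoothing estimate you handle the low-frequency block via $\|\phi\|_{L^p}\le\|\phi\|_{\sC^\alpha_p}$, which is false when $\alpha<0$; what you actually need is only the elementary bound $\|\Delta_{-1}\phi\|_{L^p}\lesssim\|\phi\|_{\sC^\alpha_p}$ (immediate from the definition of the Besov norm), which shows the $j=-1$ contribution is $O(\|\phi\|_{\sC^\alpha_p})$, and this is then absorbed into $t^{-\theta}\|\phi\|_{\sC^\alpha_p}$ precisely because, as you correctly observe, the estimate is only ever invoked on a bounded time horizon (the implied constant depends on $T$). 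Second, your parenthetical restriction $\theta\in[0,1]$ for the time-regularity inequality is not cosmetic: that bound genuinely fails for $\theta>1$ (already a single high-frequency dyadic bump with $t-s\to 0$ violates it), so the hypothesis $\theta\ge 0$ in the proposition should be read as $\theta\in[0,1]$ for the second estimate, in agreement with the cited lemmas.
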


\section{Macroscopic Hausdorff dimension}\label{sec2}
In this section, we introduce the notion of the macroscopic Hausdorff dimension given by Barlow and Taylor \cite{BT89,BT92}, and Khoshnevisan-Kim-Xiao \cite{KKX17}. We also present a useful propositions that help to provide lower bound and upper bounds to the macroscopic Hausdorff dimension of any given set.

\subsection{Definition}\label{subsec2.1}
For all integers $n\geq 1 $, we define the exponential cubes and shells as follows:
\begin{equation}
    \dV_n : = [-e^{n}, e^n)^d, \quad \dS_0 : = V_0, \quad \text{and} \quad \dS_{n+1}: = \dV_{n+1} \setminus \dV_n.
\end{equation}
Let $\cB$ be the collection of all cubes of the form
\begin{equation}
    B(x,r) := \prod_{i=1}^d [x_i,x_i+r), 
\end{equation}
for $x=(x_1,...,x_d)\in \R^d$, and $r \in [1,\infty)$. For any subset $E\subset \R^d$, $\rho>0$, and all integers $n\geq 1$, we define 
\begin{equation}
    \nu_\rho^n(E):= \inf \left\{\sum_{i=1}^m \left (\frac{s(B_i)}{e^n} \right)^\rho : B_i \in \mathcal{B}, B_i \subset \dS_n \text{ and }E\cap \dS_n \subset \cup_{i=1}^m B_i  \right\},
\end{equation}
where $s(B):=r$ denotes the side of $B=B(x,r)$. We now introduce the definition of the macroscopic Hausdorff dimension. 
\begin{definition}\label{def:definition of dimension} [\cite{BT89,BT92}]
The {\it macroscopic Hausdorff dimension} of $E \subset \R^d$ is defined as 
\begin{equation}
    \Dim (E) : = \inf \left\{\rho>0: \sum_{n=1}^\infty \nu_\rho^n (E) < \infty \right\}.
\end{equation}

\end{definition}

\subsection{Useful bounds for macroscopic Hausdorff dimension}
Choose and fix any $\theta\in (0,1)$. We define 
$$
a_{j,n}(\theta) := e^n + je^{n\theta}, \qquad 0\leq j < e^{n(1-\theta)},
$$
$$
I_{n}(\theta) : = \bigcup_{\substack{0\leq j \leq e^{n(1-\theta)}:\\ j\in \Z}} \{ a_{j,n}(\theta)\},
$$
and
$$
\mathcal{I}_{n}(\theta) : = \prod_{i=1}^{d} I^i_{n}(\theta),
$$
where $I^i_n(\theta)$ is a copy of $I_n(\theta)$ for each $i$. We call $\cup_{n=1}^\infty\mathcal{I}_n(\theta) $ a $\theta$-skeleton of $\R^d$ (see \cite[Definition~4.2]{KKX17}). Note that $\Dim \left( \cup_{n=k}^\infty\mathcal{I}_n(\theta) \right) = d (1-\theta)$ for any integer $k\geq 1 $.
\begin{definition}[Definition 4.3 of \cite{KKX17}]\label{def:ThickSet}
$E$ is called $\theta$-thick if there exists a positive integer $k=k(\theta)$ such that 
$$
E\cap Q(x,e^{n\theta}) \neq \emptyset,
$$
for all $x \in \mathcal{I}_n(\theta)$ and $n\geq k$.
\end{definition}
By the monotonicity of the macroscopic Hausdorff dimension, we get the following lower bound.
\begin{proposition}[Proposition~4.4 of \cite{KKX17}]\label{prop:thick set} 
Let $E \subset \R^d$. If $E$ contains a $\theta$-thick set for some $\theta\in (0,1)$, then 
$$
\Dim (E) \geq d(1-\theta).
$$

\end{proposition}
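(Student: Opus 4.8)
The plan is to prove Proposition~\ref{prop:thick set} by exhibiting, for each $\rho < d(1-\theta)$, an explicit efficient cover of $E$ in each shell $\dS_n$ that is \emph{too large} for the $\rho$-content to be summable; since the set $E$ is assumed to contain a $\theta$-thick set $F$, and $\Dim$ is monotone, it suffices to do this for $F$ itself. First I would recall the counting: in the shell $\dS_n$ the grid $\mathcal{I}_n(\theta)$ has roughly $c\, e^{dn(1-\theta)}$ points (more precisely, between $(\lfloor e^{n(1-\theta)}\rfloor)^d$ and $(\lceil e^{n(1-\theta)}\rceil +1)^d$ of them after intersecting with $\dS_n$), and by $\theta$-thickness each surrounding cube $Q(x,e^{n\theta})$ with $x\in\mathcal{I}_n(\theta)$ meets $F$ for all $n\geq k$. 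These cubes are essentially disjoint (two grid points differ by at least $e^{n\theta}$ in some coordinate, so the cubes overlap only along faces), hence any collection $\{B_i\}$ of cubes in $\cB$ that covers $F\cap\dS_n$ must, for each such $x$, contain at least one cube $B_i$ intersecting $Q(x,e^{n\theta})$, and one can check that a single $B_i$ of side $s(B_i)$ can intersect at most $C(1+s(B_i)e^{-n\theta})^d$ of these disjoint cubes.

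Next I would turn that geometric observation into a lower bound on $\nu_\rho^n(F)$. Given any admissible cover $\{B_1,\dots,B_m\}\subset\cB$ of $F\cap\dS_n$, split it into cubes with $s(B_i)\le e^{n\theta}$ and those with $s(B_i)>e^{n\theta}$. Each small cube covers $O(1)$ of the $\theta$-cubes while contributing $(s(B_i)/e^n)^\rho \ge (1/e^n)^\rho$ to the sum; each large cube covers $O((s(B_i)e^{-n\theta})^d)$ of them while contributing $(s(B_i)/e^n)^\rho$, and since $\rho < d \le d(1-\theta)+d\theta$ — more to the point, since the exponent $\rho$ on $s(B_i)/e^n$ is strictly less than $d$ times the exponent hidden in the counting bound once one normalizes — the large cubes are \emph{inefficient}: covering $N$ of the $\theta$-cubes with one big cube costs at least $c\,N^{\rho/d} e^{n\theta\rho} e^{-n\rho}$, which is $\gtrsim N \cdot e^{-n\rho}e^{n\theta\rho}$ only when... here I would do the elementary optimization showing that in all cases $\sum_i (s(B_i)/e^n)^\rho \ge c\, e^{dn(1-\theta)}\cdot e^{-n\rho}\, e^{n\theta\rho}\cdot(\text{correction})$; cleanly, one gets $\nu_\rho^n(F)\ge c\, e^{n(d(1-\theta)-\rho)}$ for all $n\ge k$. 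I would package this optimization as a short lemma: among all ways of writing the integer $M_n:=\#\{$ $\theta$-cubes in $\dS_n\}$ as a sum $\sum_j N_j$ with each $N_j$ covered by one cube of side $s_j$, the quantity $\sum_j (s_j/e^n)^\rho$ is minimized, up to constants, by taking all $s_j \asymp e^{n\theta}$ (i.e. $N_j\asymp 1$), because the function $N\mapsto N^{\rho/d}$ is concave and $\rho/d<1$.

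Finally, summing the resulting bound $\nu_\rho^n(F)\ge c\, e^{n(d(1-\theta)-\rho)}$ over $n\ge k$ gives $\sum_n \nu_\rho^n(F)=\infty$ whenever $\rho < d(1-\theta)$, since the exponent $d(1-\theta)-\rho$ is strictly positive. By the definition of the macroscopic Hausdorff dimension this forces $\Dim(F)\ge \rho$ for every $\rho<d(1-\theta)$, hence $\Dim(F)\ge d(1-\theta)$, and by monotonicity $\Dim(E)\ge\Dim(F)\ge d(1-\theta)$.

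The main obstacle I expect is the optimization step in the second paragraph: one must carefully handle cubes $B_i$ of \emph{all} side lengths simultaneously — tiny cubes, cubes comparable to $e^{n\theta}$, cubes comparable to $e^n$, and everything in between — and verify that large cubes never help, which hinges precisely on the inequality $\rho<d$ (equivalently, on concavity of $t\mapsto t^{\rho/d}$) together with the disjointness of the $\theta$-cubes; getting the constants and the endpoint behavior right, and making sure a cube straddling the inner or outer boundary of the shell is accounted for, is where the real care is needed. Everything else — the counting of grid points and the monotonicity reduction — is routine and can cite \cite[Section~4]{KKX17} and \cite{BT92}.
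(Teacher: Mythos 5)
Your overall strategy — reduce to the $\theta$-thick set, bound $\nu_\rho^n$ from below using the grid of $\theta$-cubes, and sum — is sound, but the central claimed estimate $\nu_\rho^n(F)\gtrsim e^{n(d(1-\theta)-\rho)}$ is false, and the "optimization lemma" you propose to prove it points in exactly the wrong direction. Indeed, for \emph{any} set $E\subset\dR^d$ one has $\nu_\rho^n(E)\le C_d(1-e^{-1})^\rho$ with a constant $C_d$ depending only on $d$: the whole shell $\dS_n$ is covered by a bounded number of cubes of side $e^n-e^{n-1}$, each contained in $\dS_n$, so covering $F\cap\dS_n$ never costs more than $O_d(1)$. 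For $\rho<d(1-\theta)$ your claimed lower bound blows up, contradicting this trivial upper bound.

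The error is in the concavity step. Writing $u_i:=\max(s_i/e^{n\theta},1)^d$ (a proxy for the number of $\theta$-cubes that $B_i$ can meet), the covering constraint is $\sum_i u_i\gtrsim M_n$, and the big-cube contribution to the $\rho$-content is $e^{-n\rho(1-\theta)}\sum_i u_i^{\rho/d}$. The map $u\mapsto u^{\rho/d}$ is concave with value $0$ at $0$, hence \emph{subadditive}: splitting a single $u_i$ into several pieces \emph{increases} $\sum u_i^{\rho/d}$. Consequently the infimum, subject to $\sum u_i\gtrsim M_n$ and $u_i\le U_{\max}\asymp e^{dn(1-\theta)}$, is attained with the $u_i$ as \emph{large} as possible — a bounded number of cubes of side comparable to $e^n$, the opposite of $s_j\asymp e^{n\theta}$. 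This gives $\sum u_i^{\rho/d}\gtrsim M_n\,U_{\max}^{\rho/d-1}$ by the chord inequality $u^{\rho/d}\ge u\,U_{\max}^{\rho/d-1}$, and plugging in $M_n\asymp e^{dn(1-\theta)}$ and $U_{\max}\asymp e^{dn(1-\theta)}$ yields only $\nu_\rho^n(F)\ge c$ with a constant $c>0$ (one also has to handle the small-cube contribution, whose unit cost $e^{-n\rho}$ is larger than $U_{\max}^{\rho/d-1}e^{-n\rho(1-\theta)}\asymp e^{-dn(1-\theta)}$ precisely when $\rho<d(1-\theta)$, so large cubes remain the binding case). Fortunately a constant lower bound uniform in $n\ge k$ is all you need: it already forces $\sum_n\nu_\rho^n(F)=\infty$ for every $\rho<d(1-\theta)$ and hence $\Dim(F)\ge d(1-\theta)$. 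So the approach is repairable, but as written the optimization is inverted and the claimed growth of $\nu_\rho^n$ is impossible. The paper itself gives no proof of this statement (it simply cites \cite[Proposition~4.4]{KKX17}), so there is no in-paper argument to compare against; the argument in that reference is essentially the corrected version of yours, with a constant rather than exponential lower bound on $\nu_\rho^n$.
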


For the set of spatio-temporal peaks, we separately provide a proposition for the lower bound of the macroscopic Hausdorff dimension.
\begin{proposition}[Theorem~4.1 of \cite{BT92}]\label{prop:DensityTheorem}
Fix $\gamma \in (0,d)$. For any set $E$ and $n\in \Z$, let us define  
\begin{align}\label{eq:mu_n}
\mu_n(E) = \sum_{\substack{s\in \Z\\e^{n}< s\leq e^{n+1}}} \sum_{\substack{\vec{j}\in \dZ^d \\ \vec{j}\in [0, e^{n(1-\gamma)})}} \1\{(s,j)\in E\}. 
\end{align} 
Then, there exists a constant $C>0$ such that $\nu_{n,d+1-d\gamma}(E)\geq C e^{-nd(1-\gamma) -n}\mu_n(E)$. 
\end{proposition}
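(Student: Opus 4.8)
The plan is to reduce the inequality to a pointwise lattice count inside a single dyadic cube and then sum over an arbitrary cover. Recall from Definition~\ref{def:definition of dimension} that $\nu^n_{\rho}(E)$ is the infimum of $\sum_i (s(B_i)/e^n)^{\rho}$ over all covers $\{B_i\}\subset\cB$ of $E\cap\dS_n$ with $B_i\subset\dS_n$ and each $B_i$ of side $r_i\ge 1$; and that $\mu_n(E)$ counts the points of $E$ that lie in $\dS_n$ and sit on the lattice $\cG_n$ of spacing $1$ in the time coordinate and spacing $e^{n\gamma}$ in each of the $d$ spatial coordinates (the restrictions $e^n< s\le e^{n+1}$ and $0\le j_k<e^{n(1-\gamma)}$ precisely place such points in $\dS_n$, since the time coordinate being $\ge e^n$ already forces the point out of $\dV_n$). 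Every point of $E\cap\dS_n$, in particular every point of $\cG_n\cap E$, lies in some $B_i$, so $\mu_n(E)\le\sum_i N(B_i)$ where $N(B):=\#(\cG_n\cap B)$. Hence it suffices to exhibit $C_0=C_0(d,\gamma)$ with
\[
  N(B)\ \le\ C_0\, e^{nd(1-\gamma)+n}\,\Big(\tfrac{r}{e^n}\Big)^{d+1-d\gamma}\qquad\text{for every cube }B\subset\dS_n\text{ of side }r\ge 1 ,
\]
and since $e^{nd(1-\gamma)+n}=e^{n(d+1-d\gamma)}$ the right-hand side equals $C_0\, r^{d+1-d\gamma}$, so the target becomes the scale-free bound $N(B)\le C_0\, r^{d+1-d\gamma}$.

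This estimate is an elementary count, split according to whether the cube is smaller or larger than the spatial mesh. An interval of length $r$ contains at most $r+1\le 2r$ integers, so $B$ meets at most $2r$ time-layers. If $1\le r\le e^{n\gamma}$, then $B$ meets at most $2$ lattice hyperplanes in each spatial direction, whence $N(B)\le 2^{d+1}r\le 2^{d+1}r^{d+1-d\gamma}$, using $r\ge 1$ and $d+1-d\gamma\ge 1$ (here $\gamma<1$, as in every application of this proposition and in \cite{BT92}). If $r>e^{n\gamma}$, then $B$ meets at most $r/e^{n\gamma}+1\le 2r/e^{n\gamma}$ lattice hyperplanes in each spatial direction, so $N(B)\le 2r\cdot(2r/e^{n\gamma})^{d}=2^{d+1}r^{d+1-d\gamma}(r/e^n)^{d\gamma}$; since $B\subset\dS_n\subset\dV_{n+1}$ forces $r\le 2e^{n+1}$, we get $(r/e^n)^{d\gamma}\le(2e)^{d\gamma}$, hence $N(B)\le 2^{d+1}(2e)^{d\gamma}r^{d+1-d\gamma}$. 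Taking $C_0:=2^{d+1}(2e)^{d\gamma}$ handles both regimes.

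Feeding this back, for any admissible cover $\{B_i\}$ we obtain $\mu_n(E)\le C_0\sum_i r_i^{d+1-d\gamma}=C_0\,e^{n(d+1-d\gamma)}\sum_i(r_i/e^n)^{d+1-d\gamma}$, and taking the infimum over covers yields $\nu^n_{d+1-d\gamma}(E)\ge C_0^{-1}e^{-nd(1-\gamma)-n}\mu_n(E)$, which is the claim with $C=C_0^{-1}$. I do not anticipate a genuine obstacle: the argument is bookkeeping, and the only two points deserving a second's attention are (i) checking that the points counted by $\mu_n(E)$ genuinely lie inside the shell $\dS_n$, so that the covering property applies — this is exactly why the time coordinate is required to be $\ge e^n$ — and (ii) keeping $C_0$ uniform in $n$, which works precisely because the budget $(r/e^n)^{d+1-d\gamma}$ scales in $n$ the same way as the product of the time count and the $e^{-n\gamma}$-rescaled spatial count. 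In a pinch one may simply invoke \cite[Theorem~4.1]{BT92} directly, of which this is the parabolic-grid analogue.
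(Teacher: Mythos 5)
Your proof is correct; it is a direct, self-contained execution of the mass-distribution argument that the paper dispatches by citing \cite[Theorem~4.1]{BT92} together with a density verification deferred to \cite{yi2022macroscopic}. The two routes are the same underlying idea — you have simply unfolded the density theorem: prove a local count $N(B)\lesssim r^{d+1-d\gamma}$ for cubes, then sum over an arbitrary admissible cover and renormalize by $e^{n(d+1-d\gamma)}$ — but the unfolding is worth having because it makes explicit two points the paper's one-line proof glosses over. First, you (silently, but correctly) read $\mu_n$ as counting hits of $E$ on the grid with unit mesh in time and mesh $e^{n\gamma}$ in each spatial coordinate, whereas \eqref{eq:mu_n} as literally written places $\vec j$ in $\Z^d$. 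Your reading is the one under which the proposition is true and under which it is actually applied in Theorem~\ref{thm:lower bound of dim for spatio-temporal peaks in 2d}; with the literal $\Z^d$ reading one has only $\mu_n(B)\sim r^{d+1}$ on a cube of side $r$, which is incompatible with the claimed prefactor $e^{-n(d+1-d\gamma)}$ — take $E$ a rectangle of side $r\le e^{n(1-\gamma)}$ to see the inequality fail. Second, the exponent in your density bound is $d+1-d\gamma$, not the $d+1$ quoted in the paper's proof as the condition to verify, and it is precisely the $d+1-d\gamma$ that makes the covering sum close with the right power of $e^n$. You also rightly flag that the case split uses $\gamma<1$, a constraint stronger than the stated $\gamma\in(0,d)$ when $d\ge 2$ but the one used throughout the paper and in \cite{BT92}.
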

\begin{proof}
The proof follows from \cite[Theorem~4.1]{BT92}. For the condition of \cite[Theorem~4.1]{BT92}, it suffices to verify that $\mu_n( (s,s+r]\times B(x,r)) \lesssim r^{d+1}$ for all $r\geq 1 $, which is proven below (4.24) of \cite{yi2022macroscopic}. 
\end{proof}

The following lemma helps us to compute the upper bound of the macroscopic Hausdorff dimension.
\begin{lemma}[Lemma~4.2 of \cite{yi2022macroscopic}]\label{lemma:block lemma}
  For any $q>1$ and $k\in \{1,...,d-1\}$, define a set $E\subseteq \dR^d$ as 
  \begin{equation*}
     E: = \bigcup_{n=0}^\infty E_n,
   \end{equation*}  where 
   \begin{equation*}
     E_n : = (0,e^{n/q}]^k \times (e^{n/q},e^{n+1}]^{d-k}.
   \end{equation*} Then we have $\Dim[E] \leq d-k.$
\end{lemma}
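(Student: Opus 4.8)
The plan is to estimate directly the $\rho$-dimensional Hausdorff content $\nu^n_\rho(E)$ for $\rho>d-k$ and show that $\sum_n\nu^n_\rho(E)<\infty$, which by Definition~\ref{def:definition of dimension} gives $\Dim[E]\le\rho$; letting $\rho\downarrow d-k$ then yields the claim. The key geometric observation is that $E\cap\dS_n$ is contained in $E_n\cap\dS_n$ together with finitely many lower blocks $E_m$ whose contribution to the shell $\dS_n$ is negligible, so up to harmless bookkeeping it suffices to cover $E_n=(0,e^{n/q}]^k\times(e^{n/q},e^{n+1}]^{d-k}$ efficiently inside $\dS_n$.

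First I would fix $\rho\in(d-k,d-k+ \text{(small)})$ and cover $E_n$ by cubes $B(x,r)$ of a single well-chosen side length $r$. Since $E_n$ is a product of a ``short'' box of side $\asymp e^{n/q}$ in $k$ coordinates and a box of side $\asymp e^{n}$ in the remaining $d-k$ coordinates, choosing $r:=e^{n/q}$ one covers $E_n$ by roughly $(e^{n}/e^{n/q})^{d-k}=e^{n(d-k)(1-1/q)}$ cubes, each of side $e^{n/q}$; the $k$ short directions are covered by $O(1)$ such cubes. Hence
\begin{equation}
  \nu^n_\rho(E)\ \lesssim\ e^{n(d-k)(1-1/q)}\Big(\frac{e^{n/q}}{e^{n}}\Big)^{\rho}
  \ =\ \exp\!\Big( n\big[(d-k)(1-1/q)-\rho(1-1/q)\big]\Big)
  \ =\ \exp\!\Big( n(1-1/q)\big(d-k-\rho\big)\Big).
\end{equation}
For $\rho>d-k$ the exponent $n(1-1/q)(d-k-\rho)$ is negative (using $q>1$), so $\nu^n_\rho(E)$ decays geometrically in $n$ and $\sum_{n}\nu^n_\rho(E)<\infty$. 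I would also need to absorb the finitely many earlier blocks $E_m$, $m<n$, that poke into $\dS_n$: each such $E_m$ meets $\dS_n$ only near its outer face, contributing an even smaller number of side-$e^{m/q}$ cubes, and summing these over $m\le n$ changes the bound only by a polynomial-in-$n$ factor, which does not affect summability. Finally, letting $\rho\downarrow d-k$ gives $\Dim[E]\le d-k$.

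The only genuinely delicate point is the covering count in the mixed directions — one must check that a single scale $r=e^{n/q}$ simultaneously handles the $k$ ``thin'' coordinates (where $O(1)$ cubes suffice because the box has side $e^{n/q}=r$) and the $d-k$ ``thick'' coordinates (where $e^{n(d-k)(1-1/q)}$ cubes are needed), and that all these cubes can be taken inside $\dS_n$; this is exactly the content of \cite[Theorem~4.1]{BT92}/\cite[(4.24)]{yi2022macroscopic} and is straightforward once the scale is fixed. Everything else is routine bookkeeping with the geometric series.
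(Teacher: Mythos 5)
Your proof rests on a geometric claim that is false, and the gap is fatal. You assert that $E\cap\dS_n$ consists of $E_n\cap\dS_n$ together with ``finitely many lower blocks $E_m$, $m<n$,'' and you cover only those. But the thick factor of $E_m$ is $(e^{m/q},e^{m+1}]^{d-k}$, whose \emph{lower} endpoint is $e^{m/q}$, not $e^m$. For a point of $E_m$ the largest coordinate is the largest thick coordinate, which lies anywhere in $(e^{m/q},e^{m+1}]$, so $E_m\cap\dS_n\neq\emptyset$ exactly when this interval meets $[e^{n-1},e^n)$, that is for every integer $m$ with $n-2\le m<qn$. On the order of $n(q-1)$ of these have $m>n$, and your argument never considers them. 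These blocks are precisely the ones a single scale $r=e^{n/q}$ cannot handle: for $m$ near $qn$ one has $E_m\cap\dS_n\supseteq (0,e^{m/q}]^k\times(e^{m/q},e^n)^{d-k}$ with $e^{m/q}$ of order $e^n$, so the $k$ ``thin'' sides are no longer thin. For instance with $d=2$, $k=1$, $q=2$, the block $E_{2n-1}\cap\dS_n=(0,e^{n-1/2}]\times(e^{n-1/2},e^n)$ is a rectangle both of whose sides are of order $e^n$; it contains a ball of radius of order $e^n$, and the $\rho$-content of such a ball relative to $\dS_n$ is bounded below by a positive constant for every $\rho<d$, so it does not decay in $n$. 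Your bound $\nu^n_\rho(E)\lesssim e^{n(1-1/q)(d-k-\rho)}$ therefore only accounts for $E_n\cap\dS_n$ and a couple of blocks with $m\le n$; the dominant part of $E\cap\dS_n$ is left uncovered, and summability cannot be concluded.

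Two remarks on how to repair the argument. First, if the thick factor of $E_n$ were $(e^n,e^{n+1}]^{d-k}$ rather than $(e^{n/q},e^{n+1}]^{d-k}$, each block would meet only $\dS_{n+1}$ and $\dS_{n+2}$, and your single-scale covering would close immediately. Second, in the application of this lemma inside the proof of Theorem~\ref{thm:upper bound of dim for spatio-temporal peaks}, a point of $\cP^d_{\epsilon_+}(\beta,v)\setminus\bigcup_n\cL_n$ lying in $E_n$ automatically satisfies $\max_i|x_i|>e^n$ (because $n$ is chosen as the shell index of the point), which forces $E_n$ to meet only $\dS_{n+1}$ and $\dS_{n+2}$; any proof that avoids the $m>n$ collision must exploit this extra constraint on the maximum coordinate, which does not appear in the statement of the lemma as written and is not used in your proposal.
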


\bibliographystyle{alpha}		
\bibliography{refs_PAM}

%
%
%
%
%
%

\end{document}